\definecolor{foge}{rgb}{0.1, 0.6, 0.1}
\newcommand{\Z}{\mathbb{Z}}
\newcommand{\Zz}{\Z_{\geq 0}}
\newcommand{\Zu}{\Z_{\geq 1}}
\newcommand{\R}{\mathbb{R}}
\newcommand{\C}{\mathcal{C}}
\newcommand{\F}{\mathcal{F}}
\newcommand{\la}{\lambda}
\newcommand{\Lkl}{\mathcal{L}^{(k,l)}}
\newcommand{\Llk}{\mathcal{L}^{(l,k)}}
\newcommand{\Bkl}{\mathcal{B}^{(k,l)}}
\newcommand{\Blk}{\mathcal{B}^{(l,k)}}
\newcommand{\Lk}{\mathcal{L}^{(k,1)}}
\newcommand{\Ll}{\mathcal{L}^{(1,k)}}
\newcommand{\Bk}{\mathcal{B}^{(k,1)}}
\newcommand{\Bl}{\mathcal{B}^{(1,k)}}
\newcommand{\akl}[1]{a^{(k,l)}_{#1}}
\newcommand{\alk}[1]{a^{(l,k)}_{#1}}
\newcommand{\ak}[1]{a^{(k,1)}_{#1}}
\newcommand{\al}[1]{a^{(1,k)}_{#1}}
\newcommand{\bkl}[1]{b^{(k,l)}_{#1}}
\newcommand{\blk}[1]{b^{(l,k)}_{#1}}
\newcommand{\bk}[1]{b^{(k,1)}_{#1}}
\newcommand{\bl}[1]{b^{(1,k)}_{#1}}
\newcommand{\skl}[1]{s^{(k,l)}_{#1}}
\newcommand{\slk}[1]{s^{(l,k)}_{#1}}
\newcommand{\sk}[1]{s^{(k,1)}_{#1}}
\newcommand{\sll}[1]{s^{(1,k)}_{#1}}
\numberwithin{equation}{subsection}
\newtheorem{theo}{Theorem}[section]
\newtheorem{prop}[theo]{Proposition}
\newtheorem{lem}[theo]{Lemma}
\newtheorem{rem}[theo]{Remark}
\newtheorem{ex}[theo]{Example}
\newtheorem{exs}[theo]{Examples}
\theoremstyle{definition} 
\newtheorem{deff}[theo]{Definition}
\title{The $(k,l)$-Euler theorem and the combinatorics of $(k,l)$-sequences}
\author{Isaac KONAN} \address{IRIF \\ University
of Paris\\  Paris,  75013, France}
\email{konan@irif.fr}
\date{}
\begin{document}
\begin{abstract}
In 1997, Bousquet-M\'elou and Eriksson stated a broad generalization of Euler's distinct-odd partition theorem, namely the $(k,l)$-Euler theorem. Their identity involved the $(k,l)$-lecture-hall partitions, which, unlike usual difference conditions of partitions in Rogers-Ramanujan type identities, satisfy some ratio constraints. In a 2008 paper, in response to a question suggested by Richard Stanley, Savage and Yee provided a simple bijection for the $l$-lecture-hall partitions (the case $k=l$), whose specialization in $l=2$ corresponds to Sylvester's bijection. Subsequently, as an open question, a generalization of their bijection was suggested for the case $k,l\geq 2$.\\
In the spirit of Savage and Yee's work, we provide and prove in this paper slight variations of the suggested bijection, not only for the case $k,l\geq 2$, but also for the cases $(k,1)$ and $(1,k)$ with $k\geq 4$. Furthermore, we show that our bijections equal the recursive bijections given by Bousquet-M\'elou and Eriksson in their recursive proof of the $(k,l)$-lecture hall theorem and finally provide the analogous recursive bijection for the $(k,l)$-Euler theorem.
\end{abstract}
\maketitle
\tableofcontents
\section{Introduction}
Let $\la$ be a \textit{finite} sequence $(\la_1,\ldots,\la_t)$ of non-negative integers. The integers $\la_1,\ldots,\la_t$, $t$  and $\la_1+\cdots+\la_t=|\la|$ are respectively called the \textit{parts}, the \textit{length} and the \textit{weight} of $\la$. By convention, for $\la=\emptyset$, the length and weight both equal $0$.
Let $|\la|_e,|\la|_o$ be respectively the even and odd weights of $\la$, and such that 
$$|\la|_e = \sum_{i \text{ even}} \la_i\qquad , \qquad |\la|_o = \sum_{i \text{ odd}} \la_i\,\,\cdot$$ 
Thus, $|\la|_e+|\la|_o=|\la|$. Conventionally, a \textbf{partition} of an integer $n$ refers to such finite sequence $\la$ with positive parts, occurring in a \textit{non-increasing} order, and with weight $n$. The sequence $\emptyset$ is then the unique partition of $0$.
\begin{exs} The sequence $(0,0,1,2,5,5)$ has length $6$, even weight $7$ and odd weight $6$, while the sequence $(0,1,2,5,5)$ has length $5$, even weight $6$ and odd weight $7$.
\end{exs}
A major identity of the partition theory, Euler's theorem, states that for $n\in \Zz$, the sets of partitions of $n$ into respectively distinct parts and odd parts are equinumerous. There have been number of generalizations of Euler's theorem, mainly as identities dealing with partitions whose parts satisfied difference constraints \cite{AN66,G83,PAPO98,Z05}. Yet, in this paper, our discussion focuses on  an intricate identity given by Bousquet-M\'elou and Eriksson \cite{BME971}, with successive parts constrained by a certain ratio.
\begin{theo}[Lecture hall theorem]\label{theo:lecturehall}
Let $n$ be a positive integer, and denote by $\mathcal{L}_n$ the set of partitions $\la=(\la_1,\ldots,\la_t)$ such that  $t\leq n$ and $(\frac{\la_i}{n+1-i})_{i=1}^t$ is non-increasing. Then, for $m\in \Zz$, the number of partitions of $m$ in $\mathcal{L}_n$ is equal to the number of partitions of $m$ into odd parts less than $2n$. The corresponding identity is 
$$\sum_{\la\in \mathcal{L}_n} q^{|\la|} = \prod_{i=1}^n \frac{1}{1-q^{2i-1}}\,\cdot$$
Such partitions $\la$ are referred to as lecture hall partitions.
\end{theo}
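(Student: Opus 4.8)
\medskip
\noindent\emph{Proof proposal.} The plan is to prove this by induction on $n$, after a preliminary reformulation. Padding every $\la\in\mathcal{L}_n$ with trailing zeros to length exactly $n$, the ratio hypothesis $\frac{\la_1}{n}\geq\frac{\la_2}{n-1}\geq\cdots\geq\frac{\la_n}{1}\geq0$ already forces $\la$ to be non-increasing --- since $\la_i=(n+1-i)\frac{\la_i}{n+1-i}\geq(n-i)\frac{\la_{i+1}}{n-i}=\la_{i+1}$ --- and forces its zero entries to be trailing; hence padding is a weight-preserving bijection from $\mathcal{L}_n$ onto the set $\{\la\in\Zz^n:\frac{\la_1}{n}\geq\cdots\geq\frac{\la_n}{1}\geq0\}$ of lattice points of a rational simplicial cone, and the claim becomes that $\sum_\la q^{|\la|}$ over this set equals $\prod_{i=1}^n(1-q^{2i-1})^{-1}$. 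The base case $n=1$ is $\sum_{m\geq0}q^m=(1-q)^{-1}$, which is immediate.

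For the inductive step one is tempted to delete the largest part: $(\la_1,\ldots,\la_n)\mapsto(\la_2,\ldots,\la_n)$ does map the set for $n$ onto the set for $n-1$, since the last $n-1$ ratios are exactly the lecture-hall inequalities with denominators $n-1,\ldots,1$. But this map is far from weight-preserving: the fibre over $\mu$ is $\{\la_1:\la_1\geq\lceil\tfrac{n\mu_1}{n-1}\rceil\}$, whose generating series $q^{\lceil n\mu_1/(n-1)\rceil}(1-q)^{-1}$ depends on $\mu$ through a ceiling. The remedy --- essentially the recursive bijection of Bousquet-M\'elou and Eriksson that the present paper will transport to the $(k,l)$-setting --- is to carry along the auxiliary \emph{partition of ratio ceilings} $c_i:=\lceil\la_i/(n+1-i)\rceil$ (one checks $c_1\geq c_2\geq\cdots\geq c_n\geq0$) and to reparametrise $\la_i=(n+1-i)c_i-s_i$ with corrections $s_i\in\{0,1,\ldots,n-i\}$; then $|\la|=\sum_i(n+1-i)c_i-\sum_i s_i$, and one checks that the only surviving constraint on the $s_i$ is that, along each maximal run of indices on which $c$ is constant, the corrections obey a ratio inequality of the same lecture-hall flavour. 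Sorting the elements of $\mathcal{L}_n$ by the shape of $c$ and recursing on $n$ (peeling runs of $c$) should then produce the weight-preserving bijection with partitions into odd parts $<2n$.

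The step I expect to be the real obstacle is exactly this bookkeeping. Because the set of admissible correction vectors $(s_i)$ depends on \emph{which} of the $c_i$ coincide, the generating function does not split as a product over the two families, so one must organise the recursion so that summing the run-wise contributions of the $s_i$ against the series enumerating the partitions $c$ telescopes to $\prod_{i=1}^n(1-q^{2i-1})^{-1}$; in polyhedral language this is the computation of the numerator of the lattice-point generating function of the lecture-hall cone together with the verification that it has the predicted product shape. Everything else is routine: the base case, the reparametrisation, and --- once the recursion is correctly phrased --- the final simplification, where algebraic identities such as $(1+q)(1-q)=1-q^2$ and $(1+q^3)(1-q^3)=1-q^6$ are the kind of cancellations that conspire to yield the product. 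Running the cases $n=2,3$ by hand first, where the case split on $c$ is tiny, is a sensible way to see the mechanism before attempting the general statement.
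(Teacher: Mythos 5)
Your setup is sound as far as it goes: the padding argument, the observation that the ratio condition forces $\la$ to be a partition with trailing zeros, the base case, the fact that deleting the largest part is onto but not weight-preserving, and the decomposition $\la_i=(n+1-i)c_i-s_i$ with $c_i=\lceil\la_i/(n+1-i)\rceil$ non-increasing and $s_i\in\{0,\ldots,n-i\}$ are all correct, and so is your analysis that the ratio constraint is automatic across a strict descent of $c$ and only bites inside a constant run. But the proof has a genuine gap, and you name it yourself: the ``bookkeeping'' step, namely showing that summing the run-wise contributions of the $s_i$ against the enumerator of the $c_i$ telescopes to $\prod_{i=1}^n(1-q^{2i-1})^{-1}$, is not carried out, and it is the entire content of the theorem. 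As set up, I do not see how it closes: the weight $|\la|=\sum_i(n+1-i)c_i-\sum_i s_i$ carries a \emph{subtracted} correction coupled to the run structure of $c$, so nothing factors; and the surviving constraints on the $(s_i)$ inside a run are of \emph{anti}-lecture-hall type (bounded parts, reversed inequalities), so the recursion does not reduce to smaller instances of the object you started with.

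For comparison, the proof this paper relies on (the Bousquet-M\'elou--Eriksson recursion, reproduced in Section \ref{sec:equiv} as the maps $\Psi^{(k,l)}_n$ and specialized at $k=l=2$) closes the induction by a device your univariate setup is missing: it refines the weight into the odd-indexed and even-indexed parts $|\la|_o,|\la|_e$ and exhibits a bijection $\mathcal{L}_n\to\mathcal{L}_{n-1}\times\Zz$ that is weight-preserving only after a monomial change of variables, yielding a bivariate functional equation of the shape
\begin{equation*}
\sum_{\la\in\mathcal{L}_{n}}x^{|\la|_o}y^{|\la|_e}=\frac{1}{1-y}\sum_{\mu\in\mathcal{L}_{n-1}}(xy^{2})^{|\mu|_o}(y^{-1})^{|\mu|_e}
\end{equation*}
(and its companion with the roles of $x$ and $y$ exchanged), from which the product falls out by iteration. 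Tracking only $q^{|\la|}$ discards exactly the information needed to make the induction self-propagating. So either adopt that bivariate refinement (your map $(\la_1,\ldots,\la_n)\mapsto(\la_2,\ldots,\la_n)$ together with the fibre description $\la_1\geq\lceil n\mu_1/(n-1)\rceil$ is essentially $\Psi$ already, up to the correction term you would need to absorb into the change of variables), or replace the recursion by the explicit Savage--Yee/Sylvester-type bijection with partitions into odd parts less than $2n$, which is the $k=l=2$ case of the bijections $\Phi^{(k,l)}_n$ constructed in this paper. As written, your argument proves nothing beyond the base case.
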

The above theorem yields the Euler identity when $n$ tends to $\infty$. Subsequently, Bousquet-M\'elou and Eriksson discovered in \cite{BME972} a generalization of Theorem \ref{theo:lecturehall} involving a particular sequence of integers named the $(k,l)$-sequence.
For $k,l$ positive integers such that $kl\geq 4$, the $(k,l)$-sequence consists of the integers $(\akl{n})_{n\in \Z}$ recursively defined by
\begin{equation}\label{eq:klseqdef}
\begin{cases}
\akl{2n}=l\akl{2n-1}-\akl{2n-2}  \\
\akl{2n+1}=k\akl{2n}-\akl{2n-1}
\end{cases}
\end{equation}
for $n\in \Z$, with the initial conditions $\akl{i}=i$ for $i \in \{0,1\}$. When $k=l$, the sequence is called an $l$-sequence, and we note $a_n^{(l)}$ instead of $a_n^{(k,l)}$. By abuse of notation, in the remainder of the paper, when $k=l\geq 2$, we replace $(k,l)$ by $(l)$. This convention will be helpful in the study of the cases $(k,1)$ and $(1,k)$ for $k\geq 4$.
\begin{enumerate}
\item Let $n\in \Zu$. Denote by $\Lkl_n$ the set of non-negative integers sequences $\la$ with length $n$, i.e. $\la=(\la_1,\ldots,\la_n)$, and such that
\begin{equation}\label{eq:ratioklseq}
\frac{\la_1}{\akl{1}}\leq \frac{\la_2}{\akl{2}}\leq \cdots \leq \frac{\la_{n-1}}{\akl{n-1}}\leq \frac{\la_n}{\akl{n}}\,\cdot
\end{equation}
The sequences $\la$ are called the $(k,l)$\textbf{-lecture hall partitions}.
\item For $m\in \Z_{\geq 4}$, let $u_m$ be the greatest radical of the equation 
$0=x^2-\sqrt{m}\cdot x +1\,\cdot$
Denote by $\Lkl$ the set of sequences $\la$ with a positive even number of parts, i.e. $\la=(\la_1,\ldots,\la_{2t})$, and such that $0=\la_{2t}\leq \la_{2t-1}$,  and for $1\leq i\leq t-1$,
\begin{equation}\label{eq:klseq}
\sqrt{\frac{k}{l}} u_{kl} \cdot \la_{2i+1}<\la_{2i}< \sqrt{\frac{k}{l}} u^{-1}_{kl}\la_{2i-1}\,\cdot
\end{equation}
The set $\Lkl$ defines the set of $(k,l)$\textbf{-Euler partitions}.
\item Define $\bkl{i}=\akl{i}+\alk{i-1}$ for $i\in \Z$. Then, for $n\in \Zu$, let $\Bkl_n$ be the set of finite sequences $\la=(\bkl{i_1},\ldots,\bkl{i_t})$ such that $1\leq i_1\leq \cdots \leq i_t \leq n $, and set $$\Bkl= \bigcup_{n\geq 1} \Bkl_n\,\cdot$$ 
Note that the sequences in $\Bkl$ are determined by the number of occurrences of each $\bkl{i}$. To ease the notation, we denote by $$\displaystyle \prod_{i\geq 1} \left(\bkl{i}\right)^{m_i}$$ the sequence of $\Bkl$ with $m_i$ occurrences of $\bkl{i}$ for $i\geq 1$.
\end{enumerate}
\begin{rem} When $k=l=2$, $\akl{n}=n$ and $\bkl{n}=2n-1$  for $n\in \Z$. Hence, $\Bkl_n$ corresponds to the set of partitions into odd parts less than $2n$, and $\Bkl$ is to the set of partitions into odd parts. Furthermore, $\Lkl_n$ can be identified to $\mathcal{L}_n$.  In fact, for $\la=(\la_i)_{i=1}^n\in\Lkl_n$ with $t$ positive parts, i.e $\la_{n-t}=0<\la_{n+1-t}$, it suffices to consider the partition $\mu=(\mu_i)_{i=1}^t=(\la_{n+1-i})_{i=1}^t$ which, by definition, belongs to  $\Lkl_n$. Inversely, for $\mu=(\mu_i)_{i=1}^t\in \mathcal{L}_n$, set $\la_i=0$ for $1\leq i\leq n-t$, and $\la_{i}=\mu_{n+1-i}$ for $n+1-t\leq i\leq n$, and one can check that $\la=(\la_i)_{i=1}^n$ belongs to $\Lkl_n$. Finally, as $\sqrt{\frac{k}{l}} u_{kl} = \sqrt{\frac{k}{l}} u_{kl}^{-1}=1$ for $k=l=2$, $\Lkl$ then corresponds to the set of partitions into distinct parts. From any partition into distinct parts, the corresponding Euler partition is obtained by adding one or two parts equal to $0$ according to whether its length is odd or even. 
\end{rem}
The Bousquet-M\'elou--Eriksson generalization of Theorem \ref{theo:lecturehall}, denoted the $(k,l)$\textbf{-lecture hall theorem}, is the following.
\begin{theo}[The $(k,l)$-lecture hall theorem]\label{theo:klseqfin}
Let $k,l,n$ be positive integers such that $kl\geq 4$. Then, 
\begin{equation}\label{eq:klseqeven}
\sum_{\la \in \Lkl_{2n}} x^{|\la|_o}y^{|\la|_e}=\prod_{i=1}^{2n}\frac{1}{1-x^{\alk{i-1}}y^{\akl{i}}}\,,
\end{equation}
\begin{equation}\label{eq:klseqodd}
\sum_{\la \in \Lkl_{2n-1}} x^{|\la|_o}y^{|\la|_e}=\prod_{i=1}^{2n-1}\frac{1}{1-x^{\alk{i}}y^{\akl{i-1}}}\,\cdot
\end{equation}
This implies that, for fixed weight $m$ in $\Zz$, there are as many $(k,l)$-lecture hall partitions in $\Lkl_{2n}$ as sequences in $\Bkl_{2n}$,
and there are as many $(k,l)$-lecture hall partitions in $\Lkl_{2n-1}$ as sequences in $\Blk_{2n-1}$. 
\end{theo}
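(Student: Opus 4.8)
The plan is to prove the two product identities \eqref{eq:klseqeven} and \eqref{eq:klseqodd} by induction on $n$, following the recursive strategy of Bousquet-M\'elou and Eriksson. The key observation is that the $(k,l)$-sequence interlaces with the $(l,k)$-sequence: applying the even/odd recursions in \eqref{eq:klseqdef} relates $\akl{i}$ to $\alk{i-1}$ after shifting indices, so that truncating a partition $\la\in\Lkl_{2n}$ from the top produces, after a suitable linear change of variables on the remaining parts, a partition in $\Llk_{2n-1}$ or $\Lkl_{2n-2}$. Concretely, I would introduce the generating functions $L^{(k,l)}_n(x,y)=\sum_{\la\in\Lkl_n}x^{|\la|_o}y^{|\la|_e}$ and seek a recursion expressing $L^{(k,l)}_{2n}$ in terms of $L^{(l,k)}_{2n-1}$, and $L^{(l,k)}_{2n-1}$ in terms of $L^{(k,l)}_{2n-2}$, each recursion contributing exactly one factor of the claimed product.

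The heart of the argument is the following combinatorial decomposition. Given $\la=(\la_1,\ldots,\la_n)\in\Lkl_n$, the ratio chain \eqref{eq:ratioklseq} forces $\la_n/\akl{n}\geq \la_{n-1}/\akl{n-1}$, and one shows that the map sending $\la$ to the pair consisting of the integer $c=\lfloor \la_n/\akl{n}\rfloor$ (or a closely related quotient dictated by the recursion coefficients $k$ or $l$) together with the truncated-and-shifted sequence $(\la_1,\ldots,\la_{n-1}-c\,\akl{n-1})$, or some variant thereof, is a bijection onto $\Zz\times\Lkl_{n-1}$-type data, where the shifted sequence lands in the lecture-hall set for the \emph{swapped} pair $(l,k)$ with one fewer part. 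Tracking how $|\la|_o$ and $|\la|_e$ change under this peeling — the removed layer contributes $x^{\alk{i-1}}y^{\akl{i}}$ or $x^{\alk{i}}y^{\akl{i-1}}$ depending on the parity of $n$ — yields precisely the missing factor $1/(1-x^{\alk{i-1}}y^{\akl{i}})$ in \eqref{eq:klseqeven}. The base case is $n=0$ (or $n=1$): $\Lkl_1$ consists of the single-part sequences $(\la_1)$ with $\la_1\geq 0$ arbitrary since $\akl{1}=1$, giving generating function $1/(1-x)$, matching the right side with a single factor. One then checks $\akl{0}=0$, $\alk{1}=1$, $\akl{1}=1$ so the exponents are consistent.

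The final sentence of the theorem — that for fixed weight $m$ there are as many partitions in $\Lkl_{2n}$ as in $\Bkl_{2n}$, and as many in $\Lkl_{2n-1}$ as in $\Blk_{2n-1}$ — follows by specializing $x=y=q$ in \eqref{eq:klseqeven} and \eqref{eq:klseqodd}. Indeed, under this specialization the left side becomes $\sum_{\la\in\Lkl_{2n}}q^{|\la|}$, while the right side of \eqref{eq:klseqeven} becomes $\prod_{i=1}^{2n}1/(1-q^{\alk{i-1}+\akl{i}})=\prod_{i=1}^{2n}1/(1-q^{\bkl{i}})$ by the definition $\bkl{i}=\akl{i}+\alk{i-1}$; this last product is visibly the weight generating function of $\Bkl_{2n}$, since a sequence in $\Bkl_{2n}$ is determined by the multiplicities $m_i$ of each part $\bkl{i}$ with $1\leq i\leq 2n$. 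The odd case is identical after noting that the exponent $\alk{i}+\akl{i-1}$ appearing in \eqref{eq:klseqodd} equals $\blk{i}$ (the roles of $k$ and $l$ are exchanged), so the right side is the weight generating function of $\Blk_{2n-1}$.

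The main obstacle I anticipate is pinning down the exact form of the peeling map and verifying it is a bijection with the correct weight-transfer: the change of variables is linear but the coefficients involve $k$ and $l$ asymmetrically, and one must be careful that the shifted sequence genuinely satisfies the ratio inequalities for the swapped pair $(l,k)$ — this is where the interlacing identities between the two sequences, obtained by manipulating \eqref{eq:klseqdef}, do the real work. A secondary subtlety is handling the boundary inequality $\la_n/\akl{n}\geq 0$ and the possibility that the truncated sequence has a vanishing top part, which must be matched against the convention that sequences in $\Lkl_{n-1}$ have a fixed length $n-1$ (including possible zero parts). Once the recursion is established for both parities, the induction closes immediately and the product formulas drop out.
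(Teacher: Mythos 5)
Your overall strategy coincides with the one the paper actually uses in Section \ref{sec:equiv}: a recursive bijection from $\Lkl_{n}$ to $\Llk_{n-1}\times\Zz$ whose weight bookkeeping turns the product identity into a one-step recursion, iterated down to the base case $\Lkl_{1}$, followed by the specialization $x=y=q$ and the identity $\bkl{i}=\akl{i}+\alk{i-1}$ to obtain the equinumerosity with $\Bkl_{2n}$ and $\Blk_{2n-1}$. Your base case and your final paragraph on deducing the combinatorial statement from \eqref{eq:klseqeven} and \eqref{eq:klseqodd} are correct as written.

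The genuine gap is that the central bijection is never constructed, and the candidate you offer would fail. You propose extracting $c=\lfloor\la_{n}/\akl{n}\rfloor$ and passing to a ``truncated-and-shifted'' sequence $(\la_1,\ldots,\la_{n-1}-c\,\akl{n-1})$. Subtracting a multiple of $\akl{n-1}$ from the single part $\la_{n-1}$ leaves every inequality lower down the chain untouched, so the image is still governed by the $(k,l)$-ratios rather than the $(l,k)$-ratios, and the map is neither well defined into $\Llk_{n-1}$ nor surjective onto $\Llk_{n-1}\times\Zz$. The map the paper uses ($\Psi^{(k,l)}_{n}$, Lemma \ref{lem:psifinsize}) is of a different shape: it keeps every other part of $\la$ unchanged and replaces each remaining part $\la_j$ by a complement of the form $\lfloor\cdot\rfloor+\lceil\cdot\rceil-\la_j$ built from the ratios with \emph{both} neighbours $\la_{j-1}$ and $\la_{j+1}$, with only the top part surrendering a free integer $m$. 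It is this complementation that, via Remark \ref{rem:rat}, reverses each two-sided integer inequality into the corresponding $(l,k)$-inequality; and the resulting weight relations $\la_o=m+k\cdot\nu_e-\nu_o$ and $\la_e=m+l\cdot\nu_o-\nu_e$ give the functional equations \eqref{eq:finoddtoeven} and \eqref{eq:fineventoodd}, in which each step contributes a factor $1/(1-x)$ or $1/(1-y)$ \emph{together with} a substitution $(x,y)\mapsto(xy^{l},y^{-1})$ or $(x^{-1},yx^{k})$ --- the product factors $1/(1-x^{\alk{i-1}}y^{\akl{i}})$ only emerge after composing all the substitutions using $l\alk{i}-\akl{i-1}=\akl{i+1}$, not one factor per step as you describe. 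Without the explicit complementation map and the verification of these two weight identities, the induction cannot be closed.
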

We recover the lecture hall theorem with the transformation $(k,l,x,y) \mapsto (2,2,q,q)$. As Euler's theorem is implied by Theorem \ref{theo:lecturehall} with $n\rightarrow \infty$, 
a suitable generalization can then be deduced from \eqref{eq:klseqeven} using the same approach. 
\begin{theo}[The $(k,l)$-Euler theorem]\label{theo:klseqinf}
Let $k,l$ be positive integers such that $kl\geq 4$. Then, 
\begin{equation}\label{eq:klseqeveninf}
\sum_{\la \in \Lkl} x^{|\la|_o}y^{|\la|_e}=\prod_{i=1}^{\infty}\frac{1}{1-x^{\akl{i}}y^{\alk{i-1}}}\,\cdot
\end{equation}
This implies that, for fixed weight $m$ in $\Zz$, there are as many $(k,l)$-Euler partitions in $\Lkl$ as sequences in $\Bkl$.
\end{theo}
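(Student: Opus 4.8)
The plan is to deduce \eqref{eq:klseqeveninf} by letting $n\to\infty$ in the even case \eqref{eq:klseqeven} of \Thm{theo:klseqfin}, imitating the passage from \Thm{theo:lecturehall} to Euler's theorem.

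First I would dispose of the right-hand sides. Each of the two subsequences $(\akl{2m})_{m\ge 0}$ and $(\akl{2m+1})_{m\ge 0}$ is strictly increasing, so $\akl{i}\to+\infty$ and, for $i$ large, the factor $\tfrac{1}{1-x^{\alk{i-1}}y^{\akl{i}}}$ equals $1$ modulo terms of arbitrarily high total degree; hence the relevant infinite products converge coefficientwise in $\Z[[x,y]]$ and $\prod_{i=1}^{2n}\tfrac{1}{1-x^{\alk{i-1}}y^{\akl{i}}}\to\prod_{i=1}^{\infty}\tfrac{1}{1-x^{\alk{i-1}}y^{\akl{i}}}$, which after exchanging $x$ and $y$ is the right-hand side of \eqref{eq:klseqeveninf}. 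By \eqref{eq:klseqeven} the theorem is therefore equivalent to
$$\sum_{\la\in\Lkl}x^{|\la|_o}y^{|\la|_e}=\lim_{n\to\infty}\sum_{\la\in\Lkl_{2n}}x^{|\la|_e}y^{|\la|_o}\,\cdot$$
Since reversal of a sequence of even length interchanges odd- and even-indexed entries, and hence interchanges $|\cdot|_o$ and $|\cdot|_e$, writing $\rho$ for reversal the right-hand side equals $\lim_n\sum_{\mu\in\rho(\Lkl_{2n})}x^{|\mu|_o}y^{|\mu|_e}$, where $\rho(\Lkl_{2n})$ is the set of $\mu=(\mu_1,\dots,\mu_{2n})$ for which $\bigl(\tfrac{\mu_i}{\akl{2n+1-i}}\bigr)_{i=1}^{2n}$ is non-increasing.

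The core of the argument is to identify the stabilized part of $\rho(\Lkl_{2n})$ with $\Lkl$. Membership $\mu\in\rho(\Lkl_{2n})$ reads $\akl{2n-i}\,\mu_i\ge\akl{2n+1-i}\,\mu_{i+1}$ for $1\le i<2n$; in particular $\mu_i=0$ forces $\mu_{i+1}=\dots=\mu_{2n}=0$, and since $\akl{m}/\akl{m+2}<1$ and the $\mu_i$ are integers, $\mu_i>0$ forces $\mu_{i+2}\le\mu_i-1$, so any $\mu$ of bounded weight has support of bounded size and is determined by its content $c(\mu)$, obtained by deleting the trailing zeros. For a fixed finite sequence $c=(\mu_1,\dots,\mu_s)$ with $\mu_s>0$, padding $c$ by zeros produces an element of $\rho(\Lkl_{2n})$ for all large $n$ precisely when $\tfrac{\mu_{i+1}}{\mu_i}\le\tfrac{\akl{2n-i}}{\akl{2n+1-i}}$ holds for $1\le i<s$ and all large $n$; using that the forward ratios $\akl{2m+1}/\akl{2m}$ and $\akl{2m}/\akl{2m-1}$ decrease to $\sqrt{k/l}\,u_{kl}$ and $\sqrt{l/k}\,u_{kl}$ respectively — so that $\akl{2n-i}/\akl{2n+1-i}$ increases \emph{strictly} to its limit as $n\to\infty$ — this is equivalent to $\mu_{i+1}<\sqrt{k/l}\,u_{kl}^{-1}\mu_i$ for $i$ odd and $\sqrt{k/l}\,u_{kl}\,\mu_{i+1}<\mu_i$ for $i$ even, i.e.\ to the interior inequalities of \eqref{eq:klseq}. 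On the other hand, deleting from $\la\in\Lkl$ its one or two structural trailing zeros, and conversely re-appending to a content the minimal number of zeros that makes the length even, exhibits a weight-preserving bijection between $\Lkl$ and the very same set of contents. Comparing, for each monomial, the number of $\mu\in\rho(\Lkl_{2n})$ realizing it (eventually constant in $n$) with the number of $\la\in\Lkl$ realizing it — both being the number of valid contents — yields the displayed identity, and hence \eqref{eq:klseqeveninf}.

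The step I expect to be the main obstacle is the detailed control of the $(k,l)$-sequence used in the third paragraph: that its consecutive ratios are monotone and converge, with limits $\sqrt{k/l}\,u_{kl}$ and $\sqrt{l/k}\,u_{kl}$, and — crucially — that they attain these values only in the limit. This one-sided, strictly monotone convergence is exactly what turns the non-strict lecture-hall ratio conditions into the strict inequalities of \eqref{eq:klseq}, and it is the general-$(k,l)$ counterpart of the trivial fact that a partition with a repeated part never lies in $\mathcal{L}_n$. For $kl\ge 5$ the limiting ratios are irrational, so strictness already follows from plain convergence; the boundary cases $kl=4$ (where $u_{kl}=1$ and the limits are rational) are the ones that genuinely need the monotonicity, and there one must verify that the ratio sequences in question approach their limits from below. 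A secondary, purely bookkeeping matter is to fix once and for all which zeros of a sequence count as structural, so that the passage through contents is an honest bijection — precisely as in the dictionary between distinct partitions and Euler partitions recalled above.
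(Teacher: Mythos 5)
Your proof is essentially correct, but it follows a genuinely different route from the paper. You deduce \eqref{eq:klseqeveninf} from the finite identity \eqref{eq:klseqeven} by a formal limit: reversing the sequences, observing that elements of bounded weight have bounded support and are determined by their content, and showing that a fixed content is eventually admissible for $\Lkl_{2n}$ exactly when it satisfies the strict inequalities \eqref{eq:klseq}. The analytic input you need --- that $(\akl{n+1}/\alk{n})_{n\geq 1}$ decreases strictly to $\skl{0}$, so the reversed ratios increase strictly to their limits and the non-strict lecture-hall conditions become strict in the limit --- is exactly Proposition \ref{prop:propklgen}(2) via \eqref{eq:ratio}, so nothing new is required there; your observation that irrationality of the limit disposes of $kl\geq 5$ and that only $kl=4$ genuinely needs the one-sided monotonicity is also accurate. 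The paper, by contrast, never carries out this limit argument (it is only alluded to after Theorem \ref{theo:lecturehall}): its actual proofs of Theorem \ref{theo:klseqinf} are (i) the explicit insertion bijection $\Phi^{(k,l)}:\Bkl\to\Lkl$ of Section \ref{sec:bijections}, whose well-definedness is established in Section \ref{part:interinf} through $(k,l)$-admissible words, and (ii) the recursive bijection $\Psi^{(k,l)}:\Lkl\to\Llk\times\Zz$ of Section \ref{sec:equiv}, which yields the functional equation \eqref{eq:infkltolk} and, upon iteration, the infinite product. Your approach is shorter and more elementary, but it takes Theorem \ref{theo:klseqfin} as input and produces only equinumerosity, whereas the paper's constructions deliver the explicit bijection that is its main object of study.

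One small correction: for an even-length content the rule ``append the minimal number of zeros making the length even'' appends none, which does not produce an element of $\Lkl$, since the definition requires $\la_{2t}=0$; you must append two zeros to an even-length content and one to an odd-length content, exactly as in the dictionary between distinct-part partitions and Euler partitions in the case $k=l=2$. This is the bookkeeping point you flag yourself, and it does not affect the rest of the argument.
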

Many variations of the lecture hall theorem can be found in the literature \cite{CLS05,CLS15,CS04,CSS12}, and  allow a deeper comprehension of sequences constrained by ratio conditions. Lecture hall partitions have raised interest not only in combinatorics, but also in number theory, algebra and geometry, and allow new interpretations and generalizations of classical theorems.\\
Our matter in this paper concerns the combinatorial aspects of these structures. One of the most effective way to uncover the combinatorial structures of mathematical objects consists in constructing bijections. In their proof of Theorem \ref{theo:klseqfin}, Bousquet-M\'elou and Eriksson used generating functions satisfying some recursive relations, recursion which indeed gives rise to a recursive bijection. However, the quest of finding a simple bijective proof remained unsolved until the work of Savage and Yee \cite{SY08}.\\
In the spirit of a bijective proof of Theorem \ref{theo:lecturehall} given by Yee \cite{Y01}, they provided simple bijections for the $l$-lecture hall and $l$-Euler theorems, which are the restrictions of respectively Theorems \ref{theo:klseqfin} and \ref{theo:klseqinf} for the case $k=l\geq 2$. They introduced for that purpose two novel tools, the Sylvester diagrams and $a^{(l)}$-interpretations of partitions. The method has a great connection to the Sylvester bijection for Euler's theorem \cite{Sy82}, which is recovered in the case $k=l=2$ of their bijection. They also suggested, without proof, a generalization of their method for the case $k,l\geq 2$.\\\\
The goal of this paper is to prove the well-definedness of the suggested bijection for $k,l\geq 2$, and also construct the suitable analogous bijection for the cases $(k,1)$ and $(1,k)$ with $k\geq 4$. In addition, in response to open question of Savage in \cite{S16}, we prove that the bijections for the $(k,l)$-lecture hall theorem are equivalent to the Bousquet-M\'elou--Eriksson recursive bijection, and provide the analogous recurvise bijection for the $(k,l)$-Euler theorem. Our argumentation dwells on a revisited $a^{(k,l)}$-interpretation of partitions. The case $k,l\geq 2$ is closely related the $a^{(l)}$-interpretation of Savage and Yee, and can be viewed as such but presenting in a more formalized way. The novelty of our work indeed consists in adapting the formal method to deal with the cases $(k,1)$ and $(1,k)$, and for that purpose, the main trick is to view certain parts, not as integers, but as pairs of integers satisfying simple arithmetic conditions. \\\\
The paper is organized as follows. In Section \ref{sec:prem}, we present some fundamental properties of the $(k,l)$-sequence and some tools and lemmas that will be useful to comprehend the mechanics of the bijections. 
Section \ref{sec:bijections} is dedicated to the description of our bijections for Theorem \ref{theo:klseqinf} and  Theorem \ref{theo:klseqfin}. Then, in Section \ref{sec:words}, we investigate the combinatorics  behind the structure of $(k,l)$-sequence through the $(k,l)$-admissible words, and provide in these terms interpretations of lecture hall and Euler partitions. In Sections \ref{part:interinf}, \ref{part:intereven} and \ref{part:interodd}, we prove the well-definedness of our bijections respectively for Theorem \ref{theo:klseqinf}, and Theorem \ref{theo:klseqfin} in the even and odd cases. 
Finally, we show in Section \ref{sec:equiv} the equivalence between our bijections and the recursive bijection, construct the analogous recursive bijection for the $(k,l)$-Euler theorem and prove its equivalence to our bijections.\\
For the convenience of the reader, we postpone the proof of technical lemmas and propositions to the appendices. 
\section{Preliminary} \label{sec:prem}
In this section, we outline key properties of the $(k,l)$-sequence, and provide simple tools to deal with the ratio constraints of our partitions.
\subsection{Properties of $(k,l)$-sequences}
Let $k,l$ be positive integers such that $kl\geq 4$. Recall that $u_{m}$ is the greatest radical of the equation $x^2+1=\sqrt{m}\cdot x$. Then, $u_{kl}=\frac{\sqrt{kl}+\sqrt{kl-4}}{2}$, $u_{kl}^{-1}=\frac{\sqrt{kl}-\sqrt{kl-4}}{2}$, and one can check that, for $n\in \Z$, 
\begin{equation}\label{eq:formuleklu}
\begin{cases}
\akl{2n} = \displaystyle \sqrt{\frac{l}{k}}\cdot \frac{u_{kl}^{2n}-u_{kl}^{-2n}}{u_{kl}-u_{kl}^{-1}}\\\\
\akl{2n+1} = \displaystyle \frac{u_{kl}^{2n+1}-u_{kl}^{-2n-1}}{u_{kl}-u_{kl}^{-1}}
\end{cases}\cdot
\end{equation}
Note that both expressions can be indeed written as polynomials in $u_{kl},u_{kl}^{-1}$, so that, in terms of limit, they \textit{still} hold for $u_{kl}=1$, i.e. , $kl=4$. We also observe that, for $n\in \Z$, $\akl{-n}=\akl{n}$. Moreover, since $u_m^2=u_{(m-2)^2}$, for $n\in \Z$,
\begin{equation}\label{eq:formulek1}
\begin{cases}
\akl{2n} = l a_{n}^{(kl-2)}\,,\\
\akl{2n-1} = a_{n}^{(kl-2)}+a_{n-1}^{(kl-2)}
\end{cases}\,,
\end{equation}
so that $\akl{2n-1}=\alk{2n-1}$ and $k\akl{2n}=l\alk{2n}$. Furthermore,
the sequences $(\akl{2n})_{n\geq 0}$ and $(\akl{2n+1})_{n\geq 0}$ are increasing. By \eqref{eq:formuleklu}, for $n\geq 1$
\begin{align*}
\akl{2n}-\akl{2n-2}&= \sqrt{\frac{l}{k}}(u^{2n-1}+u^{-2n+1})\,,\\
\akl{2n+1}-\akl{2n-1}&= u^{2n}+u^{-2n}
\end{align*}
so that both differences are positive. In particular, for $k,l\geq 2$, $(\akl{n})_{n\geq 0}$ is increasing, as
\begin{align*}
\akl{2n}&>\frac{1}{2}(\akl{2n}+\akl{2n-2})= \frac{l}{2}\akl{2n-1}\,,\\
\akl{2n+1}&>\frac{1}{2}(\akl{2n+1}+\akl{2n-1})= \frac{k}{2}\akl{2n}\,\cdot
\end{align*}
$$$$
Finally,for $n\in \Z$, we set $\skl{2n+1}=u_{kl}^{-2n}$ and $\skl{2n}=\sqrt{l/k}\cdot u^{-2n+1}_{kl}$. One can check that $(\skl{n})_{n\in\Z}$ satisfies the recursive relation \eqref{eq:klseqdef}, and the inequalities \eqref{eq:klseq} in the $(k,l)$-Euler partitions become
\begin{equation}\label{eq:klseqbis}
\begin{cases}
 \la_{2i-1}>\skl{0}\cdot\la_{2i} \quad \text{for}\quad 1\leq i\leq t-1 \,,\\
 \la_{2i}> \slk{0}\cdot \la_{2i+1} \quad \text{for}\quad 1\leq i\leq t-1 \,\cdot
\end{cases}
\end{equation}
Observe that $(\slk{0})^{-1}=\skl{2}$, and since $u_{kl}\geq 1 $, the sequences $(\skl{2n})_{n\geq 0}$ and $(\skl{2n-1})_{n\geq 1}$ are non-increasing. In particular, when $k,l\geq 2$, $(\skl{n})_{n\geq 0}$ is non-increasing. Furthermore, for $n\in\Z$, $\skl{2n}=\skl{2}s_{n}^{(kl-2)}$ and $\skl{2n-1}=s_{n}^{(kl-2)}$  
\begin{exs}$\quad$
 \begin{enumerate}
  \item For $(k,l)=(2,2)$, $a_{n}^{(2,2)}=a_{n}^{(2)}=n$ and $s_{n}^{(2,2)}=s_n^{(2)}=1$ for $n\in \Z$. 
  \item For $(k,l)=(4,1)$, $a_{2n}^{(1,4)}=4a_{2n}^{(4,1)}=4n$, and $a_{2n-1}^{(4,1)}=2n-1$, while $s_{2n}^{(1,4)}=4s_{2n}^{(4,1)}=2$ and $s_{2n-1}^{(4,1)}=1$ for $n\in \Z$.
  \item For $(k,l)=(5,1)$, $u_5$ equals the golden number $\phi=\frac{1+\sqrt{5}}{2}$, and $a_{2n}^{(1,5)}=5a_{2n}^{(5,1)}=5F_{2n}$, and $a_{2n-1}^{(5,1)}=F_{2n}+F_{2n-2}$, where $(F_n)_{n\in \Z}$ is the Fibonacci sequence, satisfying $(F_0,F_1)=(0,1)$ and $F_{n+2}=F_{n+1}+F_n$ for all $n\in \Z$. Moreover, $s_{2n}^{(1,5)}=5s_{2n}^{(5,1)}=\sqrt{5}\phi^{1-2n}$ and $s_{2n+1}^{(5,1)}=\phi^{-2n}$ for $n\in \Z$.
 \end{enumerate}
\end{exs}
We now state in the following proposition the main properties satisfied by the $(k,l)$-sequences, and which are fundamental for the study of the ratio constraints.
\begin{prop}\label{prop:propklgen}
The following properties hold.
\begin{enumerate}
\item For $n,m\in \Z$,
\begin{align}
\alk{2n-1}\akl{m}-\akl{2n}\alk{m-1} &= \akl{2n-m}\,,\label{eq:crosseven}\\
\akl{2n}\alk{m+1}-\alk{2n+1}\akl{m} &= \akl{2n-m}\,\cdot\label{eq:crossodd}
\end{align}
\item For $n\in\Z$,
\begin{align}
\akl{n}-\skl{0}\cdot\alk{n-1} &= \skl{n} \label{eq:ratio}
\end{align}
Hence, the sequence $(\akl{n+1}/\alk{n})_{n\geq 1}$ is decreasing and tends to $\skl{0}$.
\end{enumerate}
\end{prop}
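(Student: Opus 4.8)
The plan is to derive both families of identities in part~(1) directly from the closed-form expressions \eqref{eq:formuleklu}, and then to obtain part~(2) as a specialization, using the auxiliary sequence $(\skl{n})$ which, as noted in the preliminaries, also satisfies \eqref{eq:formuleklu} with the substitution $u_{kl}^{n}\mapsto u_{kl}^{-n}$ (up to the sign pattern encoded in the even/odd split). Concretely, I would write every term appearing in \eqref{eq:crosseven} and \eqref{eq:crossodd} as a Laurent polynomial in $u:=u_{kl}$ over the field $\mathbb{Q}(\sqrt{k/l},\sqrt{kl-4})$, divide through by the common factor $(u-u^{-1})^{2}$, and check that the cross-terms telescope. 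For instance, for \eqref{eq:crosseven} with $m=2p$ even, the left side becomes
\[
\frac{1}{(u-u^{-1})^{2}}\bigl[(u^{2n-1}-u^{-2n+1})(u^{2p}-u^{-2p})-(u^{2n}-u^{-2n})(u^{2p-1}-u^{-2p+1})\bigr],
\]
and expanding the two products the ``diagonal'' terms $u^{\pm(2n+2p-1)}$ cancel, leaving exactly $(u-u^{-1})(u^{2n-2p}-u^{-2n+2p})/(u-u^{-1})^{2}=\akl{2n-2p}$; the case $m$ odd is identical with the roles of the two formulas in \eqref{eq:formuleklu} interchanged, which is where the factor $\sqrt{l/k}$ reappears and matches the even-index formula on the right. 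Equation \eqref{eq:crossodd} is handled the same way, or alternatively deduced from \eqref{eq:crosseven} by the symmetry $\akl{-j}=\akl{j}$ together with the identity $k\akl{2n}=l\alk{2n}$, $\akl{2n-1}=\alk{2n-1}$ recorded in \eqref{eq:formulek1}.

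For part~(2), I would first establish \eqref{eq:ratio}. Since $\skl{2n+1}=u^{-2n}$ and $\skl{2n}=\sqrt{l/k}\,u^{-2n+1}$, and $\skl{0}=\sqrt{l/k}\,u$, one checks case-by-case (on the parity of $n$) that $\akl{n}-\skl{0}\alk{n-1}$, after substituting the closed forms, collapses to a single monomial in $u^{-1}$: the $u^{+}$-powers cancel because $\skl{0}$ carries exactly the factor $u$ needed to align the leading terms of $\akl{n}$ and $\alk{n-1}$. In fact \eqref{eq:ratio} can also be read off from \eqref{eq:crosseven}--\eqref{eq:crossodd} by specializing $m=0$, since $\akl{0}=0$, $\alk{-1}=\alk{1}=1$, and dividing by the appropriate $\alk{n-1}$; but the direct monomial computation is cleanest. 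From \eqref{eq:ratio} we get $\akl{n+1}/\alk{n}=\skl{0}+\skl{n+1}/\alk{n}$, and since $(\alk{n})_{n\ge1}$ is increasing to $+\infty$ while $\skl{n+1}>0$ is non-increasing (both facts recorded just above in the preliminaries), the sequence $(\akl{n+1}/\alk{n})_{n\ge1}$ is strictly decreasing with limit $\skl{0}$.

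I do not anticipate a genuine obstacle here: the whole proposition is a bookkeeping exercise on the explicit formulas, and the only place demanding care is the parity split in \eqref{eq:crosseven}--\eqref{eq:crossodd}, where one must track the asymmetric $\sqrt{l/k}$ factors between even- and odd-indexed terms so that both sides land on the correctly-indexed $\akl{2n-m}$ (note the right-hand side is always an even- or odd-index term according to the parity of $m$, matching the mixed product on the left). The cleanest writeup is probably to prove \eqref{eq:crosseven} in full for the two parities of $m$, deduce \eqref{eq:crossodd} from it via $\akl{-j}=\akl{j}$, and then obtain \eqref{eq:ratio} and the monotonicity claim as corollaries; the detailed monomial expansions are routine and, in keeping with the paper's stated convention, could be relegated to an appendix.
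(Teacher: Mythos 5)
Your treatment of part (1) and of \eqref{eq:ratio} is essentially the paper's own proof: expand everything via \eqref{eq:formuleklu}, cancel the diagonal terms in the cross products, and observe that $\akl{n}-\skl{0}\alk{n-1}$ collapses to the single monomial $\skl{n}$; this part is fine. One caveat: the shortcut you offer for \eqref{eq:crossodd} via ``the symmetry $\akl{-j}=\akl{j}$'' is unsafe. The recursion and \eqref{eq:formuleklu} actually give $\akl{-j}=-\akl{j}$ (e.g.\ $\akl{-1}=-1$, which is also what makes \eqref{eq:crosseven} consistent at $m=2n+1$), and it is precisely this antisymmetry that makes the signs in the deduction of \eqref{eq:crossodd} come out right; with the symmetric version the right-hand side acquires the wrong sign. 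Since you also propose the direct expansion, this is repairable, but do the computation rather than lean on that identity.

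The genuine gap is in the last step. You deduce that $\akl{n+1}/\alk{n}=\skl{0}+\skl{n+1}/\alk{n}$ is decreasing in $n$ from the claims that $(\alk{n})_{n\geq 1}$ is increasing and $(\skl{n})$ is non-increasing. Those facts are recorded in the preliminaries only for $k,l\geq 2$; for the cases $(k,1)$ and $(1,k)$ with $k\geq 4$ --- which are central to this paper --- only the even- and odd-indexed subsequences are monotone, and the full sequences oscillate (for $(k,l)=(4,1)$ one has $\alk{n}=1,4,3,8,5,\dots$ and $\skl{n}=\tfrac12,1,\tfrac12,1,\dots$). So your comparison of the consecutive terms $\skl{n+1}/\alk{n}$ and $\skl{n+2}/\alk{n+1}$ does not follow: across a parity change the numerator and the denominator both move in the wrong direction. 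The conclusion is still true, and the paper derives it from part (1): specializing \eqref{eq:crosseven} and \eqref{eq:crossodd} and dividing by $\alk{n-1}\alk{n}$ gives
$$\frac{\akl{n}}{\alk{n-1}}-\frac{\akl{n+1}}{\alk{n}}=\frac{\akl{1}}{\alk{n-1}\alk{n}}>0\,;$$
equivalently one checks $\skl{n+1}\alk{n+1}-\skl{n+2}\alk{n}=1$ directly from the closed forms. Either repair is short, but some such argument is needed. (Your limit statement survives: each parity class of $\skl{n+1}/\alk{n}$ tends to $0$, hence so does the whole sequence.)
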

\subsection{Tools and notations}
In the remainder of this paper, $\chi(prop)$ equals $1$ if $prop$ is true and $0$ if not, and for $i,n \in \Z$, $\delta_{i,n}=\chi(i=n)$. The notation $\R$ denotes the set of real numbers, and for $x\in \R$, $\lfloor x \rfloor$ denotes the greatest integer smaller or equal to $x$, i.e. $$\lfloor x \rfloor\leq x < \lfloor x \rfloor+1\,,$$
and $\lceil x \rceil$ refers to the smallest integer greater or equal to $x$, i.e.
$$\lceil x \rceil-1< x\leq \lceil x \rceil\,\cdot$$
\begin{rem}\label{rem:sumreal}
For $x,y\in \R$, if $x+y\in \Z$, then $x+y=\lceil x \rceil+\lfloor y \rfloor$, as 
$$x+y-1<\lceil x \rceil+\lfloor y \rfloor<x+y+1\,\cdot$$
\end{rem}
\begin{rem}\label{rem:rat}
For $r,s\in \Z$ and $a\in \R_{>0}$, 
\begin{align*}
 r> as &\Longleftrightarrow r\geq \lfloor as \rfloor+1 \Longleftrightarrow \lceil r/a \rceil-1\geq s\,,\\
 r\geq  as &\Longleftrightarrow r\geq \lceil as \rceil \Longleftrightarrow \lfloor r/a \rfloor\geq s\,\cdot
\end{align*}
\end{rem}
Remark \ref{rem:rat} yields a more accurate inequality between the parts involved in $(k,l)$-lecture hall and $(k,l)$-Euler partitions.
\subsubsection{Divide and conquer}
The key lemma playing a fundamental role in the combinatorics of $(k,1)$ and $(1,k)$ sequences is the following.
\begin{lem}\label{lem:divide}
 Let $\frac{1}{2}\leq x<1$ be a real number. 
 \begin{enumerate}
  \item Set $S_x^+ = \{(a,b)\in \Z:  0<(1-x)a-xb\leq 1\}$.
 Then, 
\begin{align}
 \label{eq:divide+}
 S_x^+ &= \left\{(a,b)\in \Z:  1+\left\lfloor \frac{xb}{1-x}\right\rfloor\leq a\leq 1+\left\lfloor \frac{x(b+1)}{1-x} \right\rfloor\right\}\cdot
\end{align}
In particular, 
\begin{equation}
  \label{eq:divide++}
  x<(1-x)a-xb\leq 1 \Longleftrightarrow a= 1+\left\lfloor \frac{x(b+1)}{1-x} \right\rfloor\,\cdot
\end{equation}
Moreover, for $(a,b) \in  S_x^+$, 
\begin{enumerate}
 \item $(a+1,b)\in  S_x^+$ if and only if $1+\left\lfloor \frac{xb}{1-x}\right\rfloor\leq a<1+\left\lfloor \frac{x(b+1)}{1-x} \right\rfloor$,
 \item and $(a,b+1)\in  S_x^+$ if and only if $ a=1+\left\lfloor \frac{x(b+1)}{1-x} \right\rfloor$.
\end{enumerate}
Hence, the function $(a,b)\mapsto a+b$ defines a bijection from $S_x^+$ to $\Z$, whose inverse is defined by $m\mapsto (\lfloor x m \rfloor +1,\lceil (1-x)m\rceil -1)$.\\
  \item Set $S_x^- = \{(a,b)\in \Z:  0\leq (1-x)a-xb<1\}$. Then,
  \begin{align}
   \label{eq:divide-}
 S_x^- &= \left\{(a,b)\in \Z:  \left\lceil \frac{xb}{1-x}\right\rceil\leq a\leq \left\lceil \frac{x(b+1)}{1-x} \right\rceil\right\}\,\cdot
  \end{align}
  In particular, 
\begin{equation}
  \label{eq:divide--}
  x\leq (1-x)a-xb<1 \Longleftrightarrow a= \left\lceil \frac{x(b+1)}{1-x} \right\rceil\,\cdot
\end{equation}
  Moreover, for $(a,b) \in  S_x^-$, 
  \begin{enumerate}
   \item $(a+1,b)\in  S_x^-$ if and only if $\left\lceil \frac{xb}{1-x}\right\rceil\leq a< \left\lceil \frac{x(b+1)}{1-x} \right\rceil$,
   \item and $(a,b+1)\in  S_x^-$ if and only if $a=\left\lceil \frac{x(b+1)}{1-x} \right\rceil$.
  \end{enumerate}
Hence, the function $(a,b)\mapsto a+b$ defines a bijection from
  from $S_x^-$ to $\Z$, whose inverse is defined by $m\mapsto (\lceil x m \rceil,\lfloor (1-x)m\rfloor)$.
\end{enumerate}
\end{lem}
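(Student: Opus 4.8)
The plan is to regard $b$ as a parameter and, for each integer $b$, describe explicitly the set of integers $a$ with $(a,b)\in S_x^+$; the announced bijection then drops out from a direct computation of an inverse map. Since $0<1-x$, Remark~\ref{rem:rat} turns $0<(1-x)a-xb$ into $a\geq 1+\lfloor \frac{xb}{1-x}\rfloor$. Using the identity $\frac{1+xb}{1-x}=1+\frac{x(b+1)}{1-x}$, the same remark turns $(1-x)a-xb\leq 1$, i.e.\ $a\leq 1+\frac{x(b+1)}{1-x}$, into $a\leq 1+\lfloor \frac{x(b+1)}{1-x}\rfloor$; conjoining the two yields \eqref{eq:divide+}. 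For \eqref{eq:divide++} one repeats this with the strict lower bound $x<(1-x)a-xb$, which reads $a>\frac{x(b+1)}{1-x}$, i.e.\ $a\geq 1+\lfloor \frac{x(b+1)}{1-x}\rfloor$, and together with the upper bound this forces the stated equality.

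From \eqref{eq:divide+} the two ``moreover'' equivalences are immediate: if $(a,b)\in S_x^+$ then the lower endpoint of the admissible $a$-interval is already attained, so $(a+1,b)\in S_x^+$ reduces to $a+1\leq 1+\lfloor \frac{x(b+1)}{1-x}\rfloor$; likewise the upper endpoint is attained, so $(a,b+1)\in S_x^+$ forces $a=1+\lfloor \frac{x(b+1)}{1-x}\rfloor$, the remaining (upper) constraint on $(a,b+1)$ being automatic since $b\mapsto\lfloor \frac{x(b+1)}{1-x}\rfloor$ is non-decreasing. For the bijection I would check that $\psi\colon(a,b)\mapsto a+b$ and $\phi\colon m\mapsto(\lfloor xm\rfloor+1,\lceil(1-x)m\rceil-1)$ are mutually inverse. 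Applying Remark~\ref{rem:sumreal} to $xm+(1-x)m=m\in\Z$ gives $\lfloor xm\rfloor+\lceil(1-x)m\rceil=m$, hence $\psi(\phi(m))=m$, and it also reduces $\phi(\psi(a,b))=(a,b)$ to verifying a single coordinate. That $\phi(m)\in S_x^+$ follows by substituting $a=\lfloor xm\rfloor+1$, $b=\lceil(1-x)m\rceil-1$ into $(1-x)a-xb$ and simplifying to $1-\bigl[(1-x)\bigl(xm-\lfloor xm\rfloor\bigr)+x\bigl(\lceil(1-x)m\rceil-(1-x)m\bigr)\bigr]$, whose bracketed term is a convex combination of two reals in $[0,1)$, hence lies in $[0,1)$. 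Finally, for $(a,b)\in S_x^+$ and $m=a+b$ one has $(1-x)a-xb=a-xm$, so the defining inequalities of $S_x^+$ say exactly $\lfloor xm\rfloor=a-1$, which gives $\phi(m)=(a,b)$.

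Part (2) is handled by the verbatim argument with every strict inequality exchanged for a non-strict one and $\lfloor\cdot\rfloor$ exchanged for $\lceil\cdot\rceil$: $0\leq(1-x)a-xb$ becomes $a\geq\lceil \frac{xb}{1-x}\rceil$, $(1-x)a-xb<1$ becomes $a\leq\lceil \frac{x(b+1)}{1-x}\rceil$, the inverse of $(a,b)\mapsto a+b$ is $m\mapsto(\lceil xm\rceil,\lfloor(1-x)m\rfloor)$, and membership now follows from $0\leq(1-x)\bigl(\lceil xm\rceil-xm\bigr)+x\bigl((1-x)m-\lfloor(1-x)m\rfloor\bigr)<1$. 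I do not expect a genuine obstacle here; the only delicate point is keeping the strict/non-strict bookkeeping consistent while translating between inequalities on reals and inequalities on integers, which is exactly what Remarks~\ref{rem:sumreal} and~\ref{rem:rat} are tailored to do.
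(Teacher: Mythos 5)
Your proof is correct and follows essentially the same route as the paper: both convert the real inequalities defining $S_x^{\pm}$ into the integer interval $1+\lfloor xb/(1-x)\rfloor\leq a\leq 1+\lfloor x(b+1)/(1-x)\rfloor$ (resp.\ with ceilings), read off the two succession criteria from that interval, and identify the inverse of $(a,b)\mapsto a+b$ by observing that $(1-x)a-xb=a-x(a+b)$ pins down $a=\lfloor xm\rfloor+1$ together with $\lfloor xm\rfloor+\lceil(1-x)m\rceil=m$. Your convex-combination verification that $\phi(m)\in S_x^{+}$ is a slightly more direct way to get well-definedness than the paper's chain of equivalences, but it is a cosmetic difference, not a different argument.
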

For a pair of a element $s=(s_1,s_2)$, we set $p_i(s)=s_i$  for $i\in \{1,2\}$.
Note that the map $m\mapsto (\lfloor x m \rfloor +1,\lceil (1-x)m\rceil -1)$ induces a bijection from $\Zu$ to $\{s\in S_x^+: p_2(s)\in \Zz\}$. Splitting a positive integer into such a unique pair will be the main trick that eases the study of $(k,1)$ and $(1,k)$-Euler partitions. Similarly, the map $m\mapsto (\lceil x m \rceil,\lfloor (1-x)m\rfloor)$ induces a bijection from $\Zz$ to $\{s\in S_x^-: p_2(s)\in \Zz\}$, and this splitting will ease the study of the $(k,1)$ and $(1,k)$ lecture hall partitions.\\
\subsubsection{Succession in an ordered set}
Let $(S,\preceq)$ be a countable and total ordered set. Recall that $c\prec d$ means that $c \preceq d$ and $c\neq d$. We say that $c$ and $d$  are consecutive in $S$ if and only if $c\prec d$, but there is no $e\in S$ such that $c\prec e\prec d$. For such element $c$ and $d$, $c$ \textbf{precedes} $d$ and $d$ \textbf{follows} $c$ in $S$.
In general, for $m\in \Zz$, $c$ is the $m^{th}$ element that \textbf{precedes} $d$ in $S$ or $d$ is the $m^{th}$ element that \textbf{follows} $c$ in $S$, if and only if there exists $s_0,\ldots,s_m\in S$ such that $s_0 = c$, $s_m=d$ and for $1\leq i\leq m$, $s_{i-1}$ precedes $s_i$ in $S$. We note 
\begin{equation}\label{eq:deffollower}
d= \F(m,S,c)\,\cdot
\end{equation}
\begin{rem}
Observe that $c= \F(0,S,c)$, and $d= \F(1,S,c)$ is the element that follows $c$ in $S$.
\end{rem}
The last tool of this part is presented in the following lemma and will help to prove the well-definedness of our bijections.
\begin{lem}\label{lem:prechoice}
 Let $(S,\preceq)$ be an infinite, countable  and total ordered set with a minimal element. Let $m\in \Zz$ and $c_1,c_2,d_1,d_2\in S$ such that $d_i= \F(m,S,c_i)$ for $i\in\{1,2\}$.
 Then, $c_1\preceq c_2$ if and only if $d_1\preceq d_2$. Moreover, if $d_1\preceq d_2$ and $d_1$ admits an $m^{th}$ element $c_1$ that precedes it, then $d_2$ admits an $m^{th}$ element $c_2$ that precedes it and we have $c_1\preceq c_2$.
\end{lem}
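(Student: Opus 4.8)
The plan is to induct on $m$, with the base case $m=0$ being immediate since $d_i = \F(0,S,c_i)$ means $d_i = c_i$, so both directions of the equivalence are trivial and the "moreover" part is vacuous (every element is its own $0$th predecessor). For the inductive step, suppose the statement holds for $m$ and consider $d_i = \F(m+1,S,c_i)$. Write $e_i$ for the element that follows $c_i$ at step $1$, i.e. $e_i = \F(1,S,c_i)$; this is well-defined because $S$ is infinite, countable, totally ordered with a minimal element, hence order-isomorphic to $\Zz$, so every element has an immediate successor. Then $d_i = \F(m, S, e_i)$. The key observation I would isolate as a sub-claim is the $m=1$ case: for $c_1, c_2 \in S$, one has $c_1 \preceq c_2$ if and only if $e_1 \preceq e_2$, where $e_i$ follows $c_i$. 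This follows because the successor function on a set order-isomorphic to $\Zz$ is strictly increasing and injective: if $c_1 \prec c_2$ then $c_1 \prec e_1 \preceq c_2 \prec e_2$ (using that there is no element strictly between $c_1$ and $e_1$, so $e_1 \preceq c_2$), giving $e_1 \prec e_2$; the converse and the equality case are handled symmetrically. Combining the sub-claim with the inductive hypothesis applied to $e_1, e_2$ and $d_1, d_2$ gives $c_1 \preceq c_2 \Leftrightarrow e_1 \preceq e_2 \Leftrightarrow d_1 \preceq d_2$.

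For the "moreover" statement, I would again argue by induction on $m$. Assume $d_1 \preceq d_2$ and that $d_1$ has an $m$th predecessor $c_1$. In the inductive step for $m+1$: $d_1$ has an $(m+1)$th predecessor $c_1$, so in particular $d_1$ has an $m$th predecessor $e_1$ (namely $e_1 = \F(1,S,c_1)$, and $d_1 = \F(m,S,e_1)$). By the inductive hypothesis applied to the $m$th-predecessor relation with $d_1 \preceq d_2$, the element $d_2$ has an $m$th predecessor $e_2$ with $e_1 \preceq e_2$. Now I must produce an immediate predecessor of $e_2$: since $S$ has a minimal element $\mu$ but is infinite, the only element of $S$ lacking an immediate predecessor is $\mu$ itself, so it suffices to check $e_2 \neq \mu$. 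But $e_1 \preceq e_2$ and $e_1$ has the immediate predecessor $c_1$, so $c_1 \prec e_1 \preceq e_2$, hence $e_2 \neq \mu$; thus $e_2$ has an immediate predecessor $c_2 = \F(1, S, \cdot)^{-1}(e_2)$, and $c_2$ is then an $(m+1)$th predecessor of $d_2$. Finally $c_1 \preceq c_2$ follows from the sub-claim ($m=1$ equivalence) applied to the pairs $(c_1, e_1)$ and $(c_2, e_2)$, since $e_1 \preceq e_2$.

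The main conceptual point — and the only place any care is needed — is the identification of $(S,\preceq)$ with $\Zz$ as ordered sets, which underlies both the existence of immediate successors everywhere and the existence of immediate predecessors everywhere except at the minimum. I would state this order-isomorphism explicitly at the start: an infinite countable totally ordered set with a least element and in which (as is implicit in the "$m$th follower" setup being meaningful throughout the paper) every element has only finitely many predecessors is order-isomorphic to $(\Zz, \leq)$ via $n \mapsto \F(n, S, \mu)$. Once this is in place, the lemma is essentially the statement that addition of a fixed constant $m$ on $\Zz$ is an order-preserving bijection onto $\Z_{\geq m}$, and the "moreover" part is the statement that $n \mapsto n + m$ maps $\Zz$ onto $\{n : n \geq m\}$ order-isomorphically; everything reduces to these transparent facts about $\Zz$. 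I do not expect any genuine obstacle; the only thing to be vigilant about is not conflating "$c$ precedes $d$" (immediate) with "$c$ is the $m$th element preceding $d$", and making sure the minimal-element hypothesis is invoked precisely where the existence of an immediate predecessor is claimed.
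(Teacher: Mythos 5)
Your proposal is correct and ultimately rests on the same idea as the paper's proof: the paper simply fixes the order-isomorphism $f\colon S\to\Zz$ at the outset, observes that $d_i=\F(m,S,c_i)$ is equivalent to $f(d_i)=f(c_i)+m$, and reads off both the equivalence and the ``moreover'' part from integer arithmetic, exactly as in the closing paragraph of your plan. Your inductive scaffolding via the successor map is a more elaborate route to the same reduction (and your explicit remark that one needs every element to have only finitely many predecessors is a point the paper glosses over), but there is no substantive difference in approach.
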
  
\section{Bijections}\label{sec:bijections}
This part is dedicated to the presentation of the bijection for Theorems \ref{theo:klseqfin} and \ref{theo:klseqinf}.
Denote by $\Phi^{(k,l)}$ the bijection from $\Bkl$ to $\Lkl$, and $n\in \Zu$, denote by $\Phi_{2n}^{(k,l)}$ the bijection from $\Bkl_{2n}$ to $\Lkl_{2n}$ and by $\Phi_{2n-1}^{(k,l)}$ the bijection from $\Blk_{2n-1}$ to $\Lkl_{2n-1}$.
In the remainder of the paper, we illustrate our bijections and key definitions with $kl=6$, i.e. with $(k,l)\in\{(2,3),(3,2),(6,1),(1,6)\}$.
\subsection{The map $\Phi^{(k,l)}$}
We first describe the map $\Phi^{(k,l)}$. Let $\nu$ be a partition in $\Bkl$, and set $\la=(\la_i)_{i\geq 1}$ an infinite sequence of terms all equal to $0$. We here proceed by \textit{inserting} the parts $\bkl{i}$ into the pairs $(\la_{2j-1},\la_{2j})$, starting from the smallest $j$ and the greatest $i$.
\subsubsection{The case $k,l\geq 2$}
\begin{enumerate}
\item To insert $\bkl{i}$ with $i>1$ into $(\la_{2j-1},\la_{2j})$, proceed as follows. 
If 
\begin{equation}\label{eq:condinsinf1}
\la_{2j-1}-\skl{0}\cdot \la_{2j}> \skl{i-1}-\skl{i}\,,
\end{equation}
then do
\begin{equation}\label{eq:insert1type1}
(\la_{2j-1},\la_{2j}) \mapsto (\la_{2j-1}+ \akl{i}-\akl{i-1}, \,\la_{2j}+\alk{i-1}-\alk{i-2})
\end{equation}
and store $\bkl{i-1}$ for the insertion into the pair $(\la_{2j+1},\la_{2j+2})$. This action is equivalent to inserting one part $\bkl{i}$ by adding $\bkl{i}-\bkl{i-1}$ to the pair $(\la_{2j-1},\la_{2j})$ and storing one part $\bkl{i-1}$ for the insertion into the pair $(\la_{2j+1},\la_{2j+2})$. Else, we do
\begin{equation}\label{eq:insert1type2}
(\la_{2j-1},\la_{2j}) \mapsto (\la_{2j-1}+ \akl{i}, \,\la_{2j}+\alk{i-1})\,\cdot
\end{equation}
This action means that we insert one part $\bkl{i}$ by adding it to the pair $(\la_{2j-1},\la_{2j})$.
\item To insert $\bkl{1}$, do \eqref{eq:insert1type2} for $i=1$. 
\end{enumerate}
\subsubsection{The case $(k,1)$}
\begin{enumerate}
 \item  To insert the parts $\bk{i}$ with $i>2$ into the pair $(\la_{2j-1},\la_{2j})$, proceed as follows. 
\begin{enumerate}
\item To insert $\bk{2i}$ for $i>1$, if 
\begin{equation}\label{eq:condinsinf121}
\la_{2j-1}-\sk{0}\cdot \la_{2j}> \sk{2i-2}-\sk{2i}\,,
\end{equation}
then do
\begin{equation}\label{eq:insert1type121}
(\la_{2j-1}, \la_{2j}) \mapsto (\la_{2j-1}+ \ak{2i}-\ak{2i-2}, \,\la_{2j}+\al{2i-1}-\al{2i-3})
\end{equation}
and store, for the insertion into the pair $(\la_{2j+1},\la_{2j+2})$, $\bk{2i-1}$ if there are at least two remaining parts $\bk{2i}$ to insert, and $\bkl{2i-2}$ if there is only one remaining part $\bk{2i}$. The first action consists in inserting two parts $\bk{2i}$ by adding 
$\bk{2i}-\bk{2i-2}= 2\bk{2i}-\bk{2i-1}$ to the pair  $(\la_{2j-1},\la_{2j})$, and storing $\bk{2i-1}$, while the second action, which can occur once, consists in inserting the last remaining part $\bk{2i}$ by adding 
to the pair $\bk{2i}-\bk{2i-2}$, and storing one part $\bk{2i-2}$.
If condition \eqref{eq:condinsinf121} is not satisfied, do
\begin{equation}\label{eq:insert1type221}
(\la_{2j-1}, \la_{2j}) \mapsto (\la_{2j-1}+ \ak{2i}, \,\la_{2j}+\al{2i-1})\,\cdot
\end{equation}
This implies that we insert one part $\bk{2i}$ by adding it to the pair $(\la_{2j-1},\la_{2j})$.
\item To insert $\bk{2i-1}$ for $i>1$, if 
\eqref{eq:condinsinf121} occurs, then do \eqref{eq:insert1type121}
and store for the the insertion into the pair $(\la_{2j+1},\la_{2j+2})$ two parts $\bk{2i-2}$. This action consists in inserting one part $\bk{2i-1}$ by adding $\bk{2i}-\bk{2i-2}=\bk{2i-1}-2\bk{2i-2}$ to the pair $(\la_{2j-1},\la_{2j})$, and storing two parts $\bk{2i-2}$ for the insertion into the pair $(\la_{2j+1},\la_{2j+2})$.  Else, do \eqref{eq:insert1type221}
and store for the insertion into the pair $(\la_{2j+1},\la_{2j+2})$ one part $\bk{2i-2}$.
This action consists in inserting one part $\bk{2i-1}$ by adding $\bk{2i}$ to the pair $(\la_{2j-1},\la_{2j})$, and storing one part $\bk{2i-2}=\bk{2i-1}-\bk{2i}$ for the insertion into the pair $(\la_{2j+1},\la_{2j+2})$. 
\end{enumerate}
\item To insert $\bk{2}$, if 
\begin{equation}\label{eq:condinsinf123}
\la_{2j-1}-\sk{0}\cdot \la_{2j}> \sk{0}\,,
\end{equation}
then do
\begin{equation}\label{eq:insert1type123}
(\la_{2j-1}, \la_{2j}) \mapsto (\la_{2j-1}+ \ak{2}-\ak{1}, \,\la_{2j}+\al{1}-\al{0})\,,
\end{equation}
and store for the the insertion into the pair $(\la_{2j+1},\la_{2j+2})$ one part $\bk{1}$. Else, do \eqref{eq:insert1type221} with $i=1$.
\item To insert $\bk{1}$, do 
\begin{equation}\label{eq:insert1type1234}
(\la_{2j-1},\la_{2j}) \mapsto (\la_{2j-1}+ \ak{1},\,\la_{2j}+\al{0})\,\cdot
\end{equation}
\end{enumerate}
\subsubsection{The case $(1,k)$}
\begin{enumerate}
\item To insert the parts $\bk{i}$ with $i>1$ into the pair $(\la_{2j-1},\la_{2j})$, proceed as follows. 
\begin{enumerate}
\item To insert $\bl{2i+1}$ for $i\geq 1$, if 
\begin{equation}\label{eq:condinsinf131}
\la_{2j-1}-\sll{0}\cdot \la_{2j}> \sll{2i-1}-\sll{2i+1}\,,
\end{equation}
then do
\begin{equation}\label{eq:insert1type131}
(\la_{2j-1},\la_{2j}) \mapsto (\la_{2j-1}+ \al{2i+1}-\al{2i-1},\,\la_{2j}+\ak{2i}-\ak{2i-2})
\end{equation}
and store for the the insertion into the pair $(\la_{2j+1},\la_{2j+2})$, $\bl{2i}$ if there are at least two remaining parts $\bl{2i+1}$ to insert, and $\bl{2i-1}$ if there is only one remaining part $\bl{2i+1}$. Else, do
\begin{equation}\label{eq:insert1type231}
(\la_{2j-1},\la_{2j}) \mapsto (\la_{2j-1}+ \al{2i+1},\,\la_{2j}+\ak{2i})\,\cdot
\end{equation}
\item To insert $\bl{2i}$ for $i\geq 1$, if \eqref{eq:condinsinf131} occurs
then do \eqref{eq:insert1type131}
and store for the the insertion into the pair $(\la_{2j+1},\la_{2j+2})$ two parts $\bl{2i-1}$. Else, do \eqref{eq:insert1type231}
and store for the the insertion into the pair $(\la_{2j+1},\la_{2j+2})$ one part $\bl{2i-1}$.
\end{enumerate}
\item To insert $\bl{1}$, do \eqref{eq:insert1type231} for $i=0$.
\end{enumerate}
\bigskip
Whatever the case, we stop at $(\la_{2t-1},\la_{2t})$ when there is no  part $\bkl{i}$ to insert for $i\geq 2$. Observe that the part size of $\la_{2j}$ increases after an insertion of $\bkl{i}$ if and only if $i>1$, and there is no store part after the insertion of $\bkl{1}$. Thus, $t$ is the smallest integer $j$ such that $\la_{2j}=0$ after all the insertions, and  $\la_{2t-1}$ is then equal to the number of inserted $\bkl{1}$ into the pair $(\la_{2t-1},\la_{2t})$. We finally set 
$\Phi^{(k,l)}(\nu)=(\la_1,\ldots,\la_{2t})$.\\\\
Observe that the map $\Phi^{(k,l)}$ is weight-preserving, and that the $\akl{i}$-component and $\alk{i-1}$-component of the parts $\bkl{i}$ only contribute respectively to the odd and the even weights of the image.  
\begin{rem}
 By \eqref{eq:ratio}, the following equivalences occur: 
 \begin{align*}
 \eqref{eq:condinsinf1} &\Longleftrightarrow  \la_{2j-1}+\akl{i}-\akl{i-1}>\skl{0}(\la_{2j}+ \alk{i-1}-\alk{i-2})\,,\\
 \eqref{eq:condinsinf121} &\Longleftrightarrow  \la_{2j-1}+\ak{2i}-\ak{2i-2}>\sk{0}(\la_{2j}+ \al{2i-1}-\al{2i-3})\,,\\
 \eqref{eq:condinsinf123} &\Longleftrightarrow  \la_{2j-1}+\ak{2}-\ak{1}>\sk{0}(\la_{2j}+ \al{1}-\al{0})\,,\\
 \eqref{eq:condinsinf131} &\Longleftrightarrow  \la_{2j-1}+\al{2i+1}-\al{2i-1}>\sll{0}(\la_{2j}+ \ak{2i}-\ak{2i-2})\,\cdot\\
 \end{align*}
 In practice, we conveniently apply these equivalences to compute the image by $\Phi^{(k,l)}$. Our formulations only aim at easing the calculations in the proof of the well-definedness. Moreover, the case $k=l$ of the first equivalence yields to the condition of the bijection given by Savage and Yee for the $l$-Euleur Theorem. Note that in the case of $k,l\geq 2$, with the order of the insertions, starting from the greatest available part $\bkl{i}$, we can equivalently insert $\bkl{i}$ in $(\la_1,\la_2)$, then insert the stored part $\bk{i-1}$ in $(\la_3,\la_4)$, and so on. For this reason, our map $\Phi^{(l,l)}$ matches  Savage and Yee's bijection. 
\end{rem}
\begin{exs}\label{exs:allpaper}
Let us apply the map $\Phi^{(k,l)}$ for $kl=6$, i.e. $(k,l)\in \{(1,6),(2,3),(3,2),(6,1)\}$.
Since $(a_0^{(4)},a_1^{(4)},a_2^{(4)},a_3^{(4)})=(0,1,4,15)$ and $u_{6}=\frac{\sqrt{6}+\sqrt{2}}{2}$, we then have by \eqref{eq:formulek1}
 $$\begin{cases}
  (a_0^{(k,l)},a_2^{(k,l)},a_4^{(k,l)},a_6^{(k,l)},\ldots)=(0,l,4l,15l,\ldots)\\
  (a_1^{(k,l)},a_3^{(k,l)},a_5^{(k,l)},a_7^{(k,l)},\ldots)=(1,5,19,71,\ldots)\\
  s_0^{(k,l)} = \frac{3+\sqrt{3}}{k}
\end{cases}\,\cdot
$$
\begin{enumerate}
 \item The case $(2,3)$ with $\nu=(b^{(2,3)}_{1})^5(b^{(2,3)}_{2})^4(b^{(2,3)}_{3})^2(b^{(2,3)}_{4})^3(b^{(2,3)}_{5})(b^{(2,3)}_{6})^3=(1+0)^5(3+1)^4(5+2)^2(12+5)^3(19+8)(45+19)^3$. We start by the pair $(\la_1,\la_2)$, initially equal to $(0,0)$. 
 \begin{enumerate}
  \item Insertions of $b_{6}^{(2,3)}$: Condition \eqref{eq:condinsinf1} does not occur, as $$(a_{6}^{(2,3)}-a_{5}^{(2,3)})=26\leq \frac{3+\sqrt{3}}{2}\cdot 11 =s_0^{(2,3)}(a_{5}^{(3,2)}-a_{4}^{(3,2)})\,,$$ and $(\la_1,\la_2)$ becomes $(45,19)$ after applying \eqref{eq:insert1type2}. For the second insertion, Condition \eqref{eq:condinsinf1} occur, as $$\la_1+(a_{6}^{(2,3)}-a_{5}^{(2,3)})=71>\frac{3+\sqrt{3}}{2}\cdot 30 =s_0^{(2,3)}(\la_2+ a_{5}^{(3,2)}-a_{4}^{(3,2)})\,,$$
  and $(\la_1,\la_2)$ then becomes $(71,30)$ after applying \eqref{eq:insert1type1}, and $b_{5}^{(2,3)}$ is stored for the insertion into the pair $(\la_3,\la_4)$. Finally, for the last insertion $b_{6}^{(2,3)}$,  Condition \eqref{eq:condinsinf1} does not occur, as $$\la_1+(a_{6}^{(2,3)}-a_{5}^{(2,3)})=97\leq \frac{3+\sqrt{3}}{2}\cdot 41 =s_0^{(2,3)}(\la_2+ a_{5}^{(3,2)}-a_{4}^{(3,2)})\,,$$ and $(\la_1,\la_2)$ becomes $(116,49)$. 
  \item Insertions of $b_{5}^{(2,3)}$:  Condition \eqref{eq:condinsinf1} does not occur, since $$\la_1+(a_{5}^{(2,3)}-a_{4}^{(2,3)})=(116+7)\leq \frac{3+\sqrt{3}}{2}\cdot(49+3)=s_0^{(2,3)}(\la_2+ a_{4}^{(3,2)}-a_{3}^{(3,2)})\,,$$ and $(\la_1,\la_2)$ becomes $(135,57)$ after applying \eqref{eq:insert1type2}. 
  \item Insertions of $b_{4}^{(2,3)}$: we successively apply \eqref{eq:insert1type1}, \eqref{eq:insert1type2} and \eqref{eq:insert1type1}, and $(\la_1,\la_2)$ respectively becomes $(142,60),(154,65)$ and $(161,68)$, and we store twice $b_{3}^{(2,3)}$ for the insertion into the pair $(\la_3,\la_4)$.
  \item Insertions of $b_{3}^{(2,3)}$: we apply \eqref{eq:insert1type2} then \eqref{eq:insert1type1}, and $(\la_1,\la_2)$ becomes $(166,70)$ then $(168,71)$, and we store once $b_{2}^{(2,3)}$ for the insertion into the pair $(\la_3,\la_4)$.
  \item Insertions of $b_{2}^{(2,3)}$: we successively apply \eqref{eq:insert1type2}, \eqref{eq:insert1type1},\eqref{eq:insert1type2} and \eqref{eq:insert1type1}, and $(\la_1,\la_2)$ respectively becomes $(171,72),(173,73),(176,74)$ and $(178,75)$, and we store twice $b_{1}^{(2,3)}$ for the insertion into the pair $(\la_3,\la_4)$.
  \item Insertions of $b_{1}^{(2,3)}$: we obtain $(\la_1,\la_2)=(183,75)$ by applying five times \eqref{eq:insert1type2}.
 \end{enumerate}
In the process we stored $1\times b_{5}^{(2,3)},0\times b_{4}^{(2,3)},2\times b_{3}^{(2,3)},1\times b_{2}^{(2,3)},2\times b_{1}^{(2,3)}$ for the insertion into the pair $(\la_3,\la_4)$. For these insertions, we do once \eqref{eq:insert1type2} for $i=5$, twice \eqref{eq:insert1type2} for $i=3$, once \eqref{eq:insert1type1} for $i=2$, twice \eqref{eq:insert1type2} for $i=1$, and store once $b_{1}^{(2,3)}$ for the insertion into the pair $(\la_5,\la_6)$. Thus, $(\la_3,\la_4)=(32,13)$, and $(\la_5,\la_6)=(1,0)$. Finally, set $\Phi^{(2,3)}(\nu)=(183,75,32,13,1,0)\in\mathcal{L}^{(2,3)}$.\\
 \item The case $(3,2)$ with $\nu=(b^{(3,2)}_{1})^5(b^{(3,2)}_{2})^4(b^{(3,2)}_{3})^2(b^{(3,2)}_{4})^3(b^{(3,2)}_{5})(b^{(3,2)}_{6})^3=(1+0)^5(2+1)^4(5+3)^2(8+5)^3(19+12)(30+19)^3$. For the insertion into the pair $(\la_1,\la_2)$, we have the following.
 \begin{enumerate} 
 \item Insertions of $b_{6}^{(3,2)}$: we successively apply \eqref{eq:insert1type2}, \eqref{eq:insert1type2} and \eqref{eq:insert1type1} to obtain $(\la_1,\la_2)=(71,45)$, and store once $b_{5}^{(3,2)}$ for the pair $(\la_3,\la_4)$.
 \item Insertions of $b_{5}^{(3,2)}$: we apply \eqref{eq:insert1type2} to obtain $(\la_1,\la_2)=(90,57)$.
 \item Insertions of $b_{4}^{(3,2)}$: we successively apply \eqref{eq:insert1type2}, \eqref{eq:insert1type1} and \eqref{eq:insert1type2} to obtain $(\la_1,\la_2)=(109,69)$, and store once $b_{3}^{(3,2)}$ for the pair $(\la_3,\la_4)$.
 \item Insertions of $b_{3}^{(3,2)}$: we successively apply \eqref{eq:insert1type1} and \eqref{eq:insert1type2} to obtain $(\la_1,\la_2)=(117,74)$, and store once $b_{2}^{(3,2)}$ for the pair $(\la_3,\la_4)$.
 \item Insertions of $b_{2}^{(3,2)}$: we successively apply \eqref{eq:insert1type2}, \eqref{eq:insert1type1} , \eqref{eq:insert1type2} and \eqref{eq:insert1type2} to obtain $(\la_1,\la_2)=(124,78)$, and store once $b_{1}^{(3,2)}$ for the pair $(\la_3,\la_4)$.
 \item Insertions of $b_{1}^{(3,2)}$: we apply five times \eqref{eq:insert1type2} to obtain $(\la_1,\la_2)=(129,78)$.
 \end{enumerate}
 Hence, we store once $b_{5}^{(3,2)},b_{3}^{(3,2)},b_{2}^{(3,2)},b_{1}^{(3,2)}$ for the insertion into the pair $(\la_3,\la_4)$. We then do \eqref{eq:insert1type2} for $i=5,3,2,1$ to obtain $(\la_3,\la_4)=(27,16)$. As there is no part stored for the insertion in $(\la_5,\la_6)$, we have $(\la_5,\la_6)=(0,0)$. Set $\Phi^{(3,2)}(\nu)=(129,78,27,16,0,0)\in\mathcal{L}^{(3,2)}$.\\
 \item The case $(6,1)$ with $\nu=(b_{1}^{(6,1)})^2(b_{2}^{(6,1)})^5(b_{3}^{(6,1)})^2(b_{4}^{(6,1)})^3(b_{6}^{(6,1)})^5=(1+0)^2(1+1)^5(5+6)^2(4+5)^3(15+19)^5$. For the insertion into the pair $(\la_1,\la_2)$, we have the following.
 \begin{enumerate}
  \item Insertions of $b_{6}^{(6,1)}$: 
 for $i=3$, we first apply thrice \eqref{eq:insert1type221}, and obtain $(\la_1,\la_2)=(45,57)$. Then, we apply  \eqref{eq:insert1type121} to obtain $(\la_1,\la_2)=(56,71)$ and store $b_{5}^{(6,1)}$ for the insertion into the pair $(\la_3,\la_4)$.
 \item Insertions of $b_{4}^{(6,1)}$: we apply thrice \eqref{eq:insert1type221} with $i=2$, and obtain $(\la_1,\la_2)=(68,86)$.
 \item Insertions of $b_{3}^{(6,1)}$: we apply successively \eqref{eq:insert1type121} and \eqref{eq:insert1type221} with $i=2$ to obtain $(\la_1,\la_2)=(75,95)$, and successively store two and one $b_{2}^{(6,1)}$ for the insertion into the pair $(\la_3,\la_4)$.
 \item Insertions of $b_{2}^{(6,1)}$: we apply four times \eqref{eq:insert1type221} for $i=1$ to obtain $(\la_1,\la_2)=(79,99)$, then apply \eqref{eq:insert1type123} to obtain $(\la_1,\la_2)=(79,100)$, and store $b_{1}^{(6,1)}$ for the insertion into the pair $(\la_3,\la_4)$.
 \item Insertions of $b_{1}^{(6,1)}$: we apply twice \eqref{eq:insert1type1234} and obtain $(\la_1,\la_2)=(81,100)$.
 \end{enumerate}
 We store $1\times b_{5}^{(6,1)},3\times b_{2}^{(6,1)},b_{1}^{(6,1)}$ for the insertion into the pair $(\la_3,\la_4)$, and only apply \eqref{eq:insert1type221} for $i=2,1$ to obtain $(\la_3,\la_4)=(19,22)$, and store $b_{4}^{(6,1)}$ for the insertion into the pair $(\la_5,\la_6)$. Then, for the pair $(\la_5,\la_6)$, we do once \eqref{eq:insert1type221} with $i=2$ and obtain $(\la_5,\la_6)=(4,5)$. We finally set $\Phi^{(6,1)}(\nu)=(81,100,19,22,4,5,0,0)\in \mathcal{L}^{(6,1)}$.\\
 \item The case $(1,6)$ with $\nu=(b_{1}^{(1,6)})^2(b_{2}^{(1,6)})^5(b_{3}^{(1,6)})^2(b_{4}^{(1,6)})^3(b_{6}^{(1,6)})^5=(1+0)^2(6+1)^5(5+1)^2(24+5)^3(90+19)^5$.
 We start by the pair $(\la_1,\la_2)$, initially equal to $(0,0)$. 
 \begin{enumerate}
  \item Insertions of $b_{6}^{(1,6)}$: 
 for $i=3$, we first apply thrice \eqref{eq:insert1type231} to obtain $(\la_1,\la_2)=(213,45)$, and store three $b_{5}^{(1,6)}$ for the insertion into the pair $(\la_3,\la_4)$. Then, we apply once  \eqref{eq:insert1type131} to obtain $(\la_1,\la_2)=(265,56)$ and store two $b_{5}^{(1,6)}$. Finally, we apply once \eqref{eq:insert1type231} and obtain $(\la_1,\la_2)=(336,71)$ and store $b_{5}^{(1,6)}$.
 \item Insertions of $b_{4}^{(1,6)}$: 
 for $i=2$, we apply thrice \eqref{eq:insert1type231} to obtain $(\la_1,\la_2)=(393,83)$, and store three $b_{3}^{(1,6)}$ for the insertion into the pair $(\la_3,\la_4)$.
 \item Insertions of $b_{3}^{(1,6)}$: 
 for $i=1$, we apply twice \eqref{eq:insert1type231} to obtain $(\la_1,\la_2)=(403,85)$.
 \item Insertions of $b_{2}^{(1,6)}$: 
 for $i=1$, we first apply once \eqref{eq:insert1type131} to obtain $(\la_1,\la_2)=(407,86)$, and store two $b_{1}^{(1,6)}$ for the insertion into the pair $(\la_3,\la_4)$. Then, we apply thrice \eqref{eq:insert1type231} to obtain $(\la_1,\la_2)=(422,89)$, and store three $b_{3}^{(1,6)}$. Finally, we apply once \eqref{eq:insert1type131} to obtain $(\la_1,\la_2)=(426,90)$, and store two $b_{1}^{(1,6)}$.
 \item Insertions of $b_{2}^{(1,6)}$: we apply twice \eqref{eq:insert1type231} for $i=0$ to obtain $(\la_1,\la_2)=(428,90)$.
 \end{enumerate}
 We then store $6\times b_{5}^{(1,6)},3\times b_{3}^{(1,6)},7\times b_{1}^{(1,6)}$ for the insertion into 
 into the pair $(\la_3,\la_4)$. We only apply \eqref{eq:insert1type231}, except once for $i=2$ while storing $b_{4}^{(1,6)}$ for the insertion into 
 into the pair $(\la_5,\la_6)$, and obtain $(\la_3,\la_4)=(112,22)$. For the pair $(\la_5,\la_6)$, we do 
 \eqref{eq:insert1type231} with $i=2$ to obtain $(\la_5,\la_6)=(19,4)$ and store $b_{3}^{(1,6)}$ for the insertion into 
 into the pair $(\la_7,\la_8)$. For the pair $(\la_7,\la_8)$, we do 
 \eqref{eq:insert1type231} with $i=1$ to obtain $(\la_7,\la_8)=(5,1)$. We finally set $\Phi^{(1,6)}(\nu)=(428,90,112,22,19,4,5,1,0,0)\in \mathcal{L}^{(1,6)}$.
 \end{enumerate}
\end{exs}
\subsection{The map $\Phi_{2n}^{(k,l)}$}
 We now describe the bijection $\Phi_{2n}^{(k,l)}$. Let $\nu$ be a partition in $\Bkl_{2n}$, and set $\la=(\la_1,\ldots,\la_{2n})$, all parts equal to $0$. As in the previous construction, we here proceed by step according to the insertion of the parts $\bkl{i}$ into the pairs $(\la_{2j},\la_{2j-1})$, starting from the greatest $i,j$.
 \subsubsection{The case $k,l\geq 2$}
\begin{enumerate}
\item To insert $\bkl{i}$ with $i>1$ in $(\la_{2j-1},\la_{2j})$, proceed as follows. 
If 
\begin{equation}\label{eq:condinsinf2}
\akl{2j-1}\la_{2j}-\akl{2j}\la_{2j-1}\geq \akl{2j-i+1}-\akl{2j-i}\,\,,
\end{equation}
then do 
\begin{equation}\label{eq:insert2type1}
(\la_{2j},\la_{2j-1}) \mapsto (\la_{2j}+ \akl{i}-\akl{i-1},\, \la_{2j-1}+\alk{i-1}-\alk{i-2})
\end{equation}
and store $\blk{i-1}$ for the insertion into the pair $(\la_{2j-2},\la_{2j-1})$. Else, do
\begin{equation}\label{eq:insert2type2}
(\la_{2j},\la_{2j-1}) \mapsto (\la_{2j}+ \akl{i},\, \la_{2j-1}+\alk{i-1})\,\cdot
\end{equation}
\item To insert $\bkl{1}$, do \eqref{eq:insert2type2} for $i=1$.
 \end{enumerate}
 \subsubsection{The case $(k,1)$}
 \begin{enumerate}
\item To insert the parts $\bk{i}$ with $i>2$ into the pair $(\la_{2j},\la_{2j-1})$, proceed as follows. 
\begin{enumerate}
\item To insert $\bk{2i}$ for $i>1$, if 
\begin{equation}\label{eq:condinsinf1211}
\ak{2j-1}\la_{2j}-\ak{2j}\la_{2j-1}\geq \ak{2j+2-2i}-\ak{2j-2i}\,,
\end{equation}
then do
\begin{equation}\label{eq:insert2type121}
(\la_{2j},\la_{2j-1}) \mapsto( \la_{2j}+ \ak{2i}-\ak{2i-2},\,\la_{2j-1}+\al{2i-1}-\al{2i-3})
\end{equation}
and store for the insertion in the pair $(\la_{2j-2},\la_{2j-3})$, $\bk{2i-1}$ if there are at least two remaining parts $\bk{2i}$ to insert, and $\bk{2i-2}$ if there is only one remaining part $\bk{2i}$. Else, do
\begin{equation}\label{eq:insert2type221}
(\la_{2j},\la_{2j-1}) \mapsto( \la_{2j}+ \ak{2i},\,\la_{2j-1}+\al{2i-1})\,\cdot
\end{equation}
\item To insert $\bk{2i-1}$ for $i>1$, if \eqref{eq:condinsinf1211} occurs, then do \eqref{eq:insert2type121}
and store for the insertion in the pair $(\la_{2j-2},\la_{2j-3})$ two parts $\bk{2i-2}$. Else, do \eqref{eq:insert2type221}
and store for the insertion in the pair $(\la_{2j-2},\la_{2j-3})$ one part $\bk{2i-2}$.
\end{enumerate}
\item To insert $\bk{2}$, if 
\begin{equation}\label{eq:condinsinf1231}
\ak{2j-1}\la_{2j}-\ak{2j}\la_{2j-1}\geq \ak{2j}
\end{equation}
then do
\begin{equation}\label{eq:insert2type123}
(\la_{2j},\la_{2j-1}) \mapsto(\la_{2j}+ \ak{2}-\ak{1},\,\la_{2j-1}+\al{1}-\al{0})
\end{equation}
and store for the insertion in the pair $(\la_{2j-2},\la_{2j-3})$ one part $\bk{1}$. Else, do \eqref{eq:insert2type221} for $i=1$.
\item To insert $\bk{1}$, do 
\begin{equation}\label{eq:insert2type1234}
(\la_{2j},\la_{2j-1}) \mapsto(\la_{2j}+\ak{1},\,\la_{2j-1}+\al{0})\,\cdot
\end{equation}
\end{enumerate}
\subsubsection{The case $(1,k)$}
\begin{enumerate}
\item To insert the parts $\bl{i}$ with $2j\neq i>1$ into the pair $(\la_{2j},\la_{2j-1})$, proceed as follows. 
\begin{enumerate}
\item To insert $\bl{2i+1}$ for $i\geq 1$, if 
\begin{equation}\label{eq:condinsinf1311}
\al{2j-1}\la_{2j}-\al{2j}\la_{2j-1}\geq \al{2j+1-2i}-\al{2j-1-2i}\,,
\end{equation}
then do
\begin{equation}\label{eq:insert2type131}
(\la_{2j},\la_{2j-1}) \mapsto (\la_{2j}+ \al{2i+1}-\al{2i-1},\, \la_{2j-1}+\ak{2i}-\ak{2i-2})
\end{equation}
and store for the insertion in the pair $(\la_{2j-2},\la_{2j-3})$, $\bl{2i}$ if there are at least two remaining parts $\bl{2i+1}$ to insert, and $\bl{2i-1}$ if there is only one remaining part $\bl{2i+1}$. Else, do
\begin{equation}\label{eq:insert2type231}
(\la_{2j},\la_{2j-1}) \mapsto (\la_{2j}+ \al{2i+1},\, \la_{2j-1}+\ak{2i})\,\cdot
\end{equation}
\item To insert $\bl{2i}$ for $j\neq i\geq 1$, if \eqref{eq:condinsinf1311} occurs,
then do \eqref{eq:insert2type131}
and store for the insertion in the pair $(\la_{2j-2},\la_{2j-3})$ two parts $\bl{2i-1}$. Else, do \eqref{eq:insert2type231}
and store for the insertion in the pair $(\la_{2j-2},\la_{2j-3})$ one part $\bl{2i-1}$.
\end{enumerate}
\item To insert $\bl{i}$ for $i\in \{1,2j\}$, do 
\begin{equation}\label{eq:insert2type2314}
(\la_{2j},\la_{2j-1}) \mapsto (\la_{2j}+ \al{i},\, \la_{2j-1}+\ak{i-1})\,\cdot
\end{equation}
\end{enumerate}

Similarly to $\Phi^{(k,l)}$, the map $\Phi^{(k,l)}$ is weight-preserving, and the $\akl{i}$-component and $\alk{i-1}$-component of the parts $\bkl{i}$ only contribute respectively to the even and the odd weights of the image.  
\begin{exs} We take the same examples as in the case $\Phi^{(k,l)}$, with $2n=6$.
 \begin{enumerate}
 \item The case $(2,3)$ with $\nu=(b^{(2,3)}_{1})^5(b^{(2,3)}_{2})^4(b^{(2,3)}_{3})^2(b^{(2,3)}_{4})^3(b^{(2,3)}_{5})(b^{(2,3)}_{6})^3=(1+0)^5(3+1)^4(5+2)^2(12+5)^3(19+8)(45+19)^3$. We start with the pair $(\la_6,\la_5)$, initially equal to $(0,0)$. 
 \begin{enumerate}
  \item Insertions of $b_{6}^{(2,3)}$: we apply thrice \eqref{eq:insert2type2}
  and $(\la_6,\la_5)$ becomes $(135,57)$. 
  \item Insertions of $b_{5}^{(2,3)}$:  we apply once \eqref{eq:insert2type2} and $(\la_6,\la_5)$ becomes $(154,65)$. 
  \item Insertions of $b_{4}^{(2,3)}$: we successively apply \eqref{eq:insert2type2}, \eqref{eq:insert2type1} and \eqref{eq:insert2type1}, and $(\la_6,\la_5)$ respectively becomes $(166,70),(173,73)$ and $(180,76)$, and we store twice $b_{3}^{(2,3)}$ for the insertion into the pair $(\la_4,\la_3)$.
  \item Insertions of $b_{3}^{(2,3)}$: we apply twice \eqref{eq:insert2type2} and $(\la_6,\la_5)$ becomes $(190,80)$.
  \item Insertions of $b_{2}^{(2,3)}$: we successively apply \eqref{eq:insert2type1}, \eqref{eq:insert2type2},\eqref{eq:insert2type1} and \eqref{eq:insert2type1}, and $(\la_6,\la_5)$ respectively becomes $(192,81),(195,82),(197,83)$ and $(199,84)$, and we store thrice $b_{1}^{(2,3)}$ for the insertion into the pair $(\la_4,\la_3)$.
  \item Insertions of $b_{1}^{(2,3)}$: we obtain $(\la_6,\la_5)=(204,84)$ by applying five times \eqref{eq:insert2type2}.
 \end{enumerate}
In the process we stored $2\times b_{3}^{(2,3)},3\times b_{1}^{(2,3)}$ for the insertion into the pair $(\la_4,\la_3)$. For these insertions, we do twice \eqref{eq:insert2type2} for $i=3$, thrice \eqref{eq:insert2type2} for $i=1$. Thus, $(\la_4,\la_3)=(13,4)$, and $(\la_5,\la_6)=(0,0)$. Finally, set $\Phi^{(2,3)}_6(\nu)=(0,0,4,13,84,204)\in\mathcal{L}^{(2,3)}_{6}$.\\
 \item The case $(3,2)$ with $\nu=(b^{(3,2)}_{1})^5(b^{(3,2)}_{2})^4(b^{(3,2)}_{3})^2(b^{(3,2)}_{4})^3(b^{(3,2)}_{5})(b^{(3,2)}_{6})^3=(1+0)^5(2+1)^4(5+3)^2(8+5)^3(19+12)(30+19)^3$. For the insertion into the pair $(\la_6,\la_5)$, we have the following.
 \begin{enumerate} 
 \item Insertions of $b_{6}^{(3,2)}$: we do thrice \eqref{eq:insert2type2} and obtain $(\la_6,\la_5)=(90,57)$.
 \item Insertions of $b_{5}^{(3,2)}$: we apply \eqref{eq:insert2type2} to obtain $(\la_6,\la_5)=(109,69)$.
 \item Insertions of $b_{4}^{(3,2)}$: we successively apply \eqref{eq:insert2type2}, \eqref{eq:insert2type1} and \eqref{eq:insert2type2} to obtain $(\la_6,\la_5)=(128,81)$, and store once $b_{3}^{(3,2)}$ for the pair $(\la_4,\la_3)$.
 \item Insertions of $b_{3}^{(3,2)}$: we successively apply \eqref{eq:insert2type2} and \eqref{eq:insert2type1} to obtain $(\la_6,\la_5)=(136,86)$, and store once $b_{2}^{(3,2)}$ for the pair $(\la_4,\la_3)$.
 \item Insertions of $b_{2}^{(3,2)}$: we successively apply \eqref{eq:insert2type2}, \eqref{eq:insert2type1} , \eqref{eq:insert2type2} and \eqref{eq:insert2type2} to obtain $(\la_6,\la_5)=(143,90)$, and store once $b_{1}^{(3,2)}$ for the pair $(\la_4,\la_3)$.
 \item Insertions of $b_{1}^{(3,2)}$: we apply five times \eqref{eq:insert1type2} to obtain $(\la_6,\la_5)=(148,90)$.
 \end{enumerate}
 Hence, we store once $b_{3}^{(3,2)},b_{2}^{(3,2)},b_{1}^{(3,2)}$ for the insertion into the pair $(\la_4,\la_3)$. We then do \eqref{eq:insert2type2} for $3,2,1$ to obtain $(\la_4,\la_3)=(8,4)$. As there is no part stored for the insertion in $(\la_2,\la_1)$, we have $(\la_2,\la_1)=(0,0)$. Set $\Phi^{(3,2)}_6(\nu)=(0,0,4,8,90,148)\in\mathcal{L}^{(3,2)}_6$.\\
 \item The case $(6,1)$ with $\nu=(b_{1}^{(6,1)})^2(b_{2}^{(6,1)})^5(b_{3}^{(6,1)})^2(b_{4}^{(6,1)})^3(b_{6}^{(6,1)})^5=(1+0)^2(1+1)^5(5+6)^2(4+5)^3(15+19)^5$. For the insertion into the pair $(\la_6,\la_5)$, we have the following.
 \begin{enumerate}
  \item Insertions of $b_{6}^{(6,1)}$: 
 for $i=3$, we apply five times  \eqref{eq:insert2type221}, and and obtain $(\la_6,\la_5)=(75,95)$.
 \item Insertions of $b_{4}^{(6,1)}$: we apply thrice \eqref{eq:insert2type221} with $i=2$, and obtain $(\la_6,\la_5)=(87,110)$.
 \item Insertions of $b_{3}^{(6,1)}$: we apply successively \eqref{eq:insert2type121} and \eqref{eq:insert2type221} with $i=2$ to obtain $(\la_6,\la_5)=(94,119)$, and successively store two and one $b_{2}^{(6,1)}$ for the insertion into the pair $(\la_4,\la_3)$.
 \item Insertions of $b_{2}^{(6,1)}$: we apply four times \eqref{eq:insert2type221} for $i=1$ to obtain $(\la_6,\la_5)=(98,123)$, then apply \eqref{eq:insert2type123} to obtain $(\la_6,\la_5)=(98,124)$, and store $b_{1}^{(6,1)}$ for the insertion into the pair $(\la_4,\la_3)$.
 \item Insertions of $b_{1}^{(6,1)}$: we apply twice \eqref{eq:insert2type1234} and obtain $(\la_6,\la_5)=(100,124)$.
 \end{enumerate}
 We store $3\times b_{2}^{(6,1)},b_{1}^{(6,1)}$ for the insertion into the pair $(\la_4,\la_3)$, and only thrice \eqref{eq:insert2type221} for $i=2$ and \eqref{eq:insert2type1234} to obtain $(\la_4,\la_3)=(4,3)$. Then, $(\la_5,\la_6)=(0,0)$ and we set $\Phi^{(6,1)}_6(\nu)=(0,0,3,4,124,100)\in \mathcal{L}^{(6,1)}$.\\
 \item The case $(1,6)$ with $\nu=(b_{1}^{(1,6)})^2(b_{2}^{(1,6)})^5(b_{3}^{(1,6)})^2(b_{4}^{(1,6)})^3(b_{6}^{(1,6)})^5=(1+0)^2(6+1)^5(5+1)^2(24+5)^3(90+19)^5$.
  For the insertion into the pair $(\la_6,\la_5)$, we have the following.
 \begin{enumerate}
  \item Insertions of $b_{6}^{(1,6)}$: 
we apply five times \eqref{eq:insert2type2314} with $i=6$ to obtain $(\la_6,\la_5)=(450,95)$.
 \item Insertions of $b_{4}^{(1,6)}$: 
 for $i=2$, we apply thrice \eqref{eq:insert2type231} to obtain $(\la_6,\la_5)=(507,107)$, and store three $b_{3}^{(1,6)}$ for the insertion into the pair $(\la_4,\la_3)$.
 \item Insertions of $b_{3}^{(1,6)}$: 
 for $i=1$, we apply twice \eqref{eq:insert2type231} to obtain $(\la_6,\la_5)=(517,109)$.
 \item Insertions of $b_{2}^{(1,6)}$: 
 for $i=1$, we first apply once \eqref{eq:insert2type231} to obtain $(\la_5,\la_5)=(522,110)$, and store one $b_{1}^{(1,6)}$ for the insertion into the pair $(\la_4,\la_3)$. Then, we apply once \eqref{eq:insert2type131} to obtain $(\la_6,\la_5)=(526,111)$, and store two $b_{1}^{(1,6)}$. After that, we apply twice \eqref{eq:insert2type231} to obtain $(\la_6,\la_2)=(536,113)$, and store two $b_{1}^{(1,6)}$. Finally, we apply once \eqref{eq:insert2type131} to obtain $(\la_6,\la_2)=(540,114)$, and store two $b_{1}^{(1,6)}$. 
 \item Insertions of $b_{1}^{(1,6)}$: we apply twice \eqref{eq:insert2type2314} for $i=1$ to obtain $(\la_6,\la_5)=(542,114)$.
 \end{enumerate}
 We then store $3\times b_{3}^{(1,6)},7\times b_{1}^{(1,6)}$ for the insertion into 
 into the pair $(\la_4,\la_3)$. We thrice apply \eqref{eq:insert2type231} for $i=1$ and seven times \eqref{eq:insert2type2314} for $i=1$ to obtain $(\la_3,\la_4)=(22,3)$. We finally set $\Phi^{(1,6)}_6(\nu)=(0,0,3,22,114,542)\in \mathcal{L}^{(1,6)}_6$.
 \end{enumerate}
\end{exs}
\subsection{The map $\Phi_{2n-1}^{(k,l)}$}
We finally describe the bijection $\Phi_{2n-1}^{(k,l)}$. Let $\nu$ be a partition in $\Blk_{2n-1}$, and set $\la=(\la_1,\ldots,\la_{2n-1})$, all parts equal to $0$. We again proceed by inserting the parts $\blk{i}$ into the pairs $(\la_{2j+1},\la_{2j})$ with $0\leq j<n$, starting from the greatest $i,j$. We here consider a fictitious part $\la_0=0$. 
\subsubsection{The case $k,l\geq 2$}
\begin{enumerate}
\item To insert $\blk{i}$ with $i>1$ in $(\la_{2j+1},\la_{2j})$, proceed as follows. 
If 
\begin{equation}\label{eq:condinsinf3}
\akl{2j}\la_{2j+1}-\akl{2j+1}\la_{2j}\geq \akl{2j+2-i}-\akl{2j+1-i}\,\,,
\end{equation}
then do
\begin{equation}\label{eq:insert3type1}
(\la_{2j+1},\la_{2j})\mapsto (\la_{2j+1}+\alk{i}-\alk{i-1},\,\la_{2j}+ \akl{i-1}-\akl{i-2})
\end{equation}
and store $\blk{i-1}$ for the insertion into the pair $(\la_{2j-1},\la_{2j-2})$. Else, do
\begin{equation}\label{eq:insert3type2}
(\la_{2j+1},\la_{2j})\mapsto (\la_{2j+1}+\alk{i},\,\la_{2j}+ \akl{i-1})\,\cdot
\end{equation}
\item To insert $\bkl{1}$, do \eqref{eq:insert3type2} for $i=1$.
\end{enumerate}
\subsubsection{The case $(k,1)$}
\begin{enumerate}
\item To insert the parts $\bl{i}$ with $i>1$ into the pair $(\la_{2j+1},\la_{2j})$, proceed as follows. 
\begin{enumerate}
\item To insert $\bl{2i+1}$ for $i\geq 1$, if 
\begin{equation}\label{eq:condinsinf1212}
\ak{2j}\la_{2j+1}-\ak{2j+1}\la_{2j}\geq \ak{2j+2-2i}-\ak{2j-2i}\,,
\end{equation}
then do
\begin{equation}\label{eq:insert3type121}
(\la_{2j+1},\la_{2j}) \mapsto (\la_{2j+1}+ \al{2i+1}-\al{2i-1},\, \la_{2j}+\ak{2i}-\ak{2i-2})
\end{equation}
and store for the insertion in the pair $(\la_{2j-1},\la_{2j-2})$, $\bl{2i}$ if there are at least two remaining parts $\bl{2i+1}$ to insert, and $\bl{2i-1}$ if there is only one remaining part $\bl{2i+1}$. Else, do
\begin{equation}\label{eq:insert3type221}
(\la_{2j+1},\la_{2j}) \mapsto (\la_{2j+1}+ \al{2i+1},\, \la_{2j}+\ak{2i})\,\cdot
\end{equation}
\item To insert $\bl{2i}$ for $i\geq 1$, if 
\eqref{eq:condinsinf1212} occurs,
then do
\eqref{eq:insert3type121}
and store for the insertion in the pair $(\la_{2j-1},\la_{2j-2})$ two parts $\bl{2i-1}$. Else, do
\eqref{eq:insert3type221}
and store for the insertion in the pair $(\la_{2j-1},\la_{2j-2})$ one part $\bl{2i-1}$.
\end{enumerate}
\item To insert $\bl{1}$, do \eqref{eq:insert3type221} for $i=0$.
\end{enumerate}
\subsubsection{The case $(1,k)$}
\begin{enumerate}
\item 
To insert the parts $\bk{i}$ with $2j+1\neq i>2$ into the pair $(\la_{2j+1},\la_{2j})$, proceed as follows. 
\begin{enumerate}
\item To insert $\bk{2i}$ for $i>1$, if 
\begin{equation}\label{eq:condinsinf1312}
\al{2j}\la_{2j+1}-\al{2j+1}\la_{2j}\geq \al{2j+3-2i}-\al{2j+1-2i}\,,
\end{equation}
then do
\begin{equation}\label{eq:insert3type131}
(\la_{2j+1}, \la_{2j}) \mapsto (\la_{2j+1}+ \ak{2i}-\ak{2i-2},\, \la_{2j}+\al{2i-1}-\al{2i-3})
\end{equation}
and store for the insertion in the pair $(\la_{2j-1},\la_{2j-2})$, $\bk{2i-1}$ if there are at least two remaining parts $\bk{2i}$ to insert, and $\bk{2i-2}$ if there is only one remaining part $\bk{2i}$. Else, do
\begin{equation}\label{eq:insert3type231}
(\la_{2j+1}, \la_{2j}) \mapsto (\la_{2j+1}+ \ak{2i},\, \la_{2j}+\al{2i-1})\,\cdot
\end{equation}
\item To insert $\bk{2i-1}$ for $j+1\neq i>1$, if 
\eqref{eq:condinsinf1312}, 
then do \eqref{eq:insert3type131}
and store for the insertion in the pair $(\la_{2j-1},\la_{2j-2})$ two parts $\bk{2i-2}$. Else, do
\eqref{eq:insert3type231}
and store for the insertion in the pair $(\la_{2j-1},\la_{2j-2})$ one part $\bk{2i-2}$.
\end{enumerate}
\item To insert $\bk{2}$, if 
\begin{equation}\label{eq:condinsinf1331}
\ak{2j}\la_{2j+1}-\ak{2j+1}\la_{2j}\geq \ak{2j+1}
\end{equation}
then do
\begin{equation}\label{eq:insert3type133}
(\la_{2j+1}, \la_{2j}) \mapsto (\la_{2j+1}+ \ak{2}-\ak{1},\, \la_{2j}+\al{1}-\al{0})\,,
\end{equation}
and store for the insertion in the pair $(\la_{2j-1},\la_{2j-2})$ one part $\bk{1}$. Else, do \eqref{eq:insert3type231} for $i=1$.
\item To insert $\bk{i}$ for $i\in\{2j+1,1\}$, do  \begin{equation}\label{eq:insert3type1334}
(\la_{2j+1}, \la_{2j}) \mapsto (\la_{2j+1}+ \ak{i},\, \la_{2j}+\al{i-1})\,\cdot
\end{equation}
\end{enumerate}
The map is weight-preserving, and the $\alk{i}$-component and $\akl{i-1}$-component of the parts $\blk{i}$ only contribute respectively to the odd and the even weights of the image.  
\begin{exs} We take the same examples as in the case $\Phi^{(k,l)}$, with $2n-1=7$.
 \begin{enumerate}
 \item The case $(2,3)$ with $\nu=(b^{(3,2)}_{1})^5(b^{(3,2)}_{2})^4(b^{(3,2)}_{3})^2(b^{(3,2)}_{4})^3(b^{(3,2)}_{5})(b^{(3,2)}_{6})^3=(1+0)^5(2+1)^4(5+3)^2(8+5)^3(19+12)(30+19)^3$. We start with the pair $(\la_7,\la_6)$, initially equal to $(0,0)$. 
 \begin{enumerate}
  \item Insertions of $b_{6}^{(3,2)}$: we apply successively \eqref{eq:insert3type2}, \eqref{eq:insert3type2} and \eqref{eq:insert3type1}
  to obtain $(\la_7,\la_6)=(71,45)$, and store once $b_{5}^{(3,2)}$ for the insertion into the pair $(\la_5,\la_4)$.  
  \item Insertions of $b_{5}^{(3,2)}$:  we apply once \eqref{eq:insert3type2} to obtain $(\la_7,\la_6)=(90,57)$. 
  \item Insertions of $b_{4}^{(3,2)}$: we successively apply \eqref{eq:insert3type2}, \eqref{eq:insert3type1} and \eqref{eq:insert3type2}, and $(\la_7,\la_6)$ to obtain $(109,69)$, and we store once $b_{3}^{(3,2)}$ for the insertion into the pair $(\la_5,\la_4)$.
  \item Insertions of $b_{3}^{(3,2)}$: we apply successively \eqref{eq:insert3type2} and \eqref{eq:insert3type1} to obtain $(\la_7,\la_6)=(117,74)$, and store  once $b_{2}^{(3,2)}$ for the insertion into the pair $(\la_5,\la_4)$.
  \item Insertions of $b_{2}^{(3,2)}$: we successively apply \eqref{eq:insert3type2}, \eqref{eq:insert3type1},\eqref{eq:insert3type2} and \eqref{eq:insert3type2}, and $(\la_6,\la_5)=(124,78)$, and we store once $b_{1}^{(3,2)}$ for the insertion into the pair $(\la_5,\la_4)$.
  \item Insertions of $b_{1}^{(3,2)}$: we obtain $(\la_7,\la_6)=(129,78)$ by applying five times \eqref{eq:insert3type2}.
 \end{enumerate}
In the process we stored $1\times b_{5}^{(3,2)},1\times b_{3}^{(3,2)},1\times b_{2}^{(3,2)},1\times b_{1}^{(3,2)}$ for the insertion into the pair $(\la_4,\la_3)$. For these insertions, we only do \eqref{eq:insert3type2} and obtain $(\la_5,\la_4)=(27,16)$. Finally, set $\Phi^{(2,3)}_7(\nu)=(0,0,0,16,27,78,129)\in\mathcal{L}^{(2,3)}_{7}$.\\
 \item The case $(3,2)$ with $\nu=(b^{(2,3)}_{1})^5(b^{(2,3)}_{2})^4(b^{(2,3)}_{3})^2(b^{(2,3)}_{4})^3(b^{(2,3)}_{5})(b^{(2,3)}_{6})^3=(1+0)^5(3+1)^4(5+2)^2(12+5)^3(19+8)(45+19)^3$. For the insertion into the pair $(\la_6,\la_5)$, we have the following.
 \begin{enumerate} 
 \item Insertions of $b_{6}^{(2,3)}$: we apply successively \eqref{eq:insert3type2}, \eqref{eq:insert3type1} and \eqref{eq:insert3type2}
  to obtain $(\la_7,\la_6)=(116,49)$, and store once $b_{5}^{(2,3)}$ for the insertion into the pair $(\la_5,\la_4)$.
 \item Insertions of $b_{5}^{(2,3)}$: we apply \eqref{eq:insert3type2} to obtain $(\la_7,\la_6)=(135,57)$.
 \item Insertions of $b_{4}^{(2,3)}$: we successively apply \eqref{eq:insert3type1}, \eqref{eq:insert3type2} and \eqref{eq:insert3type1} to obtain $(\la_7,\la_6)=(161,68)$, and store twice $b_{3}^{(3,2)}$ for the pair $(\la_5,\la_4)$.
 \item Insertions of $b_{3}^{(2,3)}$: we successively apply \eqref{eq:insert3type2} and \eqref{eq:insert3type2} to obtain $(\la_7,\la_6)=(171,72)$. 
 \item Insertions of $b_{2}^{(2,3)}$: we successively apply \eqref{eq:insert3type1}, \eqref{eq:insert3type2} , \eqref{eq:insert3type1} and \eqref{eq:insert3type1} to obtain $(\la_6,\la_5)=(180,76)$, and store thrice $b_{1}^{(2,3)}$ for the pair $(\la_5,\la_4)$.
 \item Insertions of $b_{1}^{(2,3)}$: we apply five times \eqref{eq:insert3type2} to obtain $(\la_7,\la_6)=(185,76)$.
 \end{enumerate}
 Hence, we store $1\times b_{5}^{(2,3)},2\times b_{3}^{(2,3)},3\times b_{1}^{(2,3)}$ for the insertion into the pair $(\la_5,\la_4)$. We then do \eqref{eq:insert3type2} to obtain $(\la_5,\la_4)=(32,12)$.  Set $\Phi^{(3,2)}_7(\nu)=(0,0,0,12,32,76,185)\in\mathcal{L}^{(3,2)}_7$.\\
 \item The case $(6,1)$ with $\nu=(b_{1}^{(1,6)})^2(b_{2}^{(1,6)})^5(b_{3}^{(1,6)})^2(b_{4}^{(1,6)})^3(b_{6}^{(1,6)})^5=(1+0)^2(6+1)^5(5+1)^2(24+5)^3(90+19)^5$. For the insertion into the pair $(\la_7,\la_6)$, we have the following.
 \begin{enumerate}
  \item Insertions of $b_{6}^{(1,6)}$: 
 for $i=3$, we apply five times  \eqref{eq:insert3type221} to obtain $(\la_7,\la_6)=(355,75)$, and store five $b_{5}^{(1,6)}$ for the insertion into into the pair $(\la_5,\la_4)$.
 \item Insertions of $b_{4}^{(1,6)}$: we apply thrice \eqref{eq:insert3type221} with $i=2$ to obtain $(\la_7,\la_6)=(412,87)$, and store three $b_{3}^{(1,6)}$.
 \item Insertions of $b_{3}^{(1,6)}$: we apply twice \eqref{eq:insert3type221} with $i=1$ to obtain $(\la_7,\la_6)=(422,89)$.
 \item Insertions of $b_{2}^{(1,6)}$: we apply successively once \eqref{eq:insert3type121}, thrice  \eqref{eq:insert3type221} and once  \eqref{eq:insert3type121} for $i=1$ to obtain $(\la_7,\la_6)=(446,95)$, and store successively two, three an two  $b_{1}^{(6,1)}$ for the insertion into the pair $(\la_5,\la_4)$.
 \item Insertions of $b_{1}^{(1,6)}$: we apply twice \eqref{eq:insert3type221} with $i=0$ to obtain $(\la_7,\la_6)=(443,93)$.
 \end{enumerate}
 We store $5\times b_{5}^{(1,6)},3\times b_{3}^{(1,6)}, 7\times b_{1}^{(6,1)}$ for the insertion into the pair $(\la_5,\la_4)$. Thus, apply five times and thrice \eqref{eq:insert3type221} with respectively $i=2,1$, Finally, apply seven times \eqref{eq:insert3type221} with $i=0$. Hence, we obtain $(\la_5,\la_4)=(117,23)$.  Therefore, $\Phi^{(6,1)}_7(\nu)=(0,0,0,23,117,93,443)\in \mathcal{L}^{(6,1)}_7$.\\
 \item The case $(1,6)$ with $\nu=(b_{1}^{(6,1)})^2(b_{2}^{(6,1)})^5(b_{3}^{(6,1)})^2(b_{4}^{(6,1)})^3(b_{6}^{(6,1)})^5=(1+0)^2(1+1)^5(5+6)^2(4+5)^3(15+19)^5$.
  For the insertion into the pair $(\la_7,\la_6)$, we have the following.
 \begin{enumerate}
  \item Insertions of $b_{6}^{(6,1)}$: 
we apply four times \eqref{eq:insert3type231} and once \eqref{eq:insert3type131} with $i=3$ to obtain $(\la_7,\la_6)=(71,90)$, and store one $b_{4}^{(6,1)}$ for the insertion into the pair $(\la_5,\la_4)$.
 \item Insertions of $b_{4}^{(6,1)}$: 
 for $i=2$, we apply thrice \eqref{eq:insert3type231} to obtain $(\la_7,\la_6)=(83,105)$.
 \item Insertions of $b_{3}^{(6,1)}$: 
 for $i=1$, we apply successively \eqref{eq:insert3type131} and \eqref{eq:insert3type231} to obtain $(\la_7,\la_6)=(90,114)$, and store successively two and one $b_{2}^{(6,1)}$ for the insertion into the pair $(\la_5,\la_4)$. 
 \item Insertions of $b_{2}^{(6,1)}$: 
 for $i=1$, we successively apply thrice \eqref{eq:insert3type231}, \eqref{eq:insert3type131} and \eqref{eq:insert3type231} to obtain $(\la_5,\la_5)=(94,119)$, and store one $b_{1}^{(6,1)}$. 
 \item Insertions of $b_{1}^{(6,1)}$: we apply twice \eqref{eq:insert3type1334} for $i=1$ to obtain $(\la_6,\la_5)=(96,119)$.
 \end{enumerate}
 We then store $1\times b_{4}^{(6,1)},3\times b_{2}^{(6,1)},1\times b_{1}^{(6,1)}$ for the insertion into 
 into the pair $(\la_5,\la_4)$. We only apply \eqref{eq:insert3type231} for $i=2,1$ and then \eqref{eq:insert3type1334} for $i=1$ to obtain $(\la_5,\la_4)=(8,8)$. We finally set $\Phi^{(1,6)}_7(\nu)=(0,0,0,8,8,119,96)\in \mathcal{L}^{(1,6)}_7$.
 \end{enumerate}
\end{exs}
\section{Combinatorics of $(k,l)$-admissible words}
\label{sec:words}
In this section, we investigate a system of numeration related to the $(k,l)$-sequence, namely the $(k,l)$-admissible words. This is a generalization of the $l$-admissible words used by Savage and Yee in \cite{SY08}, which is a numeration system of non-negative integers presented by Frankeal in \cite{FRA85}. 
\subsection{Definition of $(k,l)$-admissible words}
\subsubsection{The case $k,l\geq 2$} Let $k,l\in \Z_{\geq 2}$. Then, $(\akl{n})_{n\geq 1}$ is an increasing sequence of positive integers. For conveniences, we define the sequence $(o^{(k,l)}_i)_{i\geq 1}$ such that $o^{(k,l)}_{2i-1}=l-2$ and $o^{(k,l)}_{2i}=k-2$ for $i\geq 1$.
\begin{deff}\label{def:kladm}
An \textit{infinite} $(k,l)$-admissible word is a sequence $(c_i)_{i\geq 1}$ of non-negative integers such that :
\begin{enumerate}
\item there are finitely many positive terms, 
\item $c_i \in \{0,\ldots,o^{(k,l)}_i+1\}$,
\item there is no pair $1\leq i<j$ such that
\begin{equation}\label{eq:forbpat}
c_h = o^{(k,l)}_h+\chi(h\in \{i,j\})\qquad\text{for}\quad h\in\{i,i+1,\ldots,j\}\,\cdot
\end{equation}
\end{enumerate}
Denote $\C^{(k,l)}$ by the set of all $(k,l)$-admissible words.
For $n\geq 1$, denote by $^n\C^{(k,l)}$ the set of all infinite $(k,l)$-admissible words such that $c_i=0$ for $i\geq n$, and $_n\C^{(k,l)}$ the set of all infinite $(k,l)$-admissible words such that $c_i=0$ for $i<n$. Note that $(\,^n\C^{(k,l)})_{n\geq 1}$ is an increasing sequence of sets, while $(\,_n\C^{(k,l)})_{n\geq 1}$ is decreasing. Set $^n_i\C^{(k,l)}=\,^n\C^{(k,l)}\cap \,_i\C^{(k,l)}$. Finally, for $s\in \C^{(k,l)} $, set the sequence $_ns$ to be the sequence $s$ where the $(n-1)$ first terms  are replaced by $0$.
\end{deff}
\begin{ex} In the case $(k,l)=(2,3)$, $(o^{(k,l)}_{2i-1},o^{(k,l)}_{2i})=(1,0)$ for $i\geq 1$. Then, 
 $(2,0,1,0,0,\ldots),(1,1,1,0,0,\ldots)\in \,^4\C^{(2,3)}$, but $(\underbrace{2,0,1,1},0,0,\ldots),(1,\underbrace{1,1,1},0,0,\ldots)\notin \C^{(2,3)}$, as the under-braced sub-sequences are forbidden. In the same way, $(\underbrace{1,1,1},0,0,\ldots)\notin\C^{(3,2)}$. We also have $_1(2,0,1,0,0,\ldots)=\,_2(2,0,1,0,0,\ldots)=(0,0,1,0,0,\ldots)$.
\end{ex}

\begin{deff}\label{def:lexicorder}
Let $\prec$ be the lexicographic strict order on the set of integer sequences, defined by the following: 
$$(c_i)\prec (d_i)\text{ if and only if there exists }n>0\text{ such that }c_n<d_n\text{ and }c_i=d_i\text{ for }\textcolor{red}{i>n}\,\cdot$$
We also say that $(c_i)\preceq (d_i)$ if and only if $(c_i)=(d_i)$ or $(c_i)\prec (d_i)$,  $(d_i)\succ (c_i)$ if and only if $(c_i)\prec (d_i)$, and $(d_i)\succeq (c_i)$ if and only if $(c_i)\preceq (d_i)$.\\
Let $\ll$ be the lexicographic strict order on the set of integer sequences, defined by the following: 
$$(c_i)\ll (d_i)\text{ if and only if there exists }n>0\text{ such that }c_n<d_n\text{ and }c_i=d_i\text{ for }\textcolor{red}{i<n}\,\cdot$$
We say that $(d_i)\gg (c_i)$ if and only if $(c_i)\ll (d_i)$.\\
As both $\prec$ and $\ll$ are both total orders, we can sort any set of integer sequences according either $\prec$ or $\ll$. \\
\end{deff}
\begin{ex} In  $^4\C^{(2,3)}$,
 $(2,0,1,0,0,\ldots)\prec (1,1,1,0,0,\ldots)$, but $(2,0,1,0,0,\ldots)\gg (1,1,1,0,0,\ldots)$.
\end{ex}
\begin{rem}
 Observe that, for $m\in \Zz$ and $n\in \Zz$, and $(d_i)_{i\geq 1} \in \C^{(k,l)}$, 
\begin{equation}\label{eq:leq}
_n(d_i)_{i\geq 1}=(0,\ldots,0,d_n,d_{n+1},\ldots)=\max\{(c_i)_{i\geq 1}\in\, _n\C^{(k,l)}:  (c_i)_{i\geq 1} \preceq (d_i)_{i\geq 1}\}\,\cdot
\end{equation}
\end{rem}
The next lemma unveils a particular property of the forbidden sequences in $(k,l)$-admissible words that does not fit the lexicographic orders $\prec$ and $\ll$.
\begin{lem}\label{lem:klsommax}
Let $(w_n^{(k,l)})_{n\in\Z}$ be a sequence satisfying the same recursive relation as $(\akl{n})_{n\in \Z}$, i.e.
\begin{equation}\label{eq:klseqdefbis}
w_{n-1}^{(k,l)}+w_{n+1}^{(k,l)}=(o^{(k,l)}_n+2) \cdot w_{n}^{(k,l)}
\end{equation}
Then, for $ i\leq j \in \Z$,
\begin{equation}\label{eq:klsommaxbis}
\sum_{h=i}^j o^{(k,l)}_h w_h^{(k,l)} = w_{i-1}^{(k,l)}+w_{j+1}^{(k,l)}-w_{i}^{(k,l)}-w_{j}^{(k,l)}\,\cdot
\end{equation}
Hence, for $1\leq i<j$ and $(c_i,\ldots,c_j)$ a finite sequence satisfying \eqref{eq:forbpat},
\begin{equation}\label{eq:klsommax}
\sum_{h=i}^j c_hw_h^{(k,l)} = w_{i-1}^{(k,l)}+w_{j+1}^{(k,l)}\,\cdot
\end{equation}
\end{lem}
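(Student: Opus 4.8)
The plan is to observe that the recurrence \eqref{eq:klseqdefbis} lets one rewrite each summand $o^{(k,l)}_h w^{(k,l)}_h$ as a difference of consecutive gaps of the sequence $(w^{(k,l)}_n)$, so that the sum in \eqref{eq:klsommaxbis} telescopes. No closed form for $(w^{(k,l)}_n)$ is needed, only the three-term recurrence, which is precisely why the statement is phrased for an arbitrary solution $w$ rather than for $a^{(k,l)}$ itself.

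First I would note that, rearranging \eqref{eq:klseqdefbis} and using $o^{(k,l)}_h w^{(k,l)}_h = (o^{(k,l)}_h+2)w^{(k,l)}_h - 2w^{(k,l)}_h$, one has for every $h\in\Z$
\[
o^{(k,l)}_h w^{(k,l)}_h = \bigl(w^{(k,l)}_{h+1}-w^{(k,l)}_h\bigr)-\bigl(w^{(k,l)}_h-w^{(k,l)}_{h-1}\bigr).
\]
Writing $D_h := w^{(k,l)}_h - w^{(k,l)}_{h-1}$, this reads $o^{(k,l)}_h w^{(k,l)}_h = D_{h+1}-D_h$. Summing over $h$ from $i$ to $j$ (valid for any $i\le j$ in $\Z$, including $i=j$), the sum telescopes:
\[
\sum_{h=i}^j o^{(k,l)}_h w^{(k,l)}_h = D_{j+1}-D_i = \bigl(w^{(k,l)}_{j+1}-w^{(k,l)}_j\bigr)-\bigl(w^{(k,l)}_i-w^{(k,l)}_{i-1}\bigr),
\]
which is exactly \eqref{eq:klsommaxbis}.

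For \eqref{eq:klsommax}, I would use that a sequence $(c_i,\dots,c_j)$ satisfying \eqref{eq:forbpat} agrees with $(o^{(k,l)}_i,\dots,o^{(k,l)}_j)$ except at the two endpoints, where $c_i=o^{(k,l)}_i+1$ and $c_j=o^{(k,l)}_j+1$; the hypothesis $i<j$ ensures these are distinct positions, so the defect of the sum is exactly $w^{(k,l)}_i+w^{(k,l)}_j$:
\[
\sum_{h=i}^j c_h w^{(k,l)}_h = \sum_{h=i}^j o^{(k,l)}_h w^{(k,l)}_h + w^{(k,l)}_i + w^{(k,l)}_j .
\]
Substituting \eqref{eq:klsommaxbis}, the terms $w^{(k,l)}_i$ and $w^{(k,l)}_j$ cancel, leaving $w^{(k,l)}_{i-1}+w^{(k,l)}_{j+1}$, as claimed.

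There is no genuine obstacle here: the argument is a one-line telescoping identity followed by bookkeeping. The only points to watch are the degenerate case $i=j$ in \eqref{eq:klsommaxbis} (automatically covered, since the telescoping identity holds verbatim), and the role of the hypothesis $i<j$ in \eqref{eq:klsommax}, which is exactly what guarantees that the two "boosted" coordinates contribute $w^{(k,l)}_i+w^{(k,l)}_j$ and not $2w^{(k,l)}_i$.
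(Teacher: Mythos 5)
Your proof is correct and follows essentially the same route as the paper: both expand $o^{(k,l)}_h w^{(k,l)}_h$ via the three-term recurrence as $w^{(k,l)}_{h-1}-2w^{(k,l)}_h+w^{(k,l)}_{h+1}$ and telescope (you phrase it as a difference of consecutive gaps $D_{h+1}-D_h$, the paper as a shift of summation indices, which is the same computation), and then both deduce \eqref{eq:klsommax} by adding the two endpoint corrections $w^{(k,l)}_i+w^{(k,l)}_j$ coming from \eqref{eq:forbpat}.
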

In the following proposition, we present the interaction between the $(k,l)$-admissible words and the $(k,l)$-sequences through the lexicographic order, and show how that relation enables a numeration system of the non-negative integers in terms of the $(k,l)$-sequences.
\begin{prop}\label{prop:kladmseq}
Let $(w_{i}^{(k,l)})_{i\geq 0}$ be a sequence of non-negative real numbers satisfying \eqref{eq:klseqdefbis}, and
define the function 
\begin{align*}
\Gamma_{(k,l)}  \colon \C^{(k,l)}&\to \mathbb{R}_{\geq 0}\\\
(c_i)_{i\geq 1}&\mapsto \sum_{i\geq 1} c_i w_{i}^{(k,l)}\,\cdot
\end{align*}
\begin{enumerate}
\item
When $(w_{i}^{(k,l)})_{i\geq 0}$ is increasing, 
\begin{equation}\label{eq:preckladm}
(c_i)\prec (d_i) \Longleftrightarrow \Gamma_{(k,l)}((c_i))<\Gamma_{(k,l)}((d_i))\,,
\end{equation}
and $\Gamma_{(k,l)}$ is injective. Furthermore, if $(w_{i}^{(k,l)})_{i\geq 0}=(\akl{i})_{i\geq 0}$, $\Gamma_{(k,l)}$ then describes a bijection from $\C^{(k,l)}$ to $\Zz$ which implies for $n\geq 1$ a bijection from $^n\C^{(k,l)}$ to $\{0,\ldots,\akl{n}-1\}$.
Moreover, for $m\in \Zz$, the inverse function, denoted by $\left[\cdot\right]^{(k,l)}$, consists in constructing the sequence $(c_i)_{i\geq 1}$ as follows:
\begin{enumerate}
\item for the unique $n\geq 1$ such that $\akl{n-1}\leq m<\akl{n}$, set $c_i=0$ for $i\geq n$,
\item if $c_{i+1},\cdots,c_{n}$ are set for any $1\leq i<n$, then set
$$c_{i} = \left\lfloor \frac{m-\sum_{j=i+1}^{n-1}c_j\akl{j}}{\akl{i}}\right\rfloor\,\cdot$$
\end{enumerate}
\item When $(w_{i}^{(k,l)})_{i\geq 0}$ is non-increasing, we have
\begin{equation}\label{eq:ggkladm}
(c_i)\ll(d_i) \Longrightarrow \Gamma_{(k,l)}((c_i))\leq \Gamma_{(k,l)}((d_i))\,,
\end{equation}
and $\Gamma_{(k,l)}(_{n+1}\C^{(k,l)}) \subset (0;w_{n}^{(k,l)})$.
\end{enumerate}
\end{prop}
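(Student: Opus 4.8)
The plan is to split Proposition \ref{prop:kladmseq} into its two parts and treat the increasing and non-increasing cases in parallel, with Lemma \ref{lem:klsommax} doing the heavy lifting in both.

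\textbf{Part (1): the increasing case.} First I would establish the equivalence \eqref{eq:preckladm}. The direction $(c_i)=(d_i)\Rightarrow$ equality is trivial, so assume $(c_i)\prec(d_i)$, and let $n$ be the largest index where they differ, so $c_n<d_n$ and $c_i=d_i$ for $i>n$. Then $\Gamma_{(k,l)}((d_i))-\Gamma_{(k,l)}((c_i)) = \sum_{i=1}^n (d_i-c_i)w_i^{(k,l)} \geq w_n^{(k,l)} - \sum_{i=1}^{n-1} c_i w_i^{(k,l)}$, and I need this to be positive. The key estimate is that $\sum_{i=1}^{n-1} c_i w_i^{(k,l)} < w_n^{(k,l)}$ whenever $(c_i)_{i=1}^{n-1}$ comes from an admissible word, i.e. $\Gamma_{(k,l)}(\,^n\C^{(k,l)}) \subset [0, w_n^{(k,l)})$; I would prove this auxiliary claim by induction on $n$, using the admissibility constraint $c_i \leq o^{(k,l)}_i + 1$ together with the summation identity \eqref{eq:klsommaxbis}–\eqref{eq:klsommax} from Lemma \ref{lem:klsommax} to handle the borderline configuration where $c_i = o_i^{(k,l)}+1$ for a run of consecutive indices (the forbidden pattern \eqref{eq:forbpat} is precisely what prevents the sum from reaching or exceeding $w_n^{(k,l)}$). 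Injectivity of $\Gamma_{(k,l)}$ is then immediate from \eqref{eq:preckladm} since $\prec$ is a total order. For the bijection onto $\Zz$ when $w^{(k,l)} = a^{(k,l)}$, I would verify that the greedy algorithm described in (a)–(b) always produces an admissible word: part (a) picks the correct $n$ with $\akl{n-1}\le m<\akl{n}$; part (b) defines $c_i$ as a floor, so $0 \le c_i$; the bound $c_i \le o_i^{(k,l)}+1$ and the absence of the forbidden pattern follow by checking that after subtracting $c_i\akl{i}$ the residual lies in $[0,\akl{i})$, then invoking the inductive structure and again \eqref{eq:klsommax}. Surjectivity follows since the algorithm is defined for every $m$, and injectivity we already have; the restriction to $^n\C^{(k,l)}$ mapping onto $\{0,\dots,\akl{n}-1\}$ is then a counting/monotonicity corollary.

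\textbf{Part (2): the non-increasing case.} Here I would argue symmetrically but with $\ll$ in place of $\prec$: if $(c_i)\ll(d_i)$ with $n$ the \emph{smallest} index of disagreement, then $c_n<d_n$, $c_i=d_i$ for $i<n$, and $\Gamma_{(k,l)}((d_i))-\Gamma_{(k,l)}((c_i)) = \sum_{i\ge n}(d_i-c_i)w_i^{(k,l)}$. Since $w^{(k,l)}$ is non-increasing, $\sum_{i>n} |c_i - d_i| w_i^{(k,l)} \le w_{n+1}^{(k,l)} \cdot(\text{bounded tail sum})$, and the analogue of the auxiliary claim — namely $\Gamma_{(k,l)}(\,_{n+1}\C^{(k,l)}) \subset (0, w_n^{(k,l)})$ — gives that this tail is $< w_n^{(k,l)} \le w_n^{(k,l)}(d_n - c_n)$, hence the difference is nonnegative (only $\le$, not $<$, because $w$ may be eventually constant, e.g.\ the $(2,2)$ case where all $s_n^{(2)}=1$). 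The containment $\Gamma_{(k,l)}(\,_{n+1}\C^{(k,l)}) \subset (0, w_n^{(k,l)})$ I would again extract from Lemma \ref{lem:klsommax}: an admissible word supported on indices $\ge n+1$ has $\sum_{h\ge n+1} c_h w_h^{(k,l)} \le \sum_{h\ge n+1} o_h^{(k,l)} w_h^{(k,l)} + (\text{correction from the two allowed }+1\text{'s})$, and the telescoping in \eqref{eq:klsommaxbis} collapses this to something strictly below $w_n^{(k,l)}$.

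\textbf{Main obstacle.} The crux in both parts is the same auxiliary bound — that an admissible word's weight is confined to the half-open interval dictated by its support — and the subtlety is entirely in the forbidden-pattern condition \eqref{eq:forbpat}: without it the naive bound $\sum c_i w_i \le \sum(o_i^{(k,l)}+1)w_i$ overshoots $w_n^{(k,l)}$, and it is exactly Lemma \ref{lem:klsommax}'s identity \eqref{eq:klsommax} (which says a \emph{maximal} forbidden run of length $j-i+1$ contributes exactly $w_{i-1}^{(k,l)}+w_{j+1}^{(k,l)}$ rather than more) that closes the gap. So the real work is a careful induction that, at each step, either a coordinate is strictly below its cap (and one recurses cleanly) or one is at the cap $o_i^{(k,l)}+1$, in which case admissibility forces the surrounding coordinates down and Lemma \ref{lem:klsommax} bounds the whole block. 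I expect the bookkeeping for the boundary configurations — tracking which of the two "$+1$" slots in a block are used and at the block's endpoints — to be the fiddliest part, and I would push the most tedious case analysis into an appendix lemma as the paper does elsewhere.
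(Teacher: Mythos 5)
Your proposal is correct and follows essentially the same route as the paper: compare at the extremal index of disagreement, reduce to the containment $\Gamma_{(k,l)}(\,^n\C^{(k,l)})\subset[0,w_n^{(k,l)})$ (resp. the tail bound for $\,_{n+1}\C^{(k,l)}$), and establish that containment by splitting the word into blocks delimited by the indices where $c_i=o_i^{(k,l)}+1$, using the forbidden-pattern condition to find a compensating deficit index between consecutive such blocks and Lemma \ref{lem:klsommax} to telescope. The paper carries out this block decomposition directly rather than by induction on $n$, and verifies the greedy inverse exactly as you describe, so the two arguments coincide in substance.
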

Another interpretation of the map $[\cdot]^{(k,l)}$ is that the terms of the sequences are obtained by the greedy algorithm, as we \textit{take} at each step $i$ the maximal number of $\akl{i}$ from the remaining amount.\\\\
The infinite $(k,l)$-admissible words as defined before will find their use in the comprehension of $(k,l)$-Euler partitions. To deal with the $(k,l)$-lecture hall partitions, there is a need of defining analogous objects, viewed as the \textit{finite} $(k,l)$-admissible words.    
\begin{deff}\label{def:kladmfinite}
For positive integer $n$, and all $m\in\Zz$, define $\left[m\right]_n^{(k,l)}=(c_1,\ldots,c_n)$ such that $c_n = \lfloor m/\akl{n}\rfloor$, and 
\begin{equation}\label{eq:klwordrest}
(c_1,\ldots,c_{n-1},0,\ldots) = \left[m-c_n\akl{n}\right]^{(k,l)}\,\cdot
\end{equation}
Thus, $m=\sum_{i=1}^n c_i\akl{i}$. Set $\C^{(k,l)}_n=\left[\Zz\right]^{(k,l)}_n$. One can check  that $\C^{(k,l)}_n$ is in bijection with $^n\C^{(k,l)}\times\Zz$. We  then extend $\prec$ to the set $\C^{(k,l)}_n$ with the following relation: 
$$(c_1,\ldots,c_n)\prec (d_1,\ldots,d_n)\text{ if and only if there exists }1\leq i\leq n\text{ such that }c_i<d_i\text{ and }c_j=d_j\text{ for } n\geq j>i\,\cdot$$
By writing  
$$(c_1,\cdots,c_n) \equiv ((c_1,\ldots,c_{n-1},0,\ldots), c_n)\,,$$
we have 
$(c_1,\cdots,c_n) \prec (d_1,\cdots,d_n)$ if and only $c_n<d_n$ or $c_n=d_n$ and $(c_1,\ldots,d_{n-1},0,\ldots)\prec (d_1,\ldots,d_{n-1},0,\ldots)$.
Hence, for $r,s\in \Zz$, the relation \eqref{eq:preckladm} then yields 
\begin{equation}\label{eq:ordkladmfin}
r < s \Longleftrightarrow \left[r\right]^{(k,l)}_n \prec \left[s\right]^{(k,l)}_n \,\cdot
\end{equation} 
We also set by convention 
$$\left[\Zz\right]^{(k,l)}_0=\C^{(k,l)}_0=\{\emptyset\}\cdot$$ 
For $1\leq i\leq n+1$, denote by $_i\C^{(k,l)}_n$ the subset of $\C^{(k,l)}_n$ consisting of sequences with the $(i-1)$ first terms equal to $0$, and note that $$\C^{(k,l)}_n=\,_1\C^{(k,l)}_n \supset \,_2\C^{(k,l)}_n\supset \cdots \supset \,_n\C^{(k,l)}_n\supset \,_{n+1}\C^{(k,l)}_n = \{(0)_{j=1}^{n}\} \,\cdot$$
Remark that for $1\leq i\leq n$, $_i\C^{(k,l)}_n$ is in bijection with $^n_i\C^{(k,l)}\times\Zz$.
For $s\in \C^{(k,l)}_n$, we also set  the sequence $_is$ to be the sequence $s$ where the $(i-1)$ first terms are replaced by $0$, and observe that, analogously to \eqref{eq:leq}, 
$$_is=\max\{(c_j)_{j=1}^n\in\, _i\C^{(k,l)}_n:  (c_j)_{j=1}^n\preceq s\}\,\cdot$$
\end{deff}
\begin{ex}
 For example, we enumerate in the following table the twenty first non-negative integers in terms of $(2,3)$ and $(3,2)$-admissible words:
 \begin{footnotesize}
 $$
 \begin{array}{|c|c|c|c|c||c|c|c|c|c|}
 \hline
 m& [m]^{(2,3)}& [m]^{(3,2)}& [m]^{(2,3)}_3& [m]^{(3,2)}_3&m& [m]^{(2,3)}& [m]^{(3,2)}& [m]^{(2,3)}_3& [m]^{(3,2)}_3\\
 \hline
0&(0,0,\ldots)& (0,0,\ldots)&(0,0,0)&(0,0,0)& 10&(0,0,2,0,\ldots)& (0,1,0,1,0,\ldots)&(0,0,2)&(0,0,2)\\
1&(1,0,\ldots)& (1,0,\ldots)&(1,0,0)&(1,0,0)& 11&(1,0,2,0,\ldots)& (1,1,0,1,0,\ldots)&(1,0,2)&(1,0,2)\\
2&(2,0,\ldots)& (0,1,0,\ldots)&(2,0,0)&(0,1,0)& 12&(0,0,0,1,0,\ldots)& (0,2,0,1,0,\ldots)&(2,0,2)&(0,1,2)\\
3&(0,1,0,\ldots)& (1,1,0\ldots)&(0,1,0)&(1,1,0)& 13&(1,0,0,1,0,\ldots)& (0,0,1,1,0,\ldots)&(0,1,2)&(1,1,2)\\
4&(1,1,0,\ldots)& (0,2,0,\ldots)&(1,1,0)&(0,2,0)& 14&(2,0,0,1,0,\ldots)& (1,0,1,1,0,\ldots)&(1,1,2)&(0,2,2)\\
5&(0,0,1,0,\ldots)& (0,0,1,0\ldots)&(0,0,1)&(0,0,1)& 15&(0,1,0,1,0,\ldots)& (0,1,1,1,0,\ldots)&(0,0,3)&(0,0,3)\\
6&(1,0,1,0,\ldots)& (1,0,1,0,\ldots)&(1,0,1)&(1,0,1)& 16&(1,1,0,1,0,\ldots)& (0,0,0,2,0,\ldots)&(1,0,3)&(1,0,3)\\
7&(2,0,1,0,\ldots)& (0,1,1,0,\ldots)&(2,0,1)&(0,1,1)& 17&(0,0,1,1,0,\ldots)& (1,0,0,2,0,\ldots)&(2,0,3)&(0,1,3)\\
8&(0,1,1,0,\ldots)& (0,0,0,1,0,\ldots)&(0,1,1)&(1,1,1)& 18&(1,0,1,1,0,\ldots)& (0,1,0,2,0\ldots)&(0,1,3)&(1,1,3)\\
9&(1,1,1,0,\ldots)& (1,0,0,1,0,\ldots)&(1,1,1)&(0,2,1)& 19&(0,0,0,0,1,0,\ldots)& (0,0,0,0,1,0,\ldots)&(1,1,3)&(0,2,3)\\
 \hline
 \end{array}\,\cdot
$$
\end{footnotesize}
\end{ex}
$$$$
\subsubsection{The case $(k,1)$}
Recall \eqref{eq:formulek1}. For $n\in \Z$, 
\begin{equation}\label{eq:formulek2}
\begin{cases}
\al{2n}=k\ak{2n} = a_{n}^{(k-2)}\\
\al{2n-1}=\ak{2n-1} = a_{n}^{(k-2)}+a_{n-1}^{(k-2)}
\end{cases}\cdot
\end{equation}
Our definition of $(k,1)$-admissible words is deeply linked to the $(k-2)$-admissible words.
\begin{deff}
Let $m$ be a non negative integer.  Set $\left[m\right]^{(k,1)}=\left[m\right]^{(k-2)}$. For $n \in \Zu$, set $\left[m\right]^{(k,1)}_{2n}=\left[m\right]^{(k-2)}_n$, and set $\left[m\right]^{(k,1)}_{2n-1}=(c_1,\ldots,c_{n})$ such that $c_{n}=\lfloor m/\ak{2n-1}\rfloor$, and
 \begin{enumerate}
  \item if $m-c_{n}\ak{2n-1}<a_{n-1}^{(k-2)}$, then $$(c_1,\ldots,c_{n-1})=\left[m-c_{2n-1}\ak{2n-1}\right]^{(k-2)}_{n-1}\,,$$
  \item otherwise, $0\leq m-c_{n}\ak{2n-1}-a_{n-1}^{(k-2)}<a_{n}^{(k-2)}$, and then $$(c_1,\ldots,c_{n-2},c_{n-1}-1,0)=\left[m-c_{2n-1}\ak{2n-1}-a_{n-1}^{(k-2)}\right]^{(k-2)}_{n}\,\cdot$$
 \end{enumerate}
 In particular, $\left[m\right]^{(k,1)}_{1}=\left[m\right]^{(k,1)}_{2}=(m)$, and we set by convention $\left[m\right]^{(k,1)}_{0}=\emptyset$.
 These definitions imply that, for $m\in \Zz$ and $n\in\Zu$,
 $$\left[m\right]^{(k,1)}_n=(c_i)_{i=1}^{\lceil n/2\rceil} \Longrightarrow m = c_{\lceil n/2\rceil}\ak{n}+\sum_{i=1}^{\lceil n/2\rceil-1} c_i a_i^{(k-2)}\,\cdot$$
 Denote by $\C^{(k,1)}$ the set $\left[\Zz\right]^{(k,1)}$ and denote by $\C^{(k,1)}_n$ the set $\left[\Zz\right]^{(k,1)}_n$ for all positive integer $n$. The sequences in these sets are called the $(k,1)$-admissible words. In particular, $\C^{(k,1)}=\C^{(k-2)}$ and $\C^{(k,1)}_{2n}=\C^{(k-2)}_n$. Moreover, set $\C^{(k,1)}_{2n-1} =\,^{1}\C^{(k,1)}_{2n-1} \sqcup \,^{0}\C^{(k,1)}_{2n-1}$ with 
 $$^{0}\C^{(k,1)}_{2n-1} = \{(c_i)_{i=1}^n \in \C^{(k,1)}_{2n-1}: c_{n-1}=0\}$$
 and 
 $$^{1}\C^{(k,1)}_{2n-1} = \{(c_i)_{i=1}^n \in \C^{(k,1)}_{2n-1}: c_{n-1}>0\}\,\cdot$$
 Then, $^{0}\C^{(k,1)}_{2n-1}$ is in bijection with $^{n-1}\C^{(k-2)}\times \Zz$ with 
 $$(c_1,\ldots,c_{n-2},0,c_n) \mapsto ((c_1,\ldots,c_{n-2},0,\ldots),c_n)\,,$$
 and 
 $^{1}\C^{(k,1)}_{2n-1}$ is in bijection with $^{n}\C^{(k-2)}\times \Zz$ with
 $$(c_1,\ldots,c_{n-2},c_{n-1}+1,c_n) \mapsto ((c_1,\ldots,c_{n-1},0,\ldots),c_n)\,\cdot$$
 Note that $\prec$ define a total strict order on $\C^{(k,1)}$ and on $\C^{(k,1)}_{n}$, and by \eqref{eq:preckladm}, we have the equivalence 
 $$r<s \Longleftrightarrow \left[r\right]^{(k,1)}_{2n-1}\prec \left[s\right]^{(k,1)}_{2n-1}\,\cdot$$
 Finally, define $_n\C^{(k,1)}=\,_n\C^{(k-2)}$ and for $1\leq i\leq \lceil n/2\rceil +1$, $_i\C^{(k,1)}_n$ the set of sequences of $\C^{(k,1)}_n$ with the $(i-1)^{th}$ first terms equal to $0$.
\end{deff}
\begin{ex}
 For example, we enumerate in the following table the twenty first non-negative integers in terms of $(6,1)$-admissible words:
 $$
 \begin{array}{|c|c|c|c||c|c|c|c|}
 \hline
 m& [m]^{(6,1)}& [m]^{(6,1)}_6& [m]^{(6,1)}_5&m& [m]^{(6,1)}& [m]^{(6,1)}_6& [m]^{(6,1)}_5\\
 \hline
0&(0,0,\ldots)&(0,0,0)&(0,0,0)& 10&(2,2,0,\ldots)& (2,2,0)&(2,2,0)\\
1&(1,0,\ldots)&(1,0,0)&(1,0,0)& 11&(3,2,0,\ldots)& (3,2,0)&(3,2,0)\\
2&(2,0,\ldots)&(2,0,0)&(2,0,0)& 12&(0,3,0,\ldots)& (0,3,0)&(0,3,0)\\
3&(3,0,\ldots)&(3,0,0)&(3,0,0)& 13&(1,3,0,\ldots)& (1,3,0)&(1,3,0)\\
4&(0,1,0,\ldots)&(0,1,0)&(0,1,0)& 14&(2,3,0,\ldots)& (2,3,0)&(2,3,0)\\
5&(1,1,0,\ldots)&(1,1,0)&(1,1,0)& \textcolor{red}{15}&\textcolor{red}{(0,0,1,0,\ldots)}&\textcolor{red}{ (0,0,1)}&\textcolor{red}{(3,3,0)}\\
6&(2,1,0,\ldots)&(2,1,0)&(2,1,0)& 16&(1,0,1,0,\ldots)& (1,0,1)&(0,4,0)\\
7&(3,1,0,\ldots)&(3,1,0)&(3,1,0)& 17&(2,0,1,0,\ldots)& (2,0,1)&(1,4,0)\\
8&(0,2,0,\ldots)&(0,2,0)&(0,2,0)& 18&(3,0,1,0,\ldots)& (3,0,1)&(2,4,0)\\
9&(1,2,0,\ldots)&(1,2,0)&(1,2,0)& 19&(0,1,1,0,\ldots)& (0,1,1)&(0,0,1)\\
 \hline
 \end{array}
$$
\end{ex}
$$$$
\subsubsection{The case $(1,k)$}
The definition of the $(1,k)$-admissible words dwells on the definition of $(k,1)$-admissible words and the properties of Lemma \ref{lem:divide}, with $x=\sk{0}$ in $(1)$ and $x=\frac{\ak{n+1}}{\al{n}}$ in $(2)$.
\begin{deff}\label{def:1k}
Set $\left[0\right]^{(1,k)}= (\left[0\right]^{(k,1)},\left[0\right]^{(k,1)})$,  for $m\in \Zu$  
$$  \left[m\right]^{(1,k)}=\left(\left[\left\lfloor\sk{0}\cdot m\right\rfloor +1\right]^{(k,1)},\left[\left\lceil\sk{2}\cdot m\right\rceil-1\right]^{(k,1)}\right)\,,$$
and for $m\in \Zz$ and $n\in\Zu$,
$$
  \left[m\right]^{(1,k)}_{n}=\left(\left[\left\lceil\frac{\ak{n+1}\cdot m}{\al{n}}\right\rceil \right]^{(k,1)}_{n+1},\left[\left\lfloor\frac{\ak{n-1}\cdot m}{\al{n}}\right\rfloor\right]^{(k,1)}_{n-1}\right)\,\cdot
$$
In particular, $ \left[m\right]^{(1,k)}_{1} = ((m),\emptyset)$.
Denote by $\C^{(1,k)}$ the set $\left[\Zz\right]^{(1,k)}$ and denote by $\C^{(1,k)}_n$ the set $\left[\Zz\right]^{(1,k)}_n$ for all positive integer $n$. The pair of sequences in these sets are called the $(1,k)$-admissible words.
\end{deff}
\begin{ex}
 We have
 $$
 \begin{array}{|c|c|c|c|}
 \hline
 m& [m]^{(1,6)}& [m]^{(1,6)}_5& [m]^{(1,1)}_4\\
 \hline
17&(2,3,0,0\ldots),(3,0,0,\ldots)&(2,3,0),(3,0)&(2,3,0),(3,0)\\
\textcolor{red}{18}&\textcolor{red}{(0,0,1,0\ldots),(3,0,0,\ldots)}&\textcolor{red}{(0,0,1),(3,0)}&\textcolor{red}{(3,3,0),(3,0)}\\
19&(0,0,1,0\ldots),(0,1,0,\ldots)&(0,0,1),(0,1)&(0,4,0),(3,0)\\
20&(1,0,1,0\ldots),(0,1,0,\ldots)&(1,0,1),(0,1)&(0,4,0),(4,0)\\
21&(2,0,1,0\ldots),(0,1,0,\ldots)&(2,0,1),(0,1)&(1,4,0),(4,0)\\
22&(3,0,1,0\ldots),(0,1,0,\ldots)&(3,0,1),(0,1)&(2,4,0),(4,0)\\
23&(0,1,1,0\ldots),(0,1,0,\ldots)&(0,1,1),(0,1)&(0,0,1),(4,0)\\
24&(0,1,1,0\ldots),(1,1,0,\ldots)&(0,1,1),(1,1)&(0,0,1),(0,1)\\
 \hline
 \end{array}\,\cdot
$$
\end{ex}
$$$$
\subsection{The transformation $0\cdot$}
In this part, we define a shifting transformation. This will allows us to investigate the ratio constraint in our partition in terms of shifting of the $(k,l)$-admissible words.
\begin{deff}\label{def:decalage}
 For $t\in \Zz\cup \{\infty\}$ and all integer sequence $c=(c_i)_{i=1}^t$, $0\cdot c$ denotes the sequence $d=(d_i)_{i=1}^{t+1}$ satisfying $ d_1=0$ and $d_{i+1}=c_i$ for $1\leq i\leq t$, and denote by $0\cdot$ the function induced by the corresponding transformation.
 \end{deff}
 Observe that for $c=\emptyset$,  $0\cdot c = (0)$. 
 \begin{lem}\label{lem:decalage}Let $k,l\geq 2$.
For positive integers $n$, $i$ and $n+1\geq j\geq 1$, $0\cdot$ induces a bijection from $_i^n\C^{(l,k)}$ to $_{i+1}^{n+1}\C^{(k,l)}$ and a bijection from $_j\C^{(l,k)}_n$ to $_{j+1}\C^{(k,l)}_{n+1}$ which preserve the order $\prec$.\\
Let $k\geq 4$.
For positive integers $n,i$ and $\lceil n/2\rceil +1\geq j\geq 1$, the transformation $0\cdot$ induces a bijection from $_i^n\C^{(k,1)}$ to $_{i+1}^{n+1}\C^{(k,1)}$ and a bijection from $_j\C^{(k,1)}_n$ to $_{j+1}\C^{(k,1)}_{n+2}$ which preserve the order $\prec$.
\end{lem}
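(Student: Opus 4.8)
The plan is to treat the four claimed bijections by reducing each to the defining recursions of the relevant admissible-word sets, so that "shift by a zero" becomes an index relabelling that visibly respects every defining constraint. First I would dispose of the $k,l\ge 2$ case. Here $0\cdot$ sends $(c_1,\dots,c_t)$ to $(0,c_1,\dots,c_t)$; since $o^{(l,k)}_{h}=o^{(k,l)}_{h+1}$ for all $h\ge 1$ (the sequence $o$ just alternates $l-2,k-2$ and shifting interchanges the two patterns), condition (2) of \Def{def:kladm} transfers verbatim, condition (1) is obvious, and the forbidden-pattern condition (3) transfers because a pattern of \eqref{eq:forbpat} occupying $\{i,\dots,j\}$ in $d=0\cdot c$ would force $d_1$ or some $d_h$ past its bound only if the original pattern sat at $\{i-1,\dots,j-1\}$ in $c$ (the new first coordinate $d_1=0$ can never be the left end of a forbidden pattern since $o^{(k,l)}_1+1\ge 1>0$ would be needed). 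This gives the set-level bijection $^n_i\C^{(l,k)}\to\,^{n+1}_{i+1}\C^{(k,l)}$; the finite analogue $_j\C^{(l,k)}_n\to\,_{j+1}\C^{(k,l)}_{n+1}$ follows from \Def{def:kladmfinite} since $\C^{(l,k)}_n\cong\,^n\C^{(l,k)}\times\Zz$ with the last coordinate $c_n$ untouched by $0\cdot$ (it becomes $d_{n+1}$) and the truncated part handled by the infinite case just proved.

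Next I would check the order claim. The order $\prec$ is the lexicographic order reading from the \emph{right} (\Def{def:lexicorder}: $c\prec d$ iff $c_n<d_n$ for the largest index $n$ where they differ). Prepending a common $0$ in front does not change which coordinate, counted from the right end, is the first point of disagreement, nor the comparison there; more precisely, for $c,c'$ of length $t$ we have $c\prec c'\iff 0\cdot c\prec 0\cdot c'$ directly from the definition. Combined with the compatibility with the product decomposition in the finite case (the $c_n$-coordinate, which dominates $\prec$, maps to $d_{n+1}$, still dominating), this yields order-preservation in all four situations.

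For $k\ge 4$ in the $(k,1)$ case the argument runs through \Def{def:1k}'s parent definition and \eqref{eq:formulek2}: since $\C^{(k,1)}=\C^{(k-2)}$ as sets and $(k-2)\ge 2$, the infinite statement $^n_i\C^{(k,1)}\to\,^{n+1}_{i+1}\C^{(k,1)}$ is literally the $k,l\ge 2$ case applied with $(k,l)=(k-2,k-2)$, which is legitimate because $(k-2)(k-2)\ge 4$. The finite statement $_j\C^{(k,1)}_n\to\,_{j+1}\C^{(k,1)}_{n+2}$ needs the index bookkeeping built into $\left[m\right]^{(k,1)}_n$: a sequence in $\C^{(k,1)}_n$ has length $\lceil n/2\rceil$, and a sequence in $\C^{(k,1)}_{n+2}$ has length $\lceil n/2\rceil+1$, so $0\cdot$ again just prepends a zero, increasing the length by one — consistent with $n\mapsto n+2$. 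One then verifies case-by-case (according to the parity of $n$, and for odd $n$ according to whether the word lies in $^{0}\C^{(k,1)}_{2m-1}$ or $^{1}\C^{(k,1)}_{2m-1}$) that the defining conditions of \Def{def:kladmfinite}/the $(k,1)$ definition — in particular the clause distinguishing $m-c_n\ak{2n-1}$ against $a_{n-1}^{(k-2)}$ — are stable under the relabelling, using $\al{2n-1}=\ak{2n-1}$ and $\al{2n}=k\ak{2n}=a_n^{(k-2)}$ from \eqref{eq:formulek2}. Order-preservation follows as before since $\prec$ on $\C^{(k,1)}_m$ is inherited coordinatewise from the right.

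The main obstacle I expect is not the $k,l\ge2$ part, which is essentially a clean index shift once one notices $o^{(l,k)}_h=o^{(k,l)}_{h+1}$, but the finite $(k,1)$ bijection $_j\C^{(k,1)}_n\to\,_{j+1}\C^{(k,1)}_{n+2}$: the nonstandard two-step jump in the subscript and the branching definition of $\left[m\right]^{(k,1)}_{2n-1}$ (with the $-1$ correction to $c_{n-1}$ in the second branch) mean the stability of the constraints under $0\cdot$ must be verified in several sub-cases rather than read off directly, and one must be careful that the decomposition $\C^{(k,1)}_{2n-1}=\,^0\C^{(k,1)}_{2n-1}\sqcup\,^1\C^{(k,1)}_{2n-1}$ is respected. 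I would isolate this verification as the core of the proof and relegate the routine order-preservation checks to a remark.
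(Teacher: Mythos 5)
Your proposal is correct and follows essentially the same route as the paper: the infinite case via the observation that a forbidden pattern must begin and end with a positive entry (so prepending a zero creates none), order-preservation because $\prec$ reads from the right, the finite case via the identification $\C^{(l,k)}_n\cong\,^n\C^{(l,k)}\times\Zz$, and the $(k,1)$ case by reduction to $\C^{(k-2)}$ together with the decomposition $^{0}\C^{(k,1)}_{2n-1}\sqcup\,^{1}\C^{(k,1)}_{2n-1}$ for odd indices. Your explicit remark that $o^{(l,k)}_h=o^{(k,l)}_{h+1}$ makes the parity swap cleaner than the paper's phrasing, but it is the same argument.
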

 Extend this transformation to the set of sequence pairs with $p_i(0\cdot s) = 0\cdot p_i(s)$ for $i\in \{1,2\}$, and for all $d\in \Zz$, set $_{+d}\C^{(1,k)}= \underbrace{0\cdots 0}_{d \text{ times}}\cdot\C^{(1,k)}$, and $_{+d}\C^{(1,k)}_n= \underbrace{0\cdots 0}_{d \text{ times}}\cdot\C^{(1,k)}_n$.
We conventionally extend $\preceq$ on $ _{+d}\C^{(1,k)}$ such that for $u,v\in\, _{+d}\C^{(1,k)}$,
\begin{equation}\label{eq:ordcd}
u\preceq  v \Longleftrightarrow p_i(u)\preceq p_i(v)\text{ for  }i\in\{1,2\}\,\cdot 
\end{equation}
In the same way, we extend $\preceq$ on $\, _{+d}\C^{(1,k)}_n$. In particular, for $d=0$, $\preceq$ defines an order on $\C^{(1,k)}$ and $\C^{(1,k)}_n$.
\subsection{Followers and admissible words}
This part is dedicated to the study of the relations between two $(k,l)$-admissible words consecutive in terms of $\preceq$. 
\subsubsection{The case $k,l\geq 2$}
The next lemma relates consecutive $(k,l)$-admissible words for the infinite  version. 
\begin{lem}\label{lem:follow}
Let $n$ be a positive integer and $(c_i)_{i\geq 1}\in \, _n\C^{(k,l)}$. Let $(d_i)=\F(1,\,_n\C^{(k,l)},(c_i))$. The following then occurs.
\begin{enumerate}
\item If there exists $j\geq n$ such that $c_i= o^{(k,l)}_i+\delta_{i,j}$ for $ n\leq i\leq j$, then 
\begin{equation}\label{eq:follow1}
0=d_n=\cdots=d_j\quad, \quad d_{j+1}=c_{j+1}+1\quad\text{and}\quad d_{i}=c_{i} \text{ for }i\geq j+2\,\cdot
\end{equation}
\item Otherwise, 
\begin{equation}\label{eq:follow2}
d_{n}=c_{n}+1\quad\text{and}\quad d_{i}=c_{i} \text{ for }i\geq n+1\,\cdot
\end{equation}
\end{enumerate}
\end{lem}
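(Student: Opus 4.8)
The plan is to identify $\F(1,\,_n\C^{(k,l)},(c_i))$ with the $\preceq$-minimum of $\{(e_i)\in\,_n\C^{(k,l)}:(c_i)\prec(e_i)\}$. This is legitimate because $(\,_n\C^{(k,l)},\preceq)$ is an infinite subset of $\C^{(k,l)}$, which by \Prp{prop:kladmseq}(1) (through $\Gamma_{(k,l)}$ with weights $\akl{i}$ and the equivalence \eqref{eq:preckladm}) is well ordered of type $\omega$; hence $_n\C^{(k,l)}$ is too, and the follower of $(c_i)$ is its successor in this well order, i.e. the least element strictly above it. I would first observe that the two cases of the statement are mutually exclusive and exhaustive, and that in case (1) the index $j$ is unique (two such indices would make $c_j=o^{(k,l)}_j$ and $c_j=o^{(k,l)}_j+1$ hold at once), so the candidate $(d_i)$ is well defined. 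It then suffices to verify: (a) $(d_i)\in\,_n\C^{(k,l)}$; (b) $(c_i)\prec(d_i)$; (c) every $(e_i)\in\,_n\C^{(k,l)}$ with $(c_i)\prec(e_i)$ satisfies $(d_i)\preceq(e_i)$.

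For (a), the bound $c_i\in\{0,\dots,o^{(k,l)}_i+1\}$ and the finiteness of the support are routine, once one records two consequences of $(c_i)$ being admissible: in case (2), $c_n\leq o^{(k,l)}_n$ (otherwise $j=n$ would be a case-(1) index), so $d_n=c_n+1\leq o^{(k,l)}_n+1$; in case (1), $c_{j+1}\leq o^{(k,l)}_{j+1}$, because $c_j=o^{(k,l)}_j+1$ together with $c_{j+1}=o^{(k,l)}_{j+1}+1$ would be the forbidden pattern of the pair $\{j,j+1\}$, so $d_{j+1}=c_{j+1}+1\leq o^{(k,l)}_{j+1}+1$. \textbf{The main obstacle is ruling out a forbidden pattern in $(d_i)$.} The point is that $(d_i)$ differs from $(c_i)$ only at finitely many small indices, where entries are either set to $0$ (positions below $n$, already $0$, and in case (1) also positions $n,\dots,j$) or raised by one (position $n$ in case (2), position $j+1$ in case (1)); since $o^{(k,l)}_h+1\geq 1$, a position holding $0$ can never be an endpoint of a forbidden pattern. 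A short case analysis then forces any forbidden pattern of $(d_i)$ that is not already one of $(c_i)$ to have as its left endpoint the unique raised position $p$ (so $d_p=o^{(k,l)}_p+1$); expanding that pattern and substituting $d_i=c_i$ for $i>p$ --- and, in case (1), also $c_j=o^{(k,l)}_j+1$ --- exhibits a forbidden pattern of $(c_i)$ (in case (2), precisely a case-(1) configuration, contradicting ``otherwise''), which is impossible. As all entries below index $n$ remain $0$, this also gives $(d_i)\in\,_n\C^{(k,l)}$.

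Part (b) is immediate: $(d_i)$ and $(c_i)$ agree above the single raised index $p$ and $d_p=c_p+1>c_p$, so $p$ is the largest index where they differ and $(c_i)\prec(d_i)$. For (c) I would take $(e_i)\in\,_n\C^{(k,l)}$ with $(c_i)\prec(e_i)$ and let $m$ be the largest index with $c_m\neq e_m$, so $m\geq n$, $c_m<e_m$, and $c_i=e_i$ for $i>m$. In case (2): if $m>n$ then $(d_i)$ and $(e_i)$ agree above $m$ while $d_m=c_m<e_m$, hence $(d_i)\prec(e_i)$; if $m=n$ then $d_i=c_i=e_i$ for $i>n$ and $e_n\geq c_n+1=d_n$, hence $(d_i)\preceq(e_i)$. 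In case (1) I would first show $m\geq j+1$: $m=j$ would force $e_j\geq c_j+1=o^{(k,l)}_j+2$, impossible; and $n\leq m<j$ would give $c_m=o^{(k,l)}_m$, hence $e_m=o^{(k,l)}_m+1$, while $e_i=c_i=o^{(k,l)}_i$ for $m<i<j$ and $e_j=c_j=o^{(k,l)}_j+1$, i.e. a forbidden pattern of $(e_i)$ on $[m,j]$. With $m\geq j+1$ the comparison is as in case (2), $p=j+1$ playing the role of $n$: if $m\geq j+2$ then $(d_i)\prec(e_i)$, and if $m=j+1$ then $e_{j+1}\geq c_{j+1}+1=d_{j+1}$ with equality above, so since $(d_i)$ vanishes below $j+1$ one gets $(d_i)\preceq(e_i)$. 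This establishes (c), and the lemma follows.

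As a variant one could first reduce to $n=1$ by iterating the order-preserving shift $0\,\cdot$ of \Lem{lem:decalage}, which commutes with passing to followers, and then argue only inside $\C^{(k,l)}$ and $\C^{(l,k)}$; or one could recast the computation through $\Gamma_{(k,l)}$ and use \Lem{lem:klsommax} to evaluate the near-forbidden configuration of case (1). I would nonetheless favour the direct argument above, since it is self-contained and the bookkeeping is light.
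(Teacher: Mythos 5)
Your proposal is correct and follows essentially the same route as the paper's proof: construct the candidate $(d_i)$ explicitly, show admissibility by reducing any over-range entry or new forbidden pattern of $(d_i)$ to a forbidden pattern (or a case-(1) configuration) of $(c_i)$, and establish minimality by analysing the top index of disagreement with a competitor $(e_i)$, where the key step in case (1) is exhibiting the forbidden pattern of $(e_i)$ on $[m,j]$ exactly as the paper does with its intermediate sequence. The only cosmetic difference is that you prove minimality directly ($(c_i)\prec(e_i)\Rightarrow(d_i)\preceq(e_i)$) where the paper argues by contradiction with a strictly intermediate element.
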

We now state the analogous finite version of Lemma \ref{lem:follow}.
\begin{lem}\label{lem:followfin}
Let $n\geq j\geq 1$ be positive integers and $(c_i)_{i=1}^n\in \, _j\C^{(k,l)}_n$. Let $(d_i)=\F(1,\,_j\C^{(k,l)}_n,(c_i))$. The following then occurs.
\begin{enumerate}
\item If there exists $j\leq h\leq n-1$ such that $c_i= o^{(k,l)}_i+\delta_{i,h}$ for $ j\leq i\leq h$, then 
\begin{equation}\label{eq:follow1fin}
0=d_j=\cdots=d_h\quad, \quad d_{h+1}=c_{h+1}+1\quad\text{and}\quad d_{i}=c_{i} \text{ for } h+2\leq i \leq n\,\cdot
\end{equation}
\item Otherwise, 
\begin{equation}\label{eq:follow2fin}
d_{j}=c_{j}+1\quad\text{and}\quad d_{i}=c_{i} \text{ for } j+1\leq i\leq n \,\cdot
\end{equation}
\end{enumerate}
\end{lem}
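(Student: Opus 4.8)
The plan is to follow the proof of \Lem{lem:follow} closely, working directly with the strict order $\prec$ on $\,_j\C^{(k,l)}_n$. I would first record that $\,_j\C^{(k,l)}_n$ is a countably infinite, totally ordered set with least element $(0,\dots,0)$ and no greatest element (the last coordinate $c_n=\lfloor m/\akl{n}\rfloor$ of $[m]^{(k,l)}_n$ being unbounded), so that every element has an immediate successor and $\F(1,\,_j\C^{(k,l)}_n,(c_i))$ is well defined. The structural fact I keep in mind is that a tuple $(d_i)_{i=1}^n$ lies in $\,_j\C^{(k,l)}_n$ if and only if $d_i=0$ for $i<j$, $d_i\le o^{(k,l)}_i+1$ for $i\le n-1$, no pattern \eqref{eq:forbpat} occurs with both endpoints $\le n-1$, and $d_n\ge 0$ is arbitrary --- the last position being unconstrained, which is exactly why the hypothesis of Case $1$ requires $h\le n-1$.

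In Case $2$ I would take $d=(c_1,\dots,c_{j-1},c_j+1,c_{j+1},\dots,c_n)$. Checking $d\in\,_j\C^{(k,l)}_n$ reduces to showing that incrementing $c_j$ creates no pattern \eqref{eq:forbpat}; since positions below $j$ vanish, such a pattern would have left endpoint $j$, forcing a run $c_{j+1}=o^{(k,l)}_{j+1},\dots,c_{h-1}=o^{(k,l)}_{h-1},c_h=o^{(k,l)}_h+1$ with $h\le n-1$ above $j$, i.e.\ the Case $1$ hypothesis, excluded here. Minimality is then immediate: an $e\in\,_j\C^{(k,l)}_n$ with $e\succ c$ first disagrees with $c$ at some $m\ge j$, and comparing with $d$ (which agrees with $c$ above $j$) gives $e\succeq d$ when $m=j$ and $e\succ d$ when $m>j$.

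In Case $1$, with $j\le h\le n-1$, I would set $d_j=\dots=d_h=0$, $d_{h+1}=c_{h+1}+1$, and $d_i=c_i$ otherwise. That $d\in\,_j\C^{(k,l)}_n$ rests on two points. First, $c_{h+1}\le o^{(k,l)}_{h+1}$ whenever $h\le n-2$, for otherwise $(c_h,c_{h+1})$ is a pattern \eqref{eq:forbpat} on $[h,h+1]\subseteq[1,n-1]$; hence $d_{h+1}$ is a legal coordinate, and when $h=n-1$ the freedom at position $n$ makes $d_n=c_n+1$ legal with no constraint. Second, since $d_i=0$ for all $i\le h$, any pattern \eqref{eq:forbpat} with both endpoints $\le n-1$ in $d$ would have both endpoints $\ge h+1$; a short check shows $h+1$ can be neither an interior point nor the right endpoint, so it would be the left endpoint, which together with $c_h=o^{(k,l)}_h+1$ forces a pattern \eqref{eq:forbpat} already present in $c$ on a range $[h,b]\subseteq[1,n-1]$ --- a contradiction. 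Finally $d\succ c$, since they first disagree at position $h+1$.

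The step I expect to be the main obstacle is the minimality in Case $1$: I must rule out any $e\in\,_j\C^{(k,l)}_n$ with $c\prec e\prec d$. The argument is that any $e\succ c$ in the set first disagrees with $c$ at an index $m\ge h+1$. Indeed, if $j\le m\le h$ then $c_m$ equals $o^{(k,l)}_m$ for $m<h$ and $o^{(k,l)}_h+1$ for $m=h$, so $e_m>c_m$ is possible only with $m<h$ and $e_m=o^{(k,l)}_m+1$, and since $e$ agrees with $c$ above $m$ the segment $e_m,\dots,e_h$ realizes a pattern \eqref{eq:forbpat} on $[m,h]\subseteq[1,n-1]$, contradicting admissibility of $e$. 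Once $m\ge h+1$, comparing $e$ with $d$ gives $e\succeq d$ exactly as in Case $2$, so $d$ is the immediate successor. I would close by remarking that this is the verbatim finite counterpart of \Lem{lem:follow}, the only new ingredient being the unconstrained last coordinate; alternatively one may deduce the statement from \Lem{lem:follow} applied to $(c_1,\dots,c_{n-1},0,0,\dots)$ together with a separate treatment of position $n$, but the direct argument above is self-contained.
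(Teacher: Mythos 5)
Your proof is correct, but it takes a different route from the paper's. You re-run the forbidden-pattern analysis of \Lem{lem:follow} directly on the finite set $\,_j\C^{(k,l)}_n$: you build the candidate successor explicitly, verify admissibility by locating where a pattern \eqref{eq:forbpat} could be created, and prove minimality by showing any $e\succ c$ must first differ from $c$ at an index $\geq h+1$. The paper instead exploits the identification $(c_1,\dots,c_n)\equiv((c_1,\dots,c_{n-1},0,\dots),c_n)$ of $\C^{(k,l)}_n$ with $^n\C^{(k,l)}\times\Zz$ and \emph{deduces} the statement from \Lem{lem:follow}: if the prefix $(c_1,\dots,c_{n-1},0,\dots)$ is not the maximal element $(\underbrace{0,\dots,0}_{j-1},o_j^{(k,l)},\dots,o_{n-2}^{(k,l)},o_{n-1}^{(k,l)}+1,0,\dots)$ of $^n_j\C^{(k,l)}$, the successor keeps $c_n$ and advances the prefix via \Lem{lem:follow}, yielding cases $(1)$ with $h\leq n-2$ or $(2)$; if the prefix is maximal, the successor is $c_n+1$ with the prefix reset to zero, which is exactly case $(1)$ with $h=n-1$. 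This is the alternative you mention in your closing remark. The paper's reduction is shorter and reuses the infinite lemma; your direct argument is self-contained but duplicates the pattern analysis. One small point to tighten in your Case $2$: besides ruling out a created pattern \eqref{eq:forbpat} (which has length at least two), you must also rule out the coordinate-bound violation $d_j=o_j^{(k,l)}+2$ when $j\leq n-1$; this is the degenerate instance $h=j$ of the Case $1$ hypothesis ($c_j=o_j^{(k,l)}+1$), so it is indeed excluded, but it deserves an explicit word since \eqref{eq:forbpat} alone does not cover it.
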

\begin{exs}
 We have the following.
 \begin{align*}
  \F(1,\,_2\C^{(2,3)},(0,0,1,1,0,\ldots))&=\F(1,\,_3\C^{(2,3)},(0,0,1,1,0,\ldots))=(0,0,0,0,1,0,\ldots)\,,\\
  \F(1,\,_2\C^{(2,3)}_4,(0,0,2,1))&=(0,0,0,2)\,,\\
  \F(1,\,_1\C^{(2,3)}_4,(0,0,2,1))&=(1,0,2,1)\,\cdot
 \end{align*}
\end{exs}
These lemmas allows us to relate the integers corresponding to consecutive $(k,l)$-admissible words and to present the connection to the ratio constraint.
\begin{prop}\label{prop:klinserted} For positive integer $n$, we have the following.
\begin{enumerate}
\item For $n\geq 1$, if $(d_i)_{i\geq 1}=\F(1,\,_n\C^{(l,k)},(c_i)_{i\geq 1})$,   
one of following then holds:
\begin{align}
\label{eq:klratinfdif1} (d_i)_{i\geq n} \in\, _n\C^{(l,k)}\setminus\, _{n+1}\C^{(l,k)}&\Longleftrightarrow \sum_{i= n}^\infty (d_i-c_i)\akl{i+1} = \akl{n+1} \nonumber\\
&\Longleftrightarrow 0\leq \sum_{i\geq n}c_i\akl{i+1}-s_{0}^{(k,l)}\sum_{i\geq n}c_i\alk{i}\leq s_n^{(k,l)}-s_{n+1}^{(k,l)} \,,\\\nonumber\\
\label{eq:klratinfdif2}   (d_i)_{i\geq n} \in \,_{n+1}\C^{(l,k)}&\Longleftrightarrow \sum_{i=n}^\infty (d_i-c_i)\akl{i+1} = \akl{n+1}-\alk{n}\nonumber\\
&\Longleftrightarrow s_n^{(k,l)}-s_{n+1}^{(k,l)}<\sum_{i\geq n}c_i\akl{i+1}-s_{0}^{(k,l)}\sum_{i\geq n}c_i\alk{i}\leq s_n^{(k,l)}\,\cdot
\end{align}
 \item
For $2n-1\geq j\geq 1$, if $(d_i)_{i=1}^{2n-1}=\F(1,\,_j\C^{(l,k)}_{2n-1},(c_i)_{i\geq 1})$,  
one of following holds: 
\begin{small}
\begin{align}
\label{eq:klratevendif1} (d_i)_{i=1}^{2n-1} \in\, _j\C^{(l,k)}_{2n-1}\setminus\, _{j+1}\C^{(l,k)}_{2n-1}&\Longleftrightarrow \sum_{i=j}^{2n-1} (d_i-c_i)\akl{i+1} = \akl{j+1}\nonumber\\
&\Longleftrightarrow 0\leq \alk{2n-1}\sum_{i=j}^{2n-1}c_i\akl{i+1}-\akl{2n}\sum_{i=j}^{2n-1}c_i\alk{i} < \akl{2n-j}-\akl{2n-1-j}\,,\\\nonumber\\
\label{eq:klratevendif2}  (d_i)_{i=1}^{2n-1} \in\, _{j+1}\C^{(l,k)}_{2n-1} &\Longleftrightarrow \sum_{i=j}^{2n-1} (d_i-c_i)\akl{i+1} = \akl{j+1}-\akl{j}\nonumber\\
&\Longleftrightarrow \akl{2n-j}-\akl{2n-1-j}\leq \alk{2n-1}\sum_{i=j}^{2n-1}c_i\akl{i+1}-\akl{2n}\sum_{i=j}^{2n-1}c_i\alk{i} < \akl{2n-j}\,\cdot
\end{align}
\end{small}
\item For $2n\geq j\geq 1$, if $(d_i)_{i=1}^{2n}=\F(1,\,_j\C^{(k,l)}_{2n},(c_i)_{i=1}^{2n})$,  
one of following holds: 
\begin{small}
\begin{align}
\label{eq:klratodddif1} (d_i)_{i}^{2n} \in\, _j\C^{(k,l)}_{2n}\setminus\, _{j+1}\C^{(k,l)}_{2n}&\Longleftrightarrow \sum_{i=j}^{2n} (d_i-c_i)\alk{i+1} = \alk{j+1} \nonumber\\
&\Longleftrightarrow 0\leq \akl{2n}\sum_{i=j}^{2n}c_i\alk{i+1}-\alk{2n+1}\sum_{i=j}^{2n}c_i\akl{i} < \akl{2n+1-j}-\akl{2n-j}\,,\\\nonumber\\
\label{eq:klratodddif2} (d_i)_{i}^{2n} \in\, _{j+1}\C^{(k,l)}_{2n} &\Longleftrightarrow  \sum_{i=j}^{2n} (d_i-c_i)\alk{i+1} = \alk{j+1}-\alk{j}\nonumber\\
&\Longleftrightarrow \akl{2n+1-j}-\akl{2n-j}\leq \akl{2n}\sum_{i=j}^{2n}c_i\alk{i+1}-\alk{2n+1}\sum_{i=j}^{2n}c_i\akl{i} < \akl{2n+1-j}\,\cdot
\end{align}
\end{small}
\end{enumerate}
\end{prop}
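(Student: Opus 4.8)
The plan is to treat the three parts uniformly, since each has the same shape: a follower relation on a set of truncated $(k,l)$-admissible words is translated first into a statement about the ``weight difference'' $\sum (d_i-c_i) w_i$ against the appropriate $(k,l)$-type sequence, and then into a ratio inequality on $\sum c_i\akl{i+1}$ and $\sum c_i\alk{i}$. I would begin from Lemma \ref{lem:follow} (for part (1)) and Lemma \ref{lem:followfin} (for parts (2) and (3)), which already give the explicit form of $(d_i)$ in terms of $(c_i)$: either the ``carry'' case, where a forbidden-block prefix collapses to zeros and a single coordinate past the block is incremented, or the ``simple'' case, where only the bottom nonzero coordinate is incremented. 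In the simple case $\sum_{i\geq n}(d_i-c_i)\akl{i+1}=\akl{n+1}$ is immediate, and in the carry case, the identity $\sum_{h=i}^j o^{(k,l)}_h w_h = w_{i-1}+w_{j+1}-w_i-w_j$ from Lemma \ref{lem:klsommax} (applied with the shifted sequence $w_h = \akl{h+1}$, which still satisfies \eqref{eq:klseqdefbis} up to the appropriate index parity) is exactly what produces the telescoping that yields $\akl{n+1}-\alk{n}$, respectively $\akl{j+1}-\akl{j}$. This establishes the first $\Longleftrightarrow$ in each of \eqref{eq:klratinfdif1}--\eqref{eq:klratodddif2}.

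For the second equivalence — the passage to the ratio inequality — the key observation is that the event ``$(d_i)\in\,_n\C^{(l,k)}\setminus\,_{n+1}\C^{(l,k)}$ versus $(d_i)\in\,_{n+1}\C^{(l,k)}$'' is precisely the dichotomy of whether the carry triggered by incrementing at level $n$ propagates past level $n$ or not. So I would phrase this as: starting from $(c_i)$, the follower is obtained by adding $\akl{n+1}$ to $\Gamma_{(l,k)}$-value and reducing; the carry propagates exactly when $\sum_{i\geq n} c_i w_i$ (with $w_i$ the shifted sequence) is ``large enough'' to fill the forbidden block, i.e. lands in the upper sub-interval. Quantitatively, I would use Proposition \ref{prop:kladmseq} together with \eqref{eq:ratio} — which gives $\akl{i}-\skl{0}\alk{i-1}=\skl{i}$ — to rewrite $\sum_{i\geq n}c_i\akl{i+1}-\skl{0}\sum_{i\geq n}c_i\alk{i}$ as $\sum_{i\geq n}c_i\skl{i+1}$ (telescoping each term), and then bound this sum: $(c_i)\in\,_n\C^{(l,k)}$ means its $\Gamma$-value, weighted by the non-increasing sequence $(\skl{i+1})_{i\geq n}$, lies in $[0,\skl{n})$ by part (2) of Proposition \ref{prop:kladmseq}, and the finer split at $\skl{n}-\skl{n+1}$ corresponds exactly to whether the leading coordinate $c_n$ can still be incremented without forcing a carry. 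The finite cases (2) and (3) are the same argument with $(\skl{i})$ replaced by the finite cross-product expressions $\alk{2n-1}\akl{i+1}-\akl{2n}\alk{i}$, which by the cross-relations \eqref{eq:crosseven}--\eqref{eq:crossodd} collapse to $\akl{2n-i}$, so the weighted sum again telescopes; the interval endpoints $\akl{2n-j}-\akl{2n-1-j}$ and $\akl{2n-j}$ arise exactly as in Lemma \ref{lem:followfin}.

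Concretely the steps are: (i) invoke Lemma \ref{lem:follow}/\ref{lem:followfin} to split into the carry and simple cases and read off $(d_i-c_i)$; (ii) use Lemma \ref{lem:klsommax} (shifted index) to evaluate $\sum(d_i-c_i)w_i$, giving $\akl{n+1}$ or $\akl{n+1}-\alk{n}$ (resp. the finite analogues) — this is the first biconditional; (iii) use \eqref{eq:ratio} in the infinite case, or \eqref{eq:crosseven}--\eqref{eq:crossodd} in the finite cases, to collapse $\sum c_i\akl{i+1}-\skl{0}\sum c_i\alk{i}$ to a single telescoping sum $\sum c_i\skl{i+1}$ (resp. $\sum c_i\akl{2n-i}$); (iv) apply Proposition \ref{prop:kladmseq}(2) and the forbidden-pattern characterization to pin down which sub-interval this sum lies in, matching the carry/simple dichotomy from step (i). I expect step (iv) — matching the two sub-intervals against the precise combinatorial condition for carry propagation — to be the main obstacle, because it requires carefully tracking how the non-increasing weights $(\skl{i+1})$ interact with the admissibility constraint \eqref{eq:forbpat}; the cleanest way is probably to first prove it for the ``extremal'' admissible word $_n s$ in each sub-interval (using \eqref{eq:klsommaxbis} to compute its weighted sum exactly) and then use monotonicity of $\Gamma$ (Proposition \ref{prop:kladmseq}(1), resp. the $\ll$-monotonicity in (2)) to fill in the rest. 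The parity bookkeeping in distinguishing \eqref{eq:klratevendif1}--\eqref{eq:klratevendif2} (odd length, so the last increment sits at an even index of $\akl{}$) from \eqref{eq:klratodddif1}--\eqref{eq:klratodddif2} (even length) will also need care but is routine once the infinite case is in hand.
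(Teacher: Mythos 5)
Your proposal follows essentially the same route as the paper: Lemma \ref{lem:follow}/\ref{lem:followfin} to split into the carry and simple cases, Lemma \ref{lem:klsommax} to telescope $\sum(d_i-c_i)\akl{i+1}$, then \eqref{eq:ratio} (infinite case) or \eqref{eq:crosseven}--\eqref{eq:crossodd} (finite cases) to identify the ratio expression with $\Gamma_{(l,k)}$ for the weights $\skl{i+1}$ (resp.\ $\akl{2n-1-i}$), and Proposition \ref{prop:kladmseq} to locate it in the correct sub-interval. The only divergence is in your step (iv): the paper does not need the extremal-word-plus-monotonicity device you anticipate — it simply notes that both $\Gamma(c)$ and $\Gamma(d)$ lie in the range given by Proposition \ref{prop:kladmseq} while their difference is exactly known from step (ii), which pins down the sub-interval directly, the converses following because the two cases are exhaustive and the intervals disjoint.
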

\subsubsection{The cases $(k,1)$}
We proceed similarly to the case $(k,l)$ with $k,l\geq 2$. As $\C^{(k,1)}=\C^{(k-2)}$ and $\C^{(k,1)}_{2n}=\C^{(k-2)}_n$, we only discuss in the next lemma the notion on succession in $\C^{(k,1)}_{2n-1}$.
\begin{lem}\label{lem:followfinbis}
Let $n\geq j\geq 1$ be positive integers and $(c_i)_{i=1}^n\in \, _j\C^{(k,1)}_{2n-1}$. Let $(d_i)_{i=1}^{2n-1}=\F(1,\,_j\C^{(k,l)}_{2n-1},(c_i)_{i=1}^{2n-1})$. The following then occurs.
\begin{enumerate}
\item If there exists $j\leq h\leq n-1$ such that $c_i= o^{(k-2)}_i+\delta_{i,h}+\delta_{i,n-1}$ for $ j\leq i\leq h$, then 
\begin{equation}\label{eq:follow1finbis}
0=d_j=\cdots=d_h\quad, \quad d_{h+1}=c_{h+1}+1\quad\text{and}\quad d_{i}=c_{i} \text{ for } h+2\leq i \leq n\,\cdot
\end{equation}
\item Otherwise, 
\begin{equation}\label{eq:follow2finbis}
d_{j}=c_{j}+1\quad\text{and}\quad d_{i}=c_{i} \text{ for } j+1\leq i\leq n \,\cdot
\end{equation}
\end{enumerate}
\end{lem}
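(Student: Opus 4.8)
The plan is to reduce the statement to the infinite follower lemma (Lemma~\ref{lem:follow}) applied to the $(k-2)$-admissible words, by unwinding the decomposition $\C^{(k,1)}_{2n-1}={}^{0}\C^{(k,1)}_{2n-1}\sqcup{}^{1}\C^{(k,1)}_{2n-1}$ together with the explicit order-preserving bijections of these two pieces onto ${}^{n-1}\C^{(k-2)}\times\Zz$ and ${}^{n}\C^{(k-2)}\times\Zz$ recorded in the definition of $(k,1)$-admissible words. Writing $(c_i)_{i=1}^{n}\equiv(b,c_n)$ where $c_n$ is the top coordinate (carrying weight $\ak{2n-1}=a_{n}^{(k-2)}+a_{n-1}^{(k-2)}$) and $b$ is the associated infinite $(k-2)$-admissible word, namely $b=(c_1,\dots,c_{n-2},0,0,\dots)\in{}^{n-1}\C^{(k-2)}$ when $c_{n-1}=0$ (type $0$) and $b=(c_1,\dots,c_{n-2},c_{n-1}-1,0,\dots)\in{}^{n}\C^{(k-2)}$ when $c_{n-1}\geq 1$ (type $1$), I would first record that, listed $\prec$-increasingly, ${}_{j}\C^{(k,1)}_{2n-1}$ is the concatenation, over $c_n=0,1,2,\dots$, of the block ${}^{n-1}_{j}\C^{(k-2)}$ followed by the block ${}^{n}_{j}\C^{(k-2)}$: for fixed $c_n$ the type-$0$ words all have $c_{n-1}=0$ while the type-$1$ ones all have $c_{n-1}\geq 1$, so the $0$-block precedes the $1$-block, and inside either block the order $\prec$ on $(c_i)$ agrees with $\prec$ on $b$.

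Next I would pin down the two seams. The $\prec$-maximal element of ${}^{N}_{j}\C^{(k-2)}$ is the word $b^{\star}$ supported on $\{j,\dots,N-1\}$ with $b^{\star}_i=o^{(k-2)}_i$ for $j\leq i\leq N-2$ and $b^{\star}_{N-1}=o^{(k-2)}_{N-1}+1$: by Lemma~\ref{lem:klsommax} its $\Gamma_{(k-2)}$-value equals $a_{j-1}^{(k-2)}+a_{N}^{(k-2)}-a_{j}^{(k-2)}$, which is the largest attained on ${}^{N}_{j}\C^{(k-2)}$ since $\Gamma_{(k-2)}$ is $\prec$-monotone (Proposition~\ref{prop:kladmseq}), and applying Lemma~\ref{lem:follow} (with parameter $j$, i.e.\ for $\,{}_{j}\C^{(k-2)}$) to $b^{\star}$ shows that its successor there has a nonzero entry in position $N$, hence escapes ${}^{N}\C^{(k-2)}$. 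On the other hand, if $b\in{}^{N}_{j}\C^{(k-2)}$ is not maximal then $\F(1,{}^{N}_{j}\C^{(k-2)},b)=\F(1,{}_{j}\C^{(k-2)},b)$, because any word lying strictly between $b$ and the next element of ${}^{N}_{j}\C^{(k-2)}$ must again vanish from position $N$ onward; so in that case Lemma~\ref{lem:follow} (parameter $j$) computes the successor of $b$ verbatim.

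With this in place I would run the case analysis on $(c_i)\equiv(b,c_n)$. If $b$ is not maximal in its block, the successor keeps both $c_n$ and the type and is the image of $\F(1,{}_{j}\C^{(k-2)},b)$ under the bijection: for type $0$ this directly gives \eqref{eq:follow1finbis} with $h\leq n-3$ (the case-(1) branch of Lemma~\ref{lem:follow}, where $\delta_{i,n-1}$ is inert) or \eqref{eq:follow2finbis}, while for type $1$ one must also transport the $+1$ shift in position $n-1$, which enlarges the case-(1) range to $h\leq n-2$ but keeps $\delta_{i,n-1}$ inert. If $b$ is maximal in a $0$-block, then $c_{n-1}=0$, $c_i=o^{(k-2)}_i$ for $j\leq i\leq n-3$ and $c_{n-2}=o^{(k-2)}_{n-2}+1$ --- precisely the hypothesis of case~(1) with $h=n-2$ --- and the successor is the least element of the $1$-block at the same $c_n$, namely $d_i=0$ for $i<n-1$, $d_{n-1}=1$, $d_n=c_n$, which is \eqref{eq:follow1finbis} for $h=n-2$. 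If $b$ is maximal in a $1$-block, then $c_i=o^{(k-2)}_i$ for $j\leq i\leq n-2$ and $c_{n-1}=o^{(k-2)}_{n-1}+2$ --- precisely the hypothesis of case~(1) with $h=n-1$ --- and the successor passes to level $c_n+1$ at the least element of the next $0$-block, namely $d_i=0$ for $i<n$ and $d_n=c_n+1$, which is \eqref{eq:follow1finbis} for $h=n-1$.

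It then remains to check that these three cases exhaust ${}_{j}\C^{(k,1)}_{2n-1}$, that the disjunction in the hypothesis of case~(1) holds exactly in the case-(1) outcomes above --- the clause $h=n-2$ being shared between the ``$b$ maximal in a $0$-block'' case and the $J=n-2$ sub-branch of ``$b$ not maximal, type $1$'', both producing $(d_i)=(0,\dots,0,c_{n-1}+1,c_n)$ --- and that the degenerate ranges $j\in\{n-1,n\}$, where a block may be a singleton, are handled by direct inspection. The step I expect to be the real obstacle is exactly this seam bookkeeping: checking that the augmented pattern $c_i=o^{(k-2)}_i+\delta_{i,h}+\delta_{i,n-1}$ on $j\leq i\leq h$ encodes ``$b$ is $\prec$-maximal in its block'' simultaneously at the $0$-to-$1$ transition (where $h=n-2$, $\delta_{i,n-1}$ is inert, and the pattern deliberately says nothing about $c_{n-1}$) and at the $1$-to-next-$c_n$ transition (where $h=n-1$ and $\delta_{i,n-1}$ supplies the ``$+2$'' at position $n-1$), and verifying that in each branch the transported successor reproduces \eqref{eq:follow1finbis} exactly, including the coordinates $d_{n-1}$ and $d_n$.
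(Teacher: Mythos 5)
Your proposal is correct and follows essentially the same route as the paper: write $(c_1,\ldots,c_n)\equiv((c_1,\ldots,c_{n-1},0,\ldots),c_n)$, distinguish the type-$0$ and type-$1$ blocks according to whether $c_{n-1}=0$, reduce the within-block step to Lemma \ref{lem:follow} for $(k-2)$-words, and treat the two seams (maximal element of a $0$-block, maximal element of a $1$-block) by hand --- this is exactly the paper's four-case analysis, including the enlarged range $h\leq n-2$ in the type-$1$ branch and the shared $h=n-2$ outcome that you flag.
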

The proposition analogous to Proposition \ref{prop:klinserted} is the following.
\begin{prop}\label{prop:k1inserted} For positive integer $n$, and
for $\lfloor n/2\rfloor\geq j\geq 1$, $(c_i)_{i=1}^{\lceil n/2\rceil},(d_i)_{i=1}^{\lceil n/2\rceil} \in\,_j\C^{(k,1)}_{n}$
such that $(d_i)_{i=1}^{\lceil n/2\rceil}=\F(1,\,_j\C^{(k,1)}_{n},(c_i)_{i= 1}^{\lceil n/2\rceil})$,  
one of following holds: 
\begin{small}
\begin{align}
\label{eq:k1ratdif1} (d_i)_{i=1}^{\lceil n/2\rceil} \in\, _j\C^{(k,1)}_{n}\setminus\, _{j+1}\C^{(k,1)}_{n}&\Longleftrightarrow 0\leq \ak{n+2}\sum_{i=j}^{\lceil n/2\rceil-1}c_i\ak{2i+2}-\ak{n}\sum_{i=j}^{\lceil n/2\rceil-1}c_i\ak{2i} < \ak{n+2-2j}-\ak{n-2j}\nonumber\\
&\Longleftrightarrow (d_{\lceil n/2\rceil}-c_{\lceil n/2\rceil})\ak{n+2}+\sum_{i=j}^{\lceil n/2\rceil-1} (d_i-c_i)\ak{2i+2} = \ak{2j+2}\,,\\\nonumber\\
\label{eq:k1ratdif2} (d_i)_{i=1}^{\lceil n/2\rceil} \in\, _{j+1}\C^{(k,1)}_{n} &\Longleftrightarrow  \ak{n+2-2j}-\ak{n-2j}\leq \ak{n+2}\sum_{i=j}^{\lceil n/2\rceil-1}c_i\ak{2i+2}-\ak{n}\sum_{i=j}^{\lceil n/2\rceil-1}c_i\ak{2i} < \ak{n+2-2j}\nonumber\\
&\Longleftrightarrow (d_{\lceil n/2\rceil}-c_{\lceil n/2\rceil})\ak{n+2}+\sum_{i=j}^{\lceil n/2\rceil-1} (d_i-c_i)\ak{2i+2} = \ak{2j+2}-\ak{2j}\,\cdot
\end{align}
\end{small}
\end{prop}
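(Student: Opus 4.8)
The plan is to reduce the statement to the already-established case $k,l\geq 2$ (specifically Proposition \ref{prop:klinserted}(2), applied to the $(k-2)$-sequence) together with the structural lemmas of the $(k,1)$ case: Lemma \ref{lem:followfinbis} for the combinatorial description of the follower, and the formula \eqref{eq:formulek2} relating $\al{n}$, $\ak{n}$ to $a_n^{(k-2)}$. First I would unpack the claim into two parts: an arithmetic equivalence (the inequalities on $\ak{n+2}\sum c_i\ak{2i+2}-\ak{n}\sum c_i\ak{2i}$) characterizing whether the follower leaves $_j\C^{(k,1)}_n\setminus{}_{j+1}\C^{(k,1)}_n$ or lands in $_{j+1}\C^{(k,1)}_n$, and a ``jump formula'' expressing $\sum(d_i-c_i)\ak{2i+2}$ (with the top term carrying $\ak{n+2}$) as either $\ak{2j+2}$ or $\ak{2j+2}-\ak{2j}$. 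The key observation is that, because $\ak{2i}=a_i^{(k-2)}$ for all $i$ and $\ak{n+2}$ plays for $\C^{(k,1)}_n$ exactly the role that $a_{\lceil n/2\rceil+1}^{(k-2)}$ (resp.\ $a_{\lceil n/2\rceil}^{(k-2)}$) plays in the finite admissible word $[\cdot]^{(k-2)}_{\lceil n/2\rceil}$, the quantity $\sum_{i=j}^{\lceil n/2\rceil-1}c_i a_i^{(k-2)} + c_{\lceil n/2\rceil}\ak{n+2}$ is precisely the integer $m$ whose $(k-2)$-admissible word (in the appropriate finite length) is $(c_i)$, and similarly for $(d_i)$.

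Concretely, the steps I would carry out are: (1) Fix $n$ and set $N=\lceil n/2\rceil$; split into the cases $n=2N$ (where $\C^{(k,1)}_n=\C^{(k-2)}_N$ and the statement is literally Proposition \ref{prop:klinserted}(2) for the $(k-2)$-sequence with the substitution $\akl{m}\mapsto a_m^{(k-2)}$, $2n\mapsto N$ if $N$ plays the even role, being careful about parity) and $n=2N-1$ (where $\C^{(k,1)}_n$ is the disjoint union $^0\C^{(k,1)}_{2N-1}\sqcup{}^1\C^{(k,1)}_{2N-1}$ and one uses the two bijections to $^{N-1}\C^{(k-2)}\times\Zz$ and $^N\C^{(k-2)}\times\Zz$). (2) Using Lemma \ref{lem:followfinbis}, write down $(d_i)-(c_i)$ explicitly: in case (1) of that lemma the difference is $-\sum_{i=j}^{h}(o_i^{(k-2)}+\delta_{i,n-1})a_i^{(k-2)}$ on the low indices plus $+a_{h+1}^{(k-2)}$ at index $h+1$, and in case (2) it is simply $+a_j^{(k-2)}$; then apply the telescoping identity \eqref{eq:klsommaxbis} of Lemma \ref{lem:klsommax} (with $w^{(k,l)}=a^{(k-2)}$) to collapse $\sum_{h=i}^{j}o_h^{(k-2)}a_h^{(k-2)}$ to $a_{i-1}^{(k-2)}+a_{j+1}^{(k-2)}-a_i^{(k-2)}-a_j^{(k-2)}$, which is what produces the clean right-hand sides $\ak{2j+2}$ and $\ak{2j+2}-\ak{2j}$ after re-expressing $a^{(k-2)}$ indices as $\ak{}$ indices via \eqref{eq:formulek2}. (3) For the arithmetic equivalence, translate ``$(d_i)\in{}_j\C^{(k,1)}_n\setminus{}_{j+1}\C^{(k,1)}_n$'' into ``$d_j>0$'', i.e.\ into a bound on how far $m$ is from the largest element of $_j\C^{(k,1)}_n$ below or equal to $m$; by Remark \ref{rem:rat} and the greedy/floor description of $[\cdot]^{(k-2)}$ this bound is exactly $0\le (\text{rescaled }m) < a^{(k-2)}_{\text{index}+1-\text{index}}-a^{(k-2)}_{\cdots}$, and the cross-multiplication by $\ak{n+2}$, $\ak{n}$ in the statement is just clearing the denominator $\al{n}$ coming from the rescaling $m\mapsto \lceil \ak{n+1}m/\al{n}\rceil$ inherent in the finite admissible word (cf.\ the relation $\C^{(k,1)}_n$ versus $\C^{(k,1)}$ and the quantities $\ak{2j+2-\cdots}$ — here I would invoke the cross-relations \eqref{eq:crosseven}–\eqref{eq:crossodd} of Proposition \ref{prop:propklgen} to identify $\ak{n+2}a^{(k-2)}_{*}-\ak{n}a^{(k-2)}_{*}$ with $\ak{n+2-2j}$-type expressions).

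The main obstacle I expect is the bookkeeping around the odd case $n=2N-1$: the set $_j\C^{(k,1)}_{2N-1}$ does not simply equal a shifted copy of $\C^{(k-2)}$, and the entry $c_{n-1}=c_{N-1}$ carries the extra $+\delta_{i,n-1}$ correction in the forbidden pattern of Lemma \ref{lem:followfinbis}, so one must track carefully whether the ``run of near-maximal entries'' described by that lemma crosses the index $N-1$ or not, and correspondingly whether the telescoping identity \eqref{eq:klsommaxbis} is applied to the sequence $a^{(k-2)}$ with or without the extra $\delta$-term absorbed. Once the two subcases $^0\C$ and $^1\C$ are handled separately (mapping to $^{N-1}\C^{(k-2)}\times\Zz$ and $^N\C^{(k-2)}\times\Zz$ respectively, as in Definition \ref{def:1k}'s preamble), each reduces to a direct invocation of Proposition \ref{prop:klinserted}(2) or (3) for the $(k-2)$-sequence plus the elementary identities \eqref{eq:formulek2}, \eqref{eq:crosseven}, \eqref{eq:crossodd}; I would relegate the verification of the two displayed inequality chains to a short computation and, if it is long, to an appendix as the paper does for its other technical propositions.
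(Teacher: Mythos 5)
Your proposal follows essentially the same route as the paper's proof: the even case is reduced verbatim to Proposition \ref{prop:klinserted} for the $(k-2)$-sequence; the odd case computes the jump $\sum(d_i-c_i)\ak{2i+2}$ via Lemma \ref{lem:followfinbis} and the telescoping identity \eqref{eq:klsommaxbis}, and handles the bilinear inequality by collapsing $\ak{n+2}\ak{2i+2}-\ak{n}\ak{2i}$ through the cross-relations \eqref{eq:crosseven}--\eqref{eq:crossodd} and bounding the resulting $a^{(k-2)}$-expression with the cases $c_{n-1}=0$ and $c_{n-1}>0$ treated separately, exactly as the paper does with its function $\Gamma^{2n-1}_{(k,1)}$. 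The plan is sound and matches the paper's argument in all essential steps.
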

\subsection{Relations between admissible words, lecture-hall partitions and Euler partitions}
\subsubsection{The case $k,l\geq 2$}
We now state the proposition that relates the $(k,l)$-admissible words and the shifting operation to the $(k,l)$-Euler partitions and $(k,l)$-lecture hall partitions.
\begin{prop}\label{prop:klratioinfini}$\quad$
\begin{enumerate}
 \item For $r,s\in \Zu$,
\begin{equation}
\label{eq:klratinf0}
r>s_0^{(k,l)} s \Longleftrightarrow \,_2\left[r\right]^{(k,l)}=0\cdot\left[\lceil s_2^{(l,k)}r\rceil-1\right]^{(l,k)}\succeq 0 \cdot\left[s\right]^{(l,k)}=\left[1+\lfloor s_0^{(k,l)} s \rfloor\right]^{(k,l)}\cdot
\end{equation}
Hence, for a sequence $\la=(\la_1,\ldots,\la_{2t})$ such that $t\geq 1$,  $0=\la_{2t}\leq \la_{2t-1}$ and $\la_i>0$ for $1\leq i\leq 2t-2$, 
\begin{equation}\label{eq:inlkl}
 \la \in \Lkl \Longleftrightarrow 
 \begin{cases}
  \left[\la_{2i-1}\right]^{(k,l)}\succeq 0\cdot\left[\la_{2i}\right]^{(l,k)}\quad \text{for} \quad 1\leq i\leq t-1\,,\\
  \left[\la_{2i}\right]^{(l,k)}\succeq 0\cdot\left[\la_{2i+1}\right]^{(k,l)}\quad \text{for} \quad 1\leq i\leq t-1\,\cdot
  \end{cases}
\end{equation}
\item Let $n\in \Zu$. For $r,s\in \Zz$,
\begin{equation}
\label{eq:klrateven0}
r\geq \frac{\akl{n+1}}{\alk{n}} s \Longleftrightarrow \, _2\left[r\right]^{(k,l)}_{n+1}=0\cdot\left[\left\lfloor\frac{\alk{n}}{\akl{n+1}} r\right\rfloor\right]^{(l,k)}_{n}\succeq 0\cdot\left[s\right]^{(l,k)}_{n}=\left[\left\lceil \frac{\akl{n+1}}{\alk{n}} s\right\rceil\right]^{(k,l)}_{n+1}\,\cdot
\end{equation}
Hence, for a sequence $\la=(\la_1,\ldots,\la_{2n-1})$ of non-negative integers,
\begin{equation}\label{eq:inlklodd}
 \la \in \Lkl_{2n-1} \Longleftrightarrow 
 \begin{cases}
  \left[\la_{2i}\right]^{(k,l)}_{2i}\succeq 0\cdot\left[\la_{2i-1}\right]^{(l,k)}_{2i-1}\quad \text{for} \quad 1\leq i\leq n-1\,\,\\
 \left[\la_{2i-1}\right]^{(l,k)}_{2i-1} \succeq 0\cdot\left[\la_{2i-2}\right]^{(k,l)}_{2i-2}\quad \text{for} \quad 1\leq i\leq n-1\,,
 \end{cases}
\end{equation}
and for a sequence $\la=(\la_1,\ldots,\la_{2n})$ of non-negative integers,
\begin{equation}\label{eq:inlkleven}
 \la \in \Lkl_{2n} \Longleftrightarrow 
 \begin{cases}
  \left[\la_{2i}\right]^{(k,l)}_{2i}\succeq 0\cdot\left[\la_{2i-1}\right]^{(l,k)}_{2i-1}\quad \text{for} \quad 1\leq i\leq n\,\,\\
 \left[\la_{2i-1}\right]^{(l,k)}_{2i-1} \succeq 0\cdot\left[\la_{2i-2}\right]^{(k,l)}_{2i-2}\quad \text{for} \quad 1\leq i\leq n-1\,\cdot
 \end{cases}
\end{equation}
\end{enumerate}
\end{prop}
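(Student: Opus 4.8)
The plan is to reduce each of \eqref{eq:klratinf0} and \eqref{eq:klrateven0} to two independent ingredients: an \emph{arithmetic equivalence}, handled by \Rem{rem:rat}, and a \emph{word identity} relating $\,_2[r]^{(k,l)}$ (resp.\ $\,_2[r]^{(k,l)}_{n+1}$) to $0\cdot[\,\cdot\,]^{(l,k)}$; one then glues the two using that $0\cdot$ preserves $\prec$ (\Lem{lem:decalage}) and that $[\,\cdot\,]^{(l,k)}$, $[\,\cdot\,]^{(l,k)}_n$ preserve the order, i.e.\ \eqref{eq:preckladm} and \eqref{eq:ordkladmfin}. The arithmetic part is immediate: \Rem{rem:rat} with $a=\skl{0}$, together with $(\slk{0})^{-1}=\skl{2}$ and its $k\leftrightarrow l$ mirror $(\skl{0})^{-1}=\slk{2}$, gives $r>\skl{0}s\iff r\geq 1+\lfloor\skl{0}s\rfloor\iff\lceil\slk{2}r\rceil-1\geq s$; with $a=\akl{n+1}/\alk{n}$ it gives $r\geq\frac{\akl{n+1}}{\alk{n}}s\iff r\geq\lceil\frac{\akl{n+1}}{\alk{n}}s\rceil\iff\lfloor\frac{\alk{n}}{\akl{n+1}}r\rfloor\geq s$. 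Granting the word identities, \eqref{eq:klratinf0} then reads
\[
r>\skl{0}s\iff\lceil\slk{2}r\rceil-1\geq s\iff[\lceil\slk{2}r\rceil-1]^{(l,k)}\succeq[s]^{(l,k)}\iff 0\cdot[\lceil\slk{2}r\rceil-1]^{(l,k)}\succeq 0\cdot[s]^{(l,k)},
\]
and \eqref{eq:klrateven0} is obtained analogously with $[\,\cdot\,]^{(l,k)}_n$ in place of $[\,\cdot\,]^{(l,k)}$.

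For the infinite word identities, set $(c_i)=[r]^{(k,l)}$. Zeroing the first term of an admissible word and then stripping a leading $0$ preserves $(k,l)$-admissibility, so by \Lem{lem:decalage} $\,_2[r]^{(k,l)}=0\cdot(c_2,c_3,\dots)$ with $(c_2,c_3,\dots)\in\C^{(l,k)}$, hence $(c_2,c_3,\dots)=[r'']^{(l,k)}$ where $r''=\sum_{i\geq 2}c_i\alk{i-1}$. By \eqref{eq:ratio}, $\akl{i}=\skl{0}\alk{i-1}+\skl{i}$; summing against $(c_i)$ (and using $\alk{0}=0$, $\skl{1}=1$) yields $r=\skl{0}\,r''+\sum_{i\geq 1}c_i\skl{i}$. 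Since $(\skl{i})$ obeys the recursion \eqref{eq:klseqdef} and $(\skl{i})_{i\geq 0}$ is non-increasing, \Lem{lem:klsommax} and \Prp{prop:kladmseq}(2) give $0<\sum_{i\geq 1}c_i\skl{i}\leq\skl{0}$ for $r\geq 1$; multiplying by $\slk{2}=(\skl{0})^{-1}$ forces $r''=\lceil\slk{2}r\rceil-1$, which is the first identity. The identity $[1+\lfloor\skl{0}s\rfloor]^{(k,l)}=0\cdot[s]^{(l,k)}$ is the same computation run with $(e_i)=[s]^{(l,k)}$ and $\akl{i+1}=\skl{0}\alk{i}+\skl{i+1}$; here one uses $\skl{i+1}=\skl{2}\cdot\slk{i}$ (both sides satisfy the $(l,k)$-recursion and agree at two indices) and $\skl{2}\slk{0}=1$ to get $\sum_i e_i\akl{i+1}=\skl{0}s+t$ with $t\in(0,1]$, whence $\sum_i e_i\akl{i+1}=1+\lfloor\skl{0}s\rfloor$, and then $0\cdot[s]^{(l,k)}=(0,e_1,e_2,\dots)$ is the (unique) admissible word of this integer.

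For the finite word identities behind \eqref{eq:klrateven0} one argues in the same spirit, but the error term is now controlled by the cross-product relations of \Prp{prop:propklgen}(1): \eqref{eq:crosseven}--\eqref{eq:crossodd} yield $\alk{n}\akl{i+1}-\akl{n+1}\alk{i}=\alk{n-i}$ when $n$ is even and $=\akl{n-i}$ when $n$ is odd. Writing $(e_i)=[s]^{(l,k)}_n$, so $s=\sum_{i=1}^{n}e_i\alk{i}$ and $\sum_{i=1}^{n}e_i\akl{i+1}$ is the integer represented by $0\cdot(e_i)\in\C^{(k,l)}_{n+1}$, the quantity $\alk{n}\sum_{i}e_i\akl{i+1}-\akl{n+1}s=\sum_{i=1}^{n}e_i\bigl(\alk{n}\akl{i+1}-\akl{n+1}\alk{i}\bigr)$ is a non-negative combination of $a^{(k,l)}$- or $a^{(l,k)}$-values, and the bound $0\leq\alk{n}\sum_i e_i\akl{i+1}-\akl{n+1}s<\alk{n}$ — which is exactly $\sum_i e_i\akl{i+1}=\lceil\frac{\akl{n+1}}{\alk{n}}s\rceil$ — follows from the defining (greedy) property of $[\,\cdot\,]^{(l,k)}_n$, equivalently from the finite form of \Prp{prop:kladmseq}(2). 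The dual $\,_2[r]^{(k,l)}_{n+1}=0\cdot[\lfloor\frac{\alk{n}}{\akl{n+1}}r\rfloor]^{(l,k)}_n$ is obtained the same way. I expect this step — pinning down the correct floors and ceilings for the truncated words, and the ``reversed'' upper bound $<\alk{n}$ — to be the main technical obstacle; it may be cleanest to deduce it from \Prp{prop:klinserted}.

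The partition statements then follow by applying these equivalences link by link. For \eqref{eq:inlkl}: by \eqref{eq:klseqbis}, under the stated hypotheses $\la\in\Lkl$ is exactly the conjunction of $\la_{2i-1}>\skl{0}\la_{2i}$ and $\la_{2i}>\slk{0}\la_{2i+1}$ for $1\leq i\leq t-1$; the positivity of $\la_1,\dots,\la_{2t-2}$ makes every such pair lie in $\Zu$ (a link with $\la_{2i+1}=0$ being trivially satisfied on both sides), and each inequality is rewritten by \eqref{eq:klratinf0} in the $(k,l)$, resp.\ $(l,k)$, orientation. One then drops the subscript $\,_2$, using that for any sequences $u$ and $v$ with $v$ starting with $0$ one has $\,_2u\succeq v\iff u\succeq v$ (because $u\succeq\,_2u$ and $u,\,_2u$ agree beyond the first entry). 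For \eqref{eq:inlklodd} and \eqref{eq:inlkleven}: the chain \eqref{eq:ratioklseq} gives, for the link between positions $m$ and $m+1$, the inequality $\akl{m}\la_{m+1}\geq\akl{m+1}\la_{m}$, which — after using $\akl{2j-1}=\alk{2j-1}$ — takes the form $\la_{m+1}\geq\frac{\akl{m+1}}{\alk{m}}\la_{m}$ when $m$ is odd and $\la_{m+1}\geq\frac{\alk{m+1}}{\akl{m}}\la_{m}$ when $m$ is even, so \eqref{eq:klrateven0} (resp.\ its $k\leftrightarrow l$ swap) applies with $n=m$; the link involving the fictitious part $\la_0$ is vacuous, and collecting the rest gives the displayed systems.
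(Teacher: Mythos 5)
Your proposal is correct and follows essentially the same route as the paper: \Rem{rem:rat} handles the arithmetic equivalences, the interval bounds $0<\sum_i c_i\skl{i+1}\leq 1$ (resp.\ $0\leq \alk{n}\sum_i c_i\akl{i+1}-\akl{n+1}\sum_i c_i\alk{i}<\alk{n}$) pin down the floors and ceilings in the word identities, and order-preservation of $\left[\cdot\right]^{(k,l)}$ and of $0\cdot$ (together with the observation that the prefix $\,_2$ can be dropped when comparing against a word starting with $0$) chains everything together. The paper obtains those bounds by citing Proposition \ref{prop:klinserted} with $n=1$ (resp.\ $j=1$) — exactly the fallback you suggest for the finite case, where the reversed-sum bound is the real content — and your direct derivation of the infinite case via $\Gamma$ with weights $\skl{i}$ simply inlines that citation.
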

\subsubsection{The cases $(k,1)$ and $(1,k)$}
We first present the relation between the shift operation and the ratio constraint for $(k,1)$-admissible words.  
\begin{prop}\label{prop:k1ratiofin}
For  $k\geq 4$, $n\in \Zu$ and $r,s\in \Zz$,
\begin{equation}
\label{eq:k1rat0}
r\geq \frac{\ak{n+2}}{\ak{n}} s \Longleftrightarrow \, _2\left[r\right]^{(k,1)}_{n+2}=0\cdot\left[\left\lfloor\frac{\ak{n}}{\ak{n+2}} r\right\rfloor\right]^{(k,1)}_{n}\succeq 0\cdot\left[s\right]^{(k,1)}_{n}=\left[\left\lceil \frac{\ak{n+2}}{\ak{n}} s\right\rceil\right]^{(k,1)}_{n+2}\,\cdot
\end{equation}
\end{prop}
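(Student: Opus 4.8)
The plan is to reduce the statement to \Lem{lem:divide}, part (2), applied with $x=\frac{\ak{n+2}}{\ak{n}}\cdot(\text{something})$—more precisely, to recast the quantity $\frac{\ak{n+2}}{\ak{n}}$ in the form $\frac{y}{1-y}$ for a suitable $y\in[\tfrac12,1)$, so that the splitting maps $m\mapsto(\lceil ym\rceil,\lfloor(1-y)m\rfloor)$ of that lemma become exactly the maps $r\mapsto\bigl(\lceil\frac{\ak{n+2}}{\ak{n}}r\rceil,\ \lfloor\frac{\ak{n}}{\ak{n+2}}\cdot\frac{?}{?}\bigr)$ appearing here. First I would record, via \eqref{eq:formulek2} and the recursion \eqref{eq:klseqdef}, the key arithmetic identity linking $\ak{n},\ak{n+2}$ and the intermediate term: since $\ak{2i+2}=a_i^{(k-2)}$-type relations hold, the three consecutive terms of the subsequence $(\ak{n})$, $(\ak{n+2})$ sit in an arithmetic progression governed by the $(k-2)$-recursion, and in particular $\ak{n}+\ak{n+4}=(k-2)\ak{n+2}$ in the relevant parity; the quantity that matters is $\ak{n+2}-\ak{n}$, which by the increasing-ness established in \Sct{sec:prem} is positive. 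Setting $y=\frac{\ak{n+2}-\ak{n}}{\ak{n+2}}$ one gets $1-y=\frac{\ak{n}}{\ak{n+2}}$ and $\frac{y}{1-y}=\frac{\ak{n+2}-\ak{n}}{\ak{n}}$, so I must be slightly careful about which normalization makes the floors/ceilings land on the nose; I expect the correct choice to make $\frac{\ak{n+2}}{\ak{n}}=\frac{1}{1-y}$ with $y=1-\frac{\ak{n}}{\ak{n+2}}$, and then $\tfrac12\le y<1$ because $\ak{n}\le\tfrac12\ak{n+2}$ for $k\ge 4$ (this last inequality is exactly the "$\akl{2n+1}>\tfrac{k}{2}\akl{2n}$"-type bound from \Sct{sec:prem}, specialized; it needs $k\ge 4$, which is where the hypothesis is used).

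Next I would unwind the definitions. By \Def{def:1k}'s $(k,1)$-analogue—i.e. \Def{def:kladmfinite} together with the $(k,1)$ definition—we have $\left[r\right]^{(k,1)}_{n+2}=\left[r\right]^{(k-2)}_{(n+2)/2}$ when $n$ is even, and a two-block description when $n$ is odd; in either case the top coefficient is $\lfloor r/\ak{n+2}\rfloor$ and the remainder is re-expanded. The shift operator $0\cdot$ is controlled by \Lem{lem:decalage}: it sends $_i^n\C^{(k,1)}\to{}_{i+1}^{n+1}\C^{(k,1)}$ and $_j\C^{(k,1)}_n\to{}_{j+1}\C^{(k,1)}_{n+2}$ order-preservingly. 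Thus the chain of equalities/inequalities to prove,
\[
r\geq \tfrac{\ak{n+2}}{\ak{n}} s \iff {}_2\left[r\right]^{(k,1)}_{n+2}=0\cdot\left[\lfloor\tfrac{\ak{n}}{\ak{n+2}} r\rfloor\right]^{(k,1)}_{n}\succeq 0\cdot\left[s\right]^{(k,1)}_{n}=\left[\lceil \tfrac{\ak{n+2}}{\ak{n}} s\rceil\right]^{(k,1)}_{n+2},
\]
splits into three checks: (a) the leftmost equality, that zeroing the first coordinate of $\left[r\right]^{(k,1)}_{n+2}$ equals the shift of the $(k,1)$-word of $\lfloor\frac{\ak{n}}{\ak{n+2}}r\rfloor$ at length $n$; (b) the rightmost equality, the dual statement for $s$ and the ceiling; (c) the middle $\succeq$, which by order-preservation of $0\cdot$ and of $[\cdot]^{(k,1)}_n$ (the relation $r<s\iff[r]^{(k,1)}_n\prec[s]^{(k,1)}_n$ stated in the excerpt) is equivalent to $\lfloor\frac{\ak{n}}{\ak{n+2}}r\rfloor\ge s$, which by \Rem{rem:rat} is equivalent to $r\ge\frac{\ak{n+2}}{\ak{n}}s$.

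For (a): ${}_2\left[r\right]^{(k,1)}_{n+2}$ is the word with its first entry killed; by the structure of the $(k,1)$-expansion this equals $0\cdot w$ where $w$ is the length-$n$ $(k,1)$-word encoding $\lfloor r/\ak{\cdot}\rfloor$ after the leading $\ak{1}=1$-block is removed—equivalently $w=\left[\lfloor\frac{\ak{n}}{\ak{n+2}}r\rfloor\right]^{(k,1)}_n$ once one checks that the subword on coordinates $2,\dots,\lceil(n+2)/2\rceil$ is precisely the greedy $(k,1)$-expansion of $\lfloor r/\ak{2}\rfloor$ rescaled; here \Lem{lem:divide}(2) with the $y$ above identifies $\lfloor(1-y)r\rfloor=\lfloor\frac{\ak{n}}{\ak{n+2}}r\rfloor$ as the second coordinate of the canonical pair-splitting of $r$, and the agreement of the tail words then follows because both sides are characterized as greedy expansions with the same digit bounds $o^{(k-2)}_i$. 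Statement (b) is the same argument run with $s$, using $\lceil ys\rceil=\lceil\frac{\ak{n+2}}{\ak{n}}\cdot\frac{\ak{n}-?}{?}\rceil$—again \Lem{lem:divide}(2) identifies $\lceil ym\rceil$ as the first coordinate of the $S_y^-$-splitting, matching the claimed ceiling once the normalization is pinned down. I expect the genuine obstacle to be exactly this pinning-down: matching the floors and ceilings \emph{exactly} (not just up to $O(1)$) between the $a^{(k,1)}$-indexing and the $a^{(k-2)}$-indexing, and verifying $\tfrac12\le y<1$ sharply. Once those book-keeping identities are in hand, the three checks assemble immediately into the displayed two-way implication, and no further computation is needed; the heart of the matter is entirely \Lem{lem:divide} plus the order-preservation of $0\cdot$ and of $[\cdot]^{(k,1)}_n$.
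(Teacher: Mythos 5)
There is a genuine gap: your reduction to Lemma \ref{lem:divide}(2) cannot work as stated, because the hypothesis $\frac12\le x<1$ of that lemma fails for your choice $x=1-\ak{n}/\ak{n+2}$. The inequality $\ak{n}\le\frac12\ak{n+2}$ that you invoke is false in general: for $k=4$ one has $a^{(4,1)}_{2m}=m$, so $\ak{6}/\ak{4}=3/2$ and $x=1/3$. The bound ``$\akl{2n+1}>\frac{k}{2}\akl{2n}$'' from Section \ref{sec:prem} compares \emph{consecutive} terms, not terms two apart in the same parity class, and does not specialize to what you need. In the paper, Lemma \ref{lem:divide} is only ever applied with $x=\sk{0}$ or $x=\ak{n+1}/\al{n}$ (where $\al{n}=\ak{n+1}+\ak{n-1}$ forces $x\ge\frac12$), i.e.\ in the construction of the $(1,k)$-words, not for this proposition.

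Even setting that aside, Lemma \ref{lem:divide} only produces a two-term integer splitting $m\mapsto(\lceil xm\rceil,\lfloor(1-x)m\rfloor)$, whereas the two equalities in \eqref{eq:k1rat0} are digit-by-digit identities between a length-$(n+2)$ word with its first entry deleted and a shifted length-$n$ word; ``both sides are greedy expansions with the same digit bounds'' does not identify them. What is actually needed is the quantitative estimate that, for $(c_i)\in\C^{(k,1)}_n$ with value $s=\sum_i c_i\ak{\cdot}$, the reindexed sum $\sum_i c_i\ak{\cdot+2}$ equals $\lceil\frac{\ak{n+2}}{\ak{n}}s\rceil$, i.e.\ that a suitable weighted difference lies in $[0,1)$. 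The paper obtains this for $n$ even by reducing to \eqref{eq:klrateven0} in the parameters $(k-2,k-2)$ (using $\C^{(k,1)}_{2m}=\C^{(k-2)}_m$), and for $n$ odd from Proposition \ref{prop:k1inserted} at $j=1$, whose proof rests on the cross identities \eqref{eq:crosseven}--\eqref{eq:crossodd} and Lemma \ref{lem:klsommax}; none of this machinery appears in your argument. Your step (c) --- deducing the middle $\succeq$ from the two equalities via \eqref{eq:ordkladmfin}, Lemma \ref{lem:decalage} and Remark \ref{rem:rat} --- is correct, but it is the easy part.
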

In the following lemma, we rewrite Lemma \ref{lem:divide} in terms of $(1,k)$-admissible words, and present the corresponding properties related to the succession.
\begin{lem}\label{lem:follow1k}
Let $d,m\in \Zz$ and $n\in \Zu$. 
 \begin{enumerate}
  \item We have 
  \begin{equation}\label{eq:c1kinf}
  _{+d}\C^{(1,k)} = \left\{(s_1,s_2)\in (\,_{d+1}\C^{(k,1)})^2: 0\cdot s_2\preceq s_1\preceq 0\cdot \F(1,\,_{d+1}\C^{(k,1)},s_2)\right\}\,\cdot
  \end{equation}
  Therefore, the sequence $(t_1,t_2)=\F(1,\,_{+d}\C^{(1,k)},(s_1,s_2))$ satisfies the following:
  \begin{enumerate}
   \item if $s_1\neq 0\cdot \F(1,\,_{d+1}\C^{(k,1)},s_2)$, then $t_2=s_2$ and $t_1=\F(1,\,_{d+1}\C^{(k,1)},s_1)$,
   \item if $s_1=0\cdot \F(1,\,_{d+1}\C^{(k,1)},s_2)$, then $t_2=\F(1,\,_{d+1}\C^{(k,1)},s_2)$ and $t_1=s_1$.
  \end{enumerate}
  Furthermore, if $(t_1,t_2)=\F(m,\,_{+d}\C^{(1,k)},(s_1,s_2))$, then 
  \begin{equation}\label{eq:1ksominf}
  m=m_1+2m_2-\chi(t_1=0\cdot \F(1,\,_{d+1}\C^{(k,1)},t_2))+ \chi(s_1=0\cdot \F(1,\,_{d+1}\C^{(k,1)},s_2))\,,
  \end{equation}
  where 
  \begin{align*}
  m_1&= \sharp \left\{r\in\, _{d+1}\C^{(k,1)}\setminus\, _{d+2}\C^{(k,1)}: s_1\preceq r\prec t_1\right\}\\
  m_2&= \sharp \left\{r\in\, _{d+2}\C^{(k,1)}: s_1\preceq r\prec t_1\right\}\,\cdot
  \end{align*}
   \item When $(d,n)\neq (0,1)$, we have 
  \begin{equation}\label{eq:c1kfin}
  _{+d}\C^{(1,k)}_n = \left\{(s_1,s_2)\in \,_{d+1}\C^{(k,1)}_{n+1+2d}\times \,_{d+1}\C^{(k,1)}_{n-1+2d}: 0\cdot s_2\preceq s_1\preceq 0\cdot \F(1,\,_{d+1}\C^{(k,1)}_{n-1+2d},s_2)\right\}\,\cdot
  \end{equation}
  Therefore, the sequence $(t_1,t_2)=\F(1,\,_{+d}\C^{(1,k)}_{n},(s_1,s_2))$ satisfies the following:
  \begin{enumerate}
   \item if $s_1\neq 0\cdot\F(1,\,_{d+1}\C^{(k,1)}_{n-1+2d},s_2)$, then $t_2=s_2$ and $t_1=\F(1,\,_{d+1}\C^{(k,1)}_{n+1+2d},s_1)$,
   \item if $s_1=0\cdot\F(1,\,_{d+1}\C^{(k,1)}_{n-1+2d},s_2)$, then $t_2=\F(1,\,_{d+1}\C^{(k,1)}_{n-1+2d},s_2)$ and $t_1=s_1$.
  \end{enumerate}
  Furthermore, if $(t_1,t_2)=\F(m,\,_{+d}\C^{(1,k)}_n,(s_1,s_2))$, then 
  \begin{equation}\label{eq:1ksomfin}
  m=m_1+2m_2-\chi(t_1=0\cdot\F(1,\,_{d+1}\C^{(k,1)}_{n-1+2d},t_2))+ \chi(s_1=0\cdot\F(1,\,_{d+1}\C^{(k,1)}_{n-1+2d},s_2))\,,
  \end{equation}
  where
  \begin{align*}
  m_1&= \sharp \left\{r\in\,_{d+1}\C^{(k,1)}_{n+1+2d}\setminus\, _{d+2}\C^{(k,1)}_{n+1+2d}: s_1\prec r\preceq t_1\right\}\\
  m_2&= \sharp \left\{r\in\, _{d+2}\C^{(k,1)}_{n+1+2d}: s_1\prec r\preceq t_1\right\}\,\cdot
  \end{align*}
  \item When $(d,n)=(0,1)$,
  \begin{equation}
  _{+0}\C^{(1,k)}_1 = \C^{(1,k)}_1 = \{(s_1,s_2): s_1\in \Zz, s_2=\emptyset\}\,\cdot
  \end{equation}
  Therefore, the sequence $(t_1,t_2)=\F(1,\,_{+0}\C^{(1,k)}_{1},(s_1,s_2))$ is such that $t_1=s_1+1$ and $t_2=s_2=\emptyset$.
 \end{enumerate}
\end{lem}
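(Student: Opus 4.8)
The plan is to prove all three parts by unwinding Definition~\ref{def:1k} and feeding it into Lemma~\ref{lem:divide}. Part (3) is immediate: by Definition~\ref{def:1k} the elements of $\C^{(1,k)}_1$ are exactly the pairs $((m),\emptyset)$ with $m\in\Zz$, ordered by the value of $m$, so $\F(1,\cdot)$ just replaces $m$ by $m+1$. For parts (1) and (2) I would first reduce to $d=0$. By Lemma~\ref{lem:decalage} the shift $0\cdot$ is an order-preserving bijection of the relevant sets of $(k,1)$-admissible words, so $(0\cdot)^d$ carries $\C^{(1,k)}$ together with its order and the follower operation $\F$ onto $_{+d}\C^{(1,k)}$ (and similarly in the finite case), sending $_m\C^{(k,1)}$ to $_{m+d}\C^{(k,1)}$; since the sets $\C^{(k,1)}\setminus{}_2\C^{(k,1)}$ and $_2\C^{(k,1)}$ appearing in $m_1,m_2$ are then carried to $_{d+1}\C^{(k,1)}\setminus{}_{d+2}\C^{(k,1)}$ and $_{d+2}\C^{(k,1)}$, it suffices to establish \eqref{eq:c1kinf}, the description of $\F(1,\cdot)$, and \eqref{eq:1ksominf} for $d=0$, and likewise in the finite case. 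This reduction is also what explains the shifted index ranges $n\pm1+2d$ in \eqref{eq:c1kfin}, once one recalls $\C^{(k,1)}_{2m}=\C^{(k-2)}_m$ and $\al{n}=\ak{n+1}+\ak{n-1}$.

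For the $d=0$ case of part (1), Definition~\ref{def:1k} gives $[m]^{(1,k)}=([\lfloor\sk{0}m\rfloor+1]^{(k,1)},[\lceil\sk{2}m\rceil-1]^{(k,1)})$ for $m\geq1$. Because $\sk{0}+\sk{2}=1$ (by \eqref{eq:ratio}) and $\tfrac12\leq\sk{0}<1$, Lemma~\ref{lem:divide}(1) with $x=\sk{0}$ identifies exactly which integer pairs $(a,b)$ with $b\geq0$ occur as $(\lfloor\sk{0}m\rfloor+1,\lceil\sk{2}m\rceil-1)$, namely the pairs of $S_x^+$, i.e.\ those with
\[
1+\left\lfloor\frac{\sk{0}}{\sk{2}}\,b\right\rfloor\;\leq\;a\;\leq\;1+\left\lfloor\frac{\sk{0}}{\sk{2}}\,(b+1)\right\rfloor\,\cdot
\]
I would then rewrite this in terms of $\prec$ on $\C^{(k,1)}$: using that $[\cdot]^{(k,1)}$ is an order isomorphism onto $\Zz$ together with the identity $0\cdot[c]^{(k,1)}=[1+\lfloor\sk{0}c/\sk{2}\rfloor]^{(k,1)}$ for $c\in\Zz$ — the $(k,1)$-analogue of \eqref{eq:klratinf0}, obtainable from the explicit formulas \eqref{eq:formuleklu} or as the limiting case of Proposition~\ref{prop:k1ratiofin} — the displayed inequalities become $0\cdot s_2\preceq s_1\preceq 0\cdot[b+1]^{(k,1)}=0\cdot\F(1,\C^{(k,1)},s_2)$ with $s_1=[a]^{(k,1)}$, $s_2=[b]^{(k,1)}$, which is \eqref{eq:c1kinf}. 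The description of $\F(1,\cdot)$ in cases (a),(b) is then read off from the two sub-items of Lemma~\ref{lem:divide}(1): the move $m\mapsto m+1$ increments $a$ iff $a<1+\lfloor\sk{0}(b+1)/\sk{2}\rfloor$, i.e.\ iff $s_1\neq0\cdot\F(1,\C^{(k,1)},s_2)$, and then $s_2$ is fixed and $s_1$ advances to its immediate successor; otherwise $b$ increments, $s_1$ is fixed, and $s_2$ advances.

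Part (2) is the same argument with $\C^{(k,1)}$ replaced by the relevant $_\bullet\C^{(k,1)}_\bullet$, with $x=\ak{n+1}/\al{n}$ (which lies in $[\tfrac12,1)$ by monotonicity of the $(k,1)$-sequence), with Lemma~\ref{lem:divide}(2) and $S_x^-$ in place of (1) and $S_x^+$ (the endpoints are now $\lceil\cdot\rceil$ rather than $1+\lfloor\cdot\rfloor$, which is why the strict/non-strict comparisons in \eqref{eq:1ksomfin} sit oppositely to those in \eqref{eq:1ksominf}), and with Proposition~\ref{prop:k1ratiofin} applied to the index $n-1$ (so that $\ak{n+1}/\ak{n-1}=x/(1-x)$) in place of the identity above; the hypothesis $(d,n)\neq(0,1)$ is exactly what guarantees $n-1+2d\geq1$, so that the second component is a genuine admissible word.

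Finally, for the iterated counts \eqref{eq:1ksominf} and \eqref{eq:1ksomfin}: writing $(t_1,t_2)=\F(m,\cdot,(s_1,s_2))$, let $\alpha$ and $\beta$ be the numbers of the $m$ unit steps that advance the first, respectively second, component, so $\alpha+\beta=m$. Each first-component step moves to the immediate successor in $\C^{(k,1)}$, so $\alpha=\sharp\{r\in\C^{(k,1)}:s_1\preceq r\prec t_1\}=m_1+m_2$ after splitting $\C^{(k,1)}=(\C^{(k,1)}\setminus{}_2\C^{(k,1)})\sqcup{}_2\C^{(k,1)}$. A second-component step occurs precisely when the current first component equals $0\cdot\F(1,\C^{(k,1)},\text{current second component})$, that is, is the element of $_2\C^{(k,1)}$ of that special form; since $\sk{0}/\sk{2}\geq1$ no two second-component steps are consecutive, and each such special element of $_2\C^{(k,1)}$ in the open range between $s_1$ and $t_1$ triggers exactly one, so a boundary analysis at $s_1$ and $t_1$ yields $\beta=m_2-\chi(t_1=0\cdot\F(1,\C^{(k,1)},t_2))+\chi(s_1=0\cdot\F(1,\C^{(k,1)},s_2))$, and substituting into $\alpha+\beta=m$ gives the claim. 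I expect this last boundary bookkeeping to be the main obstacle: one must decide precisely whether the two endpoints themselves sit at a second-component-step position — which is exactly what the two $\chi$-corrections encode — and, in the finite case, do so while keeping the several shifted ranges $n\pm1+2d$ aligned without off-by-one slips.
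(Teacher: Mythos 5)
Your proposal follows essentially the same route as the paper's own proof: reduce to $d=0$ via the order-preserving shift of Lemma \ref{lem:decalage}, identify the admissible pairs through Definition \ref{def:1k} together with Lemma \ref{lem:divide} (part (1) with $x=\sk{0}$ and $S_x^+$ for the infinite case, part (2) with $x=\ak{n+1}/\al{n}$ and $S_x^-$ for the finite case) and the shift identity \eqref{eq:klratinf0} read in the $(k-2)$-setting, read off the follower description from the sub-items of Lemma \ref{lem:divide}, and obtain \eqref{eq:1ksominf} by decomposing $m$ into first- and second-component advances with the same two boundary $\chi$-corrections. The counting bookkeeping you flag as delicate is exactly the step the paper handles via the chain $0\cdot\F(0,\C^{(k,1)},s_2)\preceq s_1\preceq 0\cdot\F(1,\C^{(k,1)},s_2)\preceq\cdots\preceq t_1\preceq 0\cdot\F(m'_2+1,\C^{(k,1)},s_2)$, and your version of it is consistent with the claimed formula.
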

The characterization of $(k,1)$ and $(1,k)$-Euler and lecture hall partitions in terms of $(k,1)$ and $(1,k)$-admissible words is the following.
\begin{prop}\label{prop:1kratiofin}
For $k\geq 4$, we have the following.
\begin{enumerate}
 \item For $r,s\in \Zu$, 
\begin{align}
\label{eq:1kratinf0}
r>\sk{0}\cdot s &\Longleftrightarrow \, \left[r\right]^{(k,1)}\succeq p_1(\left[s\right]^{(1,k)})\,,\\
\label{eq:1kratinf1} r> \sll{0} \cdot s &\Longleftrightarrow \, p_2(\left[r\right]^{(1,k)}) \succeq \left[s\right]^{(k,1)}\,\cdot
\end{align}
Hence, for a sequence $\la=(\la_1,\ldots,\la_{2t})$ such that $t\geq 1$,  $0=\la_{2t}\leq \la_{2t-1}$ and $\la_i>0$ for $1\leq i\leq 2t-2$, 
\begin{align}
\label{eq:inlk1} \la \in \Lk &\Longleftrightarrow 
\begin{cases}
 \left[\la_{2i-1}\right]^{(k,1)}\succeq p_1\left(\left[\la_{2i}\right]^{(1,k)}\right)\ \ \text{for} \ \ 1\leq i\leq t\,,\\
 p_2\left(\left[\la_{2i}\right]^{(1,k)}\right)\succeq \left[\la_{2i+1}\right]^{(k,1)} \ \ \text{for} \ \ 1\leq i\leq t-1\,,
\end{cases}
\\
\label{eq:inl1k} \la \in \Ll &\Longleftrightarrow
\begin{cases}
 \left[\la_{2i}\right]^{(k,1)}\succeq p_1\left(\left[\la_{2i+1}\right]^{(1,k)}\right)\ \ \text{for} \ \ 1\leq i\leq t-1\,,\\
 p_2\left(\left[\la_{2i-1}\right]^{(1,k)}\right)\succeq \left[\la_{2i}\right]^{(k,1)} \ \ \text{for} \ \ 1\leq i\leq t\,\cdot
\end{cases}
\end{align}
\item For $r,s\in \Zz$ and for $n\in\Zu$,
\begin{align}
\label{eq:1krat0}
r\geq \frac{\ak{n+1}}{\al{n}} s &\Longleftrightarrow \, \left[r\right]^{(k,1)}_{n+1}\succeq p_1(\left[s\right]^{(1,k)}_{n})\,,\\
\label{eq:1krat1} r\geq \frac{\al{n+1}}{\ak{n}} s &\Longleftrightarrow \, p_2(\left[r\right]^{(1,k)}_{n+1})\succeq \left[s\right]^{(k,1)}_{n}\,,
\end{align}
Hence, for a sequence $\la=(\la_1,\ldots,\la_{2n})$ of non-negative integers, 
\begin{align}
\label{eq:inlk12n} \la \in \Lk_{2n}&\Longleftrightarrow
\begin{cases}
 \left[\la_{2i}\right]^{(k,1)}_{2i}\succeq p_1\left(\left[\la_{2i-1}\right]^{(1,k)}_{2i-1}\right)\ \ \text{for} \ \ 1\leq i\leq n\,,\\
 p_2\left(\left[\la_{2i+1}\right]^{(1,k)}_{2i+1}\right)\succeq \left[\la_{2i}\right]^{(k,1)}_{2i} \ \ \text{for} \ \ 1\leq i\leq n-1\,,
\end{cases}\\
\label{eq:inl1k2n} \la \in \Ll_{2n} &\Longleftrightarrow \begin{cases}
 \left[\la_{2i+1}\right]^{(k,1)}_{2i+1}\succeq p_1\left(\left[\la_{2i}\right]^{(1,k)}_{2i}\right)\ \ \text{for} \ \ 1\leq i\leq n-1\,,\\
 p_2\left(\left[\la_{2i}\right]^{(1,k)}_{2i}\right)\succeq \left[\la_{2i-1}\right]^{(k,1)}_{2i-1}\ \ \text{for} \ \ 1\leq i\leq n\,,
\end{cases}
\end{align}
and for a sequence $\la=(\la_1,\ldots,\la_{2n-1})$ of non-negative integers,
\begin{align}
\label{eq:inlk12n1} \la \in \Lk_{2n-1}&\Longleftrightarrow
\begin{cases}
 \left[\la_{2i}\right]^{(k,1)}_{2i}\succeq p_1\left(\left[\la_{2i-1}\right]^{(1,k)}_{2i-1}\right)\ \ \text{for} \ \ 1\leq i\leq n-1\,,\\
 p_2\left(\left[\la_{2i+1}\right]^{(1,k)}_{2i+1}\right)\succeq \left[\la_{2i}\right]^{(k,1)}_{2i}\ \ \text{for} \ \ 1\leq i\leq n-1\,,
\end{cases}\\
\label{eq:inl1k2n1} \la \in \Ll_{2n-1} &\Longleftrightarrow \begin{cases}
 \left[\la_{2i+1}\right]^{(k,1)}_{2i+1}\succeq p_1\left(\left[\la_{2i}\right]^{(1,k)}_{2i}\right)\ \ \text{for} \ \ 1\leq i\leq n-1\,,\\
 p_2\left(\left[\la_{2i}\right]^{(1,k)}_{2i}\right)\succeq\left[\la_{2i-1}\right]^{(k,1)}_{2i-1}\ \ \text{for} \ \ 1\leq i\leq n-1\,\cdot
\end{cases}
\end{align}
\end{enumerate}
\end{prop}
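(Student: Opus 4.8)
The statement I want to prove is Proposition \ref{prop:1kratiofin}, which translates the ratio constraints defining the $(k,1)$- and $(1,k)$-Euler and lecture hall partitions into order conditions on the associated $(k,1)$- and $(1,k)$-admissible words. The whole proposition rests on the equivalences \eqref{eq:1kratinf0}–\eqref{eq:1krat1}, so I will prove those four "master" equivalences first, and then read off the characterizations \eqref{eq:inlk1}–\eqref{eq:inl1k2n1} as immediate consequences, since each of those is just the conjunction of the relevant master equivalences applied term-by-term to the inequalities \eqref{eq:klseqbis} (for the Euler case) or to the defining ratio inequalities \eqref{eq:ratioklseq} (for the lecture hall case), after recalling that $\la\in\Lk$ or $\La\in\Ll$ means each consecutive ratio condition holds, and that by Proposition \ref{prop:propklgen}(2) the sequence $(\akl{n+1}/\alk{n})_{n\geq1}$ decreases to $\skl{0}$.

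**The master equivalences.** The key input is Lemma \ref{lem:divide} with the two specializations indicated just before Definition \ref{def:1k}: $x=\sk{0}$ for the infinite case and $x=\ak{n+1}/\al{n}$ for the finite case. Concretely, for \eqref{eq:1krat0} I would argue as follows. By definition of $\left[\cdot\right]^{(1,k)}_n$, the first component $p_1(\left[s\right]^{(1,k)}_{n})$ equals $\left[\lceil \ak{n+1}s/\al{n}\rceil\right]^{(k,1)}_{n+1}$ (up to the shift bookkeeping in Lemma \ref{lem:divide}, which is precisely why the exponent in $\C^{(k,1)}_{n+1}$ appears). Then by the order equivalence $r<s\Longleftrightarrow [r]^{(k,1)}_{2n-1}\prec[s]^{(k,1)}_{2n-1}$ recorded in the definition of $(k,1)$-admissible words (and its even-index analogue via $\C^{(k,1)}_{2n}=\C^{(k-2)}_n$), the condition $\left[r\right]^{(k,1)}_{n+1}\succeq p_1(\left[s\right]^{(1,k)}_{n})$ becomes $r\geq \lceil \ak{n+1}s/\al{n}\rceil$, which by Remark \ref{rem:rat} (the $\geq$ line, with $a=\ak{n+1}/\al{n}$) is exactly $r\geq \frac{\ak{n+1}}{\al{n}}s$. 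The equivalence \eqref{eq:1krat1} is handled the same way using the second component and $\lfloor \al{n-1}? \rfloor$-type identities together with the observation $(\slk{0})^{-1}=\skl{2}$ and $\al{n+1}/\ak{n}$ being the relevant ratio; here I must be careful that the $p_2$-component of $[r]^{(1,k)}_{n+1}$ is $\left[\lfloor \al{n-1+?}\cdots\rfloor\right]^{(k,1)}_{n-1+?}$ and that the shift aligns so that the comparison with $\left[s\right]^{(k,1)}_n$ makes sense — this is exactly the content of Lemma \ref{lem:follow1k}(2). The infinite versions \eqref{eq:1kratinf0}, \eqref{eq:1kratinf1} are the $n\to\infty$ limits, using that $\left[\cdot\right]^{(k,1)}$ is the "ambient" numeration and $\sk{0}$, $\sll{0}$ are the limiting ratios (Proposition \ref{prop:propklgen}(2) again); Remark \ref{rem:rat}'s strict-inequality line now supplies $r>\sk{0}s\Longleftrightarrow r\geq \lfloor\sk{0}s\rfloor+1$, matching the "$+1$" appearing in $\left[\lfloor\sk{0}m\rfloor+1\right]^{(k,1)}$ in Definition \ref{def:1k}.

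**Assembling the partition characterizations.** Once the four master equivalences are in place, \eqref{eq:inlk1} follows by writing $\la\in\Lk$ as: $\la_{2i-1}>\sk{0}\la_{2i}$ for $1\leq i\leq t-1$ together with $\la_{2i}>\sll{0}\la_{2i+1}$ for $1\leq i\leq t-1$ — except that because $\Lk$ is defined via ratios $\la_j/\ak{j}$ rather than via \eqref{eq:klseqbis}, I should instead invoke that for finitely many positive parts the ratio chain \eqref{eq:ratioklseq} with the "tail zero" convention collapses to exactly these consecutive comparisons with the limiting ratios, via the monotonicity in Proposition \ref{prop:propklgen}(2); this is the one place where I need to be slightly careful about the boundary index $i=t$ (where $\la_{2t}=0$, so the condition is vacuous or forces $\left[\la_{2t-1}\right]^{(k,1)}\succeq p_1(\emptyset$-type$)$, trivially true). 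The finite cases \eqref{eq:inlk12n}–\eqref{eq:inl1k2n1} are identical but use \eqref{eq:1krat0}–\eqref{eq:1krat1} with the running index $n$ replaced by $2i$ or $2i\pm1$, and here one uses that $\left[\la_j\right]^{(k,1)}_j$ with $j$ equal to the position is the "correct truncation length" dictated by the length of $\la$; I will spell out the even/odd length bookkeeping since $\C^{(k,1)}_{2i}=\C^{(k-2)}_i$ forces a parity split.

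**Main obstacle.** The genuine difficulty is not any single inequality — each is a one-line application of Remark \ref{rem:rat} and Lemma \ref{lem:divide} — but rather keeping the shift/truncation bookkeeping consistent: the maps $\left[\cdot\right]^{(1,k)}_n$ return a \emph{pair} of $(k,1)$-words living in $\C^{(k,1)}_{n+1}$ and $\C^{(k,1)}_{n-1}$ respectively, and the comparisons $\succeq$ in the statement are between words of possibly different nominal lengths, so I must check at each step that $0\cdot$ (Definition \ref{def:decalage}) lines up the indices correctly — i.e. that $0\cdot\left[\la_{2i}\right]^{(1,k)}$ is being compared with something in the same $_i\C$-space, using Lemma \ref{lem:decalage} for $k\geq4$. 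The cleanest way to avoid errors is to treat the infinite case in full detail (it has no truncation) and then derive the finite cases as truncations, quoting \eqref{eq:ordkladmfin}, the bijection $\C^{(k,1)}_n$ with $^{\lceil n/2\rceil}\C^{(k-2)}\times\Zz$, and Lemma \ref{lem:divide}'s explicit inverse formulas $m\mapsto(\lfloor xm\rfloor+1,\lceil(1-x)m\rceil-1)$ and $m\mapsto(\lceil xm\rceil,\lfloor(1-x)m\rfloor)$ to identify exactly which component carries which floor/ceiling. I expect the write-up to be somewhat lengthy but entirely mechanical once the infinite case is nailed down.
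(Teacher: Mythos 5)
Your proposal follows essentially the same route as the paper: the four master equivalences are obtained from Remark \ref{rem:rat} together with the explicit floor/ceiling formulas in Definition \ref{def:1k} and the order-preservation of the $(k,1)$-numeration, and the partition characterizations are then read off term-by-term (with the same observation that the boundary cases $r\geq s=0$ remain valid). One small correction: $\Lk$ (the Euler set, no subscript) is defined directly by the consecutive ratio conditions \eqref{eq:klseqbis}, so your first decomposition is already exact and the proposed detour through \eqref{eq:ratioklseq} and the monotonicity of $(\akl{n+1}/\alk{n})_{n\geq 1}$ is unnecessary — that chain is only relevant for the subscripted lecture hall sets.
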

\section{Well-definedness of $\Phi^{(k,l)}$}\label{part:interinf}
We first give an interpretation of the insertion of the parts $\bkl{i}$ into the pairs $(\la_{2j-1},\la_{2j})$ in terms of the admissible words. Denote by $(\la_{2j-1}^{(i)},\la_{2j}^{(i)})$ the pairs $(\la_{2j-1},\la_{2j})$ after the insertion of all the parts $\bkl{i}$, and let $m_i^{(j)}$ be the number of parts $\bkl{i}$ inserted into the pair $(\la_{2j-1},\la_{2j})$. Note that $m_i^{(1)}$ equals the number of occurrences of $\bkl{i}$ in $\nu$, and the image by $\Phi^{(k,l)}$ consists of $(\la_j^{(1)})_{j=1}^{2t}$, where $t$ is the smallest $j$ such that $\la_{2j}^{(1)}=0$.
\subsection{The case $k,l\geq 2$}\label{part:interinfkl}
We now state the key proposition that induces the well-definedness of $\Phi^{(k,l)}$.
\begin{prop}\label{prop:welldefinf}
For  $t\geq j\geq 1$,
\begin{enumerate}
\item for $i\geq 2$, we have 
\begin{equation}\label{eq:klbelongsji}
\left[\la_{2j-1}^{(i)}\right]^{(k,l)} = 0\cdot \left[\la_{2j}^{(i)}\right]^{(l,k)}\in \,_i\C^{(k,l)}\,\cdot
\end{equation}
\begin{enumerate}
\item Furthermore,
$
 \left[\la_{2j-1}^{(i)}\right]^{(k,l)}= \F\left(m_i^{(j)},\,_{i}\C^{(k,l)},\left[\la_{2j-1}^{(i+1)}\right]^{(k,l)}\right)\,\cdot
$
\item Reciprocally, 
$
 _{i+1}\left[\la_{2j-1}^{(i)}\right]^{(k,l)}= \F\left(m_{i-1}^{(j+1)},\,_{i+1}\C^{(k,l)},\left[\la_{2j-1}^{(i+1)}\right]^{(k,l)}\right)\,\cdot
$
\end{enumerate}
\item Finally, $\la_{2j}^{(1)}=\la_{2j}^{(2)}$,  
\begin{enumerate}
 \item and $\left[\la_{2j-1}^{(1)}\right]^{(k,l)}= \F\left(m_1^{(j)},\C^{(k,l)},\left[\la_{2j-1}^{(2)}\right]^{(k,l)}\right)\,\cdot$
 \item Reciprocally, 
 $\left[\la_{2j-1}^{(1)}\right]^{(k,l)}= \F\left(m_1^{(j)},\C^{(k,l)},0\cdot\left[\la_{2j}^{(1)}\right]^{(l,k)}\right)\,\cdot$
\end{enumerate}
\end{enumerate}
\end{prop}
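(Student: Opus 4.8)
The plan is to prove the three assertions simultaneously by induction on the insertion level $i$, running from large $i$ (where no parts have yet been inserted and the claim is trivial, since $\la^{(i)}_{2j-1}=\la^{(i)}_{2j}=0$ for $i$ exceeding the largest part in $\nu$) down to $i=1$. The key observation is that each single insertion of a part $\bkl{i}$ via \eqref{eq:insert1type1} or \eqref{eq:insert1type2} is precisely the arithmetic content of one step of the follower operation $\F(1,\,_i\C^{(k,l)},\cdot)$ as described in Lemma \ref{lem:follow}, read through the dictionary of Proposition \ref{prop:kladmseq} between integers and $(k,l)$-admissible words. First I would fix $i\geq 2$ and $j$, assume \eqref{eq:klbelongsji} holds at level $i+1$, i.e. $\left[\la_{2j-1}^{(i+1)}\right]^{(k,l)}=0\cdot\left[\la_{2j}^{(i+1)}\right]^{(l,k)}\in\,_{i+1}\C^{(k,l)}\subset\,_i\C^{(k,l)}$, and then analyze what happens to this admissible word as the $m_i^{(j)}$ parts $\bkl{i}$ are inserted one at a time.

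The heart of the matter is the following local claim: inserting one part $\bkl{i}$ into $(\la_{2j-1},\la_{2j})$ sends the word $w=\left[\la_{2j-1}\right]^{(k,l)}$ (assumed to lie in $_i\C^{(k,l)}$ and to equal $0\cdot\left[\la_{2j}\right]^{(l,k)}$) to $\F(1,\,_i\C^{(k,l)},w)$, with the two cases of Lemma \ref{lem:follow} corresponding respectively to the two insertion rules \eqref{eq:insert1type1} and \eqref{eq:insert1type2}. Here condition \eqref{eq:condinsinf1}, rewritten via \eqref{eq:ratio} as in the Remark following the definition of $\Phi^{(k,l)}$, is exactly the condition that distinguishes case (1) of Lemma \ref{lem:follow} from case (2): indeed Proposition \ref{prop:klinserted}\,(1), applied with $n=i$, identifies $\sum_{i\ge n}c_i\akl{i+1}-\skl{0}\sum_{i\ge n}c_i\alk{i}$ — which is $\la_{2j-1}-\skl{0}\la_{2j}$ up to the shift — with the relevant interval $[0,\skl{i-1}-\skl{i}]$ or $(\skl{i-1}-\skl{i},\skl{i}]$. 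The increments $\akl{i}-\akl{i-1}$, $\alk{i-1}-\alk{i-2}$ in \eqref{eq:insert1type1} and $\akl{i}$, $\alk{i-1}$ in \eqref{eq:insert1type2} are precisely the changes in the integers represented by the word under the two branches of the follower map, by \eqref{eq:klratinfdif1}–\eqref{eq:klratinfdif2}; moreover in case (1) the word drops into $_{i+1}\C^{(k,l)}$ only after the stored part $\bkl{i-1}$ accounts for the "carry", which is what gives assertion 1(b). Iterating $m_i^{(j)}$ times and invoking Lemma \ref{lem:prechoice} to see the composite is $\F(m_i^{(j)},\,_i\C^{(k,l)},\cdot)$ yields 1(a); the fact that the word stays of the form $0\cdot(\text{something})$ after each insertion — equivalently, that $c_1=0$ is preserved — follows because $i\geq 2$ means we never add to the first coordinate, giving \eqref{eq:klbelongsji} at level $i$ and hence the $(l,k)$-word for $\la_{2j}^{(i)}$. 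The reciprocal statement 1(b) is obtained by tracking instead the stored parts fed into $(\la_{2j+1},\la_{2j+2})$, which are the parts $\bkl{i-1}$ produced exactly when the follower map for $_i\C^{(k,l)}$ performs a carry past level $i$; these are counted by $m_{i-1}^{(j+1)}$, and the word $0\cdot\left[\la_{2j}^{(i)}\right]^{(l,k)}$ equals $_{i+1}\left[\la_{2j-1}^{(i)}\right]^{(k,l)}$ by the structure of the carries.

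For part 2, the insertion of $\bkl{1}$ is always by rule \eqref{eq:insert1type2} with $i=1$ and produces no stored part, so $\la_{2j}$ is untouched, giving $\la_{2j}^{(1)}=\la_{2j}^{(2)}$; and each such insertion adds $\akl{1}=1$ to $\la_{2j-1}$ and $\alk{0}=0$ to $\la_{2j}$, i.e. increments $c_1$ by one, which is exactly $\F(1,\C^{(k,l)},\cdot)$ in the generic case of Lemma \ref{lem:follow}\,(2) applied at $n=1$ (the exceptional carry case cannot arise because $c_1$ only ever increases here and the word already lies in $\C^{(k,l)}$). Composing $m_1^{(j)}$ times via Lemma \ref{lem:prechoice} gives 2(a); and since $\la_{2j}^{(1)}=\la_{2j}^{(2)}$ and $\left[\la_{2j-1}^{(2)}\right]^{(k,l)}=0\cdot\left[\la_{2j}^{(2)}\right]^{(l,k)}=0\cdot\left[\la_{2j}^{(1)}\right]^{(l,k)}$ by \eqref{eq:klbelongsji} at level $2$, statement 2(b) is just a rewriting of 2(a). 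I expect the main obstacle to be the bookkeeping in 1(b): correctly matching the parts $\bkl{i-1}$ that get stored for the next pair with the carries of the follower operation, and verifying that the resulting word is exactly $_{i+1}\left[\la_{2j-1}^{(i)}\right]^{(k,l)}$ rather than something shifted or off by one — this requires careful use of Lemma \ref{lem:follow}'s case split together with Lemma \ref{lem:decalage} to align the index ranges, and is where the "divide and conquer" philosophy of the paper does the real work.
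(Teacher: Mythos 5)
Your proposal follows essentially the same route as the paper's proof: downward induction on $i$ with the zero base case at large $i$, the identification of each single insertion of $\bkl{i}$ with one follower step in ${}_i\C^{(k,l)}$ via \eqref{eq:klratinfdif1}--\eqref{eq:klratinfdif2} (with \eqref{eq:condinsinf1} selecting the branch), Lemma \ref{lem:decalage} to commute the shift $0\cdot$ with the follower map so that \eqref{eq:klbelongsji} propagates, and the count of stored parts $\bkl{i-1}$ as the number of followers landing in ${}_{i+1}\C^{(k,l)}$ for 1(b). The only slip is the parenthetical in part 2 claiming the carry case cannot arise when inserting $\bkl{1}$ --- it can, since incrementing $c_1$ past $o^{(k,l)}_1+1$ forces a carry --- but the conclusion stands regardless, because adding $\akl{1}=1$ to the integer is the follower in $\C^{(k,l)}$ by the order isomorphism of Proposition \ref{prop:kladmseq}.
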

\medskip
\begin{exs}
We illustrate the previous proposition with Examples \ref{exs:allpaper}.
For $(k,l)=(2,3)$ with \\
$\nu=(b^{(2,3)}_{1})^5(b^{(2,3)}_{2})^4(b^{(2,3)}_{3})^2(b^{(2,3)}_{4})^3(b^{(2,3)}_{5})(b^{(2,3)}_{6})^3=(1+0)^5(3+1)^4(5+2)^2(12+5)^3(19+8)(45+19)^3$, the corresponding table is the following,
$$
\begin{array}{|c||c|c||c|c||c|c|}
 \hline 
 i&m_i^{(1)}&[\la_1^{(i)}]^{(2,3)}&m_i^{(2)}&[\la_3^{(i)}]^{(2,3)}&m_i^{(3)}&[\la_5^{(i)}]^{(2,3)}\\
 \hline\hline
  7&0&(0,0,0,0,0,0,\textcolor{red}{0},0,0,\ldots)&0&(0,0,0,0,0,0,\textcolor{red}{0},\ldots)&0&(0,0,0,0,0,0,\textcolor{red}{0},\ldots)\\
  6&3&(0,0,0,0,0,\textcolor{red}{1},1,0,0,\ldots)&0&(0,0,0,0,0,\textcolor{red}{0},0,\ldots)&0&(0,0,0,0,0,\textcolor{red}{0},0,\ldots)\\
  5&1&(0,0,0,0,\textcolor{red}{1},1,1,0,0,\ldots)&1&(0,0,0,0,\textcolor{red}{1},0,0,\ldots)&0&(0,0,0,0,\textcolor{red}{0},0,0,\ldots)\\
  4&3&(0,0,0,\textcolor{red}{0},1,0,2,0,0,\ldots)&0&(0,0,0,\textcolor{red}{0},1,0,0,\ldots)&0&(0,0,0,\textcolor{red}{0},0,0,0,\ldots)\\
  3&2&(0,0,\textcolor{red}{0},0,0,0,0,1,0,\ldots)&2&(0,0,\textcolor{red}{2},0,1,0,0,\ldots)&0&(0,0,\textcolor{red}{0},0,0,0,0,\ldots)\\
  2&4&(0,\textcolor{red}{0},2,0,0,0,0,1,0,\ldots)&1&(0,\textcolor{red}{0},0,1,1,0,0,\ldots)&0&(0,\textcolor{red}{0},0,0,0,0,0,\ldots)\\
  1&5&(\textcolor{red}{0},1,0,1,0,0,0,1,0,\ldots)&2&(\textcolor{red}{2},0,0,1,1,0,0,\ldots)&1&(\textcolor{red}{1},0,0,0,0,0,0,\ldots)\\
  \hline
\end{array}\,\,,
$$
and for $(k,l)=(3,2)$ with $\nu=(b^{(3,2)}_{1})^5(b^{(3,2)}_{2})^4(b^{(3,2)}_{3})^2(b^{(3,2)}_{4})^3(b^{(3,2)}_{5})(b^{(3,2)}_{6})^3=(1+0)^5(2+1)^4(5+3)^2(8+5)^3(19+12)(30+19)^3$, 
$$
\begin{array}{|c||c|c||c|c||c|c|}
 \hline 
 i&m_i^{(1)}&[\la_1^{(i)}]^{(3,2)}&m_i^{(2)}&[\la_3^{(i)}]^{(3,2)}&m_i^{(3)}&[\la_5^{(i)}]^{(3,2)}\\
 \hline\hline
  7&0&(0,0,0,0,0,0,\textcolor{red}{0},0,0,\ldots)&0&(0,0,0,0,0,0,\textcolor{red}{0},\ldots)&0&(0,0,0,0,0,0,\textcolor{red}{0},\ldots)\\
  6&3&(0,0,0,0,0,\textcolor{red}{0},1,0,0,\ldots)&0&(0,0,0,0,0,\textcolor{red}{0},0,\ldots)&0&(0,0,0,0,0,\textcolor{red}{0},0,\ldots)\\
  5&1&(0,0,0,0,\textcolor{red}{1},0,1,0,0,\ldots)&1&(0,0,0,0,\textcolor{red}{1},0,0,\ldots)&0&(0,0,0,0,\textcolor{red}{0},0,0,\ldots)\\
  4&3&(0,0,0,\textcolor{red}{1},0,1,1,0,0,\ldots)&0&(0,0,0,\textcolor{red}{0},1,0,0,\ldots)&0&(0,0,0,\textcolor{red}{0},0,0,0,\ldots)\\
  3&2&(0,0,\textcolor{red}{1},0,0,0,0,1,0,\ldots)&1&(0,0,\textcolor{red}{1},0,1,0,0,\ldots)&0&(0,0,\textcolor{red}{0},0,0,0,0,\ldots)\\
  2&4&(0,\textcolor{red}{2},0,1,0,0,0,1,0,\ldots)&1&(0,\textcolor{red}{1},1,0,1,0,0,\ldots)&0&(0,\textcolor{red}{0},0,0,0,0,0,\ldots)\\
  1&5&(\textcolor{red}{1},0,0,2,0,0,0,1,0,\ldots)&1&(\textcolor{red}{0},0,0,1,1,0,0,\ldots)&0&(\textcolor{red}{0},0,0,0,0,0,0,\ldots)\\
  \hline
\end{array}\,\,\cdot
$$
\end{exs}
$$$$
Note that for $i\geq 1$, $m_i^{(1)}$ recursively derives the numbers $m_{i+1-j}^{(j)}$ and the pairs $(\la_{2j-1}^{(i+1-j)},\la_{2j}^{(i+1-j)})$ for $1\leq j\leq i$.
The next figure gives an interpretation of the relation between the parts at intermediate phases. 
\begin{figure}[H]
\label{fig:proofinf}
\begin{tikzpicture}[scale=1.2, every node/.style={scale=0.8}]

\draw (-1+2.5+0.8,1.5) node[left] {$_{i+1}\left[\la_{2j-1}^{(i)}\right]^{(k,l)}$};
\draw (-2+2.5+0.8,3) node[left] {$\left[\la_{2j-1}^{(i+1)}\right]^{(k,l)}$};

\draw (-1+0.8,1.5) node[left] {$\left[\la_{2j-1}^{(i)}\right]^{(k,l)}$};
\draw (6+0.8,3) node[left] {$\left[\la_{2j+1}^{(i)}\right]^{(k,l)}$};

\draw (6+0.8,1.5) node[left] {$\left[\la_{2j+1}^{(i-1)}\right]^{(k,l)}$};

\draw[dotted,<->] (-1.5+2.5,3)--(5.4,3); \draw[dotted,<->] (-0.5+2.5,1.5)--(5.4,1.5);
\draw(0,2.75)--(-0.25,2.5); \draw[->](-0.75,2)--(-1,1.75);
\draw[dashed] (0.7,2.75)--(0.9,2.5); \draw[dashed,->] (1.3,2)--(1.5,1.75);
\draw[dashed,->] (0.6,1.5)--(-0.6,1.5);
\draw(1.5+4.5,2.65)--(1.5+4.5,2.5); \draw[->] (1.5+4.5,2)--(1.5+4.5,1.75);
 
\draw (-0.6,2.25) node {$\F(m_i^{(j)},\,_{i}\C^{(k,l)})$};
\draw (-1.45,2.5)--(0.25,2.5)--(0.25,2)--(-1.45,2)--cycle;

\draw (1.5,2.25) node {$\F(m_{i-1}^{(j+1)},\,_{i+1}\C^{(k,l)})$};
\draw (0.4,2.5)--(2.65,2.5)--(2.65,2)--(0.4,2)--cycle;

\draw (1.5+4.5,2.25) node {$\F(m_{i-1}^{(j+1)},\,_{i-1}\C^{(k,l)})$};
\draw (0.4+4.5,2.5)--(2.65+4.5,2.5)--(2.65+4.5,2)--(0.4+4.5,2)--cycle;

\end{tikzpicture}
\caption{Relations between the insertions into the $j^{th}$ and $(j+1)^{th}$ pairs}
\end{figure}
By Lemmas \ref{lem:prechoice} and \ref{lem:decalage}, in $_{i+1}\C^{(k,l)}$, 
$$
00\cdot\left[\la_{2j+1}^{(i)}\right]^{(k,l)} \preceq  \left[\la_{2j-1}^{(i+1)}\right]^{(k,l)} \Longleftrightarrow 00\cdot\left[\la_{2j+1}^{(i-1)}\right]^{(k,l)}\preceq\,  _{i+1}\left[\la_{2j-1}^{(i)}\right]^{(k,l)}\,\cdot
$$
By \eqref{eq:leq}, this yields
\begin{equation}\label{eq:proofinfkl}
00\cdot\left[\la_{2j+1}^{(i)}\right]^{(k,l)} \preceq  \left[\la_{2j-1}^{(i+1)}\right]^{(k,l)} \Longleftrightarrow 00\cdot\left[\la_{2j+1}^{(i-1)}\right]^{(k,l)}\preceq\,  \left[\la_{2j-1}^{(i)}\right]^{(k,l)}\,\cdot
\end{equation}
In the remainder of this part, the number fact refer to Proposition \ref{prop:welldefinf}.
\subsubsection{Proof that $\Phi^{(k,l)}(\Bkl)\subset \Lkl$}
By \eqref{eq:klbelongsji} and fact $(2)$, for $j\geq 1$,
$$\left[\la_{2j-1}^{(1)}\right]^{(k,l)}\succeq \left[\la_{2j-1}^{(2)}\right]^{(k,l)}=0\cdot\left[\la_{2j}^{(2)}\right]^{(l,k)}=0\cdot\left[\la_{2j}^{(1)}\right]^{(l,k)}\,\cdot$$
By \eqref{eq:inlkl}, to prove that $\la \in \Lkl$, it suffices to show that to show that
$$\left[\la_{2j-1}^{(2)}\right]^{(k,l)}=0\cdot\left[\la_{2j}^{(2)}\right]^{(l,k)}=0\cdot\left[\la_{2j}^{(1)}\right]^{(l,k)}\succeq 00\cdot \left[\la_{2j+1}^{(1)}\right]^{(k,l)}\,\cdot$$ 
For $n$ the greatest index $i$ such that $\bkl{i}$ occurs in $\nu$, we have for $1\leq j\leq n+1$ that
$\la_{2j-1}^{(n+2-j)}=\la_{2j}^{(n+2-j)}=0$, and
$$\left[\la_{2j-1}^{(n+2-j)}\right]^{(k,l)}=00\cdot\left[\la_{2j+1}^{(n+1-j)}\right]^{(k,l)}\,\cdot$$
Thus, by \eqref{eq:proofinfkl}, by induction  on $i$ for $n\geq i>j\geq 1$,
$$\left[\la_{2j-1}^{(i+1-j)}\right]^{(k,l)}\succeq 00\cdot\left[\la_{2j+1}^{(i-j)}\right]^{(k,l)}\,\cdot$$
In particular, for $i=j+1$, we obtain the result. Note that $\la_{2n}^{(1)}= 0$, and by the choice of $t$, we have $t$ is at most equal to $n$.
\subsubsection{Existence and the uniqueness of the reverse image}
Let $\la$ be a sequence in $\Lkl$ with $2t$ parts, and suppose that the first part $\la_{1}$ belongs to $\C^{(k,l)}_{2n+1}$. Since by  \eqref{eq:inlkl},
$$(0)_{i\geq 1}=\underbrace{0\cdots0}_{2t-1}\left[\la_{2t}\right]^{(l,k)}\preceq\underbrace{0\cdots0}_{2t-2}\left[\la_{2t-1}\right]^{(k,l)}\preceq \cdots\preceq 0\cdot\left[\la_{2}\right]^{(l,k)}\preceq\left[\la_{1}\right]^{(k,l)}\prec (\delta_{i,2n+1})_{i\geq 1}\,$$
we necessarily have by Lemma \ref{lem:decalage} that $\left[\la_{2j-1}\right]^{(k,l)}\prec (\delta_{i,2n+3-2j})_{i\geq 1}$.
Let us set $\la_{j}^{(1)}=\la_j$ for all $1\leq j\leq 2t$.
By considering the pair $(\la_{2t-1}^{(1)},\la_{2t}^{(1)})$, we have $m_{1}^{(t)}=\la_{2t-1}^{(1)}$ and $m_{i}^{(t)}=0$ for $1<i$. Suppose now that, for some $1\leq j\leq t-1$, we retrieve $\la_{2j+1}^{(i)}$ for $i\geq 1$ (and then $m_i^{(j+1)}$ by fact $(1.a)$ and $(2)$).
\begin{enumerate}
 \item  We retrieve $\left[\la_{2j-1}^{(2)}\right]^{(k,l)}$ and $m_1^{(j)}$ from fact $(2)$, and we have by \eqref{eq:klbelongsji} that 
 $$\left[\la_{2j-1}^{(2)}\right]^{(k,l)}=0\left[\la_{2j}^{(2)}\right]^{(l,k)}=0\left[\la_{2j}^{(1)}\right]^{(l,k)}\succeq 00 \cdot\left[\la_{2j+1}^{(2)}\right]^{(k,l)}\,\cdot$$
 \item Suppose that we retrieve for $\la_{2j-1}^{(i)}$ for $i\geq 2$, and that in $_{i+1}\C^{(k,l)}$, $$\left[\la_{2j-1}^{(i)}\right]^{(k,l)}\succeq 00 \cdot\left[\la_{2j+1}^{(i-1)}\right]^{(k,l)}\,\cdot$$
 Since $00 \cdot\left[\la_{2j+1}^{(i-1)}\right]^{(k,l)}$ admits a $(m_{i-1}^{(j+1)})^{th}$ predecessor $00 \cdot\left[\la_{2j+1}^{(i)}\right]^{(k,l)}$ in $_{i+1}\C^{(k,l)}$, by Lemma \ref{lem:prechoice},  $\left[\la_{2j-1}^{(i)}\right]^{(k,l)}$ also admits a $(m_{i-1}^{(j+1)})^{th}$ predecessor in $_{i+1}\C^{(k,l)}$, namely $\left[\la_{2j-1}^{(i+1)}\right]^{(k,l)}$ (by fact $(1.b)$), which satisfies
 $$\left[\la_{2j-1}^{(i+1)}\right]^{(k,l)}\succeq 00 \cdot\left[\la_{2j+1}^{(i)}\right]^{(k,l)}\,\cdot$$
 \item Finally, by \eqref{eq:klbelongsji}, $\left[\la_{2j-1}^{(2n+3-2j)}\right]^{(k,l)}\in\,_{2n+3-2j} \C^{(k,l)}$, and since $(\left[\la_{2j-1}^{(i)}\right]^{(k,l)})_{i\geq 1}$ is non-increasing in terms of $\preceq$, we necessarily have that
 $\left[\la_{2j-1}^{(2n+3-2j)}\right]^{(k,l)}\prec (\delta_{i,2n+3-2j})_{i\geq 1}$, so that $\la_{2j-1}^{(i)}=0$ for $i\geq 2n+3-2j$.
\end{enumerate}
Recursively on $i$, we obtain the datum $(\la_{2j-1}^{(i)})_{i\geq 1}$. Remark that, by \eqref{eq:klbelongsji} and fact $(2)$, the datum $(\la_{2j-1}^{(i)})_{i\geq 2}$ allows us to retrieve $(\la_{2j}^{(i)})_{i\geq 2}$, and fact $(1)$, $(\la_{2j-1}^{(i)})_{i\geq 1}$ allows us to recover $(m_i^{(j)})_{i\geq 1}$.
Finally, we recover the datum $(\la_{1}^{(i)})_{i\geq 1}$ and then $(m_i^{(1)})_{i\geq 1}$ and we conclude.
\subsection{The case $(k,1)$}\label{part:interinfk1}
Analogously to Proposition \ref{prop:welldefinf}, we have the following.
\begin{prop}\label{prop:welldefinf1}
For  $j\geq 1$,
\begin{enumerate}
\item for $i\geq 3$, we have 
\begin{equation}\label{eq:k1belongsji}
\left[\la_{2j-1}^{(i)}\right]^{(k,1)} =  p_1\left(\left[\la_{2j}^{(i)}\right]^{(1,k)}\right)= 0\cdot p_2\left(\left[\la_{2j}^{(i)}\right]^{(1,k)}\right) \in \,_{\lceil i/2\rceil}\C^{(k,1)}\,\cdot
\end{equation}
\begin{enumerate}
\item For $i\geq 2$, let $\left[\mu_{2j}^{(2i)}\right]^{(1,k)}=\F\left(m_{2i}^{(j)},\,_{+(i-1)}\C^{(1,k)},\left[\la_{2j}^{(2i+1)}\right]^{(1,k)}\right)$. 
\begin{enumerate}
 \item Then, $\left[\la_{2j-1}^{(2i)}\right]^{(k,1)}=p_1\left(\left[\mu_{2j}^{(2i)}\right]^{(1,k)}\right)$.
 \item Furthermore, $\left[\la_{2j-1}^{(2i-1)}\right]^{(k,1)}=\F\left(m_{2i-1}^{(j)},\,_{i}\C^{(k,1)},\left[\la_{2j-1}^{(2i)}\right]^{(k,1)}\right)$.
\end{enumerate}
\item Reciprocally, set 
$\left[\mu_{2j}^{(2i-1)}\right]^{(1,k)}=\left(p_1\left(\left[\la_{2j}^{(2i-1)}\right]^{(1,k)}\right), \,_{i}p_2\left(\left[\la_{2j}^{(2i-1)}\right]^{(1,k)}\right)\right)\,\cdot$
\begin{enumerate}
 \item Then, $\left[\mu_{2j}^{(2i-1)}\right]^{(1,k)}= \F\left(m_{2i-2}^{(j+1)},\,_{+(i-1)}\C^{(1,k)},\left[\mu_{2j}^{(2i)}\right]^{(1,k)}\right)$.
\item Moreover, $p_2\left(\left[\la_{2j}^{(2i+1)}\right]^{(1,k)}\right)=\F\left(m_{2i-1}^{(j+1)},\,_{i}\C^{(k,1)},p_2\left(\left[\mu_{2j}^{(2i)}\right]^{(1,k)}\right)\right)$.
\end{enumerate}
\end{enumerate}
\item Let $\left[\mu_{2j}^{(2)}\right]^{(1,k)}=\left[\la_{2j}^{(2)}\right]^{(1,k)}=\F\left(m_{2}^{(j)},\C^{(1,k)},\left[\la_{2j}^{(3)}\right]^{(1,k)}\right)$.
\begin{enumerate}
 \item Then, $\left[\la_{2j-1}^{(2)}\right]^{(k,1)}=p_1\left(\left[\mu_{2j}^{(2)}\right]^{(1,k)}\right)$.
 \item Reciprocally, $p_2\left(\left[\la_{2j}^{(2)}\right]^{(1,k)}\right)=\F\left(m_{1}^{(j+1)},\C^{(k,1)},p_2\left(\left[\mu_{2j}^{(3)}\right]^{(1,k)}\right)\right)$.
\end{enumerate}
\item Finally, $\la_{2j}^{(1)}=\la_{2j}^{(2)}$,
\begin{enumerate}
 \item and, $\left[\la_{2j-1}^{(1)}\right]^{(k,1)}=\F\left(m_{1}^{(j)},\C^{(k,1)}, \left[\la_{2j-1}^{(2)}\right]^{(k,1)}\right)$.
 \item Reciprocally, $\left[\la_{2j-1}^{(1)}\right]^{(1,k)}=\F\left(m_{1}^{(j)},\C^{(k,1)}, p_1\left(\left[\la_{2j}^{(2)}\right]^{(1,k)}\right)\right)$.
\end{enumerate}
\end{enumerate}
\end{prop}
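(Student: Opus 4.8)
The plan is to follow the strategy behind Proposition \ref{prop:welldefinf}, carried out in Section \ref{part:interinfkl}, replacing the toolkit for $(k,l)$ with $k,l\geq 2$ by the $(k,1)$/$(1,k)$ counterparts developed above: Lemma \ref{lem:divide} and Lemma \ref{lem:follow1k} for the combinatorics of $(1,k)$-words and their followers, Proposition \ref{prop:k1ratiofin} and Proposition \ref{prop:1kratiofin} to translate between the ratio constraints and the order $\preceq$, and the cross-ratio identities \eqref{eq:crosseven}, \eqref{eq:crossodd}, \eqref{eq:ratio} together with \eqref{eq:formulek1}--\eqref{eq:formulek2}. Fixing $j\geq 1$, I would argue by descending induction on $i$, from $i=n+1$ (where $n$ is the largest index of a part occurring in $\nu$) down to $i=1$. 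The base case is trivial, since for $i>n$ every pair equals $(0,0)$ and $[0]^{(k,1)}=\emptyset$, $[0]^{(1,k)}=(\emptyset,\emptyset)$ lie in the relevant $_m\C^{(k,1)}$; and \eqref{eq:k1belongsji}, i.e. the identity $[\la_{2j-1}^{(i)}]^{(k,1)}=p_1([\la_{2j}^{(i)}]^{(1,k)})=0\cdot p_2([\la_{2j}^{(i)}]^{(1,k)})\in\,_{\lceil i/2\rceil}\C^{(k,1)}$, is maintained throughout: it says that at every stage with index $\geq 3$ the pair $(\la_{2j-1},\la_{2j})$ is the canonical split (Lemma \ref{lem:divide}(1) with $x=\sk{0}$) of one integer, the membership in $_{\lceil i/2\rceil}\C^{(k,1)}$ following from \eqref{eq:leq} once one checks that inserting only parts of index $\geq i$ leaves the digits below $\lceil i/2\rceil$ equal to $0$.

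For the inductive step I would describe the insertion of the $m_i^{(j)}$ copies of $b_i^{(k,1)}$ into $(\la_{2j-1}^{(i+1)},\la_{2j}^{(i+1)})$ as a string of follower steps, distinguishing the parity of $i$. When $i=2i'$ with $i'\geq 2$, each individual insertion step \eqref{eq:insert1type121}--\eqref{eq:insert1type221} is a single step $\F(1,\,_{+(i'-1)}\C^{(1,k)},\cdot)$; by Lemma \ref{lem:follow1k}(1) such a step either leaves $p_2$ fixed and increments $p_1$, or increments $p_2$, according to whether or not the current first component equals $0\cdot\F(1,\,_{i'}\C^{(k,1)},p_2)$. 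I would identify the first alternative with case \eqref{eq:insert1type121} (storing $b_{2i'-1}^{(k,1)}$, or $b_{2i'-2}^{(k,1)}$ when only one copy remains) and the second with \eqref{eq:insert1type221}, by checking that the insertion condition \eqref{eq:condinsinf121}, rewritten via \eqref{eq:ratio} and Proposition \ref{prop:k1ratiofin}, is exactly the branch of Lemma \ref{lem:follow1k}(1); iterating $m_{2i'}^{(j)}$ times and reading off the step count \eqref{eq:1ksominf} produces the stored parts and the intermediate word $[\mu_{2j}^{(2i')}]^{(1,k)}$, giving fact (1.a)(i). When $i=2i'-1$, the same geometric move is used but only parts $b_{2i'-2}^{(k,1)}$ are stored, and the corresponding follower step acts on the first component alone, i.e. is a step of $\F(1,\,_{i'}\C^{(k,1)},\cdot)$; this is precisely why $[\mu_{2j}^{(2i')}]^{(1,k)}$ (first component of $\la_{2j}^{(2i'-1)}$ untouched, second component truncated to $_{i'}\C^{(k,1)}$) is the right base point, giving fact (1.a)(ii).

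The indices $i=2$ and $i=1$ are the degenerate cases: for $i=2$, condition \eqref{eq:condinsinf123} is the boundary instance of the same dichotomy, yielding fact (2); for $i=1$, \eqref{eq:insert1type1234} merely increments $p_1$ with no carry, yielding $\la_{2j}^{(1)}=\la_{2j}^{(2)}$ and fact (3.a). Finally, the reciprocal statements (1.b), (2.b), (3.b) follow, exactly as in Section \ref{part:interinfkl}, from the monotonicity of followers (Lemma \ref{lem:prechoice}) and the order-preserving bijectivity of $0\cdot$ (Lemma \ref{lem:decalage}): since these maps are injective and monotone, the parts stored for the pair $j+1$ are forced by the admissible word of pair $j$, and counting the induced follower steps via \eqref{eq:1ksominf} pins down the multiplicities $m_{i'}^{(j+1)}$ that feed the next pair. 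Assembling the analyses over all $i$ gives the proposition, and (via Proposition \ref{prop:1kratiofin}) the well-definedness of $\Phi^{(k,1)}$ as in Section \ref{part:interinfkl}.

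The main obstacle is the correspondence between the binary alternatives in the insertion rules --- ``store one part versus store two parts'', and ``store $b_{2i'-1}^{(k,1)}$ versus store $b_{2i'-2}^{(k,1)}$'' in \eqref{eq:insert1type121}, \eqref{eq:insert1type131} and \eqref{eq:condinsinf123} --- and the two cases of Lemma \ref{lem:follow1k}(1) according to whether a follower step increments $p_1$ or $p_2$, i.e. whether the first component equals $0\cdot\F(1,\ldots,p_2)$ or not. Establishing this equivalence is where one must carefully convert the arithmetic conditions \eqref{eq:condinsinf121}, \eqref{eq:condinsinf131} and \eqref{eq:condinsinf123} into $\preceq$-conditions, using \eqref{eq:crosseven}, \eqref{eq:crossodd}, \eqref{eq:ratio}, Proposition \ref{prop:k1ratiofin} and Proposition \ref{prop:1kratiofin}; a secondary, bookkeeping-heavy difficulty is keeping the shift parameter $d=\lceil i/2\rceil-1$ of the $(1,k)$-words synchronized with $i$ as the induction descends, and checking that the truncations defining the $\mu$-sequences are compatible with \eqref{eq:leq}.
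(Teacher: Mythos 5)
Your overall architecture (descending induction on $i$ for fixed $j$, translation of insertions into follower steps, the counting identity \eqref{eq:1ksominf}, and Lemma \ref{lem:prechoice} for the reciprocal facts) matches the paper's proof. However, the central identification you propose for the even-index case is wrong, and it is precisely the point where the real work lies. You claim that each application of \eqref{eq:insert1type121}/\eqref{eq:insert1type221} is a single follower step in $_{+(i'-1)}\C^{(1,k)}$, with the branch \eqref{eq:insert1type121} corresponding to case (a) of Lemma \ref{lem:follow1k}(1) (increment $p_1$, fix $p_2$) and \eqref{eq:insert1type221} to case (b) (increment $p_2$). This cannot be right: under \eqref{eq:insert1type221} the pair $(\la_{2j-1},\la_{2j}-\la_{2j-1})$ changes by $(\ak{2i},\al{2i-1}-\ak{2i})=(\ak{2i},\ak{2i-2})$, so \emph{both} components $p_1\left(\left[\la_{2j}\right]^{(1,k)}\right)$ and $p_2\left(\left[\la_{2j}\right]^{(1,k)}\right)$ advance; neither insertion rule ever fixes a component. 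Moreover, one application of \eqref{eq:insert1type121} consumes \emph{two} parts $\bk{2i}$ (since $\bk{2i}-\bk{2i-2}=2\bk{2i}-\bk{2i-1}$ and a $\bk{2i-1}$ is stored), so the number of rule applications is not $m_{2i}^{(j)}$, and "iterating $m_{2i'}^{(j)}$ times" is not what happens.

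What the paper actually proves is different in structure: via the substitution $(\la_{2j-1},\la_{2j})\mapsto(\la_{2j-1},\la_{2j}-\la_{2j-1})$ and the equivalence of \eqref{eq:condinsinf121} with a $(k-2)$-ratio condition, \emph{every} application of either rule advances $\left[\la_{2j-1}\right]^{(k-2)}$ by one follower step in $_{i}\C^{(k-2)}$ and $\left[\la_{2j}-\la_{2j-1}\right]^{(k-2)}$ by one step in $_{i-1}\C^{(k-2)}$ (this uses \eqref{eq:klratinfdif1}--\eqref{eq:klratinfdif2} for the $(k-2)$-sequence, keeping the locked relation $p_1=0\cdot p_2$); the branch only decides whether the new word lands in $_{i+1}\C^{(k-2)}$, hence whether one or two parts are consumed and what is stored. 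One then needs a genuine double count: $m_{2i}^{(j)}=a'+2b'-\chi(\bk{2i-2}\text{ is stored})$ from the insertion side versus $m_{2i}^{(j)}=a+2b-\chi(\cdots)$ from \eqref{eq:1ksominf}, followed by the paper's comparison/contradiction argument to conclude $\left[\la_{2j-1}^{(2i)}\right]^{(k,1)}=p_1\left(\left[\mu_{2j}^{(2i)}\right]^{(1,k)}\right)$ and to identify the stored multiplicities feeding $m_{2i-1}^{(j+1)}$ and $m_{2i-2}^{(j+1)}$. Your proposal, as written, skips this matching entirely, and the dichotomy you would "carefully convert into $\preceq$-conditions" is not the dichotomy of Lemma \ref{lem:follow1k}(1), so the conversion you plan would not close the gap. (Your treatment of the odd-index case, of $i\in\{1,2\}$, and of the reciprocal statements is consistent with the paper.)
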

\begin{ex}
 For $k=6$ with $\nu=(b_{1}^{(6,1)})^2(b_{2}^{(6,1)})^5(b_{3}^{(6,1)})^2(b_{4}^{(6,1)})^3(b_{6}^{(6,1)})^5=(1+0)^2(1+1)^5(5+6)^2(4+5)^3(15+19)^5$, we have
 \begin{footnotesize}
 $$
\begin{array}{|c||c|c||c|c||c|c|}
 \hline 
 i&m_i^{(1)}&[\mu_2]^{(1,6)}&m_i^{(2)}&[\mu_4]^{(1,6)}&m_i^{(3)}&[\mu_6]^{(1,6)}\\
 \hline\hline
  7&0&(0,0,0,0,\textcolor{red}{0},\ldots),(0,0,0,\textcolor{red}{0},\ldots)&0&(0,0,0,\textcolor{red}{0},\ldots),(0,0,0,\textcolor{red}{0},\ldots)&0&(0,0,0,\textcolor{red}{0},\ldots),(0,0,0,\textcolor{red}{0},\ldots)\\
  6&5&(0,0,\textcolor{red}{0},1,0,\ldots),(0,0,\textcolor{red}{1},0,\ldots)&0&(0,0,\textcolor{red}{0},0,\ldots),(0,0,\textcolor{red}{0},0,\ldots)&0&(0,0,\textcolor{red}{0},0,\ldots),(0,0,\textcolor{red}{0},0,\ldots)\\
  5&0&(0,0,\textcolor{red}{0},1,0,\ldots),(0,0,\textcolor{red}{1},0,\ldots)&1&(0,0,\textcolor{red}{1},0,\ldots),(0,0,\textcolor{red}{0},0,\ldots)&0&(0,0,\textcolor{red}{0},0,\ldots),(0,0,\textcolor{red}{0},0,\ldots)\\
  4&3&(0,\textcolor{red}{3},0,1,0,\ldots),(0,\textcolor{red}{0},1,0,\ldots)&0&(0,\textcolor{red}{0},1,0,\ldots),(0,\textcolor{red}{0},0,0,\ldots)&1&(0,\textcolor{red}{1},0,0,\ldots),(0,\textcolor{red}{0},0,0,\ldots)\\
  3&2&(0,\textcolor{red}{1},1,1,0,\ldots),(0,\textcolor{red}{1},1,0,\ldots)&0&(0,\textcolor{red}{0},1,0,\ldots),(0,\textcolor{red}{0},0,0,\ldots)&0&(0,\textcolor{red}{1},0,0,\ldots),(0,\textcolor{red}{0},0,0,\ldots)\\
  2&5&(\textcolor{red}{0},2,1,1,0,\ldots),(\textcolor{red}{2},1,1,0,\ldots)&3&(\textcolor{red}{3},0,1,0,\ldots),(\textcolor{red}{0},1,0,0,\ldots)&0&(\textcolor{red}{0},1,0,0,\ldots),(\textcolor{red}{1},0,0,0,\ldots)\\
  \hline\hline
  &m_1^{(1)}&[\la_1]^{(6,1)}_1&m_1^{(2)}&[\la_3]^{(6,1)}_1&m_1^{(3)}&[\la_5]^{(6,1)}_1\\
  \hline
  &2&(\textcolor{red}{2},2,1,1,0,\ldots)&1&(\textcolor{red}{0},1,1,0,\ldots)&0&(\textcolor{red}{0},1,0,0,\ldots)\\
  \hline
\end{array}\,\,\cdot
$$
\end{footnotesize}
\end{ex}

$$$$
The relations between the insertions in $(\la_{2j-1},\la_{2j})$ and $(\la_{2j+1},\la_{2j+2})$ are summarized in the following diagram.
\begin{figure}[H]
\label{fig:proofinf1}
\begin{tikzpicture}[scale=1.2, every node/.style={scale=0.8}]

\draw (-1+2.5+0.8,1.5) node[left] {$\left[\la_{2j-1}^{(2i)}\right]^{(k,1)}\succeq \,\,\,\,0\cdot p_2\left(\left[\mu_{2j}^{(2i)}\right]^{(k,1)}\right)$};
\draw (-2+2.5+0.8,3) node[left] {$\left[\la_{2j-1}^{(2i+1)}\right]^{(k,1)}$};
\draw (-2+2.5+0.8,0) node[left] {$\left[\la_{2j-1}^{(2i-1)}\right]^{(k,1)}$};

\draw[dotted,->] (-2.5,3)--(-.3,3); \draw[dotted,->] (-2.5,1.5)--(-1.5,1.5); \draw[dotted,->] (-2.5,0)--(-.3,0);
\draw (-1.4,2.85) node {$p_1$};\draw (-2,1.35) node {$p_1$}; \draw (-1.4,-.15) node {$p_1$};

\draw[dotted,<->] (-1.5+2.5,3)--(5.4-2,3); \draw[dotted,<->] (2.2,1.5)--(5.4-2,1.5);\draw[dotted,<->] (-1.5+2.5,0)--(5.4-2,0);

\draw[dashed] (0.7,2.75)--(0.9,2.5); \draw[dashed,->] (1.3,2)--(1.5,1.75);
\draw (1.5,2.25) node {$\F(m_{2i-1}^{(j+1)},\,_{i+1}\C^{(k,1)})$};
\draw (0.4,2.5)--(2.65,2.5)--(2.65,2)--(0.4,2)--cycle;

\draw[dashed] (0.7-1.5,2.75-1.5)--(0.9-1.5,2.5-1.5); \draw[dashed,->] (1.3-1.5,2-1.5)--(1.5-1.5,1.75-1.5);
\draw (1.5-1.5,2.25-1.5) node {$\F(m_{2i-1}^{(j)},\,_{i}\C^{(k,1)})$};
\draw (0.4-1.5,2.5-1.5)--(2.65-1.5,2.5-1.5)--(2.65-1.5,2-1.5)--(0.4-1.5,2-1.5)--cycle;

\draw (6+0.8-2,3) node[left] {$\left[\la_{2j+1}^{(2i)}\right]^{(k,1)}$};
\draw (6+0.8-2,1.5) node[left] {$\left[\la_{2j+1}^{(2i-1)}\right]^{(k,1)}$};
\draw (6+0.8-2,0) node[left] {$\left[\la_{2j+1}^{(2i-2)}\right]^{(k,1)}$};
\draw (1.5+4.5-2,2.25) node {$\F(m_{2i-1}^{(j+1)},\,_{i}\C^{(k,1)})$};
\draw (0.4+4.5-2,2.5)--(2.65+4.5-2,2.5)--(2.65+4.5-2,2)--(0.4+4.5-2,2)--cycle;
\draw(1.5+4.5-2,2.65)--(1.5+4.5-2,2.5); \draw[->] (1.5+4.5-2,2)--(1.5+4.5-2,1.75);

\draw (6+0.8-9,3) node[left] {$\left[\la_{2j}^{(2i+1)}\right]^{(1,k)}$};
\draw (6+0.8-9,1.5) node[left] {$\left[\mu_{2j}^{(2i)}\right]^{(1,k)}$};
\draw (1.5+4.5-9,2.25) node {$\F(m_{2i}^{(j)},\,_{+(i-1)}\C^{(1,k)})$};
\draw (0.4+4.5-9,2.5)--(2.65+4.5-9,2.5)--(2.65+4.5-9,2)--(0.4+4.5-9,2)--cycle;
\draw(1.5+4.5-9,2.65)--(1.5+4.5-9,2.5); \draw[->] (1.5+4.5-9,2)--(1.5+4.5-9,1.75);
\draw (6+0.8-9,1.5-1.5) node[left] {$\left[\mu_{2j}^{(2i-1)}\right]^{(1,k)}$};
\draw (1.5+4.5-9,2.25-1.5) node {$\F(m_{2i-2}^{(j+1)},\,_{+(i-1)}\C^{(1,k)})$};
\draw (0.3+4.5-9,2.5-1.5)--(2.75+4.5-9,2.5-1.5)--(2.75+4.5-9,2-1.5)--(0.3+4.5-9,2-1.5)--cycle;
\draw[dashed](1.5+4.5-9,2.65-1.5)--(1.5+4.5-9,2.5-1.5); \draw[dashed,->] (1.5+4.5-9,2-1.5)--(1.5+4.5-9,1.75-1.5);
\draw (6+0.8-9,-0.7) node[left] {$\left[\la_{2j}^{(2i-1)}\right]^{(1,k)}$};
\draw[dotted,->] (1.5+4.5-9,1.3-1.5)--(1.5+4.5-9,1.1-1.5);

\draw (6+0.8,3) node[left] {$\left[\mu_{2j+2}^{(2i)}\right]^{(1,k)}$};
\draw (6+0.8,1.5) node[left] {$\left[\la_{2j+2}^{(2i-1)}\right]^{(1,k)}$};
\draw (6+0.8,1.5-1.5) node[left] {$\left[\mu_{2j+2}^{(2i-2)}\right]^{(1,k)}$};
\draw (1.5+4.5,2.25-1.5) node {$\F(m_{2i-2}^{(j+1)},\,_{+(i-2)}\C^{(1,k)})$};
\draw (0.3+4.5,2.5-1.5)--(2.75+4.5,2.5-1.5)--(2.75+4.5,2-1.5)--(0.3+4.5,2-1.5)--cycle;
\draw(1.5+4.5,2.65-1.5)--(1.5+4.5,2.5-1.5); \draw[->] (1.5+4.5,2-1.5)--(1.5+4.5,1.75-1.5);

\draw[dotted,->] (5.3,3)--(4.4,3);\draw[dotted,->] (5.3,1.5)--(4.4,1.5); \draw[dotted,->] (5.3,0)--(4.4,0);
\draw (4.9,2.85) node {$p_1$};\draw (4.9,1.35) node {$p_1$}; \draw (4.9,-.15) node {$p_1$};

\end{tikzpicture}
\caption{Relations between the insertions into the $j^{th}$ and $(j+1)^{th}$ pairs}
\end{figure}
For $i\geq 1$, by Lemma \ref{lem:prechoice}, facts $(1.a),(1.b)$ and $2$, we have in $_{i}\C^{(k,1)}$, 
\begin{equation}\label{eq:k1proofinf1}
 p_2\left(\left[\la_{2j}^{(2i+1)}\right]^{(1,k)}\right)\succeq p_1\left(\left[\mu_{2j+2}^{(2i)}\right]^{(1,k)}\right)\Longleftrightarrow p_2\left(\left[\mu_{2j}^{(2i)}\right]^{(1,k)}\right)\succeq \left[\la_{2j+1}^{(2i-1)}\right]^{(k,1)}\,\cdot
\end{equation}
One the other hand, for $i\geq 2$, by Lemma \ref{lem:prechoice}, facts $(1.a),(1.b)$, in $_{+(i-1)}\C^{(1,k)}$,
\begin{equation}\label{eq:k1proofinf3}
 \left[\mu_{2j}^{(2i)}\right]^{(1,k)}\succeq 0\cdot\left[\la_{2j+2}^{(2i-1)}\right]^{(1,k)} \Longleftrightarrow \left[\mu_{2j}^{(2i-1)}\right]^{(1,k)}\succeq 0\cdot\left[\mu_{2j+2}^{(2i-2)}\right]^{(1,k)}\,\cdot
\end{equation}
Moreover, by \eqref{eq:ordcd} and \eqref{eq:k1belongsji}, in $_{+(i-1)}\C^{(1,k)}$,  
\begin{align*}
\left[\mu_{2j}^{(2i-1)}\right]^{(1,k)}\succeq 0\cdot\left[\mu_{2j+2}^{(2i-2)}\right]^{(1,k)} &\Longleftrightarrow 
p_u(\left[\mu_{2j}^{(2i-1)}\right]^{(1,k)})\succeq 0\cdot p_u(\left[\mu_{2j+2}^{(2i-2)}\right]^{(1,k)})\quad \text{for}\quad u\in \{1,2\}\,,\\
&\Longleftrightarrow 
\begin{cases}
 p_1(\left[\la_{2j}^{(2i-1)}\right]^{(1,k)})\succeq 0\cdot p_1(\left[\mu_{2j+2}^{(2i-2)}\right]^{(1,k)})\\
 _{i}p_2(\left[\la_{2j}^{(2i-1)}\right]^{(1,k)})\succeq 0\cdot p_2(\left[\mu_{2j+2}^{(2i-2)}\right]^{(1,k)})
\end{cases}\\
&\Longleftrightarrow 
\begin{cases}
 0\cdot p_2(\left[\la_{2j}^{(2i-1)}\right]^{(1,k)})\succeq 0\cdot p_1(\left[\mu_{2j+2}^{(2i-2)}\right]^{(1,k)})\\
 p_2(\left[\la_{2j}^{(2i-1)}\right]^{(1,k)})\succeq 0\cdot p_2(\left[\mu_{2j+2}^{(2i-2)}\right]^{(1,k)})
\end{cases}
\end{align*}
As  
$$p_2(\left[\la_{2j}^{(2i-1)}\right]^{(1,k)})\succeq  p_1(\left[\mu_{2j+2}^{(2i-2)}\right]^{(1,k)})\Longrightarrow p_2(\left[\la_{2j}^{(2i-1)}\right]^{(1,k)})\succeq 0\cdot p_2(\left[\mu_{2j+2}^{(2i-2)}\right]^{(1,k)})\,,$$
we then have 
\begin{equation}\label{eq:k1proofinfs1}
\left[\mu_{2j}^{(2i-1)}\right]^{(1,k)}\succeq 0\cdot\left[\mu_{2j+2}^{(2i-2)}\right]^{(1,k)} \Longleftrightarrow p_2\left(\left[\la_{2j}^{(2i-1)}\right]^{(1,k)}\right)\succeq  p_1\left(\left[\mu_{2j+2}^{(2i-2)}\right]^{(1,k)}\right)\,\cdot
\end{equation}
Furthermore, in $_{+(i-1)}\C^{(1,k)}$,  
\begin{align*}
\left[\mu_{2j}^{(2i)}\right]^{(1,k)}\succeq 0\cdot\left[\la_{2j+2}^{(2i-1)}\right]^{(1,k)} &\Longleftrightarrow 
p_u(\left[\mu_{2j}^{(2i)}\right]^{(1,k)})\succeq 0\cdot p_u(\left[\la_{2j+2}^{(2i-1)}\right]^{(1,k)})\quad \text{for}\quad u\in \{1,2\}\,,\\
&\Longleftrightarrow 
\begin{cases}
 p_1(\left[\mu_{2j}^{(2i)}\right]^{(1,k)})\succeq 0\cdot \left[\la_{2j+1}^{(2i-1)}\right]^{(k,1)}\\
 p_2(\left[\mu_{2j}^{(2i)}\right]^{(1,k)})\succeq  \left[\la_{2j+1}^{(2i-1)}\right]^{(k,1)}
\end{cases}
\end{align*}
As  
$$p_2(\left[\mu_{2j}^{(2i)}\right]^{(1,k)})\succeq  \left[\la_{2j+1}^{(2i-1)}\right]^{(k,1)}\Longrightarrow p_1(\left[\mu_{2j}^{(2i)}\right]^{(1,k)})\succeq 0\cdot \left[\la_{2j+1}^{(2i-1)}\right]^{(k,1)}\,,$$
we then have 
\begin{equation}\label{eq:k1proofinfs2}
\left[\mu_{2j}^{(2i)}\right]^{(1,k)}\succeq 0\cdot\left[\la_{2j+2}^{(2i-1)}\right]^{(1,k)} \Longleftrightarrow p_2(\left[\mu_{2j}^{(2i)}\right]^{(1,k)})\succeq  \left[\la_{2j+1}^{(2i-1)}\right]^{(k,1)}\,\cdot
\end{equation}
Therefore, by \eqref{eq:k1proofinfs1} and \eqref{eq:k1proofinfs2}, \eqref{eq:k1proofinf3} is equivalent to
\begin{equation}\label{eq:k1proofinf2}
 p_2(\left[\mu_{2j}^{(2i)}\right]^{(1,k)})\succeq  \left[\la_{2j+1}^{(2i-1)}\right]^{(k,1)}\Longleftrightarrow p_2\left(\left[\la_{2j}^{(2i-1)}\right]^{(1,k)}\right)\succeq  p_1\left(\left[\mu_{2j+2}^{(2i-2)}\right]^{(1,k)}\right)\,\cdot
\end{equation}
\subsubsection{Proof that $\Phi^{(k,1)}(\Bk)\subset \Lk$}
By facts $(2)$ and $(3)$, for $j\geq 1$,
$$\left[\la_{2j-1}^{(1)}\right]^{(k,1)}\succeq \left[\la_{2j-1}^{(2)}\right]^{(k,1)}=p_1\left(\left[\la_{2j}^{(2)}\right]^{(1,k)}\right)=p_1\left(\left[\la_{2j}^{(2)}\right]^{(1,k)}\right)\,\cdot$$
By \eqref{eq:inlk1}, to prove that $\la \in \Lk$, it suffices to show that to show that
$$ p_2\left(\left[\la_{2j}^{(1)}\right]^{(1,k)}\right)=p_2\left(\left[\la_{2j}^{(2)}\right]^{(1,k)}\right)=p_2\left(\left[\mu_{2j}^{(2)}\right]^{(1,k)}\right)\succeq \left[\la_{2j+1}^{(1)}\right]^{(k,1)}\,\cdot$$ 
For $n\geq 1$  such that $\bk{i}$ does not occur in $\nu$ for $i\geq 2n+1$, we have for $1\leq j\leq 2n$ that
$\la_{4n+1}^{(1)}=\la_{4n+2}^{(1)}=\la_{2j-1}^{(2n+2-j)}=\la_{2j}^{(2n+2-j)}=\mu_{2j}^{(2n+2-j)}=0$.
In particular, for $0\leq j\leq n-1$,
\begin{align*}
 p_2\left(\left[\mu_{4j+2}^{(2n+1-2j)}\right]^{(1,k)}\right)&= p_1\left(\left[\mu_{4j+4}^{(2n-2j)}\right]^{(k,1)}\right)\,,\\
 p_2\left(\left[\mu_{4j+4}^{(2n-2j)}\right]^{(1,k)}\right)&=\left[\la_{4j+5}^{(2n-1-2j)}\right]^{(k,1)}
\end{align*}
By \eqref{eq:k1proofinf1}, 
$$p_2\left(\left[\mu_{4j+2}^{(2n+1-2j)}\right]^{(1,k)}\right)= p_1\left(\left[\mu_{4j+4}^{(2n-2j)}\right]^{(k,1)}\right) \Longrightarrow p_2\left(\left[\mu_{4j+2}^{(2n-2j)}\right]^{(1,k)}\right)\succeq \left[\la_{4j+3}^{(2n-1-2j)}\right]^{(k,1)}\,\cdot$$
By iterating \eqref{eq:k1proofinf2} plus \eqref{eq:k1proofinf1}, we obtain by induction  on $i$ for $n\geq i>j\geq 1$,
\begin{align*}
 p_2\left(\left[\mu_{4j+2}^{(2i-2j)}\right]^{(1,k)}\right)&\succeq\left[\la_{4j+3}^{(2i-1-2j)}\right]^{(k,1)}\,,
\\
 p_2\left(\left[\mu_{4j+4}^{(2i-2j)}\right]^{(1,k)}\right)&\succeq\left[\la_{4j+5}^{(2i-1-2j)}\right]^{(k,1)}\,\cdot
\end{align*}
In particular, the result follows from the case $i=j+1$. Note that $\la_{4n}^{(1)}=0$, and by the choice of $t$, we have $t$ is at most equal to $2n$.
\subsubsection{Existence and the uniqueness of the reverse image}
Let $\la$ be a sequence in $\Lk$ with $2t$ parts, and suppose that the first part $\la_{1}<\ak{2n+2}=a_{n+1}^{(k-2)}$. 
Since by  \eqref{eq:inlk1} and \eqref{eq:k1rat0},
$$(0)_{i\geq 1}=\underbrace{0\cdots0}_{t} \cdot p_2\left(\left[\la_{2t}\right]^{(1,k)}\right)\preceq\underbrace{0\cdots0}_{t-1}\left[\la_{2t-1}\right]^{(k,1)}\preceq \cdots\preceq 0\cdot p_2\left(\left[\la_{2}\right]^{(1,k)}\right)\preceq\left[\la_{1}\right]^{(k,1)}\prec (\delta_{i,n+1})_{i\geq 1}\,$$
we necessarily have by Lemma \ref{lem:decalage} that $\left[\la_{2j-1}\right]^{(k,1)}\prec (\delta_{i,n+2-j})_{i\geq 1}$.
Let us set $\la_{j}^{(1)}=\la_j$ for all $1\leq j\leq t$.
By considering the pair $(\la_{2t-1}^{(1)},\la_{2t}^{(1)})$, we have $m_{1}^{(t)}=\la_{2t-1}^{(1)}$ and $m_{i}^{(t)}=0$ for $1<i$. Suppose now that, for some $1\leq j\leq t-1$, we retrieve $m_1^{j+1}$ and $\mu_{2j+2}^{(i)}$ for $i\geq 2$ (and then the data $\la_{2j-1}^{(i)},\la_{2j}^{(i)}, m_i^{(j+1)}$ by fact $(1)$ and $(2)$).
\begin{enumerate}
 \item We retrieve $\left[\la_{2j-1}^{(2)}\right]^{(k,1)}$ and  $m_1^{(j)}$ from fact $(3)$, as $\left[\la_{2j-1}^{(2)}\right]^{(k,1)}=p_1\left(\left[\la_{2j}^{(1)}\right]^{(1,k)}\right)$. Moreover, from $(2)$, 
 $$p_2\left(\left[\la_{2j}^{(1)}\right]^{(1,k)}\right)=p_2\left(\left[\la_{2j}^{(2)}\right]^{(1,k)}\right)=p_2\left(\left[\mu_{2j}^{(2)}\right]^{(1,k)}\right)\succeq \left[\la_{2j+1}^{(1)}\right]^{(k,1)}\,\cdot$$
 \item Now assume that we retrieve the datum $\left[\mu_{2j}^{(2i)}\right]^{(1,k)}$ for some $i\geq 1$, and that
 $$p_2\left(\left[\mu_{2j}^{(2i)}\right]^{(1,k)}\right)\succeq \left[\la_{2j+1}^{(2i-1)}\right]^{(k,1)}\,\cdot$$
 \begin{enumerate}
  \item By Lemma \ref{lem:prechoice} and fact $(1.b)$, we can retrieve $p_2\left(\left[\la_{2j}^{(2i+1)}\right]^{(1,k)}\right)$ from $m_{2i-1}^{(j+1)}$. Thus, \eqref{eq:k1proofinf1} allows us to retrieve  $\left[\la_{2j}^{(2i+1)}\right]^{(1,k)}$, and fact $(1.a)$ then allows us to retrieve $m_{2i}^{(j)}$ from $\left[\mu_{2j}^{(2i)}\right]^{(1,k)}$. Finally, we obtain $\left[\mu_{2j}^{(2i+1)}\right]^{(1,k)}$ from fact $(1.b)$, and by \eqref{eq:k1proofinf1} and \eqref{eq:k1proofinfs1}, in $_{+i}\C^{(k,l)}$,
  $$\left[\mu_{2j}^{(2i+1)}\right]^{(1,k)}\succeq \left[\mu_{2j+2}^{(2i)}\right]^{(1,k)}\,\cdot$$
  \item By Lemma \ref{lem:prechoice} and fact $(1.b)$, we can retrieve $\left[\mu_{2j}^{(2i+2)}\right]^{(1,k)}$ from $m_{2i}^{(j+1)}$. Thus, \eqref{eq:k1proofinf3} and \eqref{eq:k1proofinf3} implies that 
  $$p_2\left(\left[\mu_{2j}^{(2i+2)}\right]^{(1,k)}\right)\succeq \left[\la_{2j+1}^{(2i+1)}\right]^{(k,1)}\,\cdot$$
  Moreover, from fact $(1.a)$, $p_1(\left[\mu_{2j}^{(2i+1)}\right]^{(1,k)})$ and $p_1(\left[\mu_{2j+2}^{(2i)}\right]^{(1,k)})$, we retrieve $m_{2i+1}^{(j)}$.
 \end{enumerate}
 \item As $\left(\left[\la_{2j-1}^{(i)}\right]^{(k,1)}\right)_{i\geq 1}$ is non-increasing, $\left[\la_{2j-1}^{(2i-1)}\right]^{(k,1)}\in \,_i\C^{(k,1)}$ and $\left[\la_{2j-1}^{(2i-1)}\right]^{(k,1)}\prec (\delta_{u,n+2-j})_{u\geq 1}$, we then have that $\la_{2j-1}^{(i)}=m_{i}^{(j)}=0$ for all $i\geq 2n+3-2j$.
\end{enumerate}
Recursively on $i$, we retrieve the datum $(\mu_{2j}^{(i)})_{i\geq 2}$. Remark that the datum $(\mu_{2j}^{(i)})_{i\geq 2}$ allows us to retrieve $\la_{2j-1}^{(i)}, \la_{2j}^{(i)}, m_i^j$ for $i\geq 2$.
Finally, by induction on $j$, we recover the datum $(m_i^{(1)})_{i\geq 1}$ and we conclude.
\subsection{The case $(1,k)$}\label{part:interinf1k}
Analogously to Proposition \ref{prop:welldefinf}, we have the following.
\begin{prop}\label{prop:welldefinf2}
For  $j\geq 1$,
\begin{enumerate}
\item for $i\geq 2$, we have 
\begin{equation}\label{eq:1kbelongsji}
0\cdot \left[\la_{2j}^{(i)}\right]^{(k,1)} = 0\cdot  p_2\left(\left[\la_{2j-1}^{(i)}\right]^{(1,k)}\right)= p_1\left(\left[\la_{2j-1}^{(i)}\right]^{(1,k)}\right) \in \,_{\lceil (i+1)/2\rceil}\C^{(k,1)}\,\cdot
\end{equation}
\begin{enumerate}
\item Let $\left[\mu_{2j-1}^{(2i-1)}\right]^{(1,k)}=\F\left(m_{2i-1}^{(j)},\,_{+(i-1)}\C^{(1,k)},\left[\la_{2j-1}^{(2i)}\right]^{(1,k)}\right)$. 
\begin{enumerate}
 \item Then, $p_1\left(\left[\la_{2j-1}^{(2i-1)}\right]^{(1,k)}\right)=p_1\left(\left[\mu_{2j-1}^{(2i-1)}\right]^{(1,k)}\right)$.
 \item Furthermore, $\left[\la_{2j}^{(2i-2)}\right]^{(k,1)}=\F\left(m_{2i-2}^{(j)},\,_{i-1}\C^{(k,1)},\left[\la_{2j}^{(2i-1)}\right]^{(k,1)}\right)$.
\end{enumerate}
\item Reciprocally, set 
$\left[\mu_{2j-1}^{(2i-2)}\right]^{(1,k)}=\left(p_1\left(\left[\la_{2j-1}^{(2i-1)}\right]^{(1,k)}\right), \,_{i}p_2\left(\left[\la_{2j-1}^{(2i-1)}\right]^{(1,k)}\right)\right)\,\cdot$
\begin{enumerate}
 \item 
Then, $\left[\mu_{2j-1}^{(2i-1)}\right]^{(1,k)}=\F\left(m_{2i-3}^{(j+1)},\,_{+(i-1)}\C^{(1,k)},\left[\mu_{2j-1}^{(2i-2)}\right]^{(1,k)}\right)$.
\item
Moreover,
$p_2\left(\left[\la_{2j-1}^{(2i)}\right]^{(1,k)}\right)=\F\left(m_{2i-2}^{(j+1)},\,_{i}\C^{(k,1)},p_2\left[\mu_{2j-1}^{(2i-1)}\right]^{(1,k)}\right)$.
\end{enumerate}
\end{enumerate}
\item Finally, $\la_{2j}^{(1)}=\la_{2j}^{(2)}$, 
\begin{enumerate}
 \item and $\left[\mu_{2j-1}^{(1)}\right]^{(1,k)}=\left[\la_{2j-1}^{(1)}\right]^{(1,k)}=\F\left(m_1^{(j)},\C^{(1,k)},\left[\la_{2j-1}^{(2)}\right]^{(1,k)}\right)$.
 \item Reciprocally, $\left[\la_{2j-1}^{(1)}\right]^{(1,k)}=\F\left(m_1^{(j)},\C^{(1,k)},\left(0\cdot\left[\la_{2j}^{(1)}\right]^{(k,1)},\left[\la_{2j}^{(1)}\right]^{(k,1)}\right)\right)$.
\end{enumerate}
\end{enumerate}
\end{prop}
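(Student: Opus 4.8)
The plan is to mimic, in the $(1,k)$ setting, the arguments behind \Prp{prop:welldefinf} and \Prp{prop:welldefinf1}: I track the $(1,k)$- and $(k,1)$-admissible word encodings of the intermediate partial sums $\la^{(i)}_{2j-1},\la^{(i)}_{2j}$ and show that each elementary step of the algorithm defining $\Phi^{(1,k)}$ is, once translated through \Def{def:1k} and the shift $0\cdot$, one application of $\F(1,\cdot,\cdot)$ in a suitable admissible-word set. The tools are \Lem{lem:follow1k}, which expresses followers in $\,_{+d}\C^{(1,k)}$ through followers in $\C^{(k,1)}$ and records, via \eqref{eq:1ksominf}, the arithmetic ``carry'' produced by \Lem{lem:divide} when a pair leaves the region $S_x^+$; \Prp{prop:k1inserted} and the infinite machinery behind it (\Lem{lem:follow} and \Prp{prop:klinserted} applied to $\C^{(k,1)}=\C^{(k-2)}$, with both parameters equal to $k-2\geq 2$), which turn ``the encoding moves to its follower'' into the arithmetic inequalities that the insertion conditions test; and \Lem{lem:prechoice} with \Lem{lem:decalage}, which transport $\preceq$-comparisons through $0\cdot$. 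In effect the $(1,k)$ statement is built on top of the $(k,1)$ one: the $p_2$-component of a $(1,k)$-word is always a $(k,1)$-word, and once the boundary identity of item (1) is in force the follower behaviour of a full $(1,k)$-word in $\,_{+d}\C^{(1,k)}$ decomposes, by \Lem{lem:follow1k}, into a follower step on $p_1$ or on $p_2$, each governed by the $(k-2)$-machinery.

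I would first prove the boundary identity of item (1), namely
$$0\cdot[\la^{(i)}_{2j}]^{(k,1)}=0\cdot p_2([\la^{(i)}_{2j-1}]^{(1,k)})=p_1([\la^{(i)}_{2j-1}]^{(1,k)})\in\,_{\lceil(i+1)/2\rceil}\C^{(k,1)},$$
by induction on the number of parts already inserted into the $j$-th pair, the base case being the empty pair $(0,0)$, where all three expressions equal the zero word and $0\cdot\emptyset=(0)$ by \Def{def:decalage}. For the inductive step I distinguish the parity of the part $\bl{i}$ being inserted and the branch taken, \eqref{eq:insert1type131} versus \eqref{eq:insert1type231}: for $\bl{2i+1}$ the governing test \eqref{eq:condinsinf131}, rewritten through \eqref{eq:ratio} into the form $\la_{2j-1}+\al{2i+1}-\al{2i-1}>\sll{0}(\la_{2j}+\ak{2i}-\ak{2i-2})$, is precisely the dichotomy (a)/(b) of \Lem{lem:follow1k}(1), so each branch advances the encoding by one follower in the appropriate shifted set while preserving the boundary identity. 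The key point here---and the reason the $(1,k)$ case genuinely needs the pair-of-words formalism rather than a single word as for $k,l\geq 2$---is the asymmetry between $p_1$ and $p_2$: a plain insertion \eqref{eq:insert1type231} advances one component and pins the other, a split insertion \eqref{eq:insert1type131} does the reverse, and the arithmetic of \eqref{eq:formulek2} is exactly what makes the increments $\al{2i+1}$, $\al{2i+1}-\al{2i-1}$, $\ak{2i}$, $\ak{2i}-\ak{2i-2}$ match the $(k-2)$-weights underlying the word encodings.

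Granting the boundary identity, items (1a)(i)--(ii), (1b)(i)--(ii) and (2a)--(2b) become bookkeeping. The defining relations for $\mu^{(2i-1)}_{2j-1}$ and $\mu^{(2i-2)}_{2j-1}$ are \Lem{lem:follow1k}(1) unwound, the stored-part rule of the algorithm---whether one or two copies of the lower part are carried forward to the pair $(\la_{2j+1},\la_{2j+2})$---being matched against the correction $-\chi(\cdot)+\chi(\cdot)$ in \eqref{eq:1ksominf}; the follower statements for $[\la^{(2i-2)}_{2j}]^{(k,1)}$ in $\,_{i-1}\C^{(k,1)}$ and for $p_2([\la^{(2i)}_{2j-1}]^{(1,k)})$ in $\,_{i}\C^{(k,1)}$ then follow from the $(k-2)$-machinery applied to the $p_2$-components. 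Item (2) is the $i=1$ endpoint: $\la^{(1)}_{2j}=\la^{(2)}_{2j}$ because inserting $\bl{1}$ uses \eqref{eq:insert1type231} with $i=0$, which adds $\al{1}$ to the odd slot, $\ak{0}=0$ to the even slot, and stores nothing; the two follower identities then follow from \Lem{lem:follow1k} together with the boundary identity at $i=2$. The forward items feed the subsequent proof that $\Phi^{(1,k)}(\Bl)\subseteq\Ll$ (via \eqref{eq:inl1k}) and the reciprocal ones the existence and uniqueness of the preimage, exactly as in the $(k,l\geq 2)$ and $(k,1)$ cases treated above.

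The step I expect to be the real obstacle is the inductive verification of the boundary identity across a parity change---from index $2i+1$ (or $2i$) down to $2i-1$---where the algorithm stores one or two copies of the lower part according to how many copies of $\bl{2i+1}$ remain to be inserted. Showing that this is exactly what absorbs the carry produced by \Lem{lem:divide} when a pair exits $S_x^+$ requires reconciling three counting conventions at once---occurrences of the parts $\bl{\bullet}$, followers in $\C^{(k,1)}$, and followers in $\,_{+d}\C^{(1,k)}$ through the identity $m=m_1+2m_2-\chi(\cdot)+\chi(\cdot)$ of \eqref{eq:1ksominf}---and is where an off-by-one is easiest to slip in. Everything else is a mechanical, if lengthy, unwinding of the definitions, which I would relegate to an appendix as the paper does for the analogous $(k,l\geq 2)$ and $(k,1)$ propositions.
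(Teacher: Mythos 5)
Your plan matches the paper's proof, which itself disposes of this proposition by observing that fact (1) reduces to fact (1) of \Prp{prop:welldefinf1} once the insertion condition \eqref{eq:condinsinf131} is rewritten, via \eqref{eq:formulek2} and \eqref{eq:ratio}, as $(\la_{2j-1}-\la_{2j})-s_0^{(k-2)}\la_{2j}>s_{i-1}^{(k-2)}-s_i^{(k-2)}$, and that fact (2) follows the proof of fact (2) of \Prp{prop:welldefinf}; the detailed carry bookkeeping you flag as the main obstacle (matching the stored-part rule against $m=m_1+2m_2-\chi(\cdot)+\chi(\cdot)$ in \eqref{eq:1ksominf}) is exactly where the paper's proof of \Prp{prop:welldefinf1} spends its effort. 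Correct, and essentially the same route.
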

\begin{ex}
 For $k=6$ with $\nu=(b_{1}^{(1,6)})^2(b_{2}^{(1,6)})^5(b_{3}^{(1,6)})^2(b_{4}^{(1,6)})^3(b_{6}^{(1,6)})^5=(1+0)^2(6+1)^5(5+1)^2(24+5)^3(90+19)^5$, we have
 $$
\begin{array}{|c||c|c||c|c|}
 \hline 
 i&m_i^{(1)}&[\mu_1^{(i)}]^{(1,6)}&m_i^{(2)}&[\mu_3^{(i)}]^{(1,6)}\\
 \hline\hline
  7&0&(0,0,0,\textcolor{red}{0},0,0,\ldots),(0,0,0,\textcolor{red}{0},0,\ldots)&0&(0,0,0,\textcolor{red}{0},0,\ldots),(0,0,0,\textcolor{red}{0},\ldots)\\
  6&5&(0,0,0,\textcolor{red}{1},1,0,\ldots),(0,0,0,\textcolor{red}{1},0,\ldots)&0&(0,0,0,\textcolor{red}{0},0,\ldots),(0,0,0,\textcolor{red}{0},\ldots)\\
  5&0&(0,0,\textcolor{red}{0},1,1,0,\ldots),(0,0,\textcolor{red}{1},1,0,\ldots)&6&(0,0,\textcolor{red}{1},1,0,\ldots),(0,0,\textcolor{red}{1},0,\ldots)\\
  4&3&(0,0,\textcolor{red}{3},1,1,0,\ldots),(0,0,\textcolor{red}{1},1,0,\ldots)&0&(0,0,\textcolor{red}{1},1,0,\ldots),(0,0,\textcolor{red}{1},0,\ldots)\\
  3&2&(0,\textcolor{red}{2},3,1,1,0,\ldots),(0,\textcolor{red}{3},1,1,0,\ldots)&3&(0,\textcolor{red}{3},1,1,0,\ldots),(0,\textcolor{red}{1},1,0,\ldots)\\
  2&5&(0,\textcolor{red}{0},1,2,1,0,\ldots),(0,\textcolor{red}{1},2,1,0,\ldots)&0&(0,\textcolor{red}{3},1,1,0,\ldots),(0,\textcolor{red}{1},1,0,\ldots)\\
  \hline\hline
  &m_1^{(1)}&[\la_1^{(1)}]^{(6,1)}_1&m_1^{(2)}&[\la_3^{(1)}]^{(6,1)}_1\\
  \hline
  &2&(\textcolor{red}{2},0,1,2,1,0,\ldots),(\textcolor{red}{0},1,2,1,0,\ldots)&7&(\textcolor{red}{0},,2,1,0,\ldots),(\textcolor{red}{0},2,1,0,\ldots)\\
  \hline
\end{array}\,,
$$
$$\begin{array}{|c||c|c||c|c|}
 \hline 
 i&m_i^{(3)}&[\mu_5^{(i)}]^{(1,6)}&m_i^{(4)}&[\mu_7^{(i)}]^{(1,6)}\\
 \hline\hline
  7&0&(0,0,0,\textcolor{red}{0},\ldots),(0,0,0,\textcolor{red}{0},\ldots)&0&(0,0,0,\textcolor{red}{0},\ldots),(0,0,0,\textcolor{red}{0},\ldots)\\
  6&0&(0,0,0,\textcolor{red}{0},\ldots),(0,0,0,\textcolor{red}{0},\ldots)&0&(0,0,0,\textcolor{red}{0},\ldots),(0,0,0,\textcolor{red}{0},\ldots)\\
  5&0&(0,0,\textcolor{red}{0},0,\ldots),(0,0,\textcolor{red}{0},0,\ldots)&0&(0,0,\textcolor{red}{0},0,\ldots),(0,0,\textcolor{red}{0},0,\ldots)\\
  4&1&(0,0,\textcolor{red}{1},0,\ldots),(0,0,\textcolor{red}{0},0,\ldots)&0&(0,0,\textcolor{red}{0},0,\ldots),(0,0,\textcolor{red}{0},0,\ldots)\\
  3&0&(0,\textcolor{red}{0},1,0,\ldots),(0,\textcolor{red}{1},0,0,\ldots)&1&(0,\textcolor{red}{1},0,0,\ldots),(0,\textcolor{red}{0},0,0,\ldots)\\
  2&0&(0,\textcolor{red}{0},1,0,\ldots),(0,\textcolor{red}{1},0,0,\ldots)&0&(0,\textcolor{red}{1},0,0,\ldots),(0,\textcolor{red}{0},0,0,\ldots)\\
  \hline\hline
  &m_1^{(3)}&[\la_5^{(1)}]^{(1,6)}_1&m_1^{(4)}&[\la_7^{(1)}]^{(1,6)}_1\\
  \hline
  &0&(\textcolor{red}{0},0,1,0,\ldots),(\textcolor{red}{0},1,0,0,\ldots)&0&(\textcolor{red}{0},1,0,0,\ldots),(\textcolor{red}{1},0,0,0,\ldots)\\
  \hline
\end{array}\,\,\cdot$$
\end{ex}
$$$$
Similarly to the case $(k,1)$, we picture in the following diagram the relations between the insertions into consecutive pairs.
\begin{figure}[H]
\label{fig:proofinf2}
\begin{tikzpicture}[scale=1.2, every node/.style={scale=0.8}]

\draw (-1+2.5+0.8,1.5) node[left] {$\left[\la_{2j}^{(2i-1)}\right]^{(k,1)}\succeq  \,\,p_2\left(\left[\mu_{2j-1}^{(2i-1)}\right]^{(k,1)}\right)$};
\draw (-2+2.5+0.8,3) node[left] {$\left[\la_{2j}^{(2i)}\right]^{(k,1)}$};
\draw (-2+2.5+0.8,0) node[left] {$\left[\la_{2j}^{(2i-2)}\right]^{(k,1)}$};

\draw[dotted,->] (-2.5,3)--(0,3); \draw[dotted,->] (-2.5,1.5)--(-1.5,1.5); \draw[dotted,->] (-2.5,0)--(-0.2,0);
\draw (-1.4,2.85) node {$p_1^-$};\draw (-2,1.35) node {$p_1^-$}; \draw (-1.4,-.15) node {$p_1^-$};

\draw[dotted,<->] (-1.6+2.5,3)--(5.4-2,3); \draw[dotted,<->] (2.2,1.5)--(5.4-2,1.5);\draw[dotted,<->] (-1.6+2.5,0)--(5.4-2,0);

\draw[dashed] (0.7,2.75)--(0.9,2.5); \draw[dashed,->] (1.3,2)--(1.5,1.75);
\draw (1.5,2.25) node {$\F(m_{2i-2}^{(j+1)},\,_{i}\C^{(k,1)})$};
\draw (0.4,2.5)--(2.65,2.5)--(2.65,2)--(0.4,2)--cycle;

\draw[dashed] (0.7-1.5,2.75-1.5)--(0.9-1.5,2.5-1.5); \draw[dashed,->] (1.3-1.5,2-1.5)--(1.5-1.5,1.75-1.5);
\draw (1.5-1.5,2.25-1.5) node {$\F(m_{2i-2}^{(j)},\,_{i-1}\C^{(k,1)})$};
\draw (0.4-1.5,2.5-1.5)--(2.65-1.5,2.5-1.5)--(2.65-1.5,2-1.5)--(0.4-1.5,2-1.5)--cycle;

\draw (6+0.8-2,3) node[left] {$\left[\la_{2j+2}^{(2i-1)}\right]^{(k,1)}$};
\draw (6+0.8-2,1.5) node[left] {$\left[\la_{2j+2}^{(2i-2)}\right]^{(k,1)}$};
\draw (6+0.8-2,0) node[left] {$\left[\la_{2j+2}^{(2i-3)}\right]^{(k,1)}$};
\draw (1.5+4.5-2,2.25) node {$\F(m_{2i-2}^{(j+1)},\,_{i-1}\C^{(k,1)})$};
\draw (0.4+4.5-2,2.5)--(2.65+4.5-2,2.5)--(2.65+4.5-2,2)--(0.4+4.5-2,2)--cycle;
\draw(1.5+4.5-2,2.65)--(1.5+4.5-2,2.5); \draw[->] (1.5+4.5-2,2)--(1.5+4.5-2,1.75);

\draw (6+0.8-9,3) node[left] {$\left[\la_{2j-1}^{(2i)}\right]^{(1,k)}$};
\draw (6+0.8-9,1.5) node[left] {$\left[\mu_{2j-1}^{(2i-1)}\right]^{(1,k)}$};
\draw (1.5+4.5-9,2.25) node {$\F(m_{2i-1}^{(j)},\,_{+(i-1)}\C^{(1,k)})$};
\draw (0.3+4.5-9,2.5)--(2.75+4.5-9,2.5)--(2.75+4.5-9,2)--(0.3+4.5-9,2)--cycle;
\draw(1.5+4.5-9,2.65)--(1.5+4.5-9,2.5); \draw[->] (1.5+4.5-9,2)--(1.5+4.5-9,1.75);
\draw (6+0.8-9,1.5-1.5) node[left] {$\left[\mu_{2j-1}^{(2i-2)}\right]^{(1,k)}$};
\draw (1.5+4.5-9,2.25-1.5) node {$\F(m_{2i-3}^{(j+1)},\,_{+(i-1)}\C^{(1,k)})$};
\draw (0.3+4.5-9,2.5-1.5)--(2.75+4.5-9,2.5-1.5)--(2.75+4.5-9,2-1.5)--(0.3+4.5-9,2-1.5)--cycle;
\draw[dashed](1.5+4.5-9,2.65-1.5)--(1.5+4.5-9,2.5-1.5); \draw[dashed,->] (1.5+4.5-9,2-1.5)--(1.5+4.5-9,1.75-1.5);
\draw (6+0.8-9,-0.7) node[left] {$\left[\la_{2j-1}^{(2i-2)}\right]^{(1,k)}$};
\draw[dotted,->] (1.5+4.5-9,1.3-1.5)--(1.5+4.5-9,1.1-1.5);

\draw (6+0.8,3) node[left] {$\left[\mu_{2j+1}^{(2i-1)}\right]^{(1,k)}$};
\draw (6+0.8,1.5) node[left] {$\left[\la_{2j+1}^{(2i-2)}\right]^{(1,k)}$};
\draw (6+0.8,1.5-1.5) node[left] {$\left[\mu_{2j+1}^{(2i-3)}\right]^{(1,k)}$};
\draw (1.5+4.5,2.25-1.5) node {$\F(m_{2i-3}^{(j+1)},\,_{+(i-2)}\C^{(1,k)})$};
\draw (0.3+4.5,2.5-1.5)--(2.75+4.5,2.5-1.5)--(2.75+4.5,2-1.5)--(0.3+4.5,2-1.5)--cycle;
\draw(1.5+4.5,2.65-1.5)--(1.5+4.5,2.5-1.5); \draw[->] (1.5+4.5,2-1.5)--(1.5+4.5,1.75-1.5);

\draw[dotted,->] (5.3,3)--(4.4,3);\draw[dotted,->] (5.3,1.5)--(4.4,1.5); \draw[dotted,->] (5.3,0)--(4.4,0);
\draw (4.9,2.85) node {$p_1^-$};\draw (4.9,1.35) node {$p_1^-$}; \draw (4.9,-.15) node {$p_1^-$};
\end{tikzpicture}
\caption{Relations between the insertions into the $j^{th}$ and $(j+1)^{th}$ pairs}
\end{figure}
Here $p_1^-$ is such that $p_1=0\cdot p_1^-$.
Analogously to the case $(k,1)$, we have for $i\geq 2$ in $_i\C^{(k,1)}$
\begin{equation}\label{eq:1kproofinf1}
p_2\left(\left[\la_{2j-1}^{(2i)}\right]^{(1,k)}\right)\succeq p_1\left(\left[\mu_{2j+1}^{(2i-1)}\right]^{(1,k)}\right)\Longleftrightarrow  p_2\left(\left[\mu_{2j-1}^{(2i-1)}\right]^{(1,k)}\right)\succeq 0\cdot \left[\la_{2j+2}^{(2i-2)}\right]^{(k,1)}\,,
\end{equation}
and
\begin{align}\label{eq:1kproofinf3}
\left[\mu_{2j-1}^{(2i-1)}\right]^{(1,k)}\succeq 0\cdot \left[\la_{2j+1}^{(2i-2)}\right]^{(1,k)}&\Longleftrightarrow\left[\mu_{2j-1}^{(2i-2)}\right]^{(1,k)}\succeq 0\cdot \left[\mu_{2j+1}^{(2i-3)}\right]^{(1,k)}\,,
\\
 \Updownarrow\qquad\qquad\qquad\qquad&\qquad\qquad\qquad\qquad\qquad\Updownarrow\nonumber\\
\label{eq:1kproofinf2}
 p_2\left(\left[\mu_{2j-1}^{(2i-1)}\right]^{(1,k)}\right)\succeq 0\cdot \left[\la_{2j+2}^{(2i-2)}\right]^{(k,1)}&\Longleftrightarrow p_2\left(\left[\la_{2j-1}^{(2i-2)}\right]^{(1,k)}\right)\succeq p_1\left(\left[\mu_{2j+1}^{(2i-3)}\right]^{(1,k)}\right)\,\cdot
\end{align}
\subsubsection{Proof that $\Phi^{(1,k)}(\Bl)\subset \Ll$}
By \eqref{eq:ordcd}, fact $(2)$, \eqref{eq:1kbelongsji}, for $j\geq 1$,
$$p_2\left(\left[\la_{2j-1}^{(1)}\right]^{(1,k)}\right)\succeq p_2\left(\left[\la_{2j-1}^{(2)}\right]^{(1,k)}\right)=\left[\la_{2j}^{(2)}\right]^{(k,1)}=\left[\la_{2j}^{(1)}\right]^{(k,1)}\,\cdot$$
By \eqref{eq:inl1k}, to prove that $\la \in \Ll$, it suffices to show that to show that
$$ \left[\la_{2j}^{(1)}\right]^{(k,1)}=\left[\la_{2j}^{(2)}\right]^{(k,1)}= p_2\left(\left[\la_{2j-1}^{(2)}\right]^{(1,k)}\right)\succeq p_1\left(\left[\mu_{2j+1}^{(1)}\right]^{(1,k)}\right)=p_1\left(\left[\la_{2j+1}^{(1)}\right]^{(1,k)}\right)\,\cdot$$ 
For $n\geq 1$  such that $\bl{i}$ does not occur in $\nu$ for $i\geq 2n$, we have for $1\leq j\leq 2n$ that
$\la_{2j}^{(2n+1-j)}=\la_{2j-1}^{(2n+1-j)}=\mu_{2j-1}^{(2n+1-j)}=0$.
In particular, for $0\leq j\leq n-1$,
$$
 p_2\left(\left[\la_{4j+1}^{(2n-2j)}\right]^{(1,k)}\right) = p_1 \left(\left[\mu_{4j+3}^{(2n-1-2j)}\right]^{(1,k)}\right)
 $$
 and for $1\leq j\leq n-1$, by \eqref{eq:1kproofinf2}
$$
 p_2\left(\left[\mu_{4j-1}^{(2n+1-2j)}\right]^{(1,k)}\right)=0\cdot \left[\la_{4j+2}^{(2n-2j)}\right]^{(k,1)} \Longrightarrow p_2\left(\left[\la_{4j-1}^{(2n-2j)}\right]^{(1,k)}\right) \succeq p_1 \left(\left[\mu_{4j+1}^{(2n-1-2j)}\right]^{(1,k)}\right)\,\cdot
$$
Using \eqref{eq:1kproofinf1} and \eqref{eq:1kproofinf2}, we obtain by induction  on $i$ for $n\geq i>j\geq 1$,
\begin{align*}
  p_2\left(\left[\la_{4j-1}^{(2i-2j)}\right]^{(1,k)}\right) &\succeq p_1 \left(\left[\mu_{4j+1}^{(2i-1-2j)}\right]^{(1,k)}\right)\\
  p_2\left(\left[\la_{4j+1}^{(2i-2j)}\right]^{(1,k)}\right) &\succeq p_1 \left(\left[\mu_{4j+3}^{(2i-1-2j)}\right]^{(1,k)}\right)\,\cdot\\
\end{align*}
In particular, the result follows from the case $i=j+1$. Note that $\la_{4n-2}^{(2)}=\la_{4n-2}^{(1)}=0$, and by the choice of $t$, we have $t$ is at most equal to $2n-1$.
\subsubsection{Existence and the uniqueness of the reverse image}
Let $\la$ be a sequence in $\Ll$ with $2t$ parts, and suppose that the first part $\la_{2}<\ak{2n+2}=a_{n+1}^{(k-2)}$. 
Since by \eqref{eq:inl1k} and \eqref{eq:1krat0},
\begin{small}
$$(0)_{i\geq 1}=\underbrace{0\cdots0}_{t-1} \cdot \left[\la_{2t}\right]^{(k,1)}\preceq\underbrace{0\cdots0}_{t-1} \cdot p_2\left(\left[\la_{2t-1}\right]^{(1,k)}\right)\preceq \cdots\preceq 0\cdot \left[\la_{4}\right]^{(k,1)}\preceq 0\cdot p_2\left(\left[\la_{3}\right]^{(1,k)}\right)\preceq\left[\la_{2}\right]^{(k,1)}\prec (\delta_{i,n+1})_{i\geq 1}\,$$
\end{small}
we necessarily have by Lemma \ref{lem:decalage} that $\left[\la_{2j}\right]^{(k,1)}\prec (\delta_{i,n+2-j})_{i\geq 1}$.
Let us set $\la_{j}^{(1)}=\la_j$ for all $1\leq j\leq 2t$.
By considering the pair $(\la_{2t-1}^{(1)},\la_{2t}^{(1)})$, we have $m_{1}^{(t)}=\la_{2t-1}^{(1)}$ and $m_{i}^{(t)}=0$ for $1<i$. Suppose now that, for some $1\leq j\leq t-1$, we retrieve $m_1^{j+1}$ and $\la_{2j+1}^{(i)}$ for $i\geq 1$ (and then the data $\mu_{2j+1}^{(i)},\la_{2j+2}^{(i)}, m_i^{(j+1)}$).
\begin{enumerate}
 \item We retrieve $\left[\la_{2j-1}^{(2)}\right]^{(1,k)}$ and  $m_1^{(j)}$ from fact $(2)$ and we have 
 $$p_2\left(\left[\la_{2j-1}^{(2)}\right]^{(1,k)}\right) = \left[\la_{2j}^{(2)}\right]^{(1,k)}\succeq p_1\left(\left[\la_{2j+1}^{(1)}\right]^{(1,k)}\right)\cdot$$
 \item Now assume that we retrieve the datum $\la_{2j-1}^{(2i-2)}$ for some $i\geq 2$, and that 
 $$p_2\left(\left[\la_{2j-1}^{(2i-2)}\right]^{(1,k)}\right)\succeq p_1\left(\left[\la_{2j+1}^{(2i-3)}\right]^{(1,k)}\right)\cdot$$
 We consecutively retrieve $\left[\mu_{2j-1}^{(2i-2)}\right]^{(1,k)}$, then $\left[\mu_{2j-1}^{(2i-1)}\right]^{(1,k)}$ from $m_{2i-3}^{(j+1)}$ and fact $(1.b)$, then  $m_{2i-2}^{(j)}$ from fact $(1.a)$, then $p_2\left(\left[\la_{2j-1}^{(2i)}\right]^{(1,k)}\right)$ from $(1.b)$ and finally $\left[\la_{2j-1}^{(2i)}\right]^{(1,k)}$ from \eqref{eq:1kbelongsji}. Moreover, by \eqref{eq:1kproofinf2} and \eqref{eq:1kproofinf1}, we have 
 $$p_2\left(\left[\mu_{2j-1}^{(2i-1)}\right]^{(1,k)}\right)\succeq 0\cdot \left[\la_{2j+2}^{(2i-2)}\right]^{(k,1)}\text{ and then }p_2\left(\left[\la_{2j-1}^{(2i)}\right]^{(1,k)}\right)\succeq p_1\left(\left[\mu_{2j+1}^{(2i-1)}\right]^{(1,k)}\right)\cdot$$
 \item As $\left(\left[\la_{2j}^{(i)}\right]^{(k,1)}\right)_{i\geq 1}$ is non-increasing, $\left[\la_{2j-1}^{(2i)}\right]^{(k,1)}\in \,_i\C^{(k,1)}$ and $\left[\la_{2j}^{(1)}\right]^{(k,1)}=\left[\la_{2j}^{(2)}\right]^{(k,1)}\prec (\delta_{u,n+2-j})_{u\geq 1}$, we then have that $\la_{2j}^{(i)}$ for all $i\geq 2n+4-2j$.
\end{enumerate}
Recursively on $i$, we obtain the datum $(\la_{2j-1}^{(i)})_{i\geq 2}$. Remark that the datum $(\la_{2j-1}^{(i)})_{i\geq 2}$ allows us to retrieve $\la_{2j}^{(i)}, \mu_{2j-1}^{(i)}, m_i^{(j)}$ for $i\geq 1$.
Finally, by induction on $j$, we recover the datum $(m_i^{(1)})_{i\geq 1}$ and we conclude.
\section{Well-definedness of  $\Phi^{(k,l)}_{2n}$}\label{part:intereven}
We first give an interpretation of the insertion of the parts $\bkl{i}$ into the pairs $(\la_{2j},\la_{2j-1})$ in terms of the admissible words. Denote by $(\la_{2j}^{(i)},\la_{2j-1}^{(i)})$ the pairs $(\la_{2j},\la_{2j-1})$ after the insertion of all the parts $\bkl{i}$, and let $m_i^{(j)}$ denote the number of parts $\bkl{i}$ inserted into the pair $(\la_{2j},\la_{2j-1})$. Note that $m_i^{(n)}$ equals the number of occurrences of $\bkl{i}$ in $\nu$, and the image by $\Phi^{(k,l)}_{2n}$ consists of $(\la_j^{(1)})_{j=1}^{2n}$.
\subsection{The case $k,l\geq 2$}\label{part:interevenkl}
\begin{prop}\label{prop:welldefeven}
For $n\geq j \geq 1$,
\begin{enumerate}
\item for $n+j+1\geq i> 2j\geq 1$, $m_i^{(j)}=0$, and $\la_{2j}^{(i)}=\la_{2j-1}^{(i)}=0$.
\item For $2j\geq i\geq 2$,
\begin{equation}\label{eq:klbelongsjieven}
\left[\la_{2j}^{(i)}\right]^{(k,l)}_{2j}= 0\cdot \left[\la_{2j-1}^{(i)}\right]^{(l,k)}_{2j-1}\in \,_i\C^{(k,l)}_{2j}\,\cdot
\end{equation}
\begin{enumerate}
\item Furthermore,
$\left[\la_{2j}^{(i)}\right]^{(k,l)}_{2j}=\F\left(m_i^{(j)},\,_{i}\C^{(k,l)}_{2j},\left[\la_{2j}^{(i+1)}\right]^{(k,l)}_{2j}\right)$. In particular, $\la_{2j}^{(2j)}=m_{2j}^{(j)}\cdot \akl{2n}$.
\item Reciprocally, $_{i+1}\left[\la_{2j}^{(i)}\right]^{(k,l)}_{2j}=\F\left(m_{i-1}^{(j-1)},\,_{i+1}\C^{(l,k)}_{2j},\left[\la_{2j}^{(i+1)}\right]^{(k,l)}_{2j}\right)$.
\end{enumerate}
\item Finally, $\la_{2j-1}^{(1)}=\la_{2j-1}^{(2)}$, 
\begin{enumerate}
 \item and $\left[\la_{2j}^{(1)}\right]^{(k,l)}_{2j}=\F\left(m_1^{(j)},\C^{(k,l)}_{2j},\left[\la_{2j}^{(2)}\right]^{(k,l)}_{2j}\right)$.
 \item Reciprocally, $\left[\la_{2j}^{(1)}\right]^{(k,l)}_{2j}=\F\left(m_1^{(j)},\C^{(k,l)}_{2j},0\cdot\left[\la_{2j-1}^{(1)}\right]^{(l,k)}_{2j-1}\right)$.
\end{enumerate}
\end{enumerate}
\end{prop}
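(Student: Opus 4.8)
The plan is to prove \Prp{prop:welldefeven} by exactly the mechanism used for \Prp{prop:welldefinf}, the only difference being that infinite admissible words are replaced by finite ones of length $2j$: throughout, the poset $_i\C^{(k,l)}$ becomes $_i\C^{(k,l)}_{2j}$, \Lem{lem:follow} and the infinite part of \Prp{prop:klinserted} become \Lem{lem:followfin} and the finite parts of \Prp{prop:klinserted} (read with $2j$, resp. $2j-1$, in place of $2n$), and the shift bijection of \Lem{lem:decalage} is used in its finite form from $_{i-1}\C^{(l,k)}_{2j-1}$ onto $_i\C^{(k,l)}_{2j}$, with \Lem{lem:prechoice} transporting $m$-fold succession as before. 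The argument is a downward induction on $j$ (from $n$ to $1$) and, within each level $j$, a downward induction on the part index $i$.

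I would first dispose of \emph{Part (1)} by downward induction on $j$. The base case $j=n$ holds because $\nu\in\Bkl_{2n}$ has no part $\bkl{i}$ with $i>2n$. For the step, the parts inserted into pair $j$ are exactly those stored while processing pair $j+1$, which (by the inductive hypothesis) starts from $(\la_{2j+2},\la_{2j+1})=(0,0)$ and only receives parts of index $\le 2j+2$; and when $\bkl{2j+2}$ is inserted into that fresh pair, the left-hand side of \eqref{eq:condinsinf2} is $0$ and the right-hand side is $\akl{1}-\akl{0}=1$, a gap which the identity $\akl{2m-1}=\alk{2m-1}$ of \eqref{eq:formulek1} keeps fixed after each further insertion of $\bkl{2j+2}$, so \eqref{eq:condinsinf2} fails each time and no part of index $2j+1$ is ever stored. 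Hence $m_i^{(j)}=\la_{2j}^{(i)}=\la_{2j-1}^{(i)}=0$ for $i>2j$; the same computation with \eqref{eq:insert2type2} at $i=2j$ evaluates $\la_{2j}^{(2j)}$ and $\la_{2j-1}^{(2j)}$, so that \eqref{eq:klbelongsjieven} already holds at $i=2j$ (and trivially at $i=2j+1$).

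For \emph{Parts (2) and (3)} I would fix $j$, assume the proposition at every larger index (so that fact $(2.b)$ at level $j+1$ fixes the multiplicities $m_i^{(j)}$), and induct downward on $i$. The core claim is that one insertion of $\bkl{i}$ replaces $[\la_{2j}]^{(k,l)}_{2j}$ by its immediate successor in $_i\C^{(k,l)}_{2j}$ and, under the shift isomorphism, replaces $[\la_{2j-1}]^{(l,k)}_{2j-1}$ by the matching successor, so that \eqref{eq:klbelongsjieven} is preserved. This is where the finite case of \Prp{prop:klinserted} enters: writing $[\la_{2j-1}]^{(l,k)}_{2j-1}=(c_h)$, the shift relation gives $\la_{2j}=\sum_h c_h\akl{h+1}$, $\la_{2j-1}=\sum_h c_h\alk{h}$, and with $\akl{2j-1}=\alk{2j-1}$ the linear form governing the dichotomy there becomes exactly $\akl{2j-1}\la_{2j}-\akl{2j}\la_{2j-1}$ with thresholds $\akl{2j-i+1}-\akl{2j-i}$ and $\akl{2j-i+1}$; hence \eqref{eq:condinsinf2} holds iff the successor is of the ``carry'' type, in which case \Lem{lem:followfin} together with \eqref{eq:klseqdefbis} and \Lem{lem:klsommax} makes the increments of $\la_{2j}$ and $\la_{2j-1}$ equal to $\akl{i}-\akl{i-1}$ and $\alk{i-1}-\alk{i-2}$, matching \eqref{eq:insert2type1}, and otherwise equal to $\akl{i}$ and $\alk{i-1}$, matching \eqref{eq:insert2type2}. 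Iterating over the $m_i^{(j)}$ insertions and invoking \Lem{lem:prechoice} yields fact $(2.a)$; membership in $_i\C^{(k,l)}_{2j}$ is automatic since a successor taken inside $_i\C^{(k,l)}_{2j}$ cannot leave it. Fact $(2.b)$ is obtained by rereading the same run: a part $\bkl{i-1}$ is stored for pair $j-1$ precisely at the ``carry'' steps, and \Lem{lem:followfin} identifies the number of those steps with the number of successor steps separating $_{i+1}[\la_{2j}^{(i)}]^{(k,l)}_{2j}$ from $[\la_{2j}^{(i+1)}]^{(k,l)}_{2j}$ inside $_{i+1}\C^{(k,l)}_{2j}$, which is $m_{i-1}^{(j-1)}$. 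Part $(3)$ is the boundary $i=1$: since $\alk{0}=0$ and $\akl{1}=1$, inserting $\bkl{1}$ leaves $\la_{2j-1}$ unchanged (giving $\la_{2j-1}^{(1)}=\la_{2j-1}^{(2)}$) and advances $[\la_{2j}]^{(k,l)}_{2j}$ by one successor in $\C^{(k,l)}_{2j}$, giving $(3.a)$; $(3.b)$ then follows from $(2)$ at $i=2$ and $[\la_{2j-1}^{(1)}]^{(l,k)}_{2j-1}=[\la_{2j-1}^{(2)}]^{(l,k)}_{2j-1}$.

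The step I expect to be the real obstacle is not any single computation — each identity above is a one-line consequence of \eqref{eq:formulek1}, the cross-relations \eqref{eq:crosseven}--\eqref{eq:crossodd} and \Lem{lem:klsommax} — but the bookkeeping needed to close the double induction: one must keep straight the two index parities, the alternation between the $(k,l)$- and $(l,k)$-systems and between even- and odd-length words under the shift, and in particular must arrange the level-$j$ induction so that the reciprocal facts $(2.b)$, $(3.b)$, which refer to the multiplicities $m_{i-1}^{(j-1)}$ of the \emph{next} pair, are genuinely available where they are used, together with a separate verification of the finite boundary at position $2j$ guaranteeing that the successors invoked above exist in the relevant finite posets. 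A lighter alternative is simply to transcribe the proof of \Prp{prop:welldefinf} verbatim with the substitutions listed in the first paragraph; the only genuinely new point is checking that the finite thresholds $\akl{2j-i+1}-\akl{2j-i}$ and $\akl{2j-i+1}$ produced by \Prp{prop:klinserted} coincide with the condition \eqref{eq:condinsinf2}.
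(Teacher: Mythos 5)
Your proposal is correct and follows essentially the same route as the paper: part (1) by downward induction on $j$ via the observation that only \eqref{eq:insert2type2} can occur at the top index (the paper phrases this through \eqref{eq:klratevendif2}, you through the invariance of $\akl{2j+1}\la_{2j+2}-\akl{2j+2}\la_{2j+1}$, which is the same computation), and parts (2)--(3) by transcribing the proof of \Prp{prop:welldefinf} with $_i\C^{(k,l)}$ replaced by $_i\C^{(k,l)}_{2j}$, using the finite cases of \Prp{prop:klinserted}, \Lem{lem:decalage} and \Lem{lem:prechoice}, and checking that the thresholds $\akl{2j+1-i}-\akl{2j-i}$ and $\akl{2j+1-i}$ match \eqref{eq:condinsinf2} via $\akl{2j-1}=\alk{2j-1}$. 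The boundary concern you raise about successors existing in the finite posets is harmless, since $_i\C^{(k,l)}_{2j}$ is infinite in its last coordinate.
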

\medskip
\begin{exs}
We illustrate the previous proposition with Examples \ref{exs:allpaper}.
For $(k,l)=(2,3)$ with $n=3$ and \\
$\nu=(b^{(2,3)}_{1})^5(b^{(2,3)}_{2})^4(b^{(2,3)}_{3})^2(b^{(2,3)}_{4})^3(b^{(2,3)}_{5})(b^{(2,3)}_{6})^3=(1+0)^5(3+1)^4(5+2)^2(12+5)^3(19+8)(45+19)^3$, the corresponding table is the following,
$$
\begin{array}{|c||c|c||c|c||c|c|}
 \hline 
 i&m_i^{(1)}&[\la_6^{(i)}]^{(2,3)}_6&m_i^{(2)}&[\la_4^{(i)}]^{(2,3)}_4&m_i^{(3)}&[\la_2^{(i)}]^{(2,3)}_2\\
 \hline\hline
  6&3&(0,0,0,0,0,\textcolor{red}{3})&0&(0,0,0,0)&0&(0,0)\\
  5&1&(0,0,0,0,\textcolor{red}{1},3)&0&(0,0,0,0)&0&(0,0)\\
  4&3&(0,0,0,\textcolor{red}{0},0,4)&0&(0,0,0,\textcolor{red}{0})&0&(0,0)\\
  3&2&(0,0,\textcolor{red}{2},0,0,4)&2&(0,0,\textcolor{red}{2},0)&0&(0,0)\\
  2&4&(0,\textcolor{red}{0},0,0,1,4)&0&(0,\textcolor{red}{0},2,0)&0&(0,\textcolor{red}{0})\\
  1&5&(\textcolor{red}{0},0,1,0,1,4)&3&(\textcolor{red}{1},0,0,1)&1&(\textcolor{red}{0},0)\\
  \hline
\end{array}\,\,,
$$
and for $(k,l)=(3,2)$ with $n=3$ and $\nu=(b^{(3,2)}_{1})^5(b^{(3,2)}_{2})^4(b^{(3,2)}_{3})^2(b^{(3,2)}_{4})^3(b^{(3,2)}_{5})(b^{(3,2)}_{6})^3=(1+0)^5(2+1)^4(5+3)^2(8+5)^3(19+12)(30+19)^3$, 
$$
\begin{array}{|c||c|c||c|c||c|c|}
 \hline 
 i&m_i^{(1)}&[\la_6^{(i)}]^{(3,2)}_6&m_i^{(2)}&[\la_4^{(i)}]^{(3,2)}_4&m_i^{(3)}&[\la_2^{(i)}]^{(3,2)}_2\\
 \hline\hline
   6&3&(0,0,0,0,0,\textcolor{red}{3})&0&(0,0,0,0)&0&(0,0)\\
  5&1&(0,0,0,0,\textcolor{red}{1},3)&0&(0,0,0,0)&0&(0,0)\\
  4&3&(0,0,0,\textcolor{red}{1},0,4)&0&(0,0,0,\textcolor{red}{0})&0&(0,0)\\
  3&2&(0,0,\textcolor{red}{0},2,0,4)&1&(0,0,\textcolor{red}{1},0)&0&(0,0)\\
  2&4&(0,\textcolor{red}{2},0,0,1,4)&1&(0,\textcolor{red}{1},1,0)&0&(0,\textcolor{red}{0})\\
  1&5&(\textcolor{red}{1},0,0,1,1,4)&1&(\textcolor{red}{0},0,0,1)&0&(\textcolor{red}{0},0)\\
  \hline
\end{array}\,\,\cdot
$$
\end{exs}
We here proceed similarly to Section \ref{part:interinfkl}. The diagram corresponding to the insertion into consecutive pair is the following.
\begin{figure}[H]
\label{fig:proofeven}
\begin{tikzpicture}[scale=1.2, every node/.style={scale=0.8}]
\draw (-1+2.5+0.8,1.5) node[left] {$_{i+1}\left[\la_{2j}^{(i)}\right]^{(k,l)}_{2j}$};
\draw (-2+2.5+0.8,3) node[left] {$\left[\la_{2j}^{(i+1)}\right]^{(k,l)}_{2j}$};

\draw (-1+0.8,1.5) node[left] {$\left[\la_{2j}^{(i)}\right]^{(k,l)}_{2j}$};
\draw (6+0.8,3) node[left] {$\left[\la_{2j-2}^{(i)}\right]^{(k,l)}_{2j-2}$};

\draw (6+0.8,1.5) node[left] {$\left[\la_{2j-2}^{(i-1)}\right]^{(k,l)}_{2j-2}$};

\draw[dotted,<->] (-1.5+2.5,3)--(5.4,3); \draw[dotted,<->] (-0.5+2.5,1.5)--(5.4,1.5);
\draw(0.2,2.75)--(-0.05,2.5); \draw[->](-0.55,2)--(-0.8,1.75);
\draw[dashed] (0.7,2.75)--(0.9,2.5); \draw[dashed,->] (1.3,2)--(1.5,1.75);
\draw[dotted,->] (0.8,1.5)--(-0.5,1.5);
\draw(1.5+4.5,2.65)--(1.5+4.5,2.5); \draw[->] (1.5+4.5,2)--(1.5+4.5,1.75);
 
\draw (-0.6,2.25) node {$\F(m_i^{(j)},\,_{i}\C^{(k,l)}_{2j})$};
\draw (-1.45,2.5)--(0.25,2.5)--(0.25,2)--(-1.45,2)--cycle;

\draw (1.5,2.25) node {$\F(m_{i-1}^{(j-1)},\,_{i+1}\C^{(k,l)}_{2j})$};
\draw (0.4,2.5)--(2.65,2.5)--(2.65,2)--(0.4,2)--cycle;

\draw (1.5+4.5,2.25) node {$\F(m_{i-1}^{(j-1)},\,_{i-1}\C^{(k,l)}_{2j-2})$};
\draw (0.4+4.5,2.5)--(2.65+4.5,2.5)--(2.65+4.5,2)--(0.4+4.5,2)--cycle;

\end{tikzpicture}
\caption{Relation between the insertions in $(\la_{2j},\la_{2j-1})$ and in $(\la_{2j-2},\la_{2j-3})$}
\end{figure}
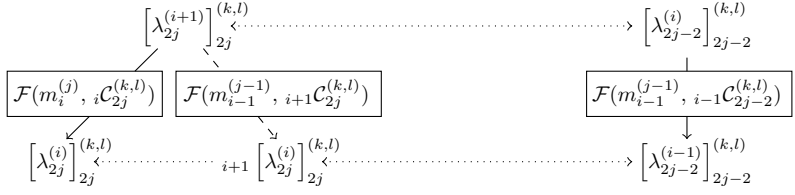
Analogously to \eqref{eq:proofinfkl}, for $1<j\leq n$ and $1<i\leq 2j-1$, 
\begin{equation}\label{eq:klproofeven}
00\cdot\left[\la_{2j-2}^{(i)}\right]^{(k,l)}_{2j-2}\preceq  \left[\la_{2j}^{(i+1)}\right]^{(k,l)}_{2j} \Longleftrightarrow 00\cdot\left[\la_{2j-2}^{(i-1)}\right]^{(k,l)}_{2j-2}\preceq \, _{i+1}\left[\la_{2j}^{(i)}\right]^{(k,l)}_{2j}\,\cdot
\end{equation} 
To prove that $(\la_j^{(1)})_{j=1}^{2n}$ belongs to $ \Lkl_{2n}$, as by \eqref{eq:klbelongsjieven} and fact $(3)$,
$0\cdot\left[\la_{2j-1}^{(1)}\right]^{(l,k)}_{2j-1}\preceq  \left[\la_{2j}^{(1)}\right]^{(k,l)}_{2j}\,,$ 
it suffices to prove that for $n\geq j>1$,
$00\cdot\left[\la_{2j-2}^{(1)}\right]^{(k,l)}_{2j-2}\preceq  \left[\la_{2j}^{(2)}\right]^{(k,l)}_{2j}$.
This holds by recursively using \eqref{eq:klproofeven} and fact $(1)$, as 
$(0)_{i=1}^{2j}=00\cdot\left[\la_{2j-2}^{(2j-1)}\right]^{(k,l)}_{2j-2}\preceq\left[\la_{2j}^{(2j)}\right]^{(k,l)}_{2j}$.\\\\
Let us now construct the inverse image. Let $(\la_j)_{j=1}^{2n}$ be a sequence in $\Lkl_{2n}$, and set $\la_j^{(1)}=\la_j$.
By considering the pair $(\la_1^{(1)},\la_2^{(1)})$, we retrieve $m_1^{(1)},\la_2^{(2)}$  by fact $(3)$, and since by \eqref{eq:inlk12n}, $0\cdot \left[\la_{1}^{(2)}\right]^{(k,l)}_1=\left[\la_{2}^{(2)}\right]^{(k,l)}_2 = (0,m_2^{(1)})$, we have $m_2^{(1)}=\la_2^{(2)}/\akl{2}= \la_1^{(1)}$. Assume now that, for $2\leq j\leq n$ we retrieve $(\la_{2j-2}^{i})_{i=1}^{2j-2}$ (and then $(m_{i}^{(j-1)})_{i=1}^{2j-2}$ by fact $(1.a)$). By fact $(3)$, we retrieve $\la_{2j}^{(2)}$ and $m_1^{(j)}$, and since $00\cdot\left[\la_{2j-2}^{(1)}\right]^{(k,l)}_{2j-2}\preceq\left[\la_{2j}^{(2)}\right]^{(k,l)}_{2j}$. Using Lemma \ref{lem:prechoice}, fact $(1.b)$ and \eqref{eq:klproofeven}, we recursively retrieve $\left[\la_{2j}^{(i)}\right]^{(k,l)}_{2j}$ from $\left[\la_{2j}^{(i-1)}\right]^{(k,l)}_{2j}$ and $m_{i-2}^{(j-1)}$ for $3\leq i\leq 2j$. By fact $(2.a)$, we recover $m_{i}^{(j)}$ for $1\leq i\leq 2j-1$. Finally, as $\left[\la_{2j}^{(2j)}\right]^{(k,l)}_{2j} \in \, _{2j}\C^{(k,l)}_{2j}$, we set $m_{2j}=\la_{2j}^{(2j)}/\akl{2j}$. Therefore, recursively on $j$, we recover the datum $(m_{i}^{(n)})_{i=1}^{2n}$  and we conclude.  
\subsection{The case $(k,1)$}\label{part:interevenk1}
\begin{prop}\label{prop:welldefeven1}
For $n\geq j \geq 1$,
\begin{enumerate}
\item for $n+j+1\geq i> 2j\geq 1$, $m_i^{(j)}=0$, and $\la_{2j}^{(i)}=\la_{2j-1}^{(i)}=0$.
\item for $2j\geq i\geq 3$, we have 
\begin{equation}\label{eq:k1belongsjieven}
\left[\la_{2j}^{(i)}\right]^{(k,1)}_{2j} =  p_1\left(\left[\la_{2j-1}^{(i)}\right]^{(1,k)}_{2j-1}\right)= 0\cdot p_2\left(\left[\la_{2j}^{(i)}\right]^{(1,k)}_{2j-1}\right) \in \,_{\lceil i-1/2\rceil}\C^{(k,1)}_{2j}\,\cdot
\end{equation}
\begin{enumerate}
\item For $j\geq i\geq 2$, let $\left[\mu_{2j-1}^{(2i)}\right]^{(1,k)}_{2j-1}=\F\left(m_{2i}^{(j)},\,_{+(i-1)}\C^{(1,k)}_{2j+1-2i},\left[\la_{2j-1}^{(2i+1)}\right]^{(1,k)}_{2j-1}\right)$.
\begin{enumerate}
 \item Then, $\left[\la_{2j}^{(2i)}\right]^{(k,1)}_{2j}=p_1\left(\left[\mu_{2j-1}^{(2i)}\right]^{(1,k)}_{2j-1}\right)$. In particular, $\la_{2j}^{(2j)}=m_{2j}^{(j)}\cdot \ak{2j}$ and  $$\left[\mu_{2j-1}^{(2j)}\right]^{(1,k)}_{2j-1}=((\underbrace{0,\ldots,0}_{j-1 \text{ times}},m_{2j}^{(j)}),(0)_{u=1}^{j-1})\,\cdot$$
 \item Furthermore, $\left[\la_{2j}^{(2i-1)}\right]^{(k,1)}_{2j}=\F\left(m_{2i-1}^{(j)},\,_{i}\C^{(k,1)}_{2j},\left[\la_{2j}^{(2i)}\right]^{(k,1)}_{2j}\right)$.
\end{enumerate}
\item Reciprocally, for $j\geq i\geq 2$, set
$\left[\mu_{2j-1}^{(2i-1)}\right]^{(1,k)}_{2j-1}=\left(p_1\left(\left[\la_{2j-1}^{(2i-1)}\right]^{(1,k)}_{2j-1}\right), \,_{i}p_2\left(\left[\la_{2j-1}^{(2i-1)}\right]^{(1,k)}_{2j-1}\right)\right)\,\cdot$
\begin{enumerate}
\item Then, $\left[\mu_{2j-1}^{(2i-1)}\right]^{(1,k)}_{2j-1}=\F\left(m_{2i-2}^{(j-1)},\,_{+(i-1)}\C^{(1,k)}_{2j+1-2i},\left[\mu_{2j-1}^{(2i-1)}\right]^{(1,k)}_{2j-1}\right)$,
\item and 
$p_2\left(\left[\mu_{2j-1}^{(2i)}\right]^{(1,k)}_{2j-1}\right)= \F\left(m_{2i-1}^{(j-1)},\,_{i}\C^{(k,1)}_{2j-2},p_2\left(\left[\la_{2j-1}^{(2i+1)}\right]^{(1,k)}_{2j-1}\right)\right)$.
\end{enumerate}
\end{enumerate}
\item Let $\left[\mu_{2j-1}^{(2)}\right]^{(1,k)}_{2j-1}=\left[\la_{2j-1}^{(2)}\right]^{(1,k)}_{2j-1}=\F\left(m_{2}^{(j)},\C^{(1,k)}_{2j-1},\left[\la_{2j-1}^{(3)}\right]^{(1,k)}_{2j-1}\right)$.
\begin{enumerate}
 \item Then, $\left[\la_{2j}^{(2)}\right]^{(k,1)}_{2j}=p_1\left(\left[\mu_{2j-1}^{(2)}\right]^{(1,k)}_{2j-1}\right)$.
 \item Reciprocally, $p_2\left(\left[\mu_{2j-1}^{(2)}\right]^{(1,k)}_{2j-1}\right)= \F\left(m_{1}^{(j-1)},\C^{(k,1)}_{2j-2},p_2\left(\left[\la_{2j-1}^{(3)}\right]^{(1,k)}_{2j-1}\right)\right)$.
\end{enumerate}
\item Finally, $\la_{2j-1}^{(1)}=\la_{2j-1}^{(2)}$ 
\begin{enumerate}
 \item and, $\left[\la_{2j}^{(1)}\right]^{(k,1)}_{2j}=\F\left(m_{1}^{(j)},\C^{(k,1)}_{2j},\left[\la_{2j}^{(2)}\right]^{(k,1)}_{2j}\right)$.
 \item Reciprocally, $\left[\la_{2j}^{(1)}\right]^{(k,1)}_{2j}=\F\left(m_{1}^{(j)},\C^{(k,1)}_{2j},p_1\left(\left[\la_{2j-1}^{(1)}\right]^{(1,k)}_{2j-1}\right)\right)$.
\end{enumerate}
\end{enumerate}
\end{prop}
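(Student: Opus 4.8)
The plan is to argue in close parallel with the proof of Proposition~\ref{prop:welldefinf1} (the infinite $(k,1)$-case) and of Proposition~\ref{prop:welldefeven} (the even case with $k,l\geq 2$), the one genuinely new feature being the truncation to \emph{finite} $(k,1)$- and $(1,k)$-admissible words. The backbone is a double induction: an outer downward induction on the pair index $j$, running from $j=n$ to $j=1$, and, within each step, a downward induction on the part index $i$, exploiting that $(\la_{2j}^{(i)},\la_{2j-1}^{(i)})$ is obtained from $(\la_{2j}^{(i+1)},\la_{2j-1}^{(i+1)})$ by inserting exactly $m_i^{(j)}$ copies of $\bk{i}$.

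\textbf{The vanishing (1).} Since $\nu\in\Bk_{2n}$, every part of $\nu$ is a $\bk{i}$ with $i\leq 2n$, which is the base case $j=n$. For the inductive step I would prove, simultaneously with the membership claim of part (2), that $[\la_{2j}^{(i)}]^{(k,1)}_{2j}$ lies in $_{\lceil (i-1)/2\rceil}\C^{(k,1)}_{2j}$ for every relevant $i$; as $\C^{(k,1)}_{2j}=\C^{(k-2)}_j$ has effective length $j$, this set collapses to the all-zero word once $i>2j$, forcing $\la_{2j}^{(i)}=\la_{2j-1}^{(i)}=0$ and $m_i^{(j)}=0$. Concretely, the key mechanical input is that when $[\la_{2j}^{(i)}]^{(k,1)}_{2j}$ sits high enough in $\C^{(k,1)}_{2j}$, the cross-quantity $\ak{2j-1}\la_{2j}-\ak{2j}\la_{2j-1}$ is smaller than the threshold $\ak{2j+2-2i}-\ak{2j-2i}$ of condition \eqref{eq:condinsinf1211}, so the ``else'' branch \eqref{eq:insert2type221} of the insertion rule is always chosen and \emph{no} part is passed down to $(\la_{2j-2},\la_{2j-3})$; feeding this back into the store rules, and tracking the drop of the index along each pair, one gets that only parts of index $\leq 2(j-1)$ ever reach pair $j-1$, which propagates the induction. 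The upper bound $n+j+1$ in the statement is just the crude estimate from ``index drops by at least one per pair''.

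\textbf{The admissible-word identities (2)--(4).} Here the machinery is exactly that of \S\ref{part:interinfk1} and \S\ref{part:interevenkl}. Each elementary insertion of a $\bk{i}$ is interpreted as one application of the follower $\F(1,\cdot)$ in the appropriate (possibly shifted, possibly truncated) set of admissible words, and the two branches of the insertion rule --- whether \eqref{eq:condinsinf1211} (resp.\ \eqref{eq:condinsinf1231}) holds or not --- are recognized as the dichotomy between \eqref{eq:k1ratdif1} and \eqref{eq:k1ratdif2} of Proposition~\ref{prop:k1inserted}. The parity of $i$ selects the world: even parts $\bk{2i}$ act as \emph{pairs}, i.e.\ as $(1,k)$-admissible words in the shifted set $_{+(i-1)}\C^{(1,k)}_{2j+1-2i}$ --- this is where the auxiliary words $\mu_{2j-1}^{(\bullet)}$ live and where Lemma~\ref{lem:follow1k} (finite version) describes the successor and supplies the counting formula \eqref{eq:1ksomfin} --- while odd parts $\bk{2i-1}$ act by $\F$ in the plain truncated set $_{i}\C^{(k,1)}_{2j}$, governed by Lemma~\ref{lem:followfinbis}. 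The passage from the ratio constraint $\ak{2j-1}\la_{2j}-\ak{2j}\la_{2j-1}\geq\cdots$ to the shift relation between consecutive admissible words is handled by Propositions~\ref{prop:k1ratiofin} and~\ref{prop:1kratiofin} (with the splitting parameter $x=\ak{n+1}/\al{n}$ of Lemma~\ref{lem:divide}), and Lemma~\ref{lem:prechoice} together with Lemma~\ref{lem:decalage} package the ``$0\cdot$'' shift bookkeeping. Assembling these, facts (2.a) and (2.b) come out of the same iteration of $\F$ as in the infinite case; the identities (3) and (4) are the degenerate instances $i=1,2$, where no shift occurs and $\C^{(1,k)}$, $\C^{(k,1)}$ appear unshifted; and the boundary relation $\la_{2j}^{(2j)}=m_{2j}^{(j)}\ak{2j}$ with the explicit shape of $[\mu_{2j-1}^{(2j)}]^{(1,k)}_{2j-1}$ follows from membership in $_j\C^{(k-2)}_j$.

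\textbf{Expected main obstacle.} The subtle part is not any individual identity but keeping the two nested inductions coherent: one must verify that the prescription ``store one or two parts'' --- which depends on whether at least two, or exactly one, copies of the current part remain --- is exactly matched by the correction terms $\pm\chi(\cdots)$ in the successor formula for $_{+d}\C^{(1,k)}_n$, and that the truncation parameters $2j+1-2i$, $\lceil i/2\rceil$, $\lceil (i-1)/2\rceil$ stay within their legal ranges at each step, notably at the boundary cases $i=2j$, $i=2$, $i=1$ and when $(d,n)=(0,1)$ in Lemma~\ref{lem:follow1k}. Once Proposition~\ref{prop:welldefeven1} is secured in this form, the well-definedness of $\Phi_{2n}^{(k,1)}$ as a bijection from $\Bk_{2n}$ to $\Lk_{2n}$ follows just as in \S\ref{part:interinfk1}, using the characterization \eqref{eq:inlk12n} of $\Lk_{2n}$ and a descending induction on $j$ with Lemma~\ref{lem:prechoice} to reconstruct the preimage.
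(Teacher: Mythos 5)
Your plan follows essentially the same route as the paper: the paper's own proof of this proposition is precisely a reduction to the argument for Proposition~\ref{prop:welldefinf1}, via the equivalence $\ak{2j-1}\la_{2j}-\ak{2j}\la_{2j-1}\geq \ak{2j+2-2i}-\ak{2j-2i} \Longleftrightarrow \ak{2j-2}\la_{2j}-\ak{2j}(\la_{2j-1}-\la_{2j})\geq \ak{2j+2-2i}-\ak{2j-2i}$ and the substitution of the finite-word facts (facts $(2)$ of Lemmas~\ref{lem:divide} and~\ref{lem:follow1k}, \eqref{eq:k1ratdif1}, \eqref{eq:k1ratdif2}, \eqref{eq:k1rat0}) for their infinite counterparts, together with the downward induction on $j$ for fact $(1)$ exactly as in Proposition~\ref{prop:welldefeven}. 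The only slip is your appeal to Lemma~\ref{lem:followfinbis} for the odd-indexed insertions: since $\C^{(k,1)}_{2j}=\C^{(k-2)}_j$ and the components of $\C^{(1,k)}_{2j+1-2i}$ are even-indexed $(k,1)$-words, the relevant successor lemma here is Lemma~\ref{lem:followfin} applied to $(k-2,k-2)$, but this does not affect the substance of the argument.
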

\begin{ex}
 For $k=6$ with $n=3$ and $\nu=(b_{1}^{(6,1)})^2(b_{2}^{(6,1)})^5(b_{3}^{(6,1)})^2(b_{4}^{(6,1)})^3(b_{6}^{(6,1)})^5=(1+0)^2(1+1)^5(5+6)^2(4+5)^3(15+19)^5$, we have
 $$
\begin{array}{|c||c|c||c|c||c|c|}
 \hline 
 i&m_i^{(3)}&[\mu_5^{(i)}]^{(1,6)}_5&m_i^{(2)}&[\mu_3^{(i)}]^{(1,6)}_3&m_i^{(1)}&[\mu_1^{(i)}]^{(1,6)}_1\\
 \hline\hline
  6&5&(0,0,\textcolor{red}{5}),(0,0)&0&\emptyset&0&\emptyset\\
  5&0&(0,0,\textcolor{red}{5}),(0,0)&0&\emptyset&0&\emptyset\\
  4&3&(0,\textcolor{red}{3},5),(0,\textcolor{red}{5})&0&(0,\textcolor{red}{0}),(0)&0&\emptyset\\
  3&2&(0,\textcolor{red}{1},6),(0,\textcolor{red}{6})&0&(0,\textcolor{red}{0}),(0)&0&\emptyset\\
  2&5&(\textcolor{red}{0},2,6),(\textcolor{red}{2},6)&3&(\textcolor{red}{3},0),(\textcolor{red}{0})&0&(\textcolor{red}{0}),\emptyset\\
  \hline\hline
  &m_1^{(3)}&[\la_6^{(1)}]^{(6,1)}_6&m_1^{(2)}&[\la_4^{(1)}]^{(6,1)}_4&m_1^{(1)}&[\la_2^{(1)}]^{(6,1)}_2\\
  \hline
  &2&(\textcolor{red}{2},2,6)&1&(\textcolor{red}{0},1)&0&(\textcolor{red}{0})\\
  \hline
\end{array}\,\,\cdot
$$
\end{ex}$$$$
The corresponding diagram is the following.
\begin{figure}[H]
\label{fig:proofeven1}
\begin{tikzpicture}[scale=1.2, every node/.style={scale=0.8}]

\draw (-1+2.5+0.8,1.5) node[left] {$\left[\la_{2j}^{(2i)}\right]^{(k,1)}_{2j}\succeq  0\cdot p_2\left(\left[\mu_{2j-1}^{(2i)}\right]^{(1,k)}_{2j-1}\right)$};
\draw (-2+2.5+0.8,3) node[left] {$\left[\la_{2j}^{(2i+1)}\right]^{(k,1)}_{2j}$};
\draw (-2+2.5+0.8,0) node[left] {$\left[\la_{2j}^{(2i-1)}\right]^{(k,1)}_{2j}$};

\draw[dotted,->] (-2.5,3)--(-0.2,3); \draw[dotted,->] (-2.5,1.5)--(-1.5,1.5); \draw[dotted,->] (-2.5,0)--(-0.2,0);
\draw (-1.4,2.85) node {$p_1$};\draw (-2,1.35) node {$p_1$}; \draw (-1.4,-.15) node {$p_1$};

\draw[dotted,<->] (-1.6+2.5,3)--(5.4-2,3); \draw[dotted,<->] (2.2,1.5)--(5.4-2,1.5);\draw[dotted,<->] (-1.6+2.5,0)--(5.4-2,0);

\draw[dashed] (0.7,2.75)--(0.9,2.5); \draw[dashed,->] (1.3,2)--(1.5,1.75);
\draw (1.5,2.25) node {$\F(m_{2i-1}^{(j-1)},\,_{i+1}\C^{(k,1)}_{2j})$};
\draw (0.4,2.5)--(2.65,2.5)--(2.65,2)--(0.4,2)--cycle;

\draw[dashed] (0.7-1.5,2.75-1.5)--(0.9-1.5,2.5-1.5); \draw[dashed,->] (1.3-1.5,2-1.5)--(1.5-1.5,1.75-1.5);
\draw (1.5-1.5,2.25-1.5) node {$\F(m_{2i-1}^{(j)},\,_{i}\C^{(k,1)}_{2j})$};
\draw (0.4-1.5,2.5-1.5)--(2.65-1.5,2.5-1.5)--(2.65-1.5,2-1.5)--(0.4-1.5,2-1.5)--cycle;

\draw (6+0.8-2,3) node[left] {$\left[\la_{2j-2}^{(2i)}\right]^{(k,1)}_{2j-2}$};
\draw (6+0.8-2,1.5) node[left] {$\left[\la_{2j-2}^{(2i-1)}\right]^{(k,1)}_{2j-2}$};
\draw (6+0.8-2,0) node[left] {$\left[\la_{2j-2}^{(2i-2)}\right]^{(k,1)}_{2j-2}$};
\draw (1.5+4.5-2,2.25) node {$\F(m_{2i-1}^{(j-1)},\,_{i}\C^{(k,1)}_{2j-2})$};
\draw (0.4+4.5-2,2.5)--(2.65+4.5-2,2.5)--(2.65+4.5-2,2)--(0.4+4.5-2,2)--cycle;
\draw(1.5+4.5-2,2.65)--(1.5+4.5-2,2.5); \draw[->] (1.5+4.5-2,2)--(1.5+4.5-2,1.75);

\draw (6+0.8-9,3) node[left] {$\left[\la_{2j-1}^{(2i+1)}\right]^{(1,k)}_{2j-1}$};
\draw (6+0.8-9,1.5) node[left] {$\left[\mu_{2j-1}^{(2i)}\right]^{(1,k)}_{2j-1}$};
\draw (1.5+4.5-9,2.25) node {$\F(m_{2i}^{(j)},\,_{+(i-1)}\C^{(1,k)}_{2j+1-2i})$};
\draw (0.2+4.5-9,2.5)--(2.75+4.5-9,2.5)--(2.75+4.5-9,2)--(0.2+4.5-9,2)--cycle;
\draw(1.5+4.5-9,2.65)--(1.5+4.5-9,2.5); \draw[->] (1.5+4.5-9,2)--(1.5+4.5-9,1.75);
\draw (6+0.8-9,1.5-1.5) node[left] {$\left[\mu_{2j-1}^{(2i-1)}\right]^{(1,k)}_{2j-1}$};
\draw (1.5+4.5-9,2.25-1.5) node {\begin{small}$\F(m_{2i-2}^{(j-1)},\,_{+(i-1)}\C^{(1,k)}_{2j+1-2i})$\end{small}};
\draw (0.2+4.5-9,2.5-1.5)--(2.75+4.5-9,2.5-1.5)--(2.75+4.5-9,2-1.5)--(0.2+4.5-9,2-1.5)--cycle;
\draw[dashed](1.5+4.5-9,2.65-1.5)--(1.5+4.5-9,2.5-1.5); \draw[dashed,->] (1.5+4.5-9,2-1.5)--(1.5+4.5-9,1.75-1.5);
\draw (6+0.8-9,-0.7) node[left] {$\left[\la_{2j-1}^{(2i-1)}\right]^{(1,k)}_{2j-1}$};
\draw[dotted,->] (1.5+4.5-9,1.3-1.5)--(1.5+4.5-9,1.1-1.5);

\draw (6+0.8,3) node[left] {$\left[\mu_{2j-3}^{(2i)}\right]^{(1,k)}_{2j-3}$};
\draw (6+0.8,1.5) node[left] {$\left[\la_{2j-3}^{(2i-1)}\right]^{(1,k)}_{2j-3}$};
\draw (6+0.8,1.5-1.5) node[left] {$\left[\mu_{2j-3}^{(2i-2)}\right]^{(1,k)}_{2j-3}$};
\draw (1.5+4.5,2.25-1.5) node {\begin{small}$\F(m_{2i-2}^{(j-1)},\,_{+(i-2)}\C^{(1,k)}_{2j+1-2i})$\end{small}};
\draw (0.2+4.5,2.5-1.5)--(2.75+4.5,2.5-1.5)--(2.75+4.5,2-1.5)--(0.2+4.5,2-1.5)--cycle;
\draw(1.5+4.5,2.65-1.5)--(1.5+4.5,2.5-1.5); \draw[->] (1.5+4.5,2-1.5)--(1.5+4.5,1.75-1.5);

\draw[dotted,->] (5.3,3)--(4.4,3);
\draw[dotted,->] (5.3,1.5)--(4.4,1.5); \draw[dotted,->] (5.3,0)--(4.4,0);
\draw (4.9,2.85) node {$p_1$};\draw (4.9,1.35) node {$p_1$}; \draw (4.9,-.15) node {$p_1$};
\end{tikzpicture}
\caption{Relations between the insertions at step $j$ and step $j-1$}
\end{figure}
Similarly to Section \ref{part:interinfk1}, we have for $1\leq i\leq j-1$,
\begin{equation}\label{eq:k1proofeven1}
 p_2\left(\left[\la_{2j-1}^{(2i+1)}\right]^{(1,k)}_{2j-1}\right)\succeq p_1\left(\left[\mu_{2j-3}^{(2i)}\right]^{(1,k)}_{2j-3}\right)\Longleftrightarrow p_2\left(\left[\mu_{2j-1}^{(2i)}\right]^{(1,k)}_{2j-1}\right)\succeq \left[\la_{2j-2}^{(2i-1)}\right]^{(k,1)}_{2j-2}\,,
\end{equation}
for $2\leq i\leq j$
\begin{align}
\label{eq:k1proofeven3}
 \left[\mu_{2j-1}^{(2i)}\right]^{(1,k)}_{2j-1}\succeq 0\cdot\left[\la_{2j-3}^{(2i-1)}\right]^{(1,k)}_{2j-3} &\Longleftrightarrow \left[\mu_{2j-1}^{(2i-1)}\right]^{(1,k)}_{2j-1}\succeq 0\cdot\left[\mu_{2j-3}^{(2i-2)}\right]^{(1,k)}_{2j-3}\,,\\
 \Updownarrow\qquad\qquad\qquad\qquad&\qquad\qquad\qquad\qquad\qquad\Updownarrow\nonumber\\
 \label{eq:k1proofeven2}
 p_2\left(\left[\mu_{2j-1}^{(2i)}\right]^{(1,k)}_{2j-1}\right)\succeq \left[\la_{2j-2}^{(2i-1)}\right]^{(k,1)}_{2j-2}&\Longleftrightarrow p_2\left(\left[\la_{2j-1}^{(2i-1)}\right]^{(1,k)}_{2j-1}\right)\succeq p_1\left(\left[\mu_{2j-3}^{(2i-2)}\right]^{(1,k)}_{2j-3}\right)\,\cdot
\end{align}
To prove that $(\la_j^{(1)})_{j=1}^{2n}$ belongs to $ \Lk_{2n}$, as by \eqref{eq:k1belongsjieven} and fact $(4)$,
$ p_1\left(\left[\la_{2j-1}^{(1)}\right]^{(1,k)}_{2j-1}\right)\preceq \left[\la_{2j}^{(1)}\right]^{(k,1)}_{2j}\,,$ 
it suffices to prove that for $n\geq j>1$,
$p_2\left(\left[\la_{2j-1}^{(2)}\right]^{(1,k)}_{2j-1}\right)\succeq \left[\la_{2j-2}^{(1)}\right]^{(k,1)}_{2j-2}$. This holds by using recursively \eqref{eq:k1proofeven2} and \eqref{eq:k1proofeven1}, since by fact $(1)$, $(0)_{u=1}^{j-1}=\left[\la_{2j-2}^{(2j-1)}\right]^{(k,1)}_{2j-2}\preceq p_2\left(\left[\la_{2j-1}^{(2j)}\right]^{(1,k)}_{2j-1}\right)$.\\\\To construct the inverse map, first build $m_1^{(1)}=\la_2^{(1)}-\la_{1}^{(1)}$ and $m_2^{(1)}=\la_2^{(2)}=\la_{1}^{(2)}$. Then, recursively retrieve, $\mu_{2j-1}^{(2)},m_1^{(j)}$ from fact $(4)$ and $(3)$, then $(\mu_{2j-1}^{(i)})_{i=3}^{(2j)}$ from $(m_i^{(j-1)})_{j=1}^{(2j-2)}$ using fact $(3)$ and $(2.b)$,  subsequently $(m_i^{(j)})_{i=2}^{(2j-1)}$ by fact $(2.a)$, and finally set $m_{2j}^{(j)}= \la_{2j}^{2j}/\ak{2j}$.
Therefore, we recover the datum $(m_i^{(n)})_{i=1}^{(2j)}$ and we conclude.
\subsection{The case $(1,k)$}\label{part:intereven1k}
\begin{prop}\label{prop:welldefeven2}
For $n\geq j \geq 1$,
\begin{enumerate}
\item for $n+j+1\geq i> 2j\geq 1$, $m_i^{(j)}=0$, and $\la_{2j}^{(i)}=\la_{2j-1}^{(i)}=0$.
\item for $2j\geq i\geq 2$, we have 
\begin{equation}\label{eq:1kbelongsjieven}
0\cdot \left[\la_{2j-1}^{(i)}\right]^{(k,1)}_{2j-1} = 0\cdot  p_2\left(\left[\la_{2j}^{(i)}\right]^{(1,k)}_{2j}\right)= p_1\left(\left[\la_{2j}^{(i)}\right]^{(1,k)}_{2j}\right) \in \,_{\lceil (i+1)/2\rceil}\C^{(k,1)}_{2j+1}\,\cdot
\end{equation}
\begin{enumerate}
\item We have $\la_{2j-1}^{(2j)}=m_{2j}^{(j)}\cdot \ak{2j-1}$. For $j\geq i \geq 2$, \\
let $\left[\mu_{2j}^{(2i-1)}\right]^{(1,k)}_{2j}=\F\left(m_{2i-1}^{(j)},\,_{+(i-1)}\C^{(1,k)}_{2j+2-2i},\left[\la_{2j}^{(2i)}\right]^{(1,k)}_{2j}\right)$. 
\begin{enumerate}
 \item Then, $p_1\left(\left[\la_{2j}^{(2i-1)}\right]^{(1,k)}_{2j}\right)=p_1\left(\left[\mu_{2j}^{(2i-1)}\right]^{(1,k)}_{2j}\right)$.
 \item  Furthermore, $\left[\la_{2j-1}^{(2i-2)}\right]^{(k,1)}_{2j-1}=\F\left(m_{2i-2}^{(j)},\,_{i-1}\C^{(k,1)},\left[\la_{2j}^{(2i-1)}\right]^{(k,1)}_{2j-1}\right)$. 
\end{enumerate}
\item Reciprocally, for $j\geq i\geq 2$, set
$\left[\mu_{2j}^{(2i-2)}\right]^{(1,k)}_{2j}=\left(p_1\left(\left[\la_{2j}^{(2i-1)}\right]^{(1,k)}_{2j}\right), \,_{i}p_2\left(\left[\la_{2j-1}^{(2i-1)}\right]^{(1,k)}_{2j}\right)\right)\,\cdot$
\begin{enumerate}
 \item Then, $\left[\mu_{2j}^{(2i-2)}\right]^{(1,k)}_{2j}=\F\left(m_{2i-3}^{(j-1)},\,_{+(i-1)}\C^{(1,k)}_{2j+2-2i},\left[\mu_{2j}^{(2i-1)}\right]^{(1,k)}_{2j}\right)$,
\item and $p_2\left(\left[\mu_{2j}^{(2i-1)}\right]^{(1,k)}_{2j}\right)=\F\left(m_{2i-2}^{(j-1)},\,_{i}\C^{(k,1)}_{2j-1},p_2\left(\left[\la_{2j}^{(2i)}\right]^{(1,k)}_{2j}\right)\right)$.
\end{enumerate}
\end{enumerate}
\item Finally, $\la_{2j-1}^{(1)}=\la_{2j-1}^{(2)}$, 
\begin{enumerate}
 \item and $\left[\mu_{2j}^{(1)}\right]^{(1,k)}_{2j}=\left[\la_{2j}^{(1)}\right]^{(1,k)}_{2j}=\F\left(m_1^{(j)},\C^{(1,k)}_{2j},\left[\la_{2j}^{(2)}\right]^{(1,k)}_{2j}\right)$.
 \item Reciprocally, $\left[\la_{2j}^{(1)}\right]^{(1,k)}_{2j}=\F\left(m_1^{(j)},\C^{(1,k)}_{2j},\left(0\cdot\left[\la_{2j-1}^{(1)}\right]^{(k,1)}_{2j-1},\left[\la_{2j-1}^{(1)}\right]^{(k,1)}_{2j-1}\right)\right)$.
\end{enumerate}
\end{enumerate}
\end{prop}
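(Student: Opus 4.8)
The plan is to prove Proposition \ref{prop:welldefeven2} by following the same three-layer strategy already used for Propositions \ref{prop:welldefeven} and \ref{prop:welldefeven1}, adapting it to the $(1,k)$ bookkeeping in which each part of the image is viewed as a pair via Lemma \ref{lem:divide}. First I would translate the insertion rules of $\Phi^{(k,l)}_{2n}$ in the case $(1,k)$ into statements about the admissible words $\left[\la_{2j}^{(i)}\right]^{(1,k)}_{2j}$ and the auxiliary words $\left[\mu_{2j}^{(i)}\right]^{(1,k)}_{2j}$: the key observation is that the moves \eqref{eq:insert2type131}, \eqref{eq:insert2type231}, \eqref{eq:insert2type2314} each add $\bl{i}$ (or a difference $\bl{2i+1}-\bl{2i-1}$) to the pair $(\la_{2j},\la_{2j-1})$, so by the decomposition formulas \eqref{eq:formulek2} and Lemma \ref{lem:follow1k} they correspond exactly to taking one step, or $m$ steps, of the follower map $\F(\cdot,\,_{+d}\C^{(1,k)}_{\star},\cdot)$ with the appropriate shift $d=\lceil (i-1)/2\rceil-1$ and truncation length $\star=2j+2-2i$. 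Point (1) (vanishing of $m_i^{(j)}$ and of $\la_{2j}^{(i)},\la_{2j-1}^{(i)}$ for $i>2j$) follows from the fact that $\akl{i}$ and $\alk{i-1}$ both exceed $\akl{2j}$ once $i>2j$, together with the truncation-length constraint $\star\geq 1$ that forces those insertions into a later pair.

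Next I would establish \eqref{eq:1kbelongsjieven}, that after inserting all parts $\bl{i}$ with $i\geq 2$ the pair $(\la_{2j}^{(i)},\la_{2j-1}^{(i)})$ satisfies the shift relation $0\cdot\left[\la_{2j-1}^{(i)}\right]^{(k,1)}_{2j-1}=p_1\left(\left[\la_{2j}^{(i)}\right]^{(1,k)}_{2j}\right)=0\cdot p_2\left(\left[\la_{2j}^{(i)}\right]^{(1,k)}_{2j}\right)$, lying in $_{\lceil (i+1)/2\rceil}\C^{(k,1)}_{2j+1}$. This is the analogue of \eqref{eq:k1belongsjieven} and is proved by descending induction on $i$ from $i=2j$ down, where the base case $i=2j$ is the explicit formula $\la_{2j-1}^{(2j)}=m_{2j}^{(j)}\ak{2j-1}$, and the inductive step checks that each of the two insertion subcases preserves membership and the shift identity — precisely what Lemma \ref{lem:follow1k} guarantees when one reads off the effect of $\F(1,\,_{+d}\C^{(1,k)}_{\star},\cdot)$ on the two components $p_1,p_2$. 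The identities (2a)(i)–(ii) and (2b)(i)–(ii) relating $\left[\la_{2j}^{(2i-1)}\right]$, $\left[\mu_{2j}^{(2i-1)}\right]$, $\left[\mu_{2j}^{(2i-2)}\right]$ and the shifted truncations to the stored parts are then a matter of matching the weight increments $\bl{2i+1}-\bl{2i-1}$, $\bl{2i}$, $\bl{2i-1}$ against the follower steps in $_{+(i-1)}\C^{(1,k)}_{2j+2-2i}$ and $_i\C^{(k,1)}_{2j-1}$, using \eqref{eq:1krat0}, \eqref{eq:1krat1} and the order-preservation part of Lemma \ref{lem:decalage} and Lemma \ref{lem:prechoice}; part (3) is the separate treatment of the parts $\bl{1}$ and $\bl{2j}$, handled by \eqref{eq:insert2type2314}, which contributes no stored part and hence is a plain follower step in $\C^{(1,k)}_{2j}$.

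Once the proposition is in hand, the well-definedness of $\Phi^{(1,k)}_{2n}$ follows exactly as in Sections \ref{part:interevenkl} and \ref{part:interevenk1}: one derives, from (2a)(i),(2b),(3) and Lemmas \ref{lem:prechoice}, \ref{lem:decalage}, the two-sided equivalence of the form \eqref{eq:k1proofeven1}--\eqref{eq:k1proofeven2} transporting the inequality $p_2(\left[\mu_{2j}^{(2i)}\right])\succeq\left[\la_{2j-1}^{(2i-1)}\right]$ between consecutive pairs, proves $\Phi^{(1,k)}_{2n}(\Bl_{2n})\subset\Ll_{2n}$ by a descending induction based on the vanishing statement (1), and reconstructs the preimage step by step ($m_1^{(1)},m_2^{(1)}$ first, then all $\mu$- and $m$-data at pair $j$ from those at pair $j-1$) using the reciprocal statements (2b),(3b). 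The main obstacle I anticipate is bookkeeping discipline rather than a genuine mathematical difficulty: in the $(1,k)$ case the truncation indices $2j+2-2i$ and $2j-1$ shift differently for the $p_1$ and $p_2$ components, and the boundary parts $\bl{1}$ and $\bl{2j}$ are exceptional (cf. the condition $j\neq i$ in the definition of $\Phi^{(1,k)}_{2n}$), so one must check carefully that every invocation of Lemma \ref{lem:follow1k} is made with $(d,n)\neq(0,1)$ or else fall back on case (3) of that lemma, and that the inequalities relating the two components — the ``$p_2\succeq 0\cdot p_2 \Leftarrow p_2\succeq p_1$'' implications used in \eqref{eq:1kproofinfs1}--\eqref{eq:1kproofinfs2} — still go through in the finite truncated setting.
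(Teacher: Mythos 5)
Your strategy coincides with the paper's: Proposition \ref{prop:welldefeven2} is proved there by reducing to the proofs of Propositions \ref{prop:welldefinf2} and \ref{prop:welldefeven} through the single equivalence
\[
\al{2j-1}\la_{2j}-\al{2j}\la_{2j-1}\geq \al{2j+1-2i}-\al{2j-1-2i} \Longleftrightarrow \ak{2j-1}(\la_{2j}-\la_{2j-1})-\ak{2j+1}\la_{2j-1}\geq \ak{2j+1-2i}-\ak{2j-1-2i}\,,
\]
which converts the $(1,k)$ ratio test \eqref{eq:condinsinf1311} into the $(k,1)$ setting where facts $(2)$ of Lemmas \ref{lem:divide} and \ref{lem:follow1k} and \eqref{eq:k1ratdif1}, \eqref{eq:k1ratdif2}, \eqref{eq:k1rat0} apply; your plan of reading each insertion move as a follower step in $\,_{+(i-1)}\C^{(1,k)}_{2j+2-2i}$ or $\,_{i}\C^{(k,1)}_{2j-1}$ is exactly that reduction (your shift $d=\lceil(i-1)/2\rceil-1$ is off by one, since the step indexed $2i-1$ uses $d=i-1$, but that is bookkeeping).

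Two steps would fail as you state them. First, your justification of part $(1)$ --- that $\akl{i},\alk{i-1}$ exceed $\akl{2j}$ for $i>2j$, plus a truncation-length constraint --- is not an argument: nothing in the insertion procedure bounds what lands in a given pair by part size. The actual proof is a descending induction on $j$: stored indices drop by at least one per pair, so $m_i^{(j-1)}$ with $i\geq 2j-1$ could only come from inserting $\bl{2j}$ (or higher, already zero by induction) into pair $j$; but $\bl{2j}$ is handled by the store-free rule \eqref{eq:insert2type2314} (and in the other cases one checks via \eqref{eq:k1ratdif1} that the pure pair $m_{2j}^{(j)}(\al{2j},\ak{2j-1})$ always fails the storage condition), so nothing of index $2j-1$ reaches pair $j-1$. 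Second, statement $(2.a.i)$ --- that the word produced by actually performing the $m_{2i-1}^{(j)}$ insertions agrees in its $p_1$-component with the $m_{2i-1}^{(j)}$-fold follower defining $\mu_{2j}^{(2i-1)}$ --- is not merely ``matching weight increments'': because of the indicator terms in \eqref{eq:1ksomfin}, the two counts could a priori differ by one follower step, and the paper excludes this by the two-sided comparison of the quantities $a,b$ and $a',b'$ and the resulting contradiction $D\geq 2$ in the proof of fact $(1.a.i)$ of Proposition \ref{prop:welldefinf1}. Any complete write-up must reproduce (or explicitly invoke) that counting argument rather than assert the identification.
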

\begin{ex}
 For $k=6$ with $n=3$ and $\nu=(b_{1}^{(1,6)})^2(b_{2}^{(1,6)})^5(b_{3}^{(1,6)})^2(b_{4}^{(1,6)})^3(b_{6}^{(1,6)})^5=(1+0)^2(6+1)^5(5+1)^2(24+5)^3(90+19)^5$, we have
 $$
\begin{array}{|c||c|c||c|c||c|c|}
\hline
&m_6^{(3)}&[\la_6^{(6)}]^{(1,6)}_6&m_4^{(2)}&[\la_4^{(4)}]^{(1,6)}_4&m_2^{(1)}&[\la_2^{(2)}]^{(1,6)}_2\\
  \hline
  &2&(0,0,0,\textcolor{red}{5}),(0,0,0)&0&(0,0,0),(0,0)&0&(0,0),(0)\\
  \hline
 \hline 
 i&m_i^{(3)}&[\mu_6^{(i)}]^{(1,6)}_6&m_i^{(2)}&[\mu_4^{(i)}]^{(1,6)}_4&m_i^{(1)}&[\mu_2^{(i)}]^{(1,6)}_2\\
 \hline\hline
  5&0&(0,0,\textcolor{red}{0},5),(0,0,\textcolor{red}{5})&0&\emptyset&0&\emptyset\\
  4&3&(0,0,\textcolor{red}{3},5),(0,0,\textcolor{red}{5})&0&\emptyset&0&\emptyset\\
  3&2&(0,\textcolor{red}{2},3,5),(0,\textcolor{red}{3},5)&3&(0,\textcolor{red}{3},0),(0,\textcolor{red}{0})&0&\emptyset\\
  2&5&(0,\textcolor{red}{0},0,6),(0,\textcolor{red}{0},6)&0&(0,\textcolor{red}{3},0),(0,\textcolor{red}{0})&0&\emptyset\\
  \hline\hline
  &m_1^{(3)}&[\la_6^{(1)}]^{(1,6)}_6&m_1^{(2)}&[\la_4^{(1)}]^{(1,6)}_4&m_1^{(1)}&[\la_2^{(1)}]^{(1,6)}_2\\
  \hline
  &2&(\textcolor{red}{2},0,0,6),(\textcolor{red}{0},0,6)&7&(\textcolor{red}{2},4,0),(\textcolor{red}{4},0)&0&(\textcolor{red}{0},0),(\textcolor{red}{0})\\
  \hline
\end{array}\,\,\cdot
$$
\end{ex}
The corresponding diagram is.
\begin{figure}[H]
\label{fig:proofeven2}
\begin{tikzpicture}[scale=1.2, every node/.style={scale=0.8}]

\draw (-1+2.5+0.8,1.5) node[left] {$\left[\la_{2j-1}^{(2i-1)}\right]^{(k,1)}_{2j-1}\succeq \,\,\,\, p_1\left(\left[\mu_{2j}^{(2i-1)}\right]^{(1,k)}_{2j}\right)$};
\draw (-2+2.5+0.8,3) node[left] {$\left[\la_{2j-1}^{(2i)}\right]^{(k,1)}_{2j-1}$};
\draw (-2+2.5+0.8,0) node[left] {$\left[\la_{2j-1}^{(2i-2)}\right]^{(k,1)}_{2j-1}$};

\draw[dotted,->] (-2.5,3)--(-0.2,3); \draw[dotted,->] (-2.5,1.5)--(-1.5,1.5); \draw[dotted,->] (-2.5,0)--(-0.2,0);
\draw (-1.4,2.85) node {$p_1^-$};\draw (-2,1.35) node {$p_1^-$}; \draw (-1.4,-.15) node {$p_1^-$};

\draw[dotted,<->] (-1.6+2.5,3)--(5.4-2,3); \draw[dotted,<->] (2.2,1.5)--(5.4-2,1.5);\draw[dotted,<->] (-1.6+2.5,0)--(5.4-2,0);

\draw[dashed] (0.7,2.75)--(0.9,2.5); \draw[dashed,->] (1.3,2)--(1.5,1.75);
\draw (1.5,2.25) node {$_{i}\C^{(k,1)}_{2j-1}+m_{2i-2}^{(j-1)}$};
\draw (0.4,2.5)--(2.65,2.5)--(2.65,2)--(0.4,2)--cycle;

\draw[dashed] (0.7-1.5,2.75-1.5)--(0.9-1.5,2.5-1.5); \draw[dashed,->] (1.3-1.5,2-1.5)--(1.5-1.5,1.75-1.5);
\draw (1.5-1.5,2.25-1.5) node {$\F(m_{2i-2}^{(j)},\,_{i-1}\C^{(k,1)}_{2j-1})$};
\draw (0.4-1.5,2.5-1.5)--(2.65-1.5,2.5-1.5)--(2.65-1.5,2-1.5)--(0.4-1.5,2-1.5)--cycle;

\draw (6+0.8-2,3) node[left] {$\left[\la_{2j-3}^{(2i-1)}\right]^{(k,1)}_{2j-3}$};
\draw (6+0.8-2,1.5) node[left] {$\left[\la_{2j-3}^{(2i-2)}\right]^{(k,1)}_{2j-3}$};
\draw (6+0.8-2,0) node[left] {$\left[\la_{2j-3}^{(2i-3)}\right]^{(k,1)}_{2j-3}$};
\draw (1.5+4.5-2,2.25) node {$\F(m_{2i-2}^{(j-1)},\,_{i-1}\C^{(k,1)}_{2j-3})$};
\draw (0.4+4.5-2,2.5)--(2.65+4.5-2,2.5)--(2.65+4.5-2,2)--(0.4+4.5-2,2)--cycle;
\draw(1.5+4.5-2,2.65)--(1.5+4.5-2,2.5); \draw[->] (1.5+4.5-2,2)--(1.5+4.5-2,1.75);

\draw (6+0.8-9,3) node[left] {$\left[\la_{2j}^{(2i)}\right]^{(1,k)}_{2j}$};
\draw (6+0.8-9,1.5) node[left] {$\left[\mu_{2j}^{(2i-1)}\right]^{(1,k)}_{2j}$};
\draw (1.5+4.5-9,2.25) node {$\F(m_{2i}^{(j)},\,_{+(i-1)}\C^{(1,k)}_{2j+2-2i})$};
\draw (0.2+4.5-9,2.5)--(2.75+4.5-9,2.5)--(2.75+4.5-9,2)--(0.2+4.5-9,2)--cycle;
\draw(1.5+4.5-9,2.65)--(1.5+4.5-9,2.5); \draw[->] (1.5+4.5-9,2)--(1.5+4.5-9,1.75);
\draw (6+0.8-9,1.5-1.5) node[left] {$\left[\mu_{2j}^{(2i-2)}\right]^{(1,k)}_{2j}$};
\draw (1.5+4.5-9,2.25-1.5) node {\begin{small}$\F(m_{2i-2}^{(j-1)},\,_{+(i-1)}\C^{(1,k)}_{2j+2-2i})$\end{small}};
\draw (0.2+4.5-9,2.5-1.5)--(2.75+4.5-9,2.5-1.5)--(2.75+4.5-9,2-1.5)--(0.2+4.5-9,2-1.5)--cycle;
\draw[dashed](1.5+4.5-9,2.65-1.5)--(1.5+4.5-9,2.5-1.5); \draw[dashed,->] (1.5+4.5-9,2-1.5)--(1.5+4.5-9,1.75-1.5);
\draw (6+0.8-9,-0.7) node[left] {$\left[\la_{2j}^{(2i-1)}\right]^{(1,k)}_{2j}$};
\draw[dotted,->] (1.5+4.5-9,1.3-1.5)--(1.5+4.5-9,1.1-1.5);

\draw (6+0.8,3) node[left] {$\left[\mu_{2j-2}^{(2i-1)}\right]^{(1,k)}_{2j-2}$};
\draw (6+0.8,1.5) node[left] {$\left[\la_{2j-2}^{(2i-2)}\right]^{(1,k)}_{2j-2}$};
\draw (6+0.8,1.5-1.5) node[left] {$\left[\mu_{2j-2}^{(2i-3)}\right]^{(1,k)}_{2j-2}$};
\draw (1.5+4.5,2.25-1.5) node {\begin{small}$\F(m_{2i-3}^{(j-1)},\,_{+(i-2)}\C^{(1,k)}_{2j+2-2i})$\end{small}};
\draw (0.2+4.5,2.5-1.5)--(2.75+4.5,2.5-1.5)--(2.75+4.5,2-1.5)--(0.2+4.5,2-1.5)--cycle;
\draw(1.5+4.5,2.65-1.5)--(1.5+4.5,2.5-1.5); \draw[->] (1.5+4.5,2-1.5)--(1.5+4.5,1.75-1.5);

\draw[dotted,->] (5.3,3)--(4.4,3);\draw[dotted,->] (5.3,1.5)--(4.4,1.5); \draw[dotted,->] (5.3,0)--(4.4,0);
\draw (4.9,2.85) node {$p_1^-$};\draw (4.9,1.35) node {$p_1^-$}; \draw (4.9,-.15) node {$p_1^-$};
\end{tikzpicture}
\caption{Relations between the insertions at step $j$ and step $j-1$}
\end{figure}
Similarly to Section \ref{part:interinf1k}, we have for $2\leq i\leq j$
\begin{equation}\label{eq:1kproofeven1}
p_2\left(\left[\la_{2j}^{(2i)}\right]^{(1,k)}_{2j}\right)\succeq p_1\left(\left[\mu_{2j-2}^{(2i-1)}\right]^{(1,k)}\right)\Longleftrightarrow  p_2\left(\left[\mu_{2j}^{(2i-1)}\right]^{(1,k)}_{2j}\right)\succeq 0\cdot \left[\la_{2j-3}^{(2i-2)}\right]^{(k,1)}_{2j-3}\,,
\end{equation}
and
\begin{align}\label{eq:1kproofeven3}
\left[\mu_{2j}^{(2i-1)}\right]^{(1,k)}_{2j}\succeq 0\cdot \left[\la_{2j-2}^{(2i-2)}\right]^{(1,k)}_{2j-2}&\Longleftrightarrow\left[\mu_{2j}^{(2i-2)}\right]^{(1,k)}_{2j}\succeq 0\cdot \left[\mu_{2j-2}^{(2i-3)}\right]^{(1,k)}_{2j-2}\,,\\
\Updownarrow\qquad\qquad\qquad\qquad&\qquad\qquad\qquad\qquad\qquad\Updownarrow\nonumber\\
\label{eq:1kproofeven2}
 p_2\left(\left[\mu_{2j}^{(2i-1)}\right]^{(1,k)}_{2j}\right)\succeq 0\cdot \left[\la_{2j-3}^{(2i-2)}\right]^{(k,1)}_{2j-3}&\Longleftrightarrow p_2\left(\left[\la_{2j}^{(2i-2)}\right]^{(1,k)}_{2j}\right)\succeq p_1\left(\left[\mu_{2j-2}^{(2i-3)}\right]^{(1,k)}_{2j-2}\right)\,\cdot
\end{align}
To prove that $(\la_j^{(1)})_{j=1}^{2n}$ belongs to $\Ll_{2n}$, as by \eqref{eq:1kbelongsjieven} and fact $(3)$,
$ p_2\left(\left[\la_{2j}^{(1)}\right]^{(1,k)}_{2j}\right)\succeq \left[\la_{2j-1}^{(1)}\right]^{(k,1)}_{2j-1}\,,$ 
it suffices to prove that for $n\geq j>1$,
$p_2\left(\left[\la_{2j}^{(2)}\right]^{(1,k)}_{2j}\right)\succeq p_1\left(\left[\la_{2j-2}^{(1)}\right]^{(1,k)}_{2j-2}\right)$. This holds by using recursively \eqref{eq:1kproofeven2} and \eqref{eq:1kproofeven1}, since by fact $(1)$, $(0)_{u=1}^{j}=p_1\left(\left[\la_{2j-2}^{(2j-1)}\right]^{(1,k)}_{2j-2}\right)\succeq p_2\left(\left[\la_{2j}^{(2j)}\right]^{(1,k)}_{2j}\right)$.\\\\To construct the inverse map, first build $m_1^{(1)}$ from fact $(3)$ and $m_2^{(1)}=\la_2^{(2)}/\al{2}=\la_{1}^{(2)}$. Then, recursively retrieve, $\la_{2j}^{(2)},m_1^{(j)}$ from fact $(3)$, then $(\la_{2j}^{(i)})_{i=3}^{(2j)}$ from $(m_i^{(j-1)})_{j=1}^{(2j-2)}$ using fact $(2.b)$,  subsequently $(m_i^{(j)})_{i=2}^{(2j-1)}$ by fact $(2.a)$, and finally set $m_{2j}^{(j)}= \la_{2j}^{2j}/\al{2j}$.
Therefore, we recover the datum $(m_i^{(n)})_{i=1}^{(2j)}$ and we conclude.
\section{Well-definedness of  $\Phi^{(k,l)}_{2n-1}$}
\label{part:interodd}
We here consider $n\geq 2$, as $\Llk_{1}=\Bkl_{1}$ and $\Phi^{(k,l)}_1 = Id$.
Let us give an interpretation of the insertion of the parts $\blk{i}$ into the pairs $(\la_{2j+1},\la_{2j})$ in terms of the admissible words. Denote by $(\la_{2j+1}^{(i)},\la_{2j}^{(i)})$ the pairs $(\la_{2j+1},\la_{2j})$ after the insertion of all the parts $\blk{i}$, and let $m_i^{(j)}$ denote the number of parts $\blk{i}$ inserted into the pair $(\la_{2j+1},\la_{2j})$. Note that $m_i^{(n-1)}$ equals the number of occurrences of $\blk{i}$ in $\nu\in \Blk_{2n-1}$, and the image by $\Phi^{(k,l)}_{2n-1}$ consists of $(\la_j^{(1)})_{j=1}^{2n-1}$.
\subsection{The case $k,l\geq 2$}\label{part:interoddkl}
The proof is similar to Section \ref{part:interevenkl} by using the following proposition.
\begin{prop}\label{prop:welldefodd}
For $n-1\geq j \geq 0$,
\begin{enumerate}
\item for $n+j+1\geq i> 2j+1\geq 1$, $m_i^{(j)}=0$, and $\la_{2j+1}^{(i)}=\la_{2j}^{(i)}=0$.
\item For $2j+1\geq i\geq 2$,
\begin{equation}\label{eq:klbelongsjiodd}
\left[\la_{2j+1}^{(i)}\right]^{(l,k)}_{2j+1}= 0\cdot \left[\la_{2j}^{(i)}\right]^{(k,l)}_{2j}\in \,_i\C^{(l,k)}_{2j+1}\,\cdot
\end{equation}
\begin{enumerate}
\item Furthermore,
$\left[\la_{2j+1}^{(i)}\right]^{(l,k)}_{2j+1}=\F\left(m_i^{(j)},\,_{i}\C^{(l,k)}_{2j+1},\left[\la_{2j+1}^{(i+1)}\right]^{(l,k)}_{2j+1}\right)$.
\item Reciprocally, $_{i+1}\left[\la_{2j+1}^{(i)}\right]^{(l,k)}_{2j+1}=\F\left(m_{i-1}^{(j-1)},\,_{i+1}\C^{(l,k)}_{2j+1},\left[\la_{2j+1}^{(i+1)}\right]^{(l,k)}_{2j+1}\right)$.
\end{enumerate}
\item Finally, $\la_{2j}^{(1)}=\la_{2j}^{(2)}$, 
\begin{enumerate}
 \item and $\left[\la_{2j+1}^{(1)}\right]^{(l,k)}_{2j+1}=\F\left(m_1^{(j)},\C^{(l,k)}_{2j+1},\left[\la_{2j+1}^{(2)}\right]^{(l,k)}_{2j+1}\right)$.
 \item Reciprocally, $\left[\la_{2j+1}^{(1)}\right]^{(l,k)}_{2j+1}=\F\left(m_1^{(j)},\C^{(l,k)}_{2j+1},0\cdot\left[\la_{2j}^{(1)}\right]^{(k,l)}_{2j}\right)$.
\end{enumerate}
\end{enumerate}
\end{prop}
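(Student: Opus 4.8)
\textbf{Plan of proof of Proposition \ref{prop:welldefodd}.}

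The plan is to mirror the structure of the proof of Proposition \ref{prop:welldefeven} (the case $\Phi^{(k,l)}_{2n}$), since the insertion rules for $\Phi^{(k,l)}_{2n-1}$ differ from those for $\Phi^{(k,l)}_{2n}$ only by an exchange of the roles of $\akl{\cdot}$ and $\alk{\cdot}$ (the parts inserted are now the $\blk{i}$, whose $\alk{i}$-component feeds the odd-indexed slot $\la_{2j+1}$ and whose $\akl{i-1}$-component feeds the even-indexed slot $\la_{2j}$). I would proceed by induction on $i$ (the size of the inserted part) for each fixed $j$, keeping track of the pair $(\la_{2j+1}^{(i)},\la_{2j}^{(i)})$ after all parts $\blk{i}$ have been inserted. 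The base case is immediate: before any insertion into $(\la_{2j+1},\la_{2j})$, the stored parts come only from slot $j-1$ and the largest possible such part is $\blk{2j+1}$, giving $m_i^{(j)}=0$ and $\la_{2j+1}^{(i)}=\la_{2j}^{(i)}=0$ for $i>2j+1$, which is item (1).

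For item (2), the key is to translate each single application of the insertion rule \eqref{eq:insert3type1} or \eqref{eq:insert3type2} into one step of the follower function $\F(1,\,_i\C^{(l,k)}_{2j+1},\cdot)$. I would first check the invariant \eqref{eq:klbelongsjiodd} is preserved: that the admissible word of $\la_{2j+1}^{(i)}$ equals $0\cdot$ the admissible word of $\la_{2j}^{(i)}$ and lies in $\,_i\C^{(l,k)}_{2j+1}$. This uses Proposition \ref{prop:klratioinfini}, specifically \eqref{eq:inlklodd}, together with the remark following Lemma \ref{lem:divide} and the equivalence between condition \eqref{eq:condinsinf3} and the ratio condition; the point is that condition \eqref{eq:condinsinf3} is designed precisely so that $\akl{2j}\la_{2j+1}-\akl{2j+1}\la_{2j}\geq \akl{2j+2-i}-\akl{2j+1-i}$ matches, via Proposition \ref{prop:klinserted}(2) (equations \eqref{eq:klratevendif1}–\eqref{eq:klratevendif2}), the two cases of Lemma \ref{lem:followfin} for which branch of the follower one is in. Then fact $(2.a)$ — that applying $m_i^{(j)}$ insertions of $\blk{i}$ realizes $\F(m_i^{(j)},\,_i\C^{(l,k)}_{2j+1},\cdot)$ — follows by iterating the single-step identity, and fact $(2.b)$ — the reciprocal statement about $_{i+1}\left[\la_{2j+1}^{(i)}\right]^{(l,k)}_{2j+1}$ being the follower of order $m_{i-1}^{(j-1)}$ — comes from bookkeeping the stored parts: inserting $\blk{i}$ via \eqref{eq:insert3type1} stores one $\blk{i-1}$ for slot $j-1$, so the count $m_{i-1}^{(j-1)}$ of parts $\blk{i-1}$ received by slot $j-1$ equals the number of type-\eqref{eq:insert3type1} insertions, which is exactly the number of "carry" steps the follower performs, measured after truncating the first $i$ coordinates (i.e. passing to $_{i+1}$). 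For item (3), when $i=1$ there is no stored part and $\la_{2j}$ does not change, so $\la_{2j}^{(1)}=\la_{2j}^{(2)}$; the follower identities $(3.a)$ and $(3.b)$ are the $\C^{(l,k)}_{2j+1}$-versions (no lower truncation) of $(2.a)$, using \eqref{eq:insert3type2} with $i=1$ and \eqref{eq:klbelongsjiodd} for the base $0\cdot\left[\la_{2j}^{(1)}\right]^{(k,l)}_{2j}$.

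The main obstacle I anticipate is the careful matching of the \emph{finite} admissible-word framework: one must check that the truncation levels and the shift operation $0\cdot$ interact correctly with $\F$, i.e. that Lemma \ref{lem:decalage} applies in the right form ($0\cdot$ sends $\,_i\C^{(k,l)}_n$ to $\,_{i+1}\C^{(l,k)}_{n+1}$ and preserves $\prec$) and that Lemma \ref{lem:followfin}'s forbidden-pattern case does not create an off-by-one error when combined with the fictitious part $\la_0=0$ and with the index bound $i\leq 2j+1$. A secondary subtlety is that the parameter $n$ plays no role inside the induction on $(i,j)$ except through the bound $2j+1\leq 2n-1$, so one should verify that no insertion ever tries to write into a nonexistent slot; this follows from item (1) applied at the top level $j=n-1$. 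Once these finite-support bookkeeping points are settled, the proof is a routine transcription of the even case with $\akl{\cdot}\leftrightarrow\alk{\cdot}$.
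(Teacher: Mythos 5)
Your plan coincides with the paper's own proof, which is literally a two-sentence remark that the argument of Proposition \ref{prop:welldefeven} carries over after intertwining $(k,l)$ with $(l,k)$ and replacing $2j$ by $2j+1$ and $2n$ by $2n-1$; your more detailed transcription (the invariant \eqref{eq:klbelongsjiodd}, the translation of \eqref{eq:insert3type1}/\eqref{eq:insert3type2} into follower steps via Proposition \ref{prop:klinserted}, and the bookkeeping of stored parts for fact $(2.b)$) is exactly that argument. Two small points to fix in the write-up: the parts awaiting insertion into $(\la_{2j+1},\la_{2j})$ were stored at slot $j+1$, not $j-1$, and for item $(1)$ one still needs the observation (present in the even case) that inserting $\blk{2j+1}$ into slot $j$ can only proceed by \eqref{eq:insert3type2}, so that no $\blk{2j}$ is passed down.
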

\begin{exs}
For $(k,l)=(2,3)$ with $n=4$ and \\ $\nu=(b^{(3,2)}_{1})^5(b^{(3,2)}_{2})^4(b^{(3,2)}_{3})^2(b^{(3,2)}_{4})^3(b^{(3,2)}_{5})(b^{(3,2)}_{6})^3=(1+0)^5(2+1)^4(5+3)^2(8+5)^3(19+12)(30+19)^3$
, the corresponding table is the following,
$$
\begin{array}{|c||c|c||c|c||c|c||c|c|}
 \hline 
 i&m_i^{(3)}&[\la_7^{(i)}]^{(3,2)}_7&m_i^{(2)}&[\la_5^{(i)}]^{(3,2)}_5&m_i^{(1)}&[\la_3^{(i)}]^{(3,2)}_3&m_i^{(0)}&[\la_1^{(i)}]^{(3,2)}_1\\
 \hline\hline
 7&0&(0,0,0,0,0,0,\textcolor{red}{0})&0&(0,0,0,0,0)&0&(0,0,0)&0&(0)\\
  6&3&(0,0,0,0,0,\textcolor{red}{0},1)&0&(0,0,0,0,0)&0&(0,0,0)&0&(0)\\
  5&1&(0,0,0,0,\textcolor{red}{1},0,1)&1&(0,0,0,0,\textcolor{red}{1})&0&(0,0,0)&0&(0)\\
  4&3&(0,0,0,\textcolor{red}{1},0,1,1)&0&(0,0,0,\textcolor{red}{0},1)&0&(0,0,0)&0&(0)\\
  3&2&(0,0,\textcolor{red}{0},2,0,1,1)&1&(0,0,\textcolor{red}{1},0,1)&0&(0,0,\textcolor{red}{0})&0&(0)\\
  2&4&(0,\textcolor{red}{2},0,0,1,1,1)&1&(0,\textcolor{red}{1},1,0,1)&0&(0,\textcolor{red}{0},0)&0&(0)\\
  1&5&(\textcolor{red}{1},0,0,1,1,1,1)&1&(\textcolor{red}{0},0,0,1,1)&0&(\textcolor{red}{0},0,0)&0&(\textcolor{red}{0})\\
  \hline
\end{array}\,\,,
$$
and for $(k,l)=(3,2)$ with $n=4$ and $\nu=(b^{(2,3)}_{1})^5(b^{(2,3)}_{2})^4(b^{(2,3)}_{3})^2(b^{(2,3)}_{4})^3(b^{(2,3)}_{5})(b^{(2,3)}_{6})^3=(1+0)^5(3+1)^4(5+2)^2(12+5)^3(19+8)(45+19)^3$, 
$$
\begin{array}{|c||c|c||c|c||c|c||c|c|}
 \hline 
 i&m_i^{(3)}&[\la_7^{(i)}]^{(2,3)}_7&m_i^{(2)}&[\la_5^{(i)}]^{(2,3)}_5&m_i^{(1)}&[\la_3^{(i)}]^{(2,3)}_3&m_i^{(0)}&[\la_1^{(i)}]^{(2,3)}_1\\
 \hline\hline
 7&0&(0,0,0,0,0,0,\textcolor{red}{0})&0&(0,0,0,0,0)&0&(0,0,0)&0&(0)\\
  6&3&(0,0,0,0,0,\textcolor{red}{1},1)&0&(0,0,0,0,0)&0&(0,0,0)&0&(0)\\
  5&1&(0,0,0,0,\textcolor{red}{1},1,1)&1&(0,0,0,0,\textcolor{red}{1})&0&(0,0,0)&0&(0)\\
  4&3&(0,0,0,\textcolor{red}{0},1,0,2)&0&(0,0,0,\textcolor{red}{0},0)&0&(0,0,0)&0&(0)\\
  3&2&(0,0,\textcolor{red}{2},0,1,0,2)&2&(0,0,\textcolor{red}{2},0,1)&0&(0,0,\textcolor{red}{0})&0&(0)\\
  2&4&(0,\textcolor{red}{0},0,0,2,0,2)&0&(0,\textcolor{red}{0},2,0,1)&0&(0,\textcolor{red}{0},0)&0&(0)\\
  1&5&(\textcolor{red}{0},0,1,0,2,0,2)&3&(\textcolor{red}{1},0,0,1,1)&1&(\textcolor{red}{0},0,0)&0&(\textcolor{red}{0})\\
  \hline
\end{array}\,\,,
$$
\end{exs}
$$$$
The corresponding diagram is the following.
\begin{figure}[H]
\label{fig:proofodd}
\begin{tikzpicture}[scale=1.2, every node/.style={scale=0.8}]

\draw (-1+2.5+0.8,1.5) node[left] {$_{i+1}\left[\la_{2j+1}^{(i)}\right]^{(l,k)}_{2j+1}$};
\draw (-2+2.5+0.8,3) node[left] {$\left[\la_{2j+1}^{(i+1)}\right]^{(l,k)}_{2j+1}$};

\draw (-1+0.8,1.5) node[left] {$\left[\la_{2j+1}^{(i)}\right]^{(l,k)}_{2j+1}$};
\draw (6+0.8,3) node[left] {$\left[\la_{2j-1}^{(i)}\right]^{(l,k)}_{2j-1}$};

\draw (6+0.8,1.5) node[left] {$\left[\la_{2j-1}^{(i-1)}\right]^{(l,k)}_{2j-1}$};

\draw[dotted,<->] (-1.5+2.5,3)--(5.4,3); \draw[dotted,<->] (-0.5+2.5,1.5)--(5.4,1.5);
\draw(0,2.75)--(-0.25,2.5); \draw[->](-0.75,2)--(-1,1.75);
\draw[dashed] (0.7,2.75)--(0.9,2.5); \draw[dashed,->] (1.3,2)--(1.5,1.75);
\draw[dashed,->] (0.6,1.5)--(-0.6,1.5);
\draw(1.5+4.5,2.65)--(1.5+4.5,2.5); \draw[->] (1.5+4.5,2)--(1.5+4.5,1.75);
 
\draw (-0.6,2.25) node {$\F(m_i^{(j)},\,_{i}\C^{(l,k)}_{2j+1})$};
\draw (-1.45,2.5)--(0.25,2.5)--(0.25,2)--(-1.45,2)--cycle;

\draw (1.5,2.25) node {$\F(m_{i-1}^{(j-1)},\,_{i+1}\C^{(l,k)}_{2j+1})$};
\draw (0.4,2.5)--(2.65,2.5)--(2.65,2)--(0.4,2)--cycle;

\draw (1.5+4.5,2.25) node {$\F(m_{i-1}^{(j-1)},\,_{i-1}\C^{(l,k)}_{2j-1})$};
\draw (0.4+4.5,2.5)--(2.65+4.5,2.5)--(2.65+4.5,2)--(0.4+4.5,2)--cycle;

\end{tikzpicture}
\caption{Relation between the insertions in $(\la_{2j+1},\la_{2j})$ and in $(\la_{2j-1},\la_{2j-2})$}
\end{figure}
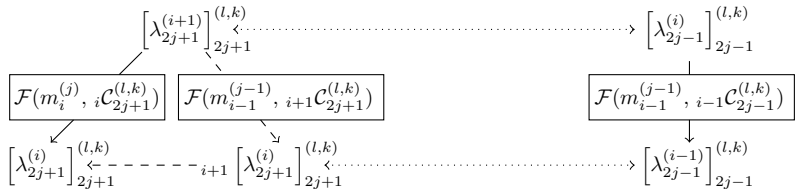
An analogue of \eqref{eq:proofinfkl} consists of the following for  $1<i\leq 2j$: 
\begin{equation}\label{eq:proofodd23}
00\cdot\left[\la_{2j-1}^{(i)}\right]^{(l,k)}_{2j-1}\preceq  \left[\la_{2j+1}^{(i+1)}\right]^{(l,k)}_{2j+1} \Longleftrightarrow 00\cdot\left[\la_{2j-1}^{(i-1)}\right]^{(l,k)}_{2j-1}\preceq\,_{i+1}\left[\la_{2j+1}^{(i)}\right]^{(l,k)}_{2j+1}\,\cdot
\end{equation}
\subsection{The case $(k,1)$}\label{part:interoddk1}
The proof is similar to Section \ref{part:intereven1k} by using the following proposition.
\begin{prop}\label{prop:welldefodd1}
For $n-1\geq j \geq 0$,
\begin{enumerate}
\item for $n+j+1\geq i> 2j+1\geq 1$, $m_i^{(j)}=0$, and $\la_{2j}^{(i)}=\la_{2j-1}^{(i)}=0$.
\item for $2j+1\geq i\geq 2$, we have 
\begin{equation}\label{eq:k1belongsjiodd}
0\cdot \left[\la_{2j}^{(i)}\right]^{(k,1)}_{2j} = 0\cdot  p_2\left(\left[\la_{2j+1}^{(i)}\right]^{(1,k)}_{2j+1}\right)= p_1\left(\left[\la_{2j+1}^{(i)}\right]^{(1,k)}_{2j+1}\right) \in \,_{\lceil (i+1)/2\rceil}\C^{(k,1)}_{2j+2}\,\cdot
\end{equation}
\begin{enumerate}
\item For $j\geq i \geq 1$, let $\left[\mu_{2j+1}^{(2i+1)}\right]^{(1,k)}_{2j+1}=\F\left(m_{2i+1}^{(j)},\,_{+(i)}\C^{(1,k)}_{2j+1-2i},\left[\la_{2j+1}^{(2i+2)}\right]^{(1,k)}_{2j+1}\right)$. 
\begin{enumerate}
 \item Then, $p_1\left(\left[\la_{2j+1}^{(2i+1)}\right]^{(1,k)}_{2j+1}\right)=p_1\left(\left[\mu_{2j+1}^{(2i+1)}\right]^{(1,k)}_{2j+1}\right)$.
 In particular, $\la_{2j+1}^{(2j+1)}=m_{2j+1}^{(j)}\cdot \al{2j+1}$, and 
$$\left[\mu_{2j+1}^{(2j+1)}\right]^{(1,k)}_{2j+1}=((\underbrace{0,\ldots,0}_{j \text{ times}},m_{2j+1}^{(j)}),(0)_{u=1}^{j})\,\cdot$$
 \item Furthermore, $\left[\la_{2j}^{(2i)}\right]^{(k,1)}_{2j}=\F\left(m_{2i}^{(j)},\,_{i}\C^{(k,1)},\left[\la_{2j}^{(2i+1)}\right]^{(k,1)}_{2j}\right)$.
\end{enumerate}
\item Reciprocally, for $j\geq i\geq 1$, set
$\left[\mu_{2j+1}^{(2i)}\right]^{(1,k)}_{2j+1}=\left(p_1\left(\left[\la_{2j+1}^{(2i)}\right]^{(1,k)}_{2j+1}\right), \,_{i}p_2\left(\left[\la_{2j+1}^{(2i)}\right]^{(1,k)}_{2j+1}\right)\right)\,\cdot$
\begin{enumerate}
 \item Then, $\left[\mu_{2j}^{(2i)}\right]^{(1,k)}_{2j+1}=\F\left(m_{2i-1}^{(j-1)},\,_{+(i)}\C^{(1,k)}_{2j+1-2i},\left[\mu_{2j+1}^{(2i+1)}\right]^{(1,k)}_{2j+1}\right)$,
\item and $p_2\left(\left[\mu_{2j+1}^{(2i+1)}\right]^{(1,k)}_{2j+1}\right)=\F\left(m_{2i}^{(j-1)},\,_{i+1}\C^{(k,1)}_{2j},p_2\left(\left[\la_{2j+1}^{(2i+2)}\right]^{(1,k)}_{2j+1}\right)\right)$.
\end{enumerate}
\end{enumerate}
\item Finally, $\la_{2j}^{(1)}=\la_{2j}^{(2)}$, 
\begin{enumerate}
 \item  and $\left[\mu_{2j+1}^{(1)}\right]^{(1,k)}_{2j+1}=\left[\la_{2j+1}^{(1)}\right]^{(1,k)}_{2j+1}=\F\left(m_{1}^{(j)},\C^{(1,k)}_{2j+1},\left[\la_{2j+1}^{(2)}\right]^{(1,k)}_{2j+1}\right)$. 
 \item Reciprocally, $\left[\la_{2j+1}^{(1)}\right]^{(1,k)}_{2j+1}=\F\left(m_{1}^{(j)},\C^{(1,k)}_{2j+1},\left(0\cdot\left[\la_{2j}^{(1)}\right]^{(k,1)}_{2j},\left[\la_{2j}^{(1)}\right]^{(k,1)}_{2j}\right)\right)$.
\end{enumerate}
\end{enumerate}
\end{prop}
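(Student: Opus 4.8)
The plan is to mirror, step for step, the argument already carried out in Section \ref{part:interinfk1} for the infinite map and in Section \ref{part:interevenk1} for $\Phi^{(k,l)}_{2n}$, adjusting the indexing conventions to the odd case. The proof proceeds by downward induction on $j$ (from $j=n-1$ to $j=0$), and within each fixed $j$ by induction on $i$. First I would verify claim $(1)$: since after inserting $\bl{i}$ the even-indexed entry $\la_{2j}$ strictly grows whenever $i>1$, and an insertion into the pair $(\la_{2j+1},\la_{2j})$ can only store a part of index at most $i-1$ for the pair $(\la_{2j-1},\la_{2j-2})$, an easy induction shows no part $\bl{i}$ with $i>2j+1$ ever reaches the $j$-th pair; hence $m_i^{(j)}=0$ and $\la_{2j+1}^{(i)}=\la_{2j}^{(i)}=0$ in that range. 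This also pins down the base of the $i$-induction: $\left[\la_{2j+1}^{(2j+2)}\right]^{(1,k)}_{2j+1}$ is the all-zero word.

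Next I would establish the membership statement \eqref{eq:k1belongsjiodd}, i.e. that after all insertions of $\bl{i}$, the $(k,1)$-word of $\la_{2j+1}^{(i)}$ is the shift $0\cdot$ of the $(k,1)$-word of $\la_{2j}^{(i)}$, lying in $_{\lceil(i+1)/2\rceil}\C^{(k,1)}_{2j+2}$. This is exactly where Lemma \ref{lem:decalage} and Proposition \ref{prop:k1ratiofin} are used: each elementary insertion rule \eqref{eq:insert3type121}, \eqref{eq:insert3type221}, \eqref{eq:insert3type133}, \eqref{eq:insert3type1334} adds to $(\la_{2j+1},\la_{2j})$ a pair of the form $(\al{r},\ak{r-1})$ or a difference thereof, and by \eqref{eq:formulek2} these increments are compatible with the $(1,k)$ splitting; translating the arithmetic identities \eqref{eq:crosseven}–\eqref{eq:crossodd} and \eqref{eq:ratio} gives that the condition \eqref{eq:condinsinf1212} is precisely the condition under which a part "carries", matching the follower dichotomy of Lemma \ref{lem:followfinbis} via Proposition \ref{prop:k1inserted}. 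The intermediate words $\left[\mu_{2j+1}^{(\cdot)}\right]^{(1,k)}_{2j+1}$ are introduced so that each single pass of insertions of a fixed index $i$ becomes a clean application of $\F(m,\cdot,\cdot)$, and then the two sub-claims $(2.a)$(i),(ii) follow by separating the $p_1$-component (which never carries out of the pair) from the $p_2$-component (which does). The reciprocal statements $(2.b)$(i),(ii) and $(3)$(i),(ii) are obtained by reading the same commuting diagram (the analogue of Figure \ref{fig:proofeven1}, now shifted to odd pairs) in the opposite direction, using Lemma \ref{lem:prechoice} to guarantee that the required $m$-th predecessors exist. The special values $\la_{2j+1}^{(2j+1)}=m_{2j+1}^{(j)}\cdot\al{2j+1}$ and the explicit form of $\left[\mu_{2j+1}^{(2j+1)}\right]^{(1,k)}_{2j+1}$ drop out because at $i=2j+1$ the word is forced into $_{j+1}\C^{(k,1)}_{2j+2}$, whose only nonzero coordinate is the last one, by \eqref{eq:formulek2} and the definition of $[\cdot]^{(k,1)}_{2j+1}$.

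With the proposition in hand, the well-definedness of $\Phi^{(k,l)}_{2n-1}$ in the $(k,1)$ case follows verbatim from the template of Section \ref{part:intereven1k}: one derives the analogues of \eqref{eq:1kproofeven1}–\eqref{eq:1kproofeven2} (chaining Lemma \ref{lem:prechoice} with the $p_1$/$p_2$ bookkeeping of \eqref{eq:ordcd}), then checks $\Phi^{(6,1)}_{2n-1}(\Blk_{2n-1})\subset\Lkl_{2n-1}$ by a downward induction on $j$ showing $p_2\left(\left[\la_{2j+1}^{(2)}\right]^{(1,k)}_{2j+1}\right)\succeq p_1\left(\left[\la_{2j-1}^{(1)}\right]^{(1,k)}_{2j-1}\right)$ starting from the vanishing words at $i=2j+1$, and finally reconstructs the preimage step by step, recovering $m_1^{(j)}$ from fact $(3)$, the intermediate $\mu$-words from facts $(2.a)$, $(2.b)$, and terminating with $m_{2j+1}^{(j)}=\la_{2j+1}^{(2j+1)}/\al{2j+1}$. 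I expect the main obstacle to be purely bookkeeping: getting the index shifts right between "pair $(\la_{2j+1},\la_{2j})$" and the admissible-word lengths $2j+1$, $2j+2$, $2j$, and keeping the parity of $i$ (which selects $\al{2i}$ vs.\ $\al{2i+1}$ and hence which of $p_1,p_2$ carries) synchronized with the $\lceil(i+1)/2\rceil$ truncation level — a single off-by-one there breaks the membership claim \eqref{eq:k1belongsjiodd}. The underlying analytic content is entirely supplied by Lemma \ref{lem:divide}, Proposition \ref{prop:k1ratiofin}, and Lemmas \ref{lem:followfinbis}, \ref{lem:prechoice}, \ref{lem:decalage}; no new estimate is needed.
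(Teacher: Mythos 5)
Your proposal takes essentially the same route as the paper: the paper disposes of fact (1) exactly as you do and then reduces facts (2)--(3) to the already-proved Proposition \ref{prop:welldefeven2} (the even $(1,k)$ case) through the single arithmetic equivalence $\ak{2j}\la_{2j+1}-\ak{2j+1}\la_{2j}\geq \ak{2j+2-2i}-\ak{2j-2i} \Longleftrightarrow \ak{2j}(\la_{2j+1}-\la_{2j})-\ak{2j+2}\la_{2j}\geq \ak{2j+2-2i}-\ak{2j-2i}$, which is precisely the combination of Lemmas \ref{lem:divide}, \ref{lem:follow1k}, \ref{lem:followfinbis}, \ref{lem:prechoice} and Proposition \ref{prop:k1inserted} that you invoke. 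The only slips are cosmetic: the correct template throughout is the even $(1,k)$ case of Section \ref{part:intereven1k} (which you cite in your last paragraph) rather than Section \ref{part:interevenk1} named at the outset, and the rules \eqref{eq:insert3type133}, \eqref{eq:insert3type1334} belong to the $(1,k)$ case of $\Phi^{(k,l)}_{2n-1}$ and play no role here.
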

\begin{ex}
 For $k=6$ with $n=4$ and $\nu=(b_{1}^{(1,6)})^2(b_{2}^{(1,6)})^5(b_{3}^{(1,6)})^2(b_{4}^{(1,6)})^3(b_{6}^{(1,6)})^5=(1+0)^2(6+1)^5(5+1)^2(24+5)^3(90+19)^5$, we have
 $$
\begin{array}{|c||c|c||c|c||c|c||c|c|}
 \hline 
 i&m_i^{(3)}&[\mu_7^{(i)}]^{(1,6)}_7&m_i^{(2)}&[\mu_5^{(i)}]^{(1,6)}_5&m_i^{(1)}&[\mu_3^{(i)}]^{(1,6)}_3&m_i^{(0)}&[\mu_1^{(i)}]^{(1,6)}_1\\
 \hline\hline
 7&0&(0,0,0,\textcolor{red}{0}),(0,0,0)&0&\emptyset&0&\emptyset&0&\emptyset\\
 6&0&(0,0,0,\textcolor{red}{5}),(0,0,0)&0&\emptyset&0&\emptyset&0&\emptyset\\
  5&0&(0,0,\textcolor{red}{0},5),(0,0,\textcolor{red}{5})&5&(0,0,\textcolor{red}{5}),(0,0)&0&\emptyset&0&\emptyset\\
  4&3&(0,0,\textcolor{red}{3},5),(0,0,\textcolor{red}{5})&0&(0,0,\textcolor{red}{5}),(0,0)&0&\emptyset&0&\emptyset\\
  3&2&(0,\textcolor{red}{2},3,5),(0,\textcolor{red}{3},5)&3&(0,\textcolor{red}{3},5),(0,\textcolor{red}{5})&0&(0,\textcolor{red}{0}),(0)&0&\emptyset\\
  2&5&(0,\textcolor{red}{0},1,6),(0,\textcolor{red}{1},6)&0&(0,\textcolor{red}{3},5),(0,\textcolor{red}{5})&0&(0,\textcolor{red}{0}),(0)&0&\emptyset\\
  \hline\hline
  &m_1^{(3)}&[\la_7^{(1)}]^{(1,6)}_7&m_1^{(2)}&[\la_5^{(1)}]^{(1,6)}_5&m_1^{(1)}&[\la_3^{(1)}]^{(1,6)}_3&m_1^{(0)}&[\la_1^{(1)}]^{(1,6)}_1\\
  \hline
  &2&(\textcolor{red}{2},0,1,6),(\textcolor{red}{0},1,6)&7&(\textcolor{red}{3},0,6),(\textcolor{red}{0},6)&0&(\textcolor{red}{0},0),(\textcolor{red}{0})&0&(0),\emptyset\\
  \hline
\end{array}\,\,\cdot
$$
\end{ex}
$$$$
The corresponding diagram is the following.
\begin{figure}[H]
\label{fig:proofodd1}
\begin{tikzpicture}[scale=1.2, every node/.style={scale=0.8}]

\draw (-1+2.5+0.8,1.5) node[left] {$\left[\la_{2j}^{(2i+1)}\right]^{(k,1)}_{2j}\succeq  \,\,\, p_2\left(\left[\mu_{2j+1}^{(2i+1)}\right]^{(1,k)}_{2j-1}\right)$};
\draw (-2+2.5+0.8,3) node[left] {$\left[\la_{2j}^{(2i+2)}\right]^{(k,1)}_{2j}$};
\draw (-2+2.5+0.8,0) node[left] {$\left[\la_{2j}^{(2i)}\right]^{(k,1)}_{2j}$};

\draw[dotted,->] (-2.5,3)--(-0.2,3); \draw[dotted,->] (-2.5,1.5)--(-1.5,1.5); \draw[dotted,->] (-2.5,0)--(-0.2,0);
\draw (-1.4,2.85) node {$p_1^-$};\draw (-2,1.35) node {$p_1^-$}; \draw (-1.4,-.15) node {$p_1^-$};

\draw[dotted,<->] (-1.6+2.5,3)--(5.4-2,3); \draw[dotted,<->] (2.2,1.5)--(5.4-2,1.5);\draw[dotted,<->] (-1.6+2.5,0)--(5.4-2,0);

\draw[dashed] (0.7,2.75)--(0.9,2.5); \draw[dashed,->] (1.3,2)--(1.5,1.75);
\draw (1.5,2.25) node {$\F(m_{2i}^{(j-1)},\,_{i+1}\C^{(k,1)}_{2j})$};
\draw (0.4,2.5)--(2.65,2.5)--(2.65,2)--(0.4,2)--cycle;

\draw[dashed] (0.7-1.5,2.75-1.5)--(0.9-1.5,2.5-1.5); \draw[dashed,->] (1.3-1.5,2-1.5)--(1.5-1.5,1.75-1.5);
\draw (1.5-1.5,2.25-1.5) node {$\F(m_{2i}^{(j)},\,_{i}\C^{(k,1)}_{2j})$};
\draw (0.4-1.5,2.5-1.5)--(2.65-1.5,2.5-1.5)--(2.65-1.5,2-1.5)--(0.4-1.5,2-1.5)--cycle;

\draw (6+0.8-2,3) node[left] {$\left[\la_{2j-2}^{(2i+1)}\right]^{(k,1)}_{2j-2}$};
\draw (6+0.8-2,1.5) node[left] {$\left[\la_{2j-2}^{(2i)}\right]^{(k,1)}_{2j-2}$};
\draw (6+0.8-2,0) node[left] {$\left[\la_{2j-2}^{(2i-1)}\right]^{(k,1)}_{2j-2}$};
\draw (1.5+4.5-2,2.25) node {$\F(m_{2i}^{(j-1)},\,_{i}\C^{(k,1)}_{2j-2})$};
\draw (0.4+4.5-2,2.5)--(2.65+4.5-2,2.5)--(2.65+4.5-2,2)--(0.4+4.5-2,2)--cycle;
\draw(1.5+4.5-2,2.65)--(1.5+4.5-2,2.5); \draw[->] (1.5+4.5-2,2)--(1.5+4.5-2,1.75);

\draw (6+0.8-9,3) node[left] {$\left[\la_{2j+1}^{(2i+2)}\right]^{(1,k)}_{2j+1}$};
\draw (6+0.8-9,1.5) node[left] {$\left[\mu_{2j+1}^{(2i+1)}\right]^{(1,k)}_{2j+1}$};
\draw (1.5+4.5-9,2.25) node {$\F(m_{2i+1}^{(j)},\,_{+(i)}\C^{(1,k)}_{2j+1-2i})$};
\draw (0.2+4.5-9,2.5)--(2.75+4.5-9,2.5)--(2.75+4.5-9,2)--(0.2+4.5-9,2)--cycle;
\draw(1.5+4.5-9,2.65)--(1.5+4.5-9,2.5); \draw[->] (1.5+4.5-9,2)--(1.5+4.5-9,1.75);
\draw (6+0.8-9,1.5-1.5) node[left] {$\left[\mu_{2j+1}^{(2i)}\right]^{(1,k)}_{2j+1}$};
\draw (1.5+4.5-9,2.25-1.5) node {\begin{small}$\F(m_{2i-1}^{(j-1)},\,_{+(i)}\C^{(1,k)}_{2j+1-2i})$\end{small}};
\draw (0.2+4.5-9,2.5-1.5)--(2.75+4.5-9,2.5-1.5)--(2.75+4.5-9,2-1.5)--(0.2+4.5-9,2-1.5)--cycle;
\draw[dashed](1.5+4.5-9,2.65-1.5)--(1.5+4.5-9,2.5-1.5); \draw[dashed,->] (1.5+4.5-9,2-1.5)--(1.5+4.5-9,1.75-1.5);
\draw (6+0.8-9,-0.7) node[left] {$\left[\la_{2j+1}^{(2i)}\right]^{(1,k)}_{2j+1}$};
\draw[dotted,->] (1.5+4.5-9,1.3-1.5)--(1.5+4.5-9,1.1-1.5);

\draw (6+0.8,3) node[left] {$\left[\mu_{2j-1}^{(2i+1)}\right]^{(1,k)}_{2j-1}$};
\draw (6+0.8,1.5) node[left] {$\left[\la_{2j-1}^{(2i)}\right]^{(1,k)}_{2j-1}$};
\draw (6+0.8,1.5-1.5) node[left] {$\left[\mu_{2j-1}^{(2i-1)}\right]^{(1,k)}_{2j-1}$};
\draw (1.5+4.5,2.25-1.5) node {\begin{small}$\F(m_{2i-1}^{(j-1)},\,_{+(i-1)}\C^{(1,k)}_{2j+1-2i})$\end{small}};
\draw (0.2+4.5,2.5-1.5)--(2.75+4.5,2.5-1.5)--(2.75+4.5,2-1.5)--(0.2+4.5,2-1.5)--cycle;
\draw(1.5+4.5,2.65-1.5)--(1.5+4.5,2.5-1.5); \draw[->] (1.5+4.5,2-1.5)--(1.5+4.5,1.75-1.5);

\draw[dotted,->] (5.3,3)--(4.4,3);\draw[dotted,->] (5.3,1.5)--(4.4,1.5); \draw[dotted,->] (5.3,0)--(4.4,0);
\draw (4.9,2.85) node {$p_1^-$};\draw (4.9,1.35) node {$p_1^-$}; \draw (4.9,-.15) node {$p_1^-$};
\end{tikzpicture}
\caption{Relations between the insertions at step $j$ and step $j-1$}
\end{figure}
Similarly to Section \ref{part:intereven1k}, we have for $1\leq i\leq j-1$,
\begin{equation}\label{eq:k1proofodd1}
 p_2\left(\left[\la_{2j+1}^{(2i+2)}\right]^{(1,k)}_{2j+1}\right)\succeq p_1\left(\left[\mu_{2j-1}^{(2i+1)}\right]^{(1,k)}_{2j-1}\right)\Longleftrightarrow p_2\left(\left[\mu_{2j+1}^{(2i+1)}\right]^{(1,k)}_{2j+1}\right)\succeq 0\cdot\left[\la_{2j-2}^{(2i)}\right]^{(k,1)}_{2j-2}\,,
\end{equation}
for $1\leq i\leq j$
\begin{align}
\label{eq:k1proofodd3}
 \left[\mu_{2j+1}^{(2i+1)}\right]^{(1,k)}_{2j-1}\succeq 0\cdot\left[\la_{2j-1}^{(2i)}\right]^{(1,k)}_{2j-1} &\Longleftrightarrow \left[\mu_{2j+1}^{(2i)}\right]^{(1,k)}_{2j+1}\succeq 0\cdot\left[\mu_{2j-1}^{(2i-1)}\right]^{(1,k)}_{2j-1}\,,\\
 \Updownarrow\qquad\qquad\qquad\qquad&\qquad\qquad\qquad\qquad\qquad\Updownarrow\nonumber\\
 \label{eq:k1proofodd2}
 p_2\left(\left[\mu_{2j+1}^{(2i+1)}\right]^{(1,k)}_{2j+1}\right)\succeq 0\cdot \left[\la_{2j-2}^{(2i)}\right]^{(k,1)}_{2j-2}&\Longleftrightarrow p_2\left(\left[\la_{2j+1}^{(2i)}\right]^{(1,k)}_{2j+1}\right)\succeq p_1\left(\left[\mu_{2j-1}^{(2i-1)}\right]^{(1,k)}_{2j-1}\right)\,\cdot
\end{align}
\subsection{The case $(1,k)$}\label{part:interodd1k}
The proof is similar to Section \ref{part:interevenk1} by using the following proposition.
\begin{prop}\label{prop:welldefodd2}
For $n-1\geq j \geq 0$,
\begin{enumerate}
\item for $n+j+1\geq i> 2j+1\geq 1$, $m_i^{(j)}=0$, and $\la_{2j}^{(i)}=\la_{2j-1}^{(i)}=0$.
\item for $2j+1\geq i\geq 3$, we have 
\begin{equation}\label{eq:1kbelongsjiodd}
\left[\la_{2j+1}^{(i)}\right]^{(k,1)}_{2j+1} =  p_1\left(\left[\la_{2j}^{(i)}\right]^{(1,k)}_{2j}\right)= 0\cdot p_2\left(\left[\la_{2j}^{(i)}\right]^{(1,k)}_{2j}\right) \in \,_{\lceil i/2\rceil}\C^{(k,1)}_{2j+1}\,\cdot
\end{equation}
\begin{enumerate}
\item 
We have $\la_{2j+1}=m_{2j+1}^{(j)}\cdot \ak{2j+1}$. For $j\geq i\geq 2$, let $\left[\mu_{2j}^{(2i)}\right]^{(1,k)}_{2j}=\F\left(m_{2i}^{(j)},\,_{+(i-1)}\C^{(1,k)}_{2j+2-2i},\left[\la_{2j}^{(2i+1)}\right]^{(1,k)}_{2j}\right)$.
\begin{enumerate}
 \item Then, $\left[\la_{2j+1}^{(2i)}\right]^{(k,1)}_{2j+1}=p_1\left(\left[\mu_{2j}^{(2i)}\right]^{(1,k)}_{2j}\right)$. 
\item Furthermore, $\left[\la_{2j+1}^{(2i-1)}\right]^{(k,1)}_{2j+1}=\F\left(m_{2i-1}^{(j)},\,_{i}\C^{(k,1)}_{2j},\left[\la_{2j+1}^{(2i)}\right]^{(k,1)}_{2j+1}\right)$.
\end{enumerate}
\item Reciprocally, for $j\geq i\geq 2$, set
$\left[\mu_{2j}^{(2i-1)}\right]^{(1,k)}_{2j}=\left(p_1\left(\left[\la_{2j}^{(2i-1)}\right]^{(1,k)}_{2j}\right), \,_{i}p_2\left(\left[\la_{2j}^{(2i-1)}\right]^{(1,k)}_{2j}\right)\right)\,\cdot$
\begin{enumerate}
 \item 
Then, $\left[\mu_{2j}^{(2i-1)}\right]^{(1,k)}_{2j}=\F\left(m_{2i-2}^{(j-1)},\,_{+(i-1)}\C^{(1,k)}_{2j+2-2i},\left[\mu_{2j}^{(2i)}\right]^{(1,k)}_{2j}\right)$,
\item and $p_2\left(\left[\mu_{2j}^{(2i)}\right]^{(1,k)}_{2j}\right)=\F\left(m_{2i-1}^{(j-1)},\,_{i}\C^{(k,1)}_{2j-1},p_2\left(\left[\la_{2j}^{(2i+1)}\right]^{(1,k)}_{2j}\right)\right)$.
\end{enumerate}
\end{enumerate}
\item 
Let $\left[\mu_{2j}^{(2)}\right]^{(1,k)}_{2j}=\left[\la_{2j}^{(2)}\right]^{(1,k)}_{2j}=\F\left(m_{2}^{(j)},\C^{(1,k)}_{2j},\left[\la_{2j}^{(3)}\right]^{(1,k)}_{2j}\right)$.
\begin{enumerate}
\item Then, $\left[\la_{2j+1}^{(2)}\right]^{(k,1)}_{2j+1}=p_1\left(\left[\mu_{2j}^{(2)}\right]^{(1,k)}_{2j}\right)$.
\item Reciprocally, $p_2\left(\left[\mu_{2j}^{(2)}\right]^{(1,k)}_{2j}\right)=\F\left(m_{1}^{(j-1)},\C^{(k,1)}_{2j-1},p_2\left(\left[\la_{2j}^{(3)}\right]^{(1,k)}_{2j}\right)\right)$.
\end{enumerate}
\item Finally, $\la_{2j}^{(1)}=\la_{2j}^{(2)}$, 
\begin{enumerate}
 \item and $\left[\la_{2j+1}^{(1)}\right]^{(k,1)}_{2j+1}=\F\left(m_{1}^{(j)},\C^{(k,1)}_{2j},\left[\la_{2j+1}^{(2)}\right]^{(k,1)}_{2j+1}\right)$.
 \item Reciprocally, $\left[\la_{2j+1}^{(1)}\right]^{(k,1)}_{2j+1}=\F\left(m_{1}^{(j)},\C^{(k,1)}_{2j},p_1\left(\left[\la_{2j}^{(1)}\right]^{(1,k)}_{2j}\right)\right)$.
\end{enumerate}
\end{enumerate}
\end{prop}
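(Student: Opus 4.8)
The plan is to follow the template of the proofs of Propositions \ref{prop:welldefinf2} and \ref{prop:welldefeven2} (the $(1,k)$-case of $\Phi^{(k,l)}$ and of $\Phi^{(k,l)}_{2n}$) and of Proposition \ref{prop:welldefodd1} (the $(k,1)$-case of $\Phi^{(k,l)}_{2n-1}$), adapting them to the odd-length $(1,k)$-setting. The central idea is to transcribe each elementary step of the insertion algorithm for $\Phi^{(1,k)}_{2n-1}$ given in Section \ref{sec:bijections} into an operation on admissible words. By Proposition \ref{prop:1kratiofin}, the odd-position integer $\la_{2j+1}^{(i)}$ is encoded by a single finite $(k,1)$-admissible word $\left[\la_{2j+1}^{(i)}\right]^{(k,1)}_{2j+1}$, whereas the even-position integer $\la_{2j}^{(i)}$ is encoded by a $(1,k)$-admissible word, that is, a pair of $(k,1)$-words obtained through the splitting of Lemma \ref{lem:divide} --- this is where the pair-of-integers trick is essential. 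Inserting a batch of copies of $\bk{i}$ into the pair $(\la_{2j+1},\la_{2j})$ then corresponds, at the level of words, to passing to a follower inside a nested set $_{\lceil i/2\rceil}\C^{(k,1)}_{2j}$ for the single word, or inside a shifted set of the form $_{+d}\C^{(1,k)}_{m}$ for the pair word, and the count identity \eqref{eq:1ksomfin} together with Lemma \ref{lem:divide} converts numbers of such steps back into the arithmetic governing the branching of the insertion. Proving the proposition amounts to checking that the intermediate words are exactly the asserted followers and satisfy the shift relation \eqref{eq:1kbelongsjiodd}.

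I would begin with part (1), which is the same counting argument as in Proposition \ref{prop:welldefodd1}(1): each batch of insertions into the pair $j$ stores at most one part for the pair $j-1$, of index $i-1$ or $i-2$, and the pair $n-1$ receives from $\nu\in\Blk_{2n-1}$ only parts of index at most $2(n-1)+1$; a downward induction on $j$ then yields $m_i^{(j)}=0$ whenever $i>2j+1$, whence $\la_{2j+1}^{(i)}=\la_{2j}^{(i)}=0$ for such $i$. The base case of the inner induction is the identity $\la_{2j+1}^{(2j+1)}=m_{2j+1}^{(j)}\cdot\ak{2j+1}$ together with the explicit form of $\left[\mu_{2j+1}^{(2j+1)}\right]^{(1,k)}_{2j+1}$ in (2)(a); it follows by direct computation, since when the batch $\bk{2j+1}$ is processed the pair $(\la_{2j+1},\la_{2j})$ is still $(0,0)$ by part (1), each of the $m_{2j+1}^{(j)}$ insertions applies \eqref{eq:insert3type1334} with $i=2j+1$ and generates no carry, so after the batch the word of $\la_{2j+1}^{(2j+1)}$ has a single nonzero entry $m_{2j+1}^{(j)}$ in its top slot and the second component of the corresponding $(1,k)$-word vanishes, which is what is claimed after unwinding the definition of $(1,k)$-admissible words.

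The bulk of the argument is then a double induction: an outer induction on $j$ (from $n-1$ down to $0$) and, for each $j$, an inner induction on $i$ (from $2j+1$ down to $2$, followed by the $i=1$ step). For the inductive step I would split according to whether condition \eqref{eq:condinsinf1312} (or \eqref{eq:condinsinf1331} when $i=2$) holds, use Lemma \ref{lem:follow1k} to read each update \eqref{eq:insert3type131}--\eqref{eq:insert3type1334} of $(\la_{2j+1},\la_{2j})$ as a follower operation on the relevant words, and use the $\prec$-compatibility of the shift $0\cdot$ from Lemma \ref{lem:decalage} to transport these relations between consecutive pairs; the reciprocal clauses are the same computation read backwards, whose consistency is guaranteed by the monotonicity of followers in Lemma \ref{lem:prechoice}. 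The main obstacle I expect is precisely the matching of the carry instructions with the word-level passage between the auxiliary words $\mu_{2j}^{(2i)},\mu_{2j}^{(2i-1)}$ and the true words $\la_{2j}^{(2i+1)},\la_{2j}^{(2i-1)}$: one must verify, using \eqref{eq:1ksomfin} and both halves of Lemma \ref{lem:divide}, that inserting $\bk{2i}$ advances the $(1,k)$-pair-word by a single follower step in the shifted set $_{+(i-1)}\C^{(1,k)}$, while inserting $\bk{2i-1}$ advances only its first component, and that these two behaviours are exactly reconciled with the integer inequality deciding the branch. Once this dictionary is in place, the remaining verifications are a routine transcription of the computations already carried out in Sections \ref{part:interinf1k} and \ref{part:intereven1k}.
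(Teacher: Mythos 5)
Your plan coincides with the paper's proof: the paper handles part (1) exactly as you do (the top index $2j+1$ is inserted by the carry‑free rule, so indices drop by two per pair) and then reduces parts (2)–(4) to the already‑established propositions by rewriting the branching condition \eqref{eq:condinsinf1312} as a $(k,1)$-type inequality on the split pair $(\la_{2j+1},\la_{2j}-\la_{2j+1})$, after which the follower/carry dictionary you describe (Lemmas \ref{lem:divide}, \ref{lem:follow1k}, \ref{lem:prechoice} and \eqref{eq:1ksomfin}) applies verbatim. The "main obstacle" you single out — reconciling the carry instructions with the integer inequality deciding the branch — is precisely the one displayed equivalence the paper isolates, so the two arguments are essentially identical.
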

\begin{ex}
 For $k=6$ with $n=4$ and $\nu=(b_{1}^{(6,1)})^2(b_{2}^{(6,1)})^5(b_{3}^{(6,1)})^2(b_{4}^{(6,1)})^3(b_{6}^{(6,1)})^5=(1+0)^2(1+1)^5(5+6)^2(4+5)^3(15+19)^5$, we have
 $$
\begin{array}{|c||c|c||c|c||c|c||c|c|}
\hline
  &m_7^{(3)}&[\la_7^{(1)}]^{(6,1)}_7&m_5^{(2)}&[\la_5^{(1)}]^{(6,1)}_5&m_3^{(1)}&[\la_3^{(1)}]^{(6,1)}_3&m_1^{(0)}&[\la_1^{(1)}]^{(6,1)}_1\\
  \hline
  &0&(0,0,0,\textcolor{red}{0})&0&(0,0,\textcolor{red}{0})&0&(0,\textcolor{red}{0})&0&(\textcolor{red}{0})\\
  \hline
 \hline 
 i&m_i^{(3)}&[\mu_6^{(i)}]^{(1,6)}_6&m_i^{(2)}&[\mu_4^{(i)}]^{(1,6)}_4&m_i^{(1)}&[\mu_2^{(i)}]^{(1,6)}_2&m_i^{(0)}&[\mu_0^{(i)}]^{(1,6)}_0\\
 \hline\hline
  6&5&(0,0,\textcolor{red}{0},1),(0,0,\textcolor{red}{1})&0&\emptyset&0&\emptyset&0&\emptyset\\
  5&0&(0,0,\textcolor{red}{0},1),(0,0,\textcolor{red}{1})&0&\emptyset&0&\emptyset&0&\emptyset\\
  4&3&(0,\textcolor{red}{3},0,1),(0,\textcolor{red}{0},1)&1&(0,\textcolor{red}{1},0),(0,\textcolor{red}{0})&0&\emptyset&0&\emptyset\\
  3&2&(0,\textcolor{red}{1},1,1),(0,\textcolor{red}{1},1)&0&(0,\textcolor{red}{1},0),(0,\textcolor{red}{0})&0&\emptyset&0&\emptyset\\
  2&5&(\textcolor{red}{0},2,1,1),(\textcolor{red}{2},1,1)&3&(\textcolor{red}{3},1,0),(\textcolor{red}{1},0)&0&(\textcolor{red}{0},0),(\textcolor{red}{0})&0&\emptyset\\
  \hline\hline
  &m_1^{(3)}&[\la_7^{(1)}]^{(6,1)}_7&m_1^{(2)}&[\la_5^{(1)}]^{(6,1)}_5&m_1^{(1)}&[\la_3^{(1)}]^{(6,1)}_3&m_1^{(0)}&[\la_1^{(1)}]^{(6,1)}_1\\
  \hline
  &2&(\textcolor{red}{2},2,1,1)&1&(\textcolor{red}{0},2,0)&0&(0,\textcolor{red}{0})&0&(\textcolor{red}{0})\\
  \hline
\end{array}\,\,\cdot
$$
\end{ex}$$$$
The corresponding diagram is the following.
\begin{figure}[H]
\label{fig:proofodd2}
\begin{tikzpicture}[scale=1.2, every node/.style={scale=0.8}]

\draw (-1+2.5+0.8,1.5) node[left] {$\left[\la_{2j+1}^{(2i)}\right]^{(k,1)}_{2j+1}\succeq \,\, 0\cdot p_1\left(\left[\mu_{2j}^{(2i)}\right]^{(1,k)}_{2j}\right)$};
\draw (-2+2.5+0.8,3) node[left] {$\left[\la_{2j+1}^{(2i+1)}\right]^{(k,1)}_{2j+1}$};
\draw (-2+2.5+0.8,0) node[left] {$\left[\la_{2j+1}^{(2i-1)}\right]^{(k,1)}_{2j+1}$};

\draw[dotted,->] (-2.5,3)--(-0.2,3); \draw[dotted,->] (-2.5,1.5)--(-1.5,1.5); \draw[dotted,->] (-2.5,0)--(-0.2,0);
\draw (-1.4,2.85) node {$p_1$};\draw (-2,1.35) node {$p_1$}; \draw (-1.4,-.15) node {$p_1$};

\draw[dotted,<->] (-1.6+2.5,3)--(5.4-2,3); \draw[dotted,<->] (2.2,1.5)--(5.4-2,1.5);\draw[dotted,<->] (-1.6+2.5,0)--(5.4-2,0);

\draw[dashed] (0.7,2.75)--(0.9,2.5); \draw[dashed,->] (1.3,2)--(1.5,1.75);
\draw (1.5,2.25) node {$\F(m_{2i-1}^{(j-1)},\,_{i+1}\C^{(k,1)}_{2j+1})$};
\draw (0.4,2.5)--(2.65,2.5)--(2.65,2)--(0.4,2)--cycle;

\draw[dashed] (0.7-1.5,2.75-1.5)--(0.9-1.5,2.5-1.5); \draw[dashed,->] (1.3-1.5,2-1.5)--(1.5-1.5,1.75-1.5);
\draw (1.5-1.5,2.25-1.5) node {$\F(m_{2i-1}^{(j)},\,_{i}\C^{(k,1)}_{2j+1})$};
\draw (0.4-1.5,2.5-1.5)--(2.65-1.5,2.5-1.5)--(2.65-1.5,2-1.5)--(0.4-1.5,2-1.5)--cycle;

\draw (6+0.8-2,3) node[left] {$\left[\la_{2j-1}^{(2i)}\right]^{(k,1)}_{2j-2}$};
\draw (6+0.8-2,1.5) node[left] {$\left[\la_{2j-1}^{(2i-1)}\right]^{(k,1)}_{2j-2}$};
\draw (6+0.8-2,0) node[left] {$\left[\la_{2j-1}^{(2i-2)}\right]^{(k,1)}_{2j-2}$};
\draw (1.5+4.5-2,2.25) node {$\F(m_{2i-1}^{(j-1)},\,_{i}\C^{(k,1)}_{2j-1})$};
\draw (0.4+4.5-2,2.5)--(2.65+4.5-2,2.5)--(2.65+4.5-2,2)--(0.4+4.5-2,2)--cycle;
\draw(1.5+4.5-2,2.65)--(1.5+4.5-2,2.5); \draw[->] (1.5+4.5-2,2)--(1.5+4.5-2,1.75);

\draw (6+0.8-9,3) node[left] {$\left[\la_{2j}^{(2i+1)}\right]^{(1,k)}_{2j}$};
\draw (6+0.8-9,1.5) node[left] {$\left[\mu_{2j}^{(2i)}\right]^{(1,k)}_{2j}$};
\draw (1.5+4.5-9,2.25) node {$\F(m_{2i}^{(j)},\,_{+(i-1)}\C^{(1,k)}_{2j+2-2i})$};
\draw (0.2+4.5-9,2.5)--(2.75+4.5-9,2.5)--(2.75+4.5-9,2)--(0.2+4.5-9,2)--cycle;
\draw(1.5+4.5-9,2.65)--(1.5+4.5-9,2.5); \draw[->] (1.5+4.5-9,2)--(1.5+4.5-9,1.75);
\draw (6+0.8-9,1.5-1.5) node[left] {$\left[\mu_{2j}^{(2i-1)}\right]^{(1,k)}_{2j}$};
\draw (1.5+4.5-9,2.25-1.5) node {\begin{small}$\F(m_{2i-2}^{(j-1)},\,_{+(i-1)}\C^{(1,k)}_{2j+2-2i})$\end{small}};
\draw (0.2+4.5-9,2.5-1.5)--(2.75+4.5-9,2.5-1.5)--(2.75+4.5-9,2-1.5)--(0.2+4.5-9,2-1.5)--cycle;
\draw[dashed](1.5+4.5-9,2.65-1.5)--(1.5+4.5-9,2.5-1.5); \draw[dashed,->] (1.5+4.5-9,2-1.5)--(1.5+4.5-9,1.75-1.5);
\draw (6+0.8-9,-0.7) node[left] {$\left[\la_{2j}^{(2i-1)}\right]^{(1,k)}_{2j}$};
\draw[dotted,->] (1.5+4.5-9,1.3-1.5)--(1.5+4.5-9,1.1-1.5);

\draw (6+0.8,3) node[left] {$\left[\mu_{2j-2}^{(2i)}\right]^{(1,k)}_{2j}$};
\draw (6+0.8,1.5) node[left] {$\left[\la_{2j-2}^{(2i-1)}\right]^{(1,k)}_{2j}$};
\draw (6+0.8,1.5-1.5) node[left] {$\left[\mu_{2j-2}^{(2i-2)}\right]^{(1,k)}_{2j}$};
\draw (1.5+4.5,2.25-1.5) node {\begin{small}$\F(m_{2i-2}^{(j-1)},\,_{+(i-2)}\C^{(1,k)}_{2j+2-2i})$\end{small}};
\draw (0.2+4.5,2.5-1.5)--(2.75+4.5,2.5-1.5)--(2.75+4.5,2-1.5)--(0.2+4.5,2-1.5)--cycle;
\draw(1.5+4.5,2.65-1.5)--(1.5+4.5,2.5-1.5); \draw[->] (1.5+4.5,2-1.5)--(1.5+4.5,1.75-1.5);

\draw[dotted,->] (5.3,3)--(4.4,3); \draw[dotted,->] (5.3,1.5)--(4.4,1.5); \draw[dotted,->] (5.3,0)--(4.4,0);
\draw (4.9,2.85) node {$p_1$};\draw (4.9,1.35) node {$p_1$}; \draw (4.9,-.15) node {$p_1$};
\end{tikzpicture}
\caption{Relations between the insertions at step $j$ and step $j-1$}
\end{figure}
Similarly to Section \ref{part:interinfk1}, we have for $1\leq i\leq j$
\begin{equation}\label{eq:1kproofodd1}
p_2\left(\left[\la_{2j}^{(2i+1)}\right]^{(1,k)}_{2j}\right)\succeq p_1\left(\left[\mu_{2j-2}^{(2i)}\right]^{(1,k)}\right)\Longleftrightarrow  p_2\left(\left[\mu_{2j}^{(2i)}\right]^{(1,k)}_{2j}\right)\succeq \left[\la_{2j-1}^{(2i-1)}\right]^{(k,1)}_{2j-1}\,,
\end{equation}
and for $2\leq i\leq j$
\begin{align}\label{eq:1kproofodd3}
\left[\mu_{2j}^{(2i)}\right]^{(1,k)}_{2j}\succeq 0\cdot \left[\la_{2j-2}^{(2i-1)}\right]^{(1,k)}_{2j-2}&\Longleftrightarrow\left[\mu_{2j}^{(2i-1)}\right]^{(1,k)}_{2j}\succeq 0\cdot \left[\mu_{2j-2}^{(2i-2)}\right]^{(1,k)}_{2j-2}\,,\\
\Updownarrow\qquad\qquad\qquad\qquad&\qquad\qquad\qquad\qquad\qquad\Updownarrow\nonumber\\
\label{eq:1kproofodd2}
 p_2\left(\left[\mu_{2j}^{(2i)}\right]^{(1,k)}_{2j}\right)\succeq  \left[\la_{2j-1}^{(2i-1)}\right]^{(k,1)}_{2j-1}&\Longleftrightarrow p_2\left(\left[\la_{2j}^{(2i-1)}\right]^{(1,k)}_{2j}\right)\succeq p_1\left(\left[\mu_{2j-2}^{(2i-2)}\right]^{(1,k)}_{2j-2}\right)\,\cdot
\end{align}
\section{Equivalence to the Bousquet-M\'elou--Eriksson recursive bijection}
\label{sec:equiv}
\subsection{Equivalence for $\Phi^{(k,l)}_{n}$}
Define the function $\Psi^{(k,l)}_{n}$ from $\Lkl_{n}$ to $\Llk_{n-1}\times \Zz$ as follows.
\begin{enumerate}
\item For $\la=(\la_j)_{j=1}^{2n+1}\in \Lkl_{2n+1}$, set 
$$
\begin{cases}
 m= \la_{2n+1}-\left\lceil \frac{\akl{2n+1}}{\akl{2n}}\la_{2n}\right\rceil\,,\\
 \nu_{2j}=\la_{2j} \ \ \text{for} \ \ 1\leq j\leq n\\
 \nu_{2j-1}= \left\lfloor \frac{\akl{2j-1}}{\akl{2j}}\la_{2j}\right\rfloor + \left\lceil \frac{\akl{2j-1}}{\akl{2j-2}} \la_{2j-2}\right\rceil - \la_{2j-1}\ \ \text{for} \ \ 1<j\leq n \\
 \nu_{1}=\left\lfloor \frac{\akl{1}}{\akl{2}}\la_{2}\right\rfloor-\la_{1}.
\end{cases}
$$
Finally set $\nu=(\nu_j)_{j=1}^{2n}$ and $\Psi^{(k,l)}_{2n+1}(\la)=(\nu,m)$.
\item For $\la=(\la_j)_{j=1}^{2n}\in \Lkl_{2n}$, set 
$$
\begin{cases}
 m= \la_{2n}-\left\lceil \frac{\akl{2n}}{\akl{2n-1}}\la_{2n-1}\right\rceil\,,\\
 \nu_{2j-1}=\la_{2j-1} \ \ \text{for} \ \ 1\leq j\leq n\\
 \nu_{2j}= \left\lfloor \frac{\akl{2j}}{\akl{2j+1}}\la_{2j+1}\right\rfloor + \left\lceil \frac{\akl{2j}}{\akl{2j-1}} \la_{2j-1}\right\rceil - \la_{2j}\ \ \text{for} \ \ 1\leq j< n \,.
\end{cases}
$$
Finally set $\nu=(\nu_j)_{j=1}^{2n-1}$ and $\Psi^{(k,l)}_{2n}(\la)=(\nu,m)$.
\end{enumerate}
\begin{lem}\label{lem:psifinsize}
The map $\Psi^{(k,l)}_{n}$ describe a bijection from $\Lkl_{n}$ to $\Llk_{n-1}\times \Zz$. Furthermore, for $\Psi^{(k,l)}_{2n+1}(\la)=(\nu,m)$, we have $\la_e=\nu_e$ and $\la_o = m+ k\cdot\nu_e-\nu_o$, 
and for $\Psi^{(k,l)}_{2n}(\la)=(\nu,m)$, we have $\la_o=\nu_o$ and $\la_e = m+ l\cdot\nu_o-\nu_e$. 
\end{lem}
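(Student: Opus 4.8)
The plan is to prove Lemma~\ref{lem:psifinsize} in two stages: first establish the weight identities for a fixed $\la$, then use these to deduce bijectivity. For the weight identities, I would work directly from the defining formulas of $\Psi^{(k,l)}_n$. Take the case $\Psi^{(k,l)}_{2n+1}(\la)=(\nu,m)$. Since $\nu_{2j}=\la_{2j}$ for all $1\leq j\leq n$, we immediately get $\nu_e=\sum_{j} \nu_{2j}=\sum_j \la_{2j}=\la_e$. For $\la_o$, sum the formulas for $\nu_1,\nu_3,\dots,\nu_{2n-1}$ together with $m$: the terms $\left\lfloor \frac{\akl{2j-1}}{\akl{2j}}\la_{2j}\right\rfloor$ and $\left\lceil \frac{\akl{2j-1}}{\akl{2j-2}}\la_{2j-2}\right\rceil$ telescope across consecutive indices (the ceiling term at index $j$ pairing with nothing, but the floor term at index $j$ and the ceiling-of-$\la_{2j-2}$ term at index $j+1$ both involve $\la_{2j}$), so that $\nu_o + m = \sum_{j=1}^n \left\lfloor \frac{\akl{2j-1}}{\akl{2j}}\la_{2j}\right\rfloor + \sum_{j=2}^{n+1}\left\lceil \frac{\akl{2j-1}}{\akl{2j-2}}\la_{2j-2}\right\rceil - \la_o$, where the last ceiling term is $\left\lceil \frac{\akl{2n+1}}{\akl{2n}}\la_{2n}\right\rceil$. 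Then I would use the relation $\akl{2j-1}+\akl{2j+1}=k\akl{2j}$ from \eqref{eq:klseqdef} (noting $o^{(k,l)}_{2j}+2=k$) to combine $\left\lfloor \frac{\akl{2j-1}}{\akl{2j}}\la_{2j}\right\rfloor + \left\lceil \frac{\akl{2j+1}}{\akl{2j}}\la_{2j}\right\rceil$; by Remark~\ref{rem:sumreal} this sum equals $k\la_{2j}$ whenever the total is an integer, which it is. Summing over $j=1,\dots,n$ gives $k\sum_j \la_{2j}=k\la_e=k\nu_e$, and hence $\nu_o+m = k\nu_e - \la_o$, i.e. $\la_o = m+k\nu_e-\nu_o$, as claimed. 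The even case $\Psi^{(k,l)}_{2n}$ is entirely analogous, swapping the roles of odd and even indices and using $\akl{2j}+\akl{2j+2}=l\akl{2j+1}$ (since $o^{(k,l)}_{2j+1}+2=l$), which produces the factor $l$ instead of $k$.

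Next I would establish that $\Psi^{(k,l)}_n$ is a bijection onto $\Llk_{n-1}\times\Zz$. The cleanest route is to exhibit the inverse explicitly and check both compositions are the identity, but in fact the structure of the definition already does most of the work: in the odd case, $\nu$ retains $\la_{2j}$ in its even positions, and each $\nu_{2j-1}$ is determined by $\la_{2j-1}$ and the (already known) neighbouring even parts, so from $(\nu,m)$ one recovers $\la_{2j}=\nu_{2j}$, then $\la_{2j-1} = \left\lfloor \frac{\akl{2j-1}}{\akl{2j}}\la_{2j}\right\rfloor + \left\lceil \frac{\akl{2j-1}}{\akl{2j-2}} \la_{2j-2}\right\rceil - \nu_{2j-1}$ for $j>1$, $\la_1 = \left\lfloor \frac{\akl{1}}{\akl{2}}\la_2\right\rfloor - \nu_1$, and finally $\la_{2n+1} = m + \left\lceil \frac{\akl{2n+1}}{\akl{2n}}\la_{2n}\right\rceil$. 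This shows $\Psi^{(k,l)}_{2n+1}$ is injective with a well-defined set-theoretic inverse on all of $\Llk_{2n}\times\Zz$; the content is to verify that (a) for $\la\in\Lkl_{2n+1}$ the output $\nu$ actually lies in $\Llk_{2n}$ and $m\in\Zz$, and (b) conversely the reconstructed $\la$ from any $(\nu,m)\in\Llk_{2n}\times\Zz$ lies in $\Lkl_{2n+1}$. Both (a) and (b) amount to translating the membership conditions \eqref{eq:inlklodd}, \eqref{eq:inlkleven} (equivalently the raw ratio inequalities \eqref{eq:ratioklseq}) for $\la$ into the corresponding inequalities for $\nu$, using Remark~\ref{rem:rat} to handle the passage between strict/non-strict inequalities and floors/ceilings. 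Concretely, the condition $\frac{\la_{2j-1}}{\akl{2j-1}}\leq\frac{\la_{2j}}{\akl{2j}}$ is equivalent to $\la_{2j-1}\leq \left\lfloor\frac{\akl{2j-1}}{\akl{2j}}\la_{2j}\right\rfloor$, and similarly $\frac{\la_{2j-2}}{\akl{2j-2}}\leq\frac{\la_{2j-1}}{\akl{2j-1}}$ is equivalent to $\left\lceil\frac{\akl{2j-1}}{\akl{2j-2}}\la_{2j-2}\right\rceil\leq\la_{2j-1}$; these two together give exactly $0\leq\nu_{2j-1}\leq\left\lfloor\frac{\akl{2j-1}}{\akl{2j}}\la_{2j}\right\rfloor-\left\lceil\frac{\akl{2j-1}}{\akl{2j-2}}\la_{2j-2}\right\rceil$, and one then checks that the upper bound here is $\leq\left\lfloor\frac{\akl{2j-1}}{\akl{2j}}\nu_{2j}\right\rfloor$ and $\geq\left\lceil\frac{\akl{2j-1}}{\akl{2j-2}}\nu_{2j-2}\right\rceil$ (using again $\akl{2j-1}+\akl{2j+1}=k\akl{2j}$ to control the cross terms), which is precisely the $(l,k)$-lecture-hall condition on $\nu$ at position $2j-1$. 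The boundary positions $\nu_1$ and $m\geq 0$ are handled the same way.

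The main obstacle I anticipate is bookkeeping in the reciprocity/membership argument: one must be careful that the floor/ceiling adjustments in $\Psi^{(k,l)}_n$ interact correctly across three consecutive indices, since each odd $\nu_{2j-1}$ couples $\la_{2j-2},\la_{2j-1},\la_{2j}$, and verifying that ``$\la$ satisfies all of \eqref{eq:ratioklseq}'' is equivalent to ``$\nu$ satisfies all of \eqref{eq:ratioklseq} for $(l,k)$ and $m\geq 0$'' requires checking that no slack is lost when passing to $\nu$. I would organize this by proving the single equivalence, for each $j$, that the pair of inequalities bounding $\la_{2j-1}$ in terms of $\la_{2j-2},\la_{2j}$ is equivalent to the pair of inequalities bounding $\nu_{2j-1}$ in terms of $\nu_{2j-2},\nu_{2j}$, treating the identity $\left\lfloor \frac{\akl{2j-1}}{\akl{2j}}x\right\rfloor + \left\lceil \frac{\akl{2j+1}}{\akl{2j}}x\right\rceil = kx$ (Remark~\ref{rem:sumreal}) as the key arithmetic lemma; the weight formulas then follow as a byproduct of the same telescoping. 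The even case is symmetric under $(k,l)\leftrightarrow(l,k)$ and odd$\leftrightarrow$even, so it suffices to write one case in detail and indicate the parallel.
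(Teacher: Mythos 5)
Your weight computation is right, and it is exactly the telescoping the paper uses for the analogous infinite statement (Lemma \ref{lem:psiinfsize}): pair $\left\lfloor \akl{2j-1}\la_{2j}/\akl{2j}\right\rfloor$ with $\left\lceil \akl{2j+1}\la_{2j}/\akl{2j}\right\rceil$ and apply Remark \ref{rem:sumreal} together with $\akl{2j-1}+\akl{2j+1}=k\,\akl{2j}$. The membership half of your argument, however, has a concrete error. Write $A=\left\lfloor \akl{2j-1}\la_{2j}/\akl{2j}\right\rfloor$ and $B=\left\lceil \akl{2j-1}\la_{2j-2}/\akl{2j-2}\right\rceil$, so that by Remark \ref{rem:rat} the two ratio conditions on $\la$ around position $2j-1$ read $B\leq\la_{2j-1}\leq A$, while $\nu_{2j-1}=A+B-\la_{2j-1}$. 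These translate to $B\leq\nu_{2j-1}\leq A$, \emph{not} to $0\leq\nu_{2j-1}\leq A-B$ as you write; and your follow-up step of comparing the quantity $A-B$ against floor and ceiling bounds ``using $\akl{2j-1}+\akl{2j+1}=k\akl{2j}$ to control the cross terms'' neither follows from what precedes nor is needed. The clean statement is that $\la_{2j-1}\mapsto A+B-\la_{2j-1}$ is an involution of the integer interval $[B,A]$, and since $\nu_{2j}=\la_{2j}$ and $\nu_{2j-2}=\la_{2j-2}$, the interval $[B,A]$ is verbatim the set of admissible values for the entry in position $2j-1$ given its even-indexed neighbours. No cross-term identity enters the membership argument; that identity is only used for the weights.

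Making this correction also exposes a second problem: the bounds $B\leq\nu_{2j-1}\leq A$ are the $(k,l)$-ratio conditions on $\nu$ (every coefficient is an $\akl{}$), so what the formulas actually produce is $\nu\in\Lkl_{n-1}$, not the ``$(l,k)$-lecture-hall condition'' you assert. You can test this on $\la=(0,0,4,13,84,204)\in\mathcal{L}^{(2,3)}_6$ (the image in the paper's running example): the formulas give $\nu=(0,2,4,50,84)$ and $m=5$, and $\nu$ lies in $\mathcal{L}^{(2,3)}_5$ but not in $\mathcal{L}^{(3,2)}_5$, since $2/2>4/5$. Landing in $\Lkl_{n-1}$ is also what \eqref{eq:psiphiodd}--\eqref{eq:psiphieven} and the recursions \eqref{eq:finoddtoeven}--\eqref{eq:fineventoodd} require, because the first component of $\Psi^{(k,l)}_{n}$ is fed to $\Lambda^{(k,l)}_{n-1}$, whose codomain is $\Lkl_{n-1}$; the codomain written in the statement of the lemma should be read with that in mind, and your proof should derive the $(k,l)$-conditions on $\nu$ rather than asserting the $(l,k)$ ones.
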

By setting 
$$\mathcal{F}_{\Lkl_{n}}(x,y)=\sum_{\la \in \Lkl_{n}} x^{|\la|_o}y^{|\la|_e}\,,$$
Lemma \ref{lem:psifinsize} yields
\begin{align}
 \label{eq:finoddtoeven} \mathcal{F}_{\Lkl_{2n}}(x,y)&=\frac{1}{1-y}\cdot \mathcal{F}_{\Lkl_{2n-1}}(xy^{l},y^{-1})\,,\\
 \label{eq:fineventoodd}\mathcal{F}_{\Lkl_{2n+1}}(x,y)&= \frac{1}{1-x}\cdot \mathcal{F}_{\Lkl_{2n}}(x^{-1},yx^{k})\,\cdot
\end{align} 
In the recursive process, as $\mathcal{F}_{\Lkl_{1}}(x,y)=1/(1-x)$, $\akl{i}=i$ for $i\in \{0,1\}$, and $l\alk{i}-\akl{i-1}=\akl{i+1}$ for $i\geq 1$, the transformations occuring in the generating functions are
\begin{align*}
 x^{\alk{i}}y^{\akl{i-1}}\mapsto x^{\alk{i}}y^{\akl{i+1}}& \Longleftrightarrow \blk{i}\mapsto\bkl{i+1} &\text{for}\quad 1\leq i\leq 2n-1\,,\\
 y = x^{\alk{0}}y^{\akl{1}}& \Longleftrightarrow \text{addition of parts }\bkl{1}  &\quad\text{in \eqref{eq:finoddtoeven}}\\
 x^{\alk{i-1}}y^{\akl{i}}\mapsto x^{\alk{i+1}}y^{\akl{i+1}}&\Longleftrightarrow \bkl{i}\mapsto\blk{i+1} &\text{for}\quad 1\leq i\leq 2n\,,\\
 x = x^{\alk{1}}y^{\akl{0}}& \Longleftrightarrow \text{addition of parts }\blk{1}&\quad\text{in \eqref{eq:fineventoodd}}\,\cdot
 \end{align*}
 Therefore, we recursively obtain Theorem \ref{theo:klseqfin} for $n\in\Zu$. Moreover, the bijections $\Psi^{(k,l)}_{n}$ induce unique recursive bijections $\Lambda^{(k,l)}_{2n}:\Bkl_{2n}\mapsto\Lkl_{2n}$ and $\Lambda^{(k,l)}_{2n-1}:\Blk_{2n}\mapsto\Lkl_{2n-1}$, such that,
for any sequence $(m_j)_{j=1}^{2n+1}$ of non-negative integers, we have  
\begin{align}
\label{eq:psiphiodd}
\Psi^{(k,l)}_{2n+1}\left(\Lambda^{(k,l)}_{2n+1}\left(\prod_{i=1}^{2n+1} (\blk{i})^{m_i}\right)\right)&= \left(\Lambda^{(k,l)}_{2n}\left(\prod_{i=2}^{2n+1} (\bkl{i-1})^{m_i}\right),m_1\right)\\
\label{eq:psiphieven}
\Psi^{(k,l)}_{2n}\left(\Lambda^{(k,l)}_{2n}\left(\prod_{i=1}^{2n} (\bkl{i})^{m_i}\right)\right)&= \left(\Lambda^{(k,l)}_{2n-1}\left(\prod_{i=2}^{2n} (\blk{i-1})^{m_i}\right),m_1\right)
\end{align}
The equivalence is then the following.
\begin{theo}\label{theo:psiphifin}
For $n\in\Zu$, $\Phi^{(k,l)}_{n}=\Lambda^{(k,l)}_{n}$.
\end{theo}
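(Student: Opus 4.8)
By construction the family $(\Lambda^{(k,l)}_n)_{n\ge 1}$ is the unique family of bijections with $\Lambda^{(k,l)}_1=\mathrm{Id}$ satisfying the peeling relations \eqref{eq:psiphiodd}--\eqref{eq:psiphieven}: since $\Psi^{(k,l)}_n$ is a bijection (Lemma \ref{lem:psifinsize}), those relations reconstruct $\Lambda^{(k,l)}_n$ from $\Lambda$ at length $n-1$ and, conversely, determine any such family from its value at length $1$. As $\Phi^{(k,l)}_1=\mathrm{Id}$ (Section \ref{part:interodd}), the theorem follows by induction on $n$ once we show that $\Phi$ satisfies the same relations, namely
\[
\Psi^{(k,l)}_{2n}\Bigl(\Phi^{(k,l)}_{2n}\bigl({\textstyle\prod_{i\ge 1}(\bkl{i})^{m_i}}\bigr)\Bigr)=\Bigl(\Phi^{(k,l)}_{2n-1}\bigl({\textstyle\prod_{i\ge 2}(\blk{i-1})^{m_i}}\bigr),\,m_1\Bigr)
\]
and the analogous identity for $\Psi^{(k,l)}_{2n+1}$; indeed, applying $\Psi^{(k,l)}_{2n}$ to both sides of the desired equality $\Phi^{(k,l)}_{2n}=\Lambda^{(k,l)}_{2n}$ and using the induction hypothesis at length $2n-1$ together with the injectivity of $\Psi^{(k,l)}_{2n}$ reduces $\Phi^{(k,l)}_{2n}=\Lambda^{(k,l)}_{2n}$ to exactly the displayed identity. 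We treat the even case in detail; the odd case is identical after exchanging the roles of even and odd positions.

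\textbf{Decoding $\Psi$ through the admissible words.} Fix $\nu=\prod_{i\ge 1}(\bkl{i})^{m_i}\in\Bkl_{2n}$, put $\la=\Phi^{(k,l)}_{2n}(\nu)$, and keep the notation of Section \ref{part:intereven}: $\la_j=\la_j^{(1)}$, $(\la_{2j}^{(i)},\la_{2j-1}^{(i)})$ the $j$-th pair after insertion of all parts $\bkl{i'}$ with $i'\ge i$, and $m_i^{(j)}$ the number of parts $\bkl{i}$ put in pair $j$. Two claims must be checked. First, $m=m_1$: the part $\bkl{1}=\akl{1}+\alk{0}=1$ is inserted last and, in the even construction, its insertion acts by $(\la_{2j},\la_{2j-1})\mapsto(\la_{2j}+1,\la_{2j-1})$, so $\la_{2j-1}^{(1)}=\la_{2j-1}^{(2)}$ and $\la_{2j}^{(1)}=\la_{2j}^{(2)}+m_1^{(j)}$; by Proposition \ref{prop:welldefeven} (resp.\ Propositions \ref{prop:welldefeven1} and \ref{prop:welldefeven2} for $(k,1)$ and $(1,k)$) one has $[\la_{2j}^{(2)}]^{(k,l)}_{2j}=0\cdot[\la_{2j-1}^{(2)}]^{(l,k)}_{2j-1}$, which by Remark \ref{rem:rat} together with the word-interpretation \eqref{eq:klrateven0}--\eqref{eq:inlkleven} (resp.\ \eqref{eq:k1rat0}, Proposition \ref{prop:1kratiofin}) forces $\la_{2j}^{(2)}=\lceil\frac{\akl{2j}}{\akl{2j-1}}\la_{2j-1}\rceil$; taking $j=n$, $m=\la_{2n}-\lceil\frac{\akl{2n}}{\akl{2n-1}}\la_{2n-1}\rceil=\la_{2n}^{(1)}-\la_{2n}^{(2)}=m_1^{(n)}=m_1$. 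Second, the sequence $(\nu_j)_{j=1}^{2n-1}$ returned by $\Psi^{(k,l)}_{2n}$, with $\nu_{2j-1}=\la_{2j-1}$ and $\nu_{2j}=\lfloor\frac{\akl{2j}}{\akl{2j+1}}\la_{2j+1}\rfloor+\lceil\frac{\akl{2j}}{\akl{2j-1}}\la_{2j-1}\rceil-\la_{2j}$, coincides with $\Phi^{(k,l)}_{2n-1}(\prod_{i\ge 2}(\blk{i-1})^{m_i})$. Here the structural input is the intertwining $\akl{2j-1}=\alk{2j-1}$, $k\akl{2j}=l\alk{2j}$ together with the step-transformations $\bkl{i}\mapsto\blk{i+1}$ displayed just after \eqref{eq:fineventoodd}: these match, pair by pair, the insertion step of $\Phi^{(k,l)}_{2n}$ at level $i\ge 2$ with the insertion step of $\Phi^{(k,l)}_{2n-1}$ at level $i-1$, identify the intermediate word $[\la_{2j}^{(i)}]^{(k,l)}_{2j}$ with the corresponding intermediate word of the odd map, and identify the follower operation of Proposition \ref{prop:welldefeven}(2a) with that of Proposition \ref{prop:welldefodd}(2a) (and likewise for $(k,1)$, $(1,k)$, using \eqref{eq:klbelongsjiodd}, \eqref{eq:k1belongsjiodd}, \eqref{eq:1kbelongsjiodd}). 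Removing the $\bkl{1}$-layer (which subtracts $m_1^{(j)}$ from $\la_{2j}$ and fixes $\la_{2j-1}$) and reindexing $\bkl{i}\mapsto\blk{i-1}$ is therefore precisely the passage from $\Phi^{(k,l)}_{2n}$ to $\Phi^{(k,l)}_{2n-1}$, and unwinding the two floors and ceilings in the formula for $\nu_{2j}$ via Remark \ref{rem:sumreal} (splitting the integer $\nu_{2j}$ between the ratio bounds that its two neighbours impose on $\la_{2j}$) returns exactly the $(2j-1)$-st part of that image, while $\nu_{2j-1}=\la_{2j-1}$ gives the remaining parts through the correspondence $[\la_{2j+1}^{(i)}]^{(l,k)}_{2j+1}=0\cdot[\la_{2j}^{(i)}]^{(k,l)}_{2j}$ of Proposition \ref{prop:welldefodd}.

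\textbf{Cases and the main obstacle.} For $k,l\ge 2$ the identification above is essentially term-by-term once Proposition \ref{prop:welldefeven}, the displayed step-transformations and the compatibility \eqref{eq:klproofeven}, \eqref{eq:proofodd23} of followers across consecutive pairs are in hand. The main difficulty is the cases $(k,1)$ and $(1,k)$: there a part is a pair $(s_1,s_2)$ obtained from the splitting of Lemma \ref{lem:divide}, the relevant words live in the shifted families ${}_{+d}\C^{(1,k)}$, and the follower map mixes the components $p_1,p_2$ with the $0\cdot$-shift as recorded in Lemma \ref{lem:follow1k} and used in \eqref{eq:k1proofeven1}--\eqref{eq:k1proofeven2} and \eqref{eq:1kproofeven1}--\eqref{eq:1kproofeven2}. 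One has to verify that the floors and ceilings in the definition of $\Psi^{(k,l)}_{2n}$, which are written in terms of the single integers $\la_j$, faithfully reproduce the $p_1/p_2$ decomposition and the correction terms $\chi(\,\cdot=0\cdot\F(\cdots))$ occurring in \eqref{eq:1ksomfin}; this is the technical heart of the proof, though routine given Lemmas \ref{lem:divide}, \ref{lem:prechoice}, \ref{lem:follow1k} and the well-definedness propositions. Finally the odd-length relation follows from the same computation with even and odd positions interchanged, peeling off the layer $\blk{1}=\alk{1}+\akl{0}=1$ (which affects only odd positions) and using $m=\la_{2n+1}-\lceil\frac{\akl{2n+1}}{\akl{2n}}\la_{2n}\rceil$; together with $\Phi^{(k,l)}_1=\mathrm{Id}=\Lambda^{(k,l)}_1$ this closes the induction and yields $\Phi^{(k,l)}_n=\Lambda^{(k,l)}_n$ for all $n\in\Zu$.
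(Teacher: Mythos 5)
Your proposal follows the same route as the paper: reduce the theorem to verifying the peeling identities \eqref{eq:psiphiodd}--\eqref{eq:psiphieven} with $\Phi^{(k,l)}_n$ in place of $\Lambda^{(k,l)}_n$, extract $m=m_1$ from the identity $\la^{(2)}$-pair $=$ shifted admissible word (via \eqref{eq:klrateven0}, \eqref{eq:k1rat0} and facts $(3)$ of the well-definedness propositions), and then match the intermediate insertion states $\nu_{j}^{(i)}=\la_{j}^{(i+1)}$, $n_i^{(j)}=m_{i+1}^{(j)}$ by the double induction on pairs and levels using Propositions \ref{prop:welldefeven}--\ref{prop:welldefodd2} and the follower compatibilities. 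The argument is correct, with the second half stated more schematically than the paper's explicit induction, but all the key ingredients and the treatment of the $(k,1)$/$(1,k)$ splittings are correctly identified.
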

Let $(m_i)_{i=1}^{2n+1} \in \Zz^{2n+1}$. To prove that, it suffices to show \eqref{eq:psiphiodd} and \eqref{eq:psiphieven} with $\Phi^{(k,l)}_{n}$ instead of $\Lambda^{(k,l)}_{n}$.
\begin{enumerate}
 \item Set $\la=(\la_j^{(1)})_{j=1}^{2n+1}=\Phi^{(k,l)}_{2n+1}(\prod_{i=1}^{2n+1}(\blk{i})^{m_i})$, and $\Psi^{(k,l)}_{2n+1}(\la)=(\nu,m)$. We here refer to the notations of Section \ref{part:interodd} for $\la$ with $\la_{j}^{(i)}$ and $m_i^{(j)}$ (and $\mu_{2j+1}^{(i)}$ for the cases $(k,1)$ and $(1,k)$), while using for $\nu$ the notations $\nu_{j}^{(i)}$ and $n_i^{(j)}$ instead of $\la_{j}^{(i)}$ and $m_i^{(j)}$ (and $\eta_{2j+1}^{(i)}$ instead of $\mu_{2j+1}^{(i)}$ for the cases $(k,1)$ and $(1,k)$) in Section \ref{part:intereven}. For the proof of \eqref{eq:psiphiodd}, we show that, for $1\leq j\leq n$ and $1\leq i\leq 2j$, $\nu_{2j}^{(i)}=\la_{2j}^{(i+1)}$ and $n_i^{(j)}=m_{i+1}^{(j)}$.
 \item Set $\la=(\la_j^{(1)})_{j=1}^{2n}=\Phi^{(k,l)}_{2n}(\prod_{i=1}^{2n}(\bkl{i})^{m_i})$, and $\Psi^{(k,l)}_{2n}(\la)=(\nu,m)$. We here refer to the notations of Section \ref{part:intereven} for $\la$ with $\la_{j}^{(i)}$ and $m_i^{(j)}$ (and $\mu_{2j+1}^{(i)}$ for the cases $(k,1)$ and $(1,k)$), while using for $\nu$ the notations $\nu_{j}^{(i)}$ and $n_i^{(j)}$ instead of $\la_{j}^{(i)}$ and $m_i^{(j)}$ (and $\eta_{2j+1}^{(i)}$ instead of $\mu_{2j+1}^{(i)}$ for the cases $(k,1)$ and $(1,k)$) in Section \ref{part:interodd}. For the proof of \eqref{eq:psiphieven}, we show that, for $1\leq j\leq n$ and $1\leq i\leq 2j$, $\nu_{2j-1}^{(i)}=\la_{2j-1}^{(i+1)}$ and $n_i^{(j-1)}=m_{i+1}^{(j)}$.
\end{enumerate}

\subsubsection{The case $k,l\geq 2$}
\begin{enumerate}
\item For $1\leq j\leq n$, we have $\nu_{2j}^{(1)}=\la_{2j}^{(2)}$ by fact $(3)$ of Proposition \ref{prop:welldefodd}. Also, by \eqref{eq:klrateven0} and \eqref{eq:klbelongsjiodd}, 
$$
\left[\la_{2j+1}^{(2)}\right]^{(l,k)}_{2j+1} = 0\cdot\left[\la_{2j}^{(2)}\right]^{(k,l)}_{2j}= \left[\left\lceil \frac{\alk{2j+1}}{\akl{2j}}\la_{2j}^{(2)}\right\rceil\right]^{(l,k)}_{2j+1}
\Longleftrightarrow \quad \la_{2j+1}^{(2)}= \left\lceil \frac{\alk{2j+1}}{\akl{2j}}\la_{2j}^{(2)}\right\rceil\,\cdot
$$
Hence, using fact $(3.a)$ of Proposition \ref{prop:welldefodd}, 
$$\la_{2j+1}^{(1)}-\left\lceil \frac{\alk{2j+1}}{\akl{2j}}\la_{2j}^{(1)}\right\rceil = \la_{2j+1}^{(1)}-\la_{2j+1}^{(2)} = m_1^{(j)}\,\cdot$$
Thus, $m=m_1^{(n)}=m_1$, and since $\la_{1}^{(1)}=m_1^{(0)}$, we have for all $1\leq j\leq n$, 
$$\nu_{2j-1}^{(2)}=\nu_{2j-1}^{(1)}=\left\lfloor \frac{\alk{2j-1}}{\akl{2j}}\la_{2j}^{(2)}\right\rfloor-m_1^{(j-1)}\,\cdot$$
By \eqref{eq:klrateven0}, this is equivalent to saying that $0\cdot\left[\nu_{2j-1}^{(2)}\right]^{(l,k)}_{2j-1}$ is the $(m_1^{(j-1)})^{th}$ sequence that precedes $_2\left[\la_{2j}^{(2)}\right]^{(k,l)}_{2j}$ in $_2\C^{(k,l)}_{2j}$. Using \eqref{eq:klbelongsjieven} and fact $(2.b)$ of Proposition \ref{prop:welldefodd} with $i=2$, we then have to $\left[\nu_{2j}^{(2)}\right]^{(k,l)}_{2j}=\left[\la_{2j}^{(3)}\right]^{(k,l)}_{2j}\in \, _2\C^{(k,l)}_{2j}$. Therefore, since $\nu_{2j}^{(1)}=\la_{2j}^{(2)}$, by fact $(2.a)$ of Proposition \ref{prop:welldefodd} and fact $(3,a)$ Proposition \ref{prop:welldefeven}, $n_1^{(j)}=m_2^{(j)}$. 
Moreover, we have for $j=1$ 
$$n_{2}^{(1)}=\frac{\nu_{2}^{(2)}}{\akl{2}}=\frac{\la_{2}^{(3)}}{\akl{2}}=m_{3}^{(1)}\,\cdot$$
Assuming that for some $n-1\geq j>1$, $n_i^{(j-1)}=m_{i+1}^{(j-1)}$ and $\left[\nu_{2j-2}^{(i+1)}\right]_{2j-2}^{(k,l)}= \left[\la_{2j-2}^{(i+2)}\right]^{(k,l)}_{2j-2}\in \, _{i+1}\C^{(k,l)}_{2j-2}$ for $1\leq i \leq 2j-2$. Then, using fact $(2)$ of Propositions \ref{prop:welldefeven} and \ref{prop:welldefodd}, we recursively retrieve that $n_i^{(j)}=m_{i+1}^{(j)}$  and $\left[\nu_{2j}^{(i+1)}\right]_{2j}^{(k,l)}= \left[\la_{2j}^{(i+2)}\right]^{(k,l)}_{2j}\in \, _{i+1}\C^{(k,l)}_{2j}$ for $2\leq i \leq 2j-1$. Finally, 
$$n_{2j}^{(j)}=\frac{\nu_{2j}^{(2j)}}{\akl{2j}}=\frac{\la_{2j}^{(2j+1)}}{\akl{2j}}=m_{2j+1}^{(j)}\,,$$
and $(0)_{u=1}^{2j}=\left[\nu_{2j}^{(2j+1)}\right]_{2j}^{(k,l)}= \left[\la_{2j}^{(2j+2)}\right]^{(k,l)}_{2j}\in \, _{2j+1}\C^{(k,l)}_{2j}$, and the heredity on $j$ holds. 
\item By definition and fact $(3)$ of Propositions \ref{prop:welldefeven}, $\nu_{2j-1}^{(1)}=\la_{2j-1}^{(2)}$ for $n\geq j\geq 1$. In particular for $j=1$, 
$$n_1^{(0)}=\nu_{1}^{(1)}=\la_{1}^{(2)}= m_2^{(1)}\,\cdot$$
Similarly to $(1)$, by using \eqref{eq:klrateven0}, \eqref{eq:klbelongsjieven} and fact $(3)$ of Propositions \ref{prop:welldefeven} and \ref{prop:welldefodd}, $m=m_1^{(n)}=m_1$, and for all $2\leq j\leq n$, 
$$\nu_{2j-2}^{(2)}=\nu_{2j-2}^{(1)}=\left\lfloor \frac{\akl{2j-2}}{\alk{2j-1}}\la_{2j-1}^{(2)}\right\rfloor-m_1^{(j-1)}\,\cdot$$
By \eqref{eq:klrateven0}, \eqref{eq:klbelongsjiodd} and fact $(2.b)$ of Proposition \ref{prop:welldefeven}, $\left[\nu_{2j-1}^{(2)}\right]^{(l,k)}_{2j-1}=\left[\la_{2j-1}^{(3)}\right]^{(l,k)}_{2j-1}\in \, _2\C^{(k,l)}_{2j-1}$, and since $\left[\nu_{2j-1}^{(1)}\right]^{(l,k)}_{2j-1}=\left[\la_{2j-1}^{(2)}\right]^{(l,k)}_{2j-1}\in \, _2\C^{(k,l)}_{2j-1}$, fact $(2.a)$ of Proposition \ref{prop:welldefeven} and fact $(3,a)$ Proposition \ref{prop:welldefodd} yield $n_1^{(j-1)}=m_2^{(j)}$.\\
Assume that for some $n-1\geq j\geq 2$, $n_i^{(j-2)}=m_{i+1}^{(j-1)}$ and $\left[\nu_{2j-3}^{(i+1)}\right]_{2j-3}^{(l,k)}= \left[\la_{2j-3}^{(i+2)}\right]^{(l,k)}_{2j-3}\in \, _{i+1}\C^{(l,k)}_{2j-3}$ for $1\leq i \leq 2j-3$. Then, using fact $(2)$ of Propositions \ref{prop:welldefeven} and \ref{prop:welldefodd}, we recursively retrieve that $n_i^{(j-1)}=m_{i+1}^{(j)}$  and $\left[\nu_{2j-1}^{(i+1)}\right]_{2j-1}^{(l,k)}= \left[\la_{2j-1}^{(i+2)}\right]^{(l,k)}_{2j-1}\in \, _{i+1}\C^{(l,k)}_{2j-1}$ for $2\leq i \leq 2j-2$. Finally, 
$$n_{2j-1}^{(j-1)}=\frac{\nu_{2j-1}^{(2j-1)}}{\alk{2j-1}}=\frac{\la_{2j-1}^{(2j)}}{\alk{2j-1}}=m_{2j}^{(j)}\,,$$
and $(0)_{u=1}^{2j-1}=\left[\nu_{2j-1}^{(2j)}\right]_{2j-1}^{(l,k)}= \left[\la_{2j-1}^{(2j+1)}\right]^{(l,k)}_{2j-1}\in \, _{2j}\C^{(l,k)}_{2j-1}$, and the heredity on $j$ holds. 
\end{enumerate}
\subsubsection{The case $(k,1)$}
\begin{enumerate}
\item By \eqref{eq:k1belongsjiodd}, and \eqref{eq:k1rat0}, 
for $1\leq j\leq n$,
\begin{align*}
p_1\left(\left[\la_{2j+1}^{(2)}\right]^{(1,k)}_{2j+1} \right)=0\cdot p_2\left(\left[\la_{2j+1}^{(2)}\right]^{(1,k)}_{2j+1} \right)&= 0\cdot\left[\la_{2j}^{(2)}\right]^{(k,1)}_{2j}= \left[\left\lceil \frac{\ak{2j+2}}{\ak{2j}}\la_{2j}^{(2)}\right\rceil\right]^{(k,1)}_{2j+2}\\
\Longleftrightarrow\quad \la_{2j+1}^{(2)}= \left\lceil \frac{\ak{2j+2}}{\ak{2j}}\la_{2j}^{(2)}\right\rceil+ \la_{2j}^{(2)}&=\left\lceil \frac{\al{2j+1}}{\ak{2j}}\la_{2j}^{(2)}\right\rceil\,\cdot
\end{align*}
Hence, using fact $(3.a)$ of Proposition \ref{prop:welldefodd1}, 
$$\la_{2j+1}^{(1)}-\left\lceil \frac{\al{2j+1}}{\ak{2j}}\la_{2j}^{(1)}\right\rceil = \la_{2j+1}^{(1)}-\la_{2j+1}^{(2)} = m_1^{(j)}\,\cdot$$
Thus, $m=m_1^{(n)}=m_1$, and since $\la_{1}^{(1)}=m_1^{(0)}$, we have for all $1\leq j\leq n$, 
$$\nu_{2j-1}^{(2)}=\left\lfloor \frac{\al{2j-1}}{\ak{2j}}\la_{2j}^{(2)}\right\rfloor-m_1^{(j-1)}=\la_{2j}^{(2)}+\left\lfloor \frac{\ak{2j-2}}{\ak{2j}}\la_{2j}^{(2)}\right\rfloor-m_1^{(j-1)}\,\cdot$$
By \eqref{eq:k1rat0}, this is equivalent to saying that $0\cdot\left[\eta_{2j-1}^{(2)}\right]^{(1,k)}_{2j-1}=0\cdot\left[\nu_{2j-1}^{(2)}\right]^{(1,k)}_{2j-1}$ is the $(m_1^{(j-1)})^{th}$ sequence that precedes $\left[\mu_{2j+1}^{(2)}\right]^{(1,k)}_{2j+1}$ in $_{+1}\C^{(1,k)}_{2j-1}$, and using fact $(2.b.i)$ of Proposition \ref{prop:welldefodd1}, that sequence is equal to $\left[\mu_{2j+1}^{(3)}\right]^{(1,k)}_{2j+1}$. Therefore,  by applying $p_1^-$,  \eqref{eq:k1belongsjieven}, fact $(2.a.i)$ of Proposition \ref{prop:welldefodd1} and \eqref{eq:k1belongsjiodd}, we obtain that $\nu_{2j}^{(2)}=\la_{2j}^{(3)}$. Since $\nu_{2j}^{(1)}=\la_{2j}^{(2)}$, facts $(3.a)$ of Proposition \ref{prop:welldefeven1} and $(2.a.ii)$ of Proposition \ref{prop:welldefodd1} yield $n_1^{(j)}=m_2^{(j)}$. Moreover, we have for $j=1$ 
$$n_{2}^{(1)}=\frac{\nu_{2}^{(2)}}{\akl{2}}=\frac{\la_{2}^{(3)}}{\akl{2}}=m_{3}^{(1)}\,\cdot$$
Assuming that for some $n-1\geq j>1$, $n_i^{(j-1)}=m_{i+1}^{(j-1)}$ and $\left[\nu_{2j-2}^{(i+1)}\right]_{2j-2}^{(k,1)}= \left[\la_{2j-2}^{(i+2)}\right]^{(k,1)}_{2j-2}\in \, _{\lceil (i+1)/2\rceil }\C^{(k,1)}_{2j-2}$ for $1\leq i \leq 2j-2$.
As $0\cdot\left[\eta_{2j-1}^{(2)}\right]^{(1,k)}_{2j-1}=\left[\mu_{2j+1}^{(3)}\right]^{(1,k)}_{2j+1}$, $\nu_{2j}^{(1)}=\la_{2j}^{(2)}$  and $n_2^{(j-1)}=m_3^{(j-1)}$, by using facts $(2.b.ii)$ of Propositions \ref{prop:welldefeven1} and \ref{prop:welldefodd1}, we obtain $\nu_{2j}^{(3)}=\la_{2j}^{(4)}$, and then $0\cdot\left[\nu_{2j-1}^{(3)}\right]^{(1,k)}_{2j-1}=\left[\la_{2j+1}^{(4)}\right]^{(1,k)}_{2j+1}$. We thus retrieve by facts $(2)$ that $n_2^{(j)}=m_3^{(j)}$. Finally, by definitions in facts $(2.b)$, $0\cdot\left[\eta_{2j-1}^{(3)}\right]^{(1,k)}_{2j-1}=\left[\mu_{2j+1}^{(4)}\right]^{(1,k)}_{2j+1}$, and using facts $(2.b.i)$, we have $0\cdot\left[\eta_{2j-1}^{(4)}\right]^{(1,k)}_{2j-1}=\left[\mu_{2j+1}^{(5)}\right]^{(1,k)}_{2j+1}$. Then $\nu_{2j}^{(4)}=\la_{2j}^{(5)}$ and facts $(2.a.ii)$ imply that $n_3^{(j)}=m_4^{(j)}$. Iteration of this process leads to $n_i^{(j)}=m_{i+1}^{(j)}$ and $\left[\nu_{2j}^{(i+1)}\right]_{2j}^{(k,1)}= \left[\la_{2j}^{(i+2)}\right]^{(k,1)}_{2j}\in \, _{\lceil (i+1)/2\rceil }\C^{(k,1)}_{2j}$ for $2\leq i \leq 2j-1$. Finally, 
$$n_{2j}^{(j)}= \frac{\nu_{2j}^{(2j)}}{\ak{2j}} =\frac{\la_{2j}^{(2j+1)}}{\ak{2j}}=m_{2j+1}^{(j)}\,,$$
and $(0)_{u=1}^{2j}=\left[\nu_{2j}^{(2j+1)}\right]_{2j}^{(k,l)}= \left[\la_{2j}^{(2j+2)}\right]^{(k,l)}_{2j}\in \, _{2j+1}\C^{(k,l)}_{2j}$, and the heredity on $j$ holds. 
\item By definition and fact $(3)$ of Proposition \ref{prop:welldefeven1}, $\nu_{2j-1}^{(1)}=\la_{2j-1}^{(2)}$ for $n\geq j\geq 1$. In particular for $j=1$, 
$$n_1^{(0)}=\nu_{1}^{(1)}=\la_{1}^{(2)}= m_2^{(1)}\,\cdot$$
Moreover, by fact $(4)$ of Proposition \ref{prop:welldefeven1} and definition of $p_1$, for $1\leq j\leq n$,
$$\la_{2j}^{(1)}-\left\lceil\frac{\ak{2j}}{\al{2j-1}}\la_{2j-1}^{(2)}\right\rceil=m_1^{(j)}\,\cdot$$
Hence, $m=m_1^{(n)}=m_1$, and for all $2\leq j\leq n$, 
$$\nu_{2j-2}^{(2)}=\nu_{2j-2}^{(1)}=\left\lfloor \frac{\ak{2j-2}}{\al{2j-1}}\la_{2j-1}^{(2)}\right\rfloor-m_1^{(j-1)}\,\cdot$$
Therefore, $p_2(\left[\nu_{2j-1}^{(2)}\right]^{(1,k)}_{2j-1})=\left[\nu_{2j-2}^{(2)}\right]^{(k,1)}_{2j-2}$ is the $(m_1^{(j-1)})$ sequence that precedes $p_2(\left[\mu_{2j-1}^{(2)}\right]^{(1,k)}_{2j-1})$ in $\C^{(k,1)}_{2j-2}$, so that by fact $(3.b)$ of Proposition \ref{prop:welldefeven1}, \eqref{eq:k1belongsjieven} and \eqref{eq:k1belongsjiodd}, $\nu_{2j-1}^{(2)}=\la_{2j-1}^{(3)}$. Then, as $\nu_{2j-1}^{(2)}=\la_{2j-1}^{(3)}$, by facts $(3.a)$ of Propositions \ref{prop:welldefeven1} and \ref{prop:welldefodd1}, $n_{1}^{(j-1)}=m_2^{(j)}$. \\
Assume that for some $n-1\geq j\geq 2$, $n_i^{(j-2)}=m_{i+1}^{(j-1)}$ and $\left[\nu_{2j-3}^{(i+1)}\right]_{2j-3}^{(1,k)}= \left[\la_{2j-3}^{(i+2)}\right]^{(1,k)}_{2j-3} \in \, _{+(\lceil(i-1)/2\rceil)}\C^{(1,k)}_{2j-3-2\lceil(i-1)/2\rceil}$ for $1\leq i \leq 2j-3$. As, $\nu_{2j-1}^{(2)}=\la_{2j-1}^{(3)}$, we have by definitions in fact $(2.b)$ that $\left[\eta_{2j-1}^{(2)}\right]_{2j-1}^{(1,k)}=\left[\mu_{2j-1}^{(3)}\right]_{2j-1}^{(1,k)} \in \, _{+(1)}\C^{(1,k)}_{2j-3}$, and since $n_1^{(j-2)}=m_{2}^{(j-1)}$, by fact $(2.b.i)$, $\left[\eta_{2j-1}^{(3)}\right]_{2j-1}^{(1,k)}=\left[\mu_{2j-1}^{(4)}\right]_{2j-1}^{(1,k)}\in \, _{+(1)}\C^{(1,k)}_{2j-3}$. Hence, $\nu_{2j-1}^{(3)}=\la_{2j-1}^{(4)}$, and we recover by facts $(2.a.ii)$ that $n_2^{(j-1)}=m_{3}^{(j)}$. 
Moreover, since $n_2^{(j-2)}=m_{3}^{(j-1)}$, by using facts $(2.b.ii)$, \eqref{eq:k1belongsjieven} and \eqref{eq:k1belongsjieven}, we obtain $\nu_{2j-1}^{(4)}=\la_{2j-1}^{(5)}$, and for $j>2$, using facts $(2)$, we have $n_3^{(j-1)}=m_{4}^{(j)}$. Iterating this process, we have 
$n_{i}^{(j-1)}=m_{i+1}^{(j)}$ and $\nu_{2j-1}^{(i+1)}=\la_{2j-1}^{(i+2)}$ for $2\leq i\leq 2j-2$. Finally, 
$$n_{2j-1}^{(j-1)}=\frac{\nu_{2j-1}^{(2j-1)}}{\al{2j-1}}=\frac{\la_{2j-1}^{(2j)}}{\al{2j-1}}=m_{2j}^{(j)}\,,$$
and $(0)_{u=1}^{2j-1}=\left[\nu_{2j-1}^{(2j)}\right]_{2j-1}^{(l,k)}= \left[\la_{2j-1}^{(2j+1)}\right]^{(l,k)}_{2j-1}\in \, _{2j}\C^{(l,k)}_{2j-1}$, and the heredity on $j$ holds. 
\end{enumerate}
\subsubsection{The case $(1,k)$}
The proofs of $(1)$ and $(2)$ are respectively similar to the proofs of $(2)$ and $(1)$ in the case $(k,1)$ using Propositions \ref{prop:welldefeven2} and \ref{prop:welldefodd2}.
\subsection{Equivalence for $\Phi^{(k,l)}$}
Define the function $\Psi^{(k,l)}$ from $\Lkl$ to $\Llk\times \Zz$ as follows. For $\la=(\la_j)_{j=1}^{2t}\in \Lkl$, if $t=1$, set $\Psi^{(k,l)}(\la)=((0,0),\la_1)$. Otherwise, set
$$
\begin{cases}
 m= \la_1-\left\lfloor s_0^{(k,l)} \la_2\right\rfloor-1\,,\\
 \nu_{2j-1}=\la_{2j} \ \ \text{for} \ \ 1\leq j\leq t-1\\
 \nu_{2j}= \left\lceil  s_2^{(k,l)}\la_{2j}\right\rceil + \left\lfloor s_0^{(k,l)} \la_{2j+2}\right\rfloor - \la_{2j+1}\ \ \text{for} \ \ 1\leq j<t-1 \\
 \nu_{2t-2}=\left\lceil  s_2^{(k,l)}\la_{2t-2}\right\rceil-1-\la_{2t-1}.
\end{cases}
$$
If $\nu_{2t-2}=0$, define $\nu=(\nu_j)_{j=1}^{2t-2}$, otherwise, define $\nu_{2t-1}=\nu_{2t}=0$, and $\nu=(\nu_j)_{j=1}^{2t}$. Finally set 
$\Psi^{(k,l)}(\la)=(\nu,m)$. Observe that in the case where $\nu\neq (0,0)$, it has the same number of parts as $\la$ if and only if its two last part equals $0$, otherwise $\nu$ has two less parts than $\la$.\\
We now defined the inverse map by the following. Let $m\in \Zz$ and $\nu=(\nu_j)_{j=1}^{2t}$. If $\nu=(0,0)$, set 
$\la=(m,0)$. Otherwise, set $t'=t$ if $\nu_{2t-1}>0$, and $t'=t-1$ if $\nu_{2t-1}=0$. Note that $t'\geq 1$. Then set 
$$
\begin{cases}
\la_{2j}=\nu_{2j-1} \ \ \text{for} \ \ 1\leq j\leq t'\\
 \la_1= \left\lfloor s_0^{(k,l)} \nu_1\right\rfloor+1+m\,,\\
 \la_{2j+1}= \left\lceil  s_2^{(k,l)}\nu_{2j-1}\right\rceil + \left\lfloor s_0^{(k,l)} \nu_{2j+1}\right\rfloor - \nu_{2j}\ \ \text{for} \ \ 1\leq j<t'\\
 \la_{2t'+1}=\left\lceil  s_2^{(k,l)}\nu_{2t'-1}\right\rceil-1-\nu_{2t'}.
\end{cases}
$$
We finally set $\la_{2t'+2}=0$ and $\la=(\la_j)_{j=1}^{2t'+2}$, and one can check that $\la$ is the reverse image of $(\nu,m)$ by $\Psi^{(k,l)}$.
\begin{lem}\label{lem:psiinfsize}
The map $\Psi^{(k,l)}$ describes a bijection from $\Lkl$ to $\Llk\times \Zz$, and for $\Psi^{(k,l)}(\la)=(\nu,m)$, we have $\la_e=\nu_o$ and $\la_o = m+ l\cdot\nu_o-\nu_e$.
\end{lem}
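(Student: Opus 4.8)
\textbf{Proof sketch of Lemma \ref{lem:psiinfsize}.}

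The plan is to verify three things: that $\Psi^{(k,l)}$ maps $\Lkl$ into $\Llk\times\Zz$, that the map defined afterwards is a genuine two-sided inverse, and that the stated weight transformation holds. For the first point I would take $\la=(\la_j)_{j=1}^{2t}\in\Lkl$ with $t\geq 2$ and examine the inequalities \eqref{eq:klseqbis} that characterize membership in $\Lkl$. Recall from \eqref{eq:ratio} and Proposition \ref{prop:propklgen} that $\skl{n}=\akl{n}-\skl{0}\alk{n-1}$ and $(\slk{0})^{-1}=\skl{2}$. Using Remark \ref{rem:rat}, the constraints $\la_{2i-1}>\skl{0}\la_{2i}$ and $\la_{2i}>\slk{0}\la_{2i+1}$ translate into $\la_{2i-1}\geq\lfloor\skl{0}\la_{2i}\rfloor+1$ and $\lceil\skl{2}\la_{2i}\rceil-1\geq\la_{2i+1}$ (the latter since $\skl{2}=(\slk{0})^{-1}$). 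These are exactly the conditions making $\nu_{2j},\nu_{2t-2}\geq 0$; indeed $\nu_{2j}=\lceil\skl{2}\la_{2j}\rceil+\lfloor\skl{0}\la_{2j+2}\rfloor-\la_{2j+1}\geq(\la_{2j+1}+1)+\ldots$ after substituting the known inequalities. Then I must check that the resulting $\nu$ lies in $\Llk$: by construction $\nu_{2j-1}=\la_{2j}$ and $\nu_{2j}$ is defined so that $\nu_{2j-1}>\slk{0}\nu_{2j}\cdot$-type and $\nu_{2j}>\skl{0}\nu_{2j+1}$-type inequalities hold — here one uses the symmetry $\skl{2n-1}=\slk{2n-1}$, $k\skl{2n}=l\slk{2n}$ noted after \eqref{eq:formulek1}, together with the monotonicity of the sequences $(\skl{2n})$ and $(\skl{2n-1})$, to pass the ratio condition from $(k,l)$ to $(l,k)$. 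The case distinction on whether $\nu_{2t-2}=0$ (hence $\nu$ has two fewer parts) or $\nu_{2t-2}>0$ (hence $\nu$ retains the two appended zeros) is handled exactly as in the Remark following Theorem \ref{theo:lecturehall}, where one adjusts by adding parts equal to $0$.

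Next I would verify that the explicitly given reverse map is indeed a two-sided inverse. The point is purely arithmetical: given $(\nu,m)$, one recovers $t'$ as the number of ``active'' odd-indexed parts, sets $\la_{2j}=\nu_{2j-1}$, and solves the triangular system for the odd-indexed $\la$'s starting from $\la_1=\lfloor\skl{0}\nu_1\rfloor+1+m$ and propagating via $\la_{2j+1}=\lceil\skl{2}\nu_{2j-1}\rceil+\lfloor\skl{0}\nu_{2j+1}\rfloor-\nu_{2j}$. Substituting the forward formulas into the backward formulas (and vice versa) gives back the original data; the only subtlety is tracking the ``$\pm 1$'' corrections and the truncation at $t'$, which I would do by the same Remark \ref{rem:rat} manipulations — checking that $\lceil\skl{2}\la_{2j}\rceil$ and $\lfloor\skl{0}\la_{2j}\rfloor$ are mutually inverse in the appropriate sense when applied to the integers arising here. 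One should also confirm that $\la_{2t'+1}>0$ and $\la_{2t'+2}=0$, so the output genuinely sits in $\Lkl$, and that applying $\Psi^{(k,l)}$ to it returns $(\nu,m)$; the degenerate branch $\nu=(0,0)\mapsto\la=(m,0)$ is immediate.

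Finally, for the weight statement I would compute $|\la|_o$ and $|\la|_e$ directly from the formulas. Since the odd-indexed parts of $\la$ become the even-indexed parts of $\nu$ up to index shift — precisely $\nu_{2j-1}=\la_{2j}$ — one immediately gets $\la_e=\sum_j\la_{2j}=\sum_j\nu_{2j-1}=\nu_o$. For the odd weight, summing $\la_1=\lfloor\skl{0}\nu_1\rfloor+1+m$ together with $\la_{2j+1}=\lceil\skl{2}\nu_{2j-1}\rceil+\lfloor\skl{0}\nu_{2j+1}\rfloor-\nu_{2j}$ telescopes: the $\lfloor\skl{0}\nu_{2j+1}\rfloor$ terms and $\lceil\skl{2}\nu_{2j+1}\rceil$ terms combine across consecutive indices. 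Here is where the recursion constants enter — one needs $\lceil\skl{2}x\rceil+\lfloor\skl{0}x\rfloor$ to relate to $l\cdot x$ for the relevant integers $x=\nu_{2j-1}$; this follows because $\skl{2}+\skl{0}$ is governed by the recurrence \eqref{eq:klseqdef} (the even step has multiplier $l$, i.e. $\skl{2}=l\skl{1}-\skl{0}$ with $\skl{1}=1$), combined with Remark \ref{rem:sumreal} to handle the floor/ceiling of a sum that is an integer. After telescoping one is left with $\la_o=m+l\sum_j\nu_{2j-1}-\sum_j\nu_{2j}=m+l\nu_o-\nu_e$, as claimed. The main obstacle I anticipate is not conceptual but bookkeeping: correctly pairing each floor with the matching ceiling across the index shift and keeping the ``$+1$'' and ``$-1$'' corrections aligned with the boundary terms $\la_1$ and $\la_{2t'+1}$, so that the telescoping is exact and no stray constant survives. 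Everything else reduces to the arithmetic of $\lfloor\cdot\rfloor$, $\lceil\cdot\rceil$ packaged in Remarks \ref{rem:sumreal} and \ref{rem:rat} and the two symmetry identities for $\skl{n}$ versus $\slk{n}$.
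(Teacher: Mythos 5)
Your proposal follows essentially the same route as the paper's proof: translate the ratio constraints via Remark \ref{rem:rat} into the floor/ceiling inequalities that are exactly the non-negativity conditions built into the definition of $\Psi^{(k,l)}$, and compute the odd weight by telescoping with the identity $\lceil \skl{2}x\rceil+\lfloor \skl{0}x\rfloor=lx$ obtained from $\skl{0}+\skl{2}=l\skl{1}=l$ and Remark \ref{rem:sumreal}. The only difference is that you spell out the verification of the explicit inverse map, which the paper leaves as "one can check''.
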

By setting 
$$\mathcal{F}_{\Lkl}(x,y)=\sum_{\la \in \Lkl_{n}} x^{|\la|_o}y^{|\la|_e}\,,$$
Lemma \ref{lem:psiinfsize} yields
\begin{align}
 \label{eq:infkltolk} \mathcal{F}_{\Lkl}(x,y)&=\frac{1}{1-x}\cdot \mathcal{F}_{\Llk}(x^{l}y,x^{-1})\,\cdot
\end{align} 
In the recursive process, as $\akl{i}=i$ for $i\in \{0,1\}$, and $l\alk{i}-\akl{i-1}=\akl{i+1}$ for $i\geq 1$, the transformations occurring in the generating functions are
\begin{align*}
 x^{\alk{i}}y^{\akl{i-1}}\mapsto x^{\akl{i+1}}y^{\alk{i}}& \Longleftrightarrow \blk{i}\mapsto\bkl{i+1} &\text{for}\quad 1\leq i\,,\\
 x = x^{\akl{1}}y^{\alk{0}}& \Longleftrightarrow \text{addition of parts }\bkl{1}  &\quad\text{in \eqref{eq:infkltolk}}\,\cdot
 \end{align*}
 Then, by iterating \eqref{eq:infkltolk}, for $(k,l)$ then $(l,k)$, we have for $n\in \Zu$
 \begin{align*} \mathcal{F}_{\Lkl}(x,y)&=\left(\prod_{i=1}^{2n-1}\frac{1}{1-x^{\akl{i}}y^{\alk{i-1}}}\right)\cdot \mathcal{F}_{\Llk}\left(x^{\akl{2n}}y^{\alk{2n-1}},x^{-\akl{2n-1}}y^{-\alk{2n-2}}\right)\\
 &=\left(\prod_{i=1}^{2n}\frac{1}{1-x^{\akl{i}}y^{\alk{i-1}}}\right)\cdot \mathcal{F}_{\Lkl}\left(x^{\akl{2n+1}}y^{\alk{2n}},x^{-\akl{2n}}y^{-\alk{2n-1}}\right)
 \end{align*}
We then obtain Theorem \ref{theo:klseqinf} when $n$ tends to $\infty$.
 The analogous result for Theorem \ref{theo:psiphifin} is the following.
\begin{theo}\label{theo:psiphiinf}
For sequence $(m_j)_{j\geq 1}$ of non-negative integers with finitely many positive, we have
\begin{equation}\label{eq:psiphiinf}
\Psi^{(k,l)}\left(\Phi^{(k,l)}\left(\prod_{i\geq 1} (\bkl{i})^{m_i}\right)\right)= \left(\Phi^{(l,k)}\left(\prod_{i\geq 2} (\blk{i-1})^{m_i}\right),m_1\right)
\end{equation} 
\end{theo}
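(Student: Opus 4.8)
The plan is to prove \eqref{eq:psiphiinf} by exhibiting a common recursive mechanism in both the map $\Phi^{(k,l)}$ and the recursive bijection $\Psi^{(k,l)}$, exactly in the spirit of the finite case treated in Theorem \ref{theo:psiphifin}. Write $\la = (\la_j^{(1)})_{j=1}^{2t} = \Phi^{(k,l)}(\prod_{i\geq 1}(\bkl{i})^{m_i})$ and $\Psi^{(k,l)}(\la) = (\nu,m)$, using the notation of Section \ref{part:interinf} for $\la$ (the intermediate parts $\la_j^{(i)}$, the counters $m_i^{(j)}$, and in the cases $(k,1),(1,k)$ the auxiliary pairs $\mu_{2j\mp1}^{(i)}$), and the parallel notation $\nu_j^{(i)}$, $n_i^{(j)}$ (and $\eta$) for $\nu$. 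The goal is to show, by induction on $j$, that for every $j\geq 1$ and every admissible $i$,
\[
\nu_{2j-1}^{(i)} = \la_{2j-1}^{(i+1)} \qquad\text{and}\qquad n_i^{(j)} = m_{i+1}^{(j)}\,,
\]
together with the matching statement $m = m_1$ and, where relevant, $\la_{2j}^{(1)}=\la_{2j}^{(2)}$. Once this is established, the image of $\Phi^{(k,l)}$ applied to the word $\prod_{i\geq 2}(\blk{i-1})^{m_i}$ — which inserts parts $\blk{i-1}$ with multiplicities $m_i$, i.e. uses counters shifted by one index and one sequence-type swap — is exactly $\nu$, which yields \eqref{eq:psiphiinf}.

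First I would handle the degenerate case $t=1$, where $\la = (\la_1,0)$ with $\la_1 = \sum_i m_i\akl{i}$ read off from the insertions into $(\la_1,\la_2)$ never increasing $\la_2$ (so all $m_i=0$ for $i\geq 2$), hence $\Phi^{(k,l)}(\prod)= (m_1,0)$ and $\Psi^{(k,l)}(m_1,0) = ((0,0),m_1)$ by definition; this matches the right-hand side since $\Phi^{(l,k)}(\emptyset)$ is the empty word giving $(0,0)$. For $t\geq 2$, the core of the argument is the translation of the defining formulas of $\Psi^{(k,l)}$ into statements about admissible words via Proposition \ref{prop:klratioinfini} (case $k,l\geq 2$) and Proposition \ref{prop:1kratiofin} (cases $(k,1),(1,k)$): the relation $\nu_{2j-1}=\la_{2j}$ together with the floor/ceiling identities $\nu_{2j} = \lceil s_2^{(k,l)}\la_{2j}\rceil + \lfloor s_0^{(k,l)}\la_{2j+2}\rfloor - \la_{2j+1}$ become, under $[\cdot]^{(k,l)}$ resp. $[\cdot]^{(l,k)}$, statements that $0\cdot[\nu_{2j-1}^{(2)}]$ is the $(m_1^{(j-1)})^{\mathrm{th}}$ predecessor of $_2[\la_{2j}^{(2)}]$ in the appropriate set of admissible words — which, by \eqref{eq:klbelongsji}–\eqref{eq:klbelongsjieven} resp. their $(k,1),(1,k)$ analogues and fact $(2.b)$ of Propositions \ref{prop:welldefinf}, \ref{prop:welldefinf1}, \ref{prop:welldefinf2}, identifies $[\nu_{2j}^{(2)}]$ with $[\la_{2j}^{(3)}]$. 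Combined with $\nu_{2j}^{(1)}=\la_{2j}^{(2)}$ and fact $(2.a)$, this gives $n_1^{(j)} = m_2^{(j)}$; then one propagates up the $i$-index using fact $(1)$ of the relevant well-definedness proposition in Section \ref{part:interinf}, exactly mirroring the block structure of the finite-case proof in Section \ref{sec:equiv}. The base $j=1$ uses $\nu_1^{(1)}=\lfloor s_0^{(k,l)}\la_2\rfloor - \la_1 = $ (number of $\bkl{1}$ inserted) $-\,m_1$, hence $m = m_1^{(1)} = m_1$ and the whole chain starts from $n_1^{(1)}=m_2^{(1)}$.

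For the cases $(k,1)$ and $(1,k)$ the same scheme applies, with the added bookkeeping of the auxiliary pairs $\mu$: one must verify that the $\eta$-pairs constructed from $\nu$ coincide with the $\mu$-pairs constructed from $\la$ with shifted indices, which follows from the projection identities \eqref{eq:k1proofinfs1}, \eqref{eq:k1proofinfs2}, \eqref{eq:k1proofinf2} (resp. \eqref{eq:1kproofinf1}–\eqref{eq:1kproofinf2}) together with the characterizations \eqref{eq:1kratinf0}–\eqref{eq:inl1k}; in fact, as the excerpt's final sentence notes, the $(1,k)$ case is obtained from the $(k,1)$ case by interchanging the roles of the two coordinates and of the two sequences, using Propositions \ref{prop:welldefinf2} and (appropriately) the $(k,1)$ counterparts. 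The main obstacle I anticipate is the termination/finiteness issue specific to the infinite setting: one must argue that the induction on $j$ actually stops — that is, that $t$ (the number of pairs produced by $\Phi^{(k,l)}$) and $t'$ (the analogous quantity for $\Psi^{(k,l)}$, determined by whether $\nu_{2t-1}>0$) are related correctly, so that $\nu$ has the right length, including the delicate case where $\nu$ has two fewer parts than $\la$ (when $\nu_{2t-2}=0$). This is handled by the same device as in Section \ref{part:interinf}: picking $n$ large enough that $\bkl{i}$ does not occur in $\nu$ for $i\geq 2n+1$, so that all the sequences $[\la_{2j-1}^{(i)}]$ are eventually $0$ and the induction is finite, and then checking that the ``stop'' condition $\la_{2j}^{(1)}=0$ for $\Phi$ corresponds precisely to $\nu_{2j-1}=0$, i.e. $\lceil s_2^{(k,l)}\la_{2j-2}\rceil - 1 = \la_{2j-1}$, which is the last line of the definition of $\Psi^{(k,l)}$. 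The remaining steps are the routine floor/ceiling manipulations already packaged in Remark \ref{rem:rat} and Lemma \ref{lem:divide}, so no genuinely new estimate is needed.
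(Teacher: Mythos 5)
Your proposal follows essentially the same route as the paper's own proof: reduce \eqref{eq:psiphiinf} to the identities $\nu_{2j-1}^{(i)}=\la_{2j}^{(i+1)}$ and $n_i^{(j)}=m_{i+1}^{(j)}$ (note the target is $\la_{2j}^{(i+1)}$, not $\la_{2j-1}^{(i+1)}$ as in your first display — the odd parts of $\nu$ track the \emph{even} parts of $\la$), treat $t=1$ separately, translate the floor/ceiling formulas of $\Psi^{(k,l)}$ into predecessor statements about admissible words via Proposition \ref{prop:klratioinfini} and its $(k,1)$, $(1,k)$ analogues, and propagate with facts $(1.a)$, $(1.b)$ and $(2)$ of Propositions \ref{prop:welldefinf}--\ref{prop:welldefinf2}, with the $\mu/\eta$ bookkeeping in the degenerate cases. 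The one point to fix is the direction of the outer induction: it must run \emph{downward} on $j$ from $j=t$ (where all counters vanish) rather than upward from $j=1$, because climbing the $i$-ladder at pair $j$ via fact $(1.b)$ consumes the counts $n_{i}^{(j+1)}$ attached to pair $j+1$; your device of choosing $n$ large enough that $\bkl{i}$ is absent for $i\geq 2n+1$ is exactly what makes that descending induction start.
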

\begin{proof}
Let $(m_j)_{j\geq 1}$ be a sequence of non-negative integers with finitely many positive integers, and set $$\la = \Phi^{(k,l)}\left(\prod_{i\geq 1} (\bkl{i})^{m_i}\right)=(\la_j^{(1)})_{j=1}^{2t}$$ as defined in Section \ref{part:interinf}. Note that $t=1$ if and only if $m_i=0$ for $i\geq 2$, in which case $\la = (m_1,0)$. In that case, as $\nu=(0,0)$, \eqref{eq:psiphiinf} follows.
Suppose now that $t\geq 2$, i.e there exists $i\geq 2$  such that $m_i>0$. Let $\Psi^{(k,l)}=(\nu,m)$ and write $\nu=(\nu_j)_{j=1}^{2t}$ with $\nu_1>0=\nu_{2t-1}=\nu_{2t}$ (with eventually $\nu_{2t-2}=0$). We here refer to the notations of Section \ref{part:interinf} for $\la$ with $\la_{j}^{(i)}$ and $m_i^{(j)}$ (and $\mu_{j}^{(i)}$ for the cases $(k,1)$ and $(1,k)$), while using for $\nu$ the notations $\nu_{j}^{(i)}$ and $n_i^{(j)}$ instead of $\la_{j}^{(i)}$ and $m_i^{(j)}$ (and $\eta_{j}^{(i)}$ instead of $\mu_{j}^{(i)}$ for the cases $(k,1)$ and $(1,k)$). For the proof of \eqref{eq:psiphiinf}, we show that, for $1\leq j\leq t$ and $1\leq i$, $\nu_{2j-1}^{(i)}=\la_{2j}^{(i+1)}$ and $n_i^{(j)}=m_{i+1}^{(j)}$. This already holds for $j=t$, as $0=n_i^{(t)}=m_{i+1}^{(t)}$ for $i\geq 1$.
The proof is the same as the proof of \eqref{eq:psiphieven} for all the cases. We here illustrate the case $k,l\geq 2$.\\\\
By fact $(2)$ of Proposition \ref{prop:welldefinf}, we then have, for $1\leq j\leq t$, that $\la_{(2j)}^{(2)}=\nu_{2j-1}^{(1)}$.
Also, by \eqref{eq:klratinf0}, $$\left[\lfloor s_0^{(k,l)} \la_{2j}^{(2)}\rfloor+1\right]^{(k,l)}=0\cdot \left[\la_{2j}^{(2)}\right]^{(l,k)}=\left[\la_{2j-1}^{(2)}\right]^{(k,l)}\,\cdot$$ 
Hence, by fact $(2.a)$ of Proposition \ref{prop:welldefinf},
$$\la_{2j-1}^{(1)}-\lfloor s_0^{(k,l)} \la_{2j}^{(2)}\rfloor-1=\la_{2j-1}^{(1)}-\la_{2j-1}^{(2)}=m_1^{(j)}\,\cdot$$
Thus, $m=m_1^{(n)}=m_1$, and since $\la_{2t-1}^{(1)}=m_1^{(t)}$, we have for all $1\leq j\leq t-1$, 
$$\nu_{2j}^{(2)}=\nu_{2j}^{(1)}=\left\lceil \skl{2}\la_{2j}^{(2)}\right\rceil-1-m_1^{(j-1)}\,\cdot$$
By \eqref{eq:klratinf0}, this is equivalent to saying that $0\cdot\left[\nu_{2j}^{(2)}\right]^{(k,l)}$ is the $(m_1^{(j-1)})^{th}$ sequence that precedes $_2\left[\la_{2j}^{(2)}\right]^{(l,k)}$ in $_2\C^{(l,k)}$. Using \eqref{eq:klbelongsji} and fact $(1.b)$ of Proposition \ref{prop:welldefinf}, we then have to $\left[\nu_{2j}^{(2)}\right]^{(l,k)}=\left[\la_{2j}^{(3)}\right]^{(l,k)}\in \, _2\C^{(l,k)}$. Therefore, since $\nu_{2j-1}^{(1)}=\la_{2j}^{(2)}$, by fact $(1.a)$ and fact $(2.a)$ Proposition \ref{prop:welldefinf}, $n_1^{(j)}=m_2^{(j)}$.\\
Assume now that, for some $1\leq j<t$, $\nu_{2j+1}^{(i)}=\la_{2j+2}^{(i+1)}$ and $n_i^{(j+1)}=m_{i+1}^{(j+1)}$ for $i\geq 1$. 
We already have $\nu_{2j-1}^{(1)}=\la_{2j}^{(2)}$, $\nu_{2j-1}^{(2)}=\la_{2j}^{(3)}$ and $n_1^{(j)}=m_{2}^{(j)}$. As $n_1^{(j-1)}=m_{2}^{(j-1)}$, we thus have by fact $(1.b)$ that $\nu_{2j-1}^{(3)}=\la_{2j}^{(4)}$, and by fact $(1.a)$, $n_2^{(j)}=m_{3}^{(j)}$. By iterating the process, the heredity on $j$ holds. In particular, for $j=1$, we retrieve \eqref{eq:psiphiinf}.
\end{proof}
\[\]
\appendix
\section{Proof of technical lemmas}
\subsection{Proof of Lemma \ref{lem:divide}}
Let $\frac{1}{2}\geq x<1$
\begin{enumerate}
 \item For $a\in \Z$,
 \begin{align*}
0<(1-x)a-xb\leq 1 &\Longleftrightarrow \frac{xb}{1-x}<a\leq 1+\frac{x(b+1)}{1-x} \\
&\Longleftrightarrow 1+\left\lfloor \frac{xb}{1-x}\right\rfloor\leq a\leq 1+\left\lfloor \frac{x(b+1)}{1-x} \right\rfloor\,,
 \end{align*}
 and \eqref{eq:divide+} holds. The proof of \eqref{eq:divide++} comes from the equivalence
 $$x<(1-x)a-xb\leq 1\Longleftrightarrow 0<a-\frac{x(b+1)}{1-x}\leq 1$$
 Now suppose that $(a,b)\in S_x^+$. This is equivalent to $1+\left\lfloor \frac{xb}{1-x}\right\rfloor\leq a\leq 1+\left\lfloor \frac{x(b+1)}{1-x} \right\rfloor$. Then, $(a+1,b)\in S_x^+$ if and only if $1+\left\lfloor \frac{xb}{1-x}\right\rfloor< a+1\leq 1+\left\lfloor \frac{x(b+1)}{1-x} \right\rfloor$, so that $1+\left\lfloor \frac{xb}{1-x}\right\rfloor\leq  a < 1+\left\lfloor \frac{x(b+1)}{1-x} \right\rfloor$. Moreover, $(a,b+1)\in S_x^+$ if and only if $1+\left\lfloor \frac{x(b+1)}{1-x} \right\rfloor\leq a \leq 1+\left\lfloor \frac{x(b+1)}{1-x} \right\rfloor$, so that $a = 1+\left\lfloor \frac{x(b+1)}{1-x} \right\rfloor$. 
 The fact that for $(a,b)\in S_x^+$, either $(a+1,b)$ or $(a,b+1)$ belongs  to $S_x^+$ and not both at the same time imply that $(a,b)\mapsto a+b$ is a bijection from $S_x^+$ to $\Z$. Furthermore, 
  \begin{align*}
0<(1-x)a-xb\leq 1 &\Longleftrightarrow b<(1-x)(a+b)\leq 1+b \\
&\Longleftrightarrow b=\left\lceil (1-x)(a+b)\right\rceil-1\,,\\\\
0<(1-x)a-xb\leq 1&\Longleftrightarrow a-1\leq x(a+b)< a \,\\
&\Longleftrightarrow a=\left\lfloor x(a+b)\right\rfloor+1\cdot
\end{align*}
 \item For $a\in \Z$,
 \begin{align*}
0\leq (1-x)a-xb< 1 &\Longleftrightarrow \frac{xb}{1-x}\leq a< 1+\frac{x(b+1)}{1-x} \\
&\Longleftrightarrow \left\lceil \frac{xb}{1-x}\right\rceil\leq a\leq \left\lceil \frac{x(b+1)}{1-x} \right\rceil\,,
 \end{align*}
 and \eqref{eq:divide-} holds. The proof of \eqref{eq:divide++} comes from the equivalence
 $$x\leq (1-x)a-xb\leq 1\Longleftrightarrow 0\leq a-\frac{x(b+1)}{1-x}<1$$  
  Furthermore, 
  \begin{align*}
0\leq (1-x)a-xb< 1 &\Longleftrightarrow b\leq (1-x)(a+b)< 1+b \\
&\Longleftrightarrow b=\left\lfloor (1-x)(a+b)\right\rfloor\,,\\\\
0\leq (1-x)a-xb<1&\Longleftrightarrow a-1< x(a+b)\leq a \,\\
&\Longleftrightarrow a=\left\lceil x(a+b)\right\rceil\cdot
\end{align*}
The remaining of the reasoning is the same as the case $S_x^+$. 
\end{enumerate}
\subsection{Proof of Lemma \ref{lem:prechoice}}
By definition, there exists a unique bijection $\alpha :S \to \Zz$ such that $c_1\preceq c_1$ if an only if $f(r)\leq f(s)$. Then, $d_i=\F(m,S,c_i)$ if and only if $f(d_i)=f(c_i)+m$, for $i\in\{1,2\}$. The first equivalence immediately follows. Moreover, if $f(d_2)\geq f(d_1)=f(c_1)+m$, then $f_{d_2}-m \geq f(c_1) \geq 0$, and we set $c_2=f^{-1}\left(f(d_1)-m\right)$.   
\subsection{Proof of Lemma \ref{lem:klsommax}}
By \eqref{eq:klseqdefbis}, for $i\leq j$
\begin{align*}
\sum_{h=i}^j o_h^{(k,l)}w_h^{(k,l)} &= \sum_{h=i}^{j}w_{h-1}^{(k,l)}-2w_h^{(k,l)} + w_{h+1}^{(k,l)} \\
&=\sum_{h=i-1}^{j-1}w_h^{(k,l)}-2\sum_{h=i}^{j}w_h^{(k,l)}+\sum_{h=i+1}^{j+1}w_h^{(k,l)}\\
&=w_{i-1}^{(k,l)}-w_{j+1}^{(k,l)}-w_{i}^{(k,l)}+w_{j+1}^{(k,l)}\,\cdot
\end{align*}
Therefore, for $1\leq i<j$, as $c_h=o_h^{(k,l)}+\chi(ih\in \{i,j\})$, \eqref{eq:klsommax} follows.
\subsection{Proof of Lemma \ref{lem:decalage}}
By definition, a forbidden pattern always begins and ends by a positive integer. Therefore, for $(c_i)_{i\geq 1}$ which that not contain any forbidden pattern, $0\cdot (c_i)_{i\geq 1}$ also does not contain any forbidden pattern. Hence, for $n, i \in\Zu$,  $(c_i)_{i\geq 1}\in \,_i^n\C^{(k,l)}$ if and only if $0\cdot (c_i)_{i\geq 1}\in \,_{i+1}^{n+1}\C^{(k,l)}$. Moreover, in  $_i^n\C^{(k,l)}$, $(c_u)_{u\geq}\prec (d_u)_{u\geq 1}$ if and only if there exists $i\leq j <n$ such that $c_j<d_j$ and $c_u=d_u$ for all $j<u<n$. By definition, by setting $(c'_u)_{u\geq1}=0\cdot(c_u)_{u\geq 1}$ and $(d'_u)_{u\geq1}=0\cdot(d_u)_{u\geq1}$, this is equivalent to saying that $c'_{j+1}<d_{j'+1}$ and $c'_u=d'_u$ for all $n+1>i>j+1$, which is equivalent to $(c'_i)_{i\geq}\prec (d'_i)_{i\geq 1}$ in $_{i+1}^{n+1}\C^{(k,l)}$.\\
To prove the second part of the Lemma, first note that $_{n+1}\C^{(k,l)}_n = \{(0)_{i=1}^n\}$ and  $_{n+2}\C^{(k,l)}_{n+1} = \{(0)_{i=1}^{n+1}\}$, so that $0\cdot\,_{n+1}\C^{(k,l)}_{n}=\,_{n+2}\C^{(k,l)}_{n+1}$. For $1\leq j\leq n$, as $0\cdot \,_{j}^n\C^{(k,l)} = \,_{j+1}^{n+1}\C^{(k,l)}$ and $\prec$ is preserved, we naturally have 
$$0\cdot \,_{j}\C^{(k,l)}_n \equiv 0\cdot \,_{j}^n\C^{(k,l)}\times \Zz = \,_{j+1}^{n+1}\C^{(k,l)}\times \Zz \equiv \,_{j+1}\C^{(k,l)}_{n+1}\,,$$
so that $0\cdot \,_{j}\C^{(k,l)}_n = \,_{j+1}\C^{(k,l)}_{n+1}$ with $\prec$ preserved.\\\\
For $k\geq 4$, we have $0\cdot\,_i^n\C^{(k,1)}=0\cdot\,_i^n\C^{(k-2)}=\,_{i+1}^{n+1}\C^{(k-2)}=\,_{i+1}^{n+1}\C^{(k,1)}$, and $\prec$ is preserved. The same holds for $_j\C^{(k,1)}_{2n}=_j\C^{(k-2)}_{n}$, as
$0\cdot\,_j\C^{(k,1)}_{2n}=0\cdot\,_j\C^{(k-2)}_n=\,_{j+1}\C^{(k-2)}_{n+1}=\,_{j+1}\C^{(k,1)}_{2n+2}$. To prove the result on $_j\C^{(k,1)}_{2n-1}$, it suffices to observe that $0\cdot\,^u_j\C^{(k,1)}_{2n-1}=\,^u_{j+1}\C^{(k,1)}_{2n+1}$ for $u\in \{0,1\}$, and that the order $\prec$ is preserved on each set. 
\subsection{Proof of Lemma \ref{lem:follow}}
Let $n$ be a positive integer and $(c_i)_{i\geq 1}\in \, _n\C^{(k,l)}$.
\begin{enumerate}
\item Suppose that there exists $j\geq n$ such that $c_i= o^{(k,l)}_i+\delta_{i,j}$ for $ n\leq i\leq j$, and set $(d_i)_{i\geq 1}$
$$
0=d_1=\cdots=d_j\quad, \quad d_{j+1}=c_{j+1}+1\quad\text{and}\quad d_{i}=c_{i} \text{ for }i\geq j+2\,\cdot
$$
We first show that $(d_i)_{i\geq 1}\in \, _n\C^{(k,l)}$. Since $d_1=\cdots=d_{n-1}=0$, it suffices to prove that it does not contain neither $o^{(k,l)}_i+2$ or a forbidden sub-sequence. 
\begin{enumerate}
\item Assume that there is some $i$ such that $d_i=o^{(k,l)}_i+2$. This is only possible when $i=j+1$ and $c_{j+1}=o^{(k,l)}_{j+1}+1$. Thus, the sequence $(c_i)$ contains the sub-sequence $(c_j,c_{j+1})= (o^{(k,l)}_j+1,o^{(k,l)}_{j+1}+1)$, which is a forbidden sub-sequence by definition.
\item Now assume that there is a forbidden sub-sequence in $(d_i)$. Since $(d_i)_{i\geq j+2}=(c_i)_{i\geq j+2}$, this forbidden sub-sequence necessarily starts from $j+1$, and by \eqref{eq:forbpat}, there exists $i>j+1$ such that $d_h=o^{(k,l)}_h+\chi(h\in \{j+1,i\})$ for $j+1\leq h\leq j$. Therefore, $c_h=o^{(k,l)}_h+\chi(h\in \{j,i\})$ for $j\leq h\leq j$ and $(c_j,\ldots,c_i)$ is a forbidden sub-sequence.
\end{enumerate}
The assumption of the two possible cases of forbidden sub-sequence leads to a contradiction. Thus, $(d_i)_{i\geq 1}\in \, _n\C^{(k,l)}$. In addition, $c_{j+1}<d_{j+1}$ and $c_i=d_i$ for $i>j+1$ so that $(c_i)\prec (d_i)$. We finally prove that $(d_i)$ follows $(c_i)$ in terms of $\prec$ in $ \, _n\C^{(k,l)}$. Suppose the contrary and assume that there exists $(e_i)\in  \, _n\C^{(k,l)}$ such that $(c_i)\prec (e_i)\prec (d_i)$. 
\begin{enumerate}
\item Since $c_{j+1}+1=d_{j+1}$ and $c_i=d_i$ for $i>j+1$, we necessarily have that $c_{j+1}\leq e_{j+1} \leq c_{j+1}+1=d_{j+1}$ and $c_i=e_i=d_i$ for $i>j+1$. The fact that $0=d_i\leq e_i$ for $i\leq j$ implies that $e_{j+1}<d_{j+1}$, and then $e_{j+1}=c_{j+1}$.
\item Note that $c_j=o^{(k,l)}_j+1 \leq e_j \leq o^{(k,l)}_j+1$ and then $e_j=o^{(k,l)}_j+1=c_j$. Also, since $(c_i)\prec (e_i)$ and $e_i=c_i$ for $i\geq j$ and $i<n$, there exists $n\leq g<j$ such that $o^{(k,l)}_g=c_g<e_g\leq o^{(k,l)}_g+1$ and $o^{(k,l)}_h=c_h=e_h$ for $g<h<n$. Thus, $e_h=o^{(k,l)}_h+\chi(h\in \{g,j\})$ for $g\leq h\leq j$, and $(e_g,\ldots,e_j)$ is a forbidden sub-sequence, which contradicts the fact that $(e_i)$ is a $(k,l)$-admissible word.
\end{enumerate}
From what precedes, $(d_i)$ follows $(c_i)$ in terms of $\prec$ in $ \, _n\C^{(k,l)}$.
\item Suppose that there does not exist $j\geq n$ such that $c_i= o^{(k,l)}_i+\delta_{i,j}$ for $ n\leq i\leq j$, and set $(d_i)\in \, _n\C^{(k,l)}$ such that
$
d_{n}=c_{n}+1\quad\text{and}\quad d_{i}=c_{i} \text{ for }i\geq n+1
$. In the set of integer sequences with the $(n-1)^{th}$ first terms equals to $0$, one can check that $(d_i)$ follows $(c_i)$ in terms of $\prec$. It now suffices to show that $(d_i)\in \, _n\C^{(k,l)}$. Assume the contrary. As in the previous reasoning, we either have $d_n=o^{(k,l)}_n+2$ or $(d_i)$ contains a forbidden sub-sequence.
\begin{enumerate}
\item If $d_n=o^{(k,l)}_n+2$, then $c_n=o^{(k,l)}_n+1$ and we have the fact in $(1)$ with $j=n$.
\item Suppose now that $(d_i)$ contains a forbidden sub-sequence, and this sub-sequence necessarily starts from $n$ and there exist $j>n$ such that $d_i = o^{(k,l)}_i+\chi(h\in \{n,j\})$ for $n\leq i\leq j$. Thus, $c_i = o^{(k,l)}_i+\delta_{h,j}$ for $n\leq i\leq j$ and we have the fact in $(1)$ for $j>n$.
\end{enumerate}
In both cases, the reasoning leads to the fact $(1)$, and we have a contradiction.
\end{enumerate}
\subsection{Proof of Lemma \ref{lem:followfin}}
For $j=n$, $0=c_1=\ldots=c_{n-1}=d_1=\cdots=d_{n-1}$, and $d_n=c_{n}+1$, so that only $(2)$ holds. Now assume that $1\leq j<n$. 
Let us write $(c_1,\cdots,c_n) \equiv ((c_1,\ldots,c_{n-1},0,\ldots), c_n)$. If $(c_1,\ldots,c_{n-1},0,\ldots)\neq (\underbrace{0,\ldots,0}_{j-1\text{ times}},o_j^{(k,l)},\ldots,o_{n-2}^{(k,l)},o_{n-1}^{(k,l)}+1,0,\ldots)$, then the sequence $(d_1,\cdots,d_n) \equiv ((d_1,\ldots,d_{n-1},0,\ldots), d_n)$ is such that $c_n=d_n$ and $(d_1,\ldots,d_{n-1},0,\ldots)$ follows $(c_1,\ldots,c_{n-1},0,\ldots)$ in $_j^n\C^{(k,l)}$. By Lemma \ref{lem:follow}, $(1)$ holds for $1\leq h<n-1$ or $(2)$ holds. If $(c_1,\ldots,c_{n-1},0,\ldots)=(\underbrace{0,\ldots,0}_{j-1\text{ times}},o_j^{(k,l)},\ldots,o_{n-2}^{(k,l)},o_{n-1}^{(k,l)}+1,0,\ldots)$, then  $d_n=c_n+1$, and $(d_1,\ldots,d_{n-1},0,\ldots)=(0,\ldots)$, and $(1)$ holds for $h=n-1$.
\subsection{Proof of Lemma \ref{lem:followfinbis}}
For $j=n$, $0=c_1=\ldots=c_{n-1}=d_1=\cdots=d_{n-1}$, and $d_n=c_{n}+1$, so that only $(2)$ holds. Now assume that $1\leq j<n$. 
Let us write $(c_1,\cdots,c_n) \equiv ((c_1,\ldots,c_{n-1},0,\ldots), c_n)$.
\begin{enumerate}
 \item If $(c_1,\ldots,c_{n-1},0,\ldots)\prec (\underbrace{0,\ldots,0}_{j-1\text{ times}},o_j^{(k,l)},\ldots,o_{n-3}^{(k,l)},o_{n-2}^{(k,l)}+1,0,\ldots)$, then the sequence $(d_1,\cdots,d_n) \equiv ((d_1,\ldots,d_{n-1},0,\ldots), d_n)$ is such that $c_n=d_n$, $0=c_{n-1}=d_{n-1}$, and $(d_1,\ldots,d_{n-2},0,\ldots)$ follows $(c_1,\ldots,c_{n-2},0,\ldots)$ in $_j^{n-1}\C^{(k,l)}$. By Lemma \ref{lem:follow}, $(1)$ holds for $1\leq h<n-2$ or $(2)$ holds.
 \item If $(c_1,\ldots,c_{n-1},0,\ldots)=(\underbrace{0,\ldots,0}_{j-1\text{ times}},o_j^{(k,l)},\ldots,o_{n-3}^{(k,l)},o_{n-2}^{(k,l)}+1,0,\ldots)$, then the sequence $(d_1,\cdots,d_n) \equiv ((d_1,\ldots,d_{n-1},0,\ldots), d_n)$ is such that $c_n=d_n$, $1=c_{n-1}+1=d_{n-1}$, and $(d_1,\ldots,d_{n-2},0,\ldots)=(0,\ldots)$, and $(1)$ holds for $h=n-2$.
 \item If 
 $$(\underbrace{0,\ldots,0}_{n-2\text{ times}},1,0,\ldots)\preceq (c_1,\ldots,c_{n-1},0,\ldots)\prec (\underbrace{0,\ldots,0}_{j-1\text{ times}},o_j^{(k,l)},\ldots,o_{n-2}^{(k,l)},o_{n-2}^{(k,l)}+2,0,\ldots)\,,$$
 we necessarily have $c_{n-1}\geq 1$, and the sequence $(d_1,\cdots,d_n) \equiv ((d_1,\ldots,d_{n-1},0,\ldots), d_n)$ is such that $c_n=d_n$, $0<c_{n-1}=d_{n-1}$, and $(d_1,\ldots,d_{n-2},d_{n-1}-1,0,\ldots)$ follows $(c_1,\ldots,c_{n-2},c_{n-1},0,\ldots)$ in $_j^{n}\C^{(k,l)}$. By Lemma \ref{lem:follow}, $(1)$ holds for $1\leq h\leq n-2$ or $(2)$ holds.
 \item If  $(c_1,\ldots,c_{n-1},0,\ldots)=(\underbrace{0,\ldots,0}_{j-1\text{ times}},o_j^{(k,l)},\ldots,o_{n-2}^{(k,l)},o_{n-2}^{(k,l)}+2,0,\ldots)$, then $d_n=c_n+1$, and $(d_1,\ldots,d_{n-1},0,\ldots)=(0,\ldots)$. Then, $(1)$ holds for $h=n-1$.
\end{enumerate}
\subsection{Proof of Lemma \ref{lem:follow1k}}
Let $d\in\Zz$ and $n\in \Zu$.
\begin{enumerate}
 \item As $\sk{0}+\sk{2}=\sk{1}=1$ and $\sk{0}>\sk{2}$, by \eqref{eq:divide+} with $x= \sk{0}$ and $b\in \Zz$, Definition \ref{def:1k} and \eqref{eq:klratinf0} in $(k-2,k-2)$, 
 $$\C^{(1,k)}\setminus\{((0)_{i\geq 1},(0)_{i\geq 1})\}=\{(s_1,s_2)\in \C^{(k,1)})\setminus\{(0)_{i\geq 1}\}\times \C^{(k,1)}: 0\cdot s_2 \preceq s_1 \preceq 0\cdot \F(1,\C^{(k,1)},s_2)\}\,\cdot$$
 Hence 
 $$\C^{(1,k)}=\{(s_1,s_2)\in \C^{(k,1)})\times \C^{(k,1)}: 0\cdot s_2 \preceq s_1 \preceq 0\cdot \F(1,\C^{(k,1)},s_2)\}\,\cdot$$
 Moreover,  by fact $(1.a)$ of Lemma \ref{lem:divide}, if $s$ is not special, then $t_2=s_2$ and $t_1$ is the sequence that follows $s_1$ in $\C^{(k,1)}$. By fact $(1.b)$ of Lemma \ref{lem:divide}, if $s$ is special, then $t_2=s'_2$ and $t_1=s_1$.\\
Finally, if $(t_1,t_2)= \F(m,\C^{(1,k)},(s_1,s_2))$, then $(a)$ and $(b)$ implies that  $m=m'_1+m'_2$ where $t_i= \F(m'_i,\C^{(k,1)},s_i)$ for $i\in \{1,2\}$. We then have 
$$0\cdot\F(0,\C^{(k,1)},s_2)\preceq s_1\preceq 0\cdot\F(1,\C^{(k,1)},s_2)\preceq \cdots \preceq 0\cdot\F(m'_2,\C^{(k,1)},s_2)\preceq t_1\preceq 0\cdot\F(m'_2+1,\C^{(k,1)},s_2)\,,$$
so that
\begin{align*}
 m_2&=\sharp \{c \in \,_2\C^{(k,1)}: s_1\prec c\preceq t_1\}\\
 &= m'_2-\chi(s_1= 0\cdot \F(1,\C^{(k,1)},s_2)) + \chi(t_1= 0\cdot \F(1,\C^{(k,1)},t_2)\,\cdot
\end{align*}
Since $m'_1=m_1+m'_2$, we obtain \eqref{eq:1ksominf}, and fact $(1)$ holds for $d=0$.
 Using Lemma \ref{lem:decalage}, we deduce from $d=0$ the result for any $d\geq 1$.
 \item The same goes for fact $(2)$, as $\ak{n+1}>\ak{n-1}$, and $\ak{n+1}+\ak{n-1}=\al{n}$. Using \eqref{eq:divide-} with $x= \ak{n+1}/\al{n}$ and $b\geq 0$, Definition \ref{def:1k} and \eqref{eq:k1rat0}, fact $(2)$ is satisfied for $d=0$. Using Lemma \ref{lem:decalage}, we deduce from $d=0$ the result for any $d\geq 1$.
\end{enumerate}

\subsection{Proof of Lemma \ref{lem:psiinfsize}}
For $t-2\geq j\geq 1$
$$\nu_{2j}-1-\left\lfloor s_0^{(k,l)} \nu_{2j+1}\right\rfloor=\left\lceil  s_2^{(k,l)}\la_{2j}\right\rceil-1-\la_{2j+1}\,,$$
for $t-1\geq j\geq 1$,
$$\left\lceil  s_2^{(k,l)}\nu_{2j-1}\right\rceil-1-\nu_{2j} = \la_{2j+1}-1-\left\lfloor s_0^{(k,l)} \la_{2j+2}\right\rfloor\,,$$ and $\la_{1}-1-\left\lfloor s_0^{(k,l)} \la_{2}\right\rfloor=m$.   
Using Remark \ref{rem:rat}, $\la \in \Lkl$ if and only if $(\nu,m)\in \Llk\times \Zz$.  Finally, we have that 
$$|\la|_e=\sum_{i=1}^{t-1} \la_{2j} = \sum_{i=1}^{t-1} \nu_{2j-1}=|\nu|_o $$
and 
\begin{align*}
 |\la|_o&= m+ 1+\left\lfloor s_0^{(k,l)} \nu_{1}\right\rfloor + \left(\sum_{j=1}^{t-2} \left\lceil  s_2^{(k,l)}\nu_{2j-1}\right\rceil+ \left\lfloor s_0^{(k,l)} \nu_{2j+1}\right\rfloor - \nu_{2j}\right)
 + \left\lceil  s_2^{(k,l)}\nu_{2t-3}\right\rceil-1 - \nu_{2t-2}\\
 &=  m- \sum_{i=1}^{t-1}\nu_{2t-2} + \sum_{i=1}^{t-1} \left\lceil  s_2^{(k,l)}\nu_{2j-1}\right\rceil+ \left\lfloor s_0^{(k,l)} \nu_{2j-1}\right\rfloor\\
 &=m- \sum_{i=1}^{t-1}\nu_{2t-2} + l\sum_{i=1}^{t-1} \nu_{2j-1}\\
 &= m+l\cdot|\nu|_o-|\nu|_e\,\cdot
\end{align*}
\[\]
\section{Proof of Propositions}
\subsection{Proof of Proposition \ref{prop:propklgen}}
\begin{enumerate}
\item By \eqref{eq:formuleklu}, for $n,m\in \Z$,
\begin{align*}
\alk{2n-1}\akl{2m}-\akl{2n}\alk{2m-1} &= \sqrt{\frac{l}{k}}\cdot\frac{(u^{2n-1}-u^{-2n+1})(u^{2m}-u^{-2m})-(u^{2n}-u^{-2n})(u^{2m-1}-u^{-2m+1})}{(u-u^{-1})^2}\,\\
&= \sqrt{\frac{l}{k}}\cdot\frac{-u^{2n-2m-1}-u^{-2n+2m+1}+u^{2n-2m+1}+u^{-2n+2m-1}}{(u-u^{-1})^2}\,\\
&= \sqrt{\frac{l}{k}}\cdot\frac{(u^{2n-2m}-u^{-2n+2m})(u-u^{-1})}{(u-u^{-1})^2}\,,\\
&= \akl{2n-2m}\,,
\end{align*}
and
\begin{align*}
\alk{2n-1}\akl{2m+1}-\akl{2n}\alk{2m} &= \frac{(u^{2n-1}-u^{-2n+1})(u^{2m+1}-u^{-2m-1})-(u^{2n}-u^{-2n})(u^{2m}-u^{-2m})}{(u-u^{-1})^2}\,\\
&= \frac{-u^{2n-2m-2}-u^{-2n+2m+2}+u^{2n-2m}+u^{-2n+2m}}{(u-u^{-1})^2}\,\\
&= \frac{(u^{2n-2m-1}-u^{-2n+2m+1})(u-u^{-1})}{(u-u^{-1})^2}\,,\\
&= \akl{2n-2m-1}\,\cdot
\end{align*}
and \eqref{eq:crosseven} follows. Analogously, we prove \eqref{eq:crossodd}.
\item 
By \eqref{eq:formuleklu}, for $n \in \Z$,
\begin{align*}
\akl{2n}-\sqrt{\frac{l}{k}}u\cdot\alk{2n-1} &= \sqrt{\frac{l}{k}}\cdot\frac{(u^{2n}-u^{-2n})-u(u^{2n-1}-u^{-2n+1})}{u-u^{-1}}\,\\
&= \sqrt{\frac{l}{k}}\cdot\frac{u^{-2n+2}-u^{-2n}}{u-u^{-1}}\,\\
&= \sqrt{\frac{l}{k}}u^{-2n+1}
\end{align*}
and
\begin{align*}
\akl{2n+1}-\sqrt{\frac{l}{k}}u\cdot\alk{2n} &= \frac{(u^{2n+1}-u^{-2n-1})-u(u^{2n}-u^{-2n})}{u-u^{-1}}\,\\
&= \frac{u^{-2n+1}-u^{-2n-1}}{u-u^{-1}}\,\\
&=u^{-2n}\,\cdot
\end{align*}
Since $(\alk{2n})_{n\geq 1}$ and $(\alk{2n-1})_{n\geq 1}$ are both increasing sequence of positive integers, they tend to $\infty$, so that, with the fact that $u^{-1}\leq 1$, both sequences $(\akl{2n}/\alk{2n-1})_{n\geq 1}$ and $(\akl{2n+1}/\alk{2n})_{n\geq 1}$ tend to $\sqrt{\frac{l}{k}}u$. Moreover, by \eqref{eq:crosseven} and \eqref{eq:crossodd}, for $n\geq 1$
\begin{align*}
\frac{\akl{2n}}{\alk{2n-1}}-\frac{\akl{2n+1}}{\alk{2n}} = \frac{\akl{1}}{\alk{2n-1}\alk{2n}}\\
\frac{\akl{2n+1}}{\alk{2n}}-\frac{\akl{2n+2}}{\alk{2n+1}} = \frac{\akl{1}}{\alk{2n}\alk{2n+1}}\\
\end{align*}
so that $(\akl{n}/\alk{n-1})_{n\geq 2}$ is decreasing.
\end{enumerate}
\subsection{Proof of Proposition \ref{prop:kladmseq}}
\begin{enumerate}
\item
To prove \eqref{eq:preckladm}, it suffices to prove that
$$(c_i)\prec (d_i)\Longrightarrow \Gamma_{(k,l)}((c_i))<\Gamma_{(k,l)}((d_i))\,\cdot$$
In fact, $\prec$ is a total strict order, i.e for $(c_i)\neq(d_i)$, we either have $(c_i)\prec (d_i)$ or $(c_i)\succ (d_i)$, and the previous implication yields
$$(c_i)\succ(d_i)\Longrightarrow \Gamma_{(k,l)}((c_i))>\Gamma_{(k,l)}((d_i))\,\cdot$$
\begin{enumerate}
\item Let $(c_i)_{i\geq 1}$ and $(d_i)_{i\geq 1}$ such that $(c_i)\prec (d_i)$, and let $n$ be the unique positive integer such that $c_n<d_n$ and $c_i=d_i$ for $i>n$. Thus, 
\begin{align*}
\Gamma_{(k,l)}((d_i))-\Gamma_{(k,l)}((c_i))&= \sum_{i=1}^{n} (d_i-c_i)w^{(k,l)}_{i}\geq w^{(k,l)}_{n} - \sum_{i=1}^{n-1}c_iw^{(k,l)}_{i}\,\cdot
\end{align*}  
By \eqref{eq:klsommaxbis}, we have that
$$w^{(k,l)}_{n}+w^{(k,l)}_0-w^{(k,l)}_{1}-w^{(k,l)}_{n-1}= \sum_{i=1}^{n-1} o^{(k,l)}_i w^{(k,l)}_{i}\,\cdot $$
 Therefore, if $c_i\leq o^{(k,l)}_i$ for $1\leq i\leq n-1$, then 
\begin{align*}
w^{(k,l)}_{n} - \sum_{i=1}^{n-1}c_iw^{(k,l)}_{i} &= (w^{(k,l)}_{1}-w^{(k,l)}_{0})+w^{(k,l)}_{n-1} + \sum_{i=1}^{n-1}(o^{(k,l)}_i-c_i)w^{(k,l)}_{i}\\
&\geq (w^{(k,l)}_{1}-w^{(k,l)}_{0})+w^{(k,l)}_{n-1} \\
&>0\,\cdot
\end{align*}
Now suppose that there exists $t\geq 1$ indices $i$, namely $1\leq i_1<\cdots<i_t\leq n-1$, such that $c_i>o^{(k,l)}_i$. They then satisfy $c_i=o^{(k,l)}_i+1$.
As $c_{i_j},\cdots, c_{i_{j+1}}$ is not a forbidden sub-sequence, we necessarily have that the set $\{i_j+1,i_{j+1}-1\}$ is not empty and contains a integer $i'_j$ such that $c_{i'_j}<o^{(k,l)}_{i'_j}$. Then, 
\begin{align*}
w^{(k,l)}_{n} - \sum_{i=1}^{n-1}c_iw^{(k,l)}_{i} &= w^{(k,l)}_{1}-w^{(k,l)}_{0}+(w^{(k,l)}_{n-1} -w^{(k,l)}_{i_t}) + \left(\sum_{i=i_t+1}^{n-1}(o^{(k,l)}_i-c_i)w^{(k,l)}_{i}\right)\\
&+ \left(\sum_{i=1}^{i_1-1} (o^{(k,l)}_i-c_i)w^{(k,l)}_{i}\right)+\sum_{j=1}^{t-1} \left(-w^{(k,l)}_{i_j} + \sum_{i=i_j+1}^{i_{j+1}-1}(o^{(k,l)}_i-c_i)w^{(k,l)}_{i}\right)\\
&\geq (w^{(k,l)}_{1}-w^{(k,l)}_{0}) + (w^{(k,l)}_{n-1} -w^{(k,l)}_{i_t}) + \sum_{j=1}^{t-1}w^{(k,l)}_{i'_j}-w^{(k,l)}_{i_j}\\
&\geq (w^{(k,l)}_{1}-w^{(k,l)}_{0}) + t-1\\
&>0\,\cdot
\end{align*} 
and we conclude. The fact that $\prec$ is a total strict order on $\C^{(k,l)}$ implies that $\Gamma_{(k,l)}$ is indeed injective.
\item We now prove that, for $(s_i^{(k,l)})=(\akl{i})$, $\Gamma_{(k,l)}$ is surjective on $\Zz$.  Consider the function described as the inverse. For any $m\in \Zz$, since $(\akl{i})_{i\geq 0}$ is increasing, tends to $\infty$ and $\akl{0}=0$, $n$ is simply equal to $\min\{i: \akl{i}>m\}$. Moreover, the recursive construction of $c_i$ for $1\leq i<n$ implies that
\begin{align*}
m-\sum_{j=i}^{n-1}c_j\akl{j} &= m-\sum_{j=i+1}^{n-1}c_j\akl{j} - \akl{i}\left\lfloor \frac{m-\sum_{j=i+1}^{n-1}c_j\akl{j}}{\akl{i}}\right\rfloor\\
&\in \{0,\ldots, \akl{i}-1\}\,\cdot
\end{align*}
Thus, the fact that $\akl{1}=1$ yields $m=\sum_{j=1}^{n-1}c_j\akl{j} = \sum_{j=1}^{\infty}c_j\akl{j}$. Furthermore, we recursively have that 
\begin{align*} 0\leq c_i &\leq \left\lfloor \frac{\akl{i+1}-1}{\akl{i}}\right\rfloor \\
&=\left\lfloor \frac{\akl{i+1}+\akl{i-1}-(\akl{i-1}+1)}{\akl{i}}\right\rfloor\\
&=o^{(k,l)}_i+2+\left\lfloor \frac{-(\akl{i-1}+1)}{\akl{i}}\right\rfloor\\
&=o^{(k,l)}_i+1\\
\end{align*} 
since,  by the fact that $(\akl{j})_{j\geq 0}$ is a increasing sequence of non-negative integers, $-1\leq \frac{-(\akl{i-1}+1)}{\akl{i}}<0$. Then $c_i \in \{0,\ldots,o^{(k,l)}_i+1\}$, so that the sequence $(c_i)_{i\geq 1}$ satisfied the two first fact for the definition of $(k,l)$-sequences. Finally, if we suppose by that a forbidden sub-sequence  $c_i,\ldots,c_j$ occurs in $(c_1,\ldots,c_{n-1})$, by \eqref{eq:klsommaxbis}, 
\begin{align*}
\akl{j+1}&> m-\sum_{h=j+1}^{n-1} c_h \akl{h} =\sum_{h=1}^{j} c_h \akl{h}\\
&\geq \sum_{h=i}^{j} c_h \akl{h}=\akl{j+1}+\akl{i-1}\\
&\geq \akl{j+1}\,,
\end{align*}
and we have a contradiction. Hence, $(c_i)_{i\geq 1}$ belongs to $\C_{(k,l)}$, and $\Gamma_{(k,l)}$ is surjective as $m = \sum_{i\geq 1}c_i\akl{i}$. Thus, $\Gamma_{(k,l)}$ is a bijection. We finally conclude by observing that, by \eqref{eq:preckladm},
$$\left[\{0,\ldots,\akl{n}-1\}\right]^{(k,l)} = \{(c_i)_{i\geq 1}\prec (\delta_{i,n})_{i\geq 1}\}= \, ^n\C^{(k,l)}$$
\item As $_n\left[m\right]^{(k,l)}\preceq \left[m\right]^{(k,l)}$ and the relation $\preceq$ is transitive, to prove \eqref{eq:leq}, it suffices to show that for $(c_i)_{i\geq 1}\in _n\C^{(k,l)}$ such that $(c_i)\succ \,_n\left[m\right]^{(k,l)}$, $(c_i)\succ \,_n\left[m\right]^{(k,l)}$. Set $\left[m\right]^{(k,l)}=(m_i)_{i\geq 1}$ and assume that $(c_i)_{i\geq 1}\in _n\C^{(k,l)}$ and $(c_i)\succ \,_n\left[m\right]^{(k,l)}$. As both sequences have their $(n-1)^{th}$ first parts equal to $0$, this means that there exists $i\geq n$ such $c_i>m_i$ and $c_j=m_j$ for $j>i$, and this implies that  $(c_i)\succ \left[m\right]^{(k,l)}$.
\end{enumerate}
\item Let $(c_i)_{i\geq 1}$ and $(d_i)_{i\geq 1}$ such that $(c_i)\ll (d_i)$, and let $n$ be the unique positive integer such that $c_n<d_n$ and $c_i=d_i$ for $1\leq i<n$. Thus, for an $m>n$ such that $c_i=0$ for $i>m$
\begin{align*}
\Gamma_{(k,l)}((d_i))-\Gamma_{(k,l)}((c_i))&= \sum_{i=n}^\infty (d_i-c_i)w^{(k,l)}_{i}\geq w^{(k,l)}_{n}- \sum_{i=n+1}^m c_iw^{(k,l)}_{i}\,\cdot
\end{align*}  
By \eqref{eq:klsommaxbis}, we have that
$$w^{(k,l)}_{n}+w^{(k,l)}_{m+1}-w^{(k,l)}_{n+1}-w^{(k,l)}_{m}= \sum_{i=n+1}^{m} o^{(k,l)}_i w^{(k,l)}_{i}\,\cdot $$
 Therefore, if $c_i\leq o^{(k,l)}_i$ for $1\leq i\leq n-1$, then 
\begin{align*}
w^{(k,l)}_{n} - \sum_{i=n+1}^{m} c_iw^{(k,l)}_{i} &= (w^{(k,l)}_{m}-w^{(k,l)}_{m+1})+w^{(k,l)}_{n+1} + \sum_{i=n+1}^{m}(o^{(k,l)}_i-c_i)w^{(k,l)}_{i}\\
&\geq (w^{(k,l)}_{m}-w^{(k,l)}_{m+1})+w^{(k,l)}_{n+1} \\
&\geq 0\,\cdot
\end{align*}
Now suppose that there exists $t\geq 1$ indices $i$, namely $n+1\leq i_1<\cdots<i_t\leq m$, such that $c_i>o^{(k,l)}_i$. As in the previous case, we necessarily have that the set $\{i_{j'-1}+1,i_{j}-1\}$ is not empty and contains a integer $i'_j$ such that $c_{i'_j}<o^{(k,l)}_{i'_j}$. Then, 
\begin{align*}
w^{(k,l)}_{n} - \sum_{i=n+1}^{m}c_iw^{(k,l)}_{i} &= (w^{(k,l)}_{m}-w^{(k,l)}_{m+1})+(w^{(k,l)}_{n+1}-w^{(k,l)}_{i_1}) + \left(\sum_{i=n+1}^{i_1-1}(o^{(k,l)}_i-c_i)w^{(k,l)}_{i}\right)\\
&+ \left(\sum_{i=i_t+1}^{m} (o^{(k,l)}_i-c_i)w^{(k,l)}_{i}\right)+\sum_{j=2}^{t} \left(-w^{(k,l)}_{i_j} + \sum_{i=i_{j-1}+1}^{i_{j}-1}(o^{(k,l)}_i-c_i)w^{(k,l)}_{i}\right)\\
&\geq (w^{(k,l)}_{m}-w^{(k,l)}_{m+1})+(w^{(k,l)}_{n+1}-w^{(k,l)}_{i_1}) + \sum_{j=2}^{t}w^{(k,l)}_{i'_j}-w^{(k,l)}_{i_j}\\
&\geq (w^{(k,l)}_{m}-w^{(k,l)}_{m+1})\\
&\geq 0\,\cdot
\end{align*} 
and this yields \eqref{eq:ggkladm}.
We also observe that, 
by \eqref{eq:preckladm}, for $(c_i) \in \,_{n+1}\C^{(k,l)}$, $(c_i)\ll (\delta_{i,n})$ so that $\Gamma_{(k,l)}(\,_{n+1}\C^{(k,l)}) \subset (0,s_n^{(k,l)})$.
\end{enumerate}
\subsection{Proof of Proposition \ref{prop:klinserted}}
\begin{enumerate}
 \item 
 \begin{enumerate}
 \item
 For $n\geq 1$, and $(d_i)_{i\geq 1}$ that follows $(c_i)_{i\geq 1}$ in $_n\C^{(l,k)}$, we equivalently have that $0\cdot (d_i)_{i\geq 1}$ follows $0\cdot(c_i)_{i\geq 1}$ in $_{n+1}\C^{(k,l)}$. Hence
 $0\cdot (d_i)_{i\geq 1}$ belongs to 
$_{n+2}\C^{(k,l)}$ if and only if the first fact of Lemma \ref{lem:follow} occurs, and by using the corresponding notations, \eqref{eq:follow1}  and \eqref{eq:klsommaxbis} give
\begin{align*}
\sum_{i= n}^\infty (d_i-c_i)\akl{i+1} & = \akl{j+2}-\akl{j+1} \sum_{i=n}^{j} o_{i+1}^{(k,l)}\cdot\akl{i+1}\\ 
& = \akl{n+1}-\akl{n}\,\cdot 
\end{align*}
On the other hand, $0\cdot (d_i)_{i\geq 1}$ does not belong to 
$_{n+2}\C^{(k,l)}$ if and only if the second fact of Lemma \ref{lem:follow} occurs, and \eqref{eq:follow2} 
gives $\sum_{i= n}^\infty (d_i-c_i)\akl{i+1} = \akl{n+1}$. 
As $\akl{n}\neq 0$, by Lemma \ref{lem:decalage}, we have the equivalences
\begin{align*}
 (d_i)_{i\geq 1}\in \,_{n+1}\C^{(l,k)}&\Longleftrightarrow \sum_{i= n}^\infty (d_i-c_i)\akl{i+1} = \akl{n+1}-\akl{n}\\
 (d_i)_{i\geq 1}\notin \,_{n+1}\C^{(l,k)}&\Longleftrightarrow \sum_{i= n}^\infty (d_i-c_i)\akl{i+1} = \akl{n+1}\,\cdot
\end{align*}
\item
Let us now set the function 
\begin{align*}
 \C^{(l,k)}&\to \mathbb{R}_{\geq 0}\\\
(c_i)_{i\geq 1}&\mapsto \sum_{i\geq 1} c_i(\akl{i+1}-\skl{0}\alk{i}) \,\cdot
\end{align*}
By \eqref{eq:ratio} and Proposition \ref{prop:kladmseq}, this is exactly $\Gamma_{(l,k)}$ for $w_i^{(l,k)}=\skl{i+1}$. In particular, $(w_i^{(l,k)})_{n\geq 1}$ is non-increasing, and fact $(2)$ of Proposition \ref{prop:kladmseq} implies that 
$\Gamma_{(l,k)}(_{n}\C^{(l,k)})\subset (0;w_{n-1}^{(l,k)})=(0;\skl{n})$.\\
Hence, $\Gamma_{(l,k)}( (d_i)_{i\geq 1}),\Gamma_{(l,k)}( (c_i)_{i\geq 1})\in (0;\skl{n})$ and  $\Gamma_{(l,k)}( (d_i)_{i\geq 1})\neq 0$(as $(d_i)_{i\geq 1}\neq 0$ and $\skl{i}>0$).
In addition,
$$\Gamma_{(l,k)}( (d_i)_{i\geq 1})-\Gamma_{(l,k)}( (c_i)_{i\geq 1})= \sum_{i= 1}^\infty (d_i-c_i)(\akl{i+1}-\skl{0}\alk{i})=\sum_{i= 1}^\infty (d_i-c_i)\skl{i+1}\,\cdot$$
By Lemma \ref{lem:follow} and \eqref{eq:klsommaxbis},
\begin{align*}
 (d_i)_{i\geq 1}\in \,_{n+1}\C^{(l,k)}&\Longrightarrow \Gamma_{(l,k)}( (d_i)_{i\geq 1})-\Gamma_{(l,k)}( (c_i)_{i\geq 1})= \skl{n+1}-\skl{n}\,,
 \\
 &\Longrightarrow \Gamma_{(l,k)}( (c_i)_{i\geq 1})= \skl{n} -\skl{n+1}+ \Gamma_{(l,k)}( (d_i)_{i\geq 1})\,,\\
  &\Longrightarrow \Gamma_{(l,k)}( (c_i)_{i\geq 1})>\skl{n} -\skl{n+1}\,,\\\\
 (d_i)_{i\geq 1}\notin \,_{n+1}\C^{(l,k)}&\Longrightarrow \Gamma_{(l,k)}( (d_i)_{i\geq 1})-\Gamma_{(l,k)}( (c_i)_{i\geq 1})= \skl{n+1}\,
 \\
 &\Longrightarrow \Gamma_{(l,k)}( (c_i)_{i\geq 1})=  \Gamma_{(l,k)}( (d_i)_{i\geq 1})-\skl{n+1}\leq \skl{n} -\skl{n+1}\,,\\
 &\Longrightarrow \Gamma_{(l,k)}( (c_i)_{i\geq 1}) \leq \skl{n} -\skl{n+1}\,,
\end{align*}
and \eqref{eq:klratinfdif1} and \eqref{eq:klratinfdif2} follow.
\end{enumerate}
\item 
\begin{enumerate}
\item Let $n\geq 1$, $2n-1\geq j\geq 1$, and  assume that $(d_i)_{i=1}^{2n-1}$ follows $(c_i)_{i=1}^{2n-1}$ in $_{j}\C^{(l,k)}_{2n-1}$. Analogously to the previous reasoning, by Lemma $\ref{lem:followfin}$ and \eqref{eq:klsommaxbis}, we obtain
\begin{align*}
 (d_i)_{i=1}^{2n-1}\in \,_{j+1}\C^{(l,k)}_{2n-1}&\Longleftrightarrow \sum_{i= j}^{2n-1} (d_i-c_i)\akl{i+1} = \akl{j+1}-\akl{j}\\
 (d_i)_{i=1}^{2n-1}\notin \,_{j+1}\C^{(l,k)}_{2n-1}&\Longleftrightarrow \sum_{i=j}^{2n-1} (d_i-c_i)\akl{i+1} = \akl{j+1}\,\cdot
\end{align*}
\item Set the function 
\begin{align*}
 \Gamma_{(l,k)}^{2n-1}  \colon\C^{(l,k)}_{2n-1}&\to \Zz\\\
(c_i)_{i=1}^{2n-1}&\mapsto \sum_{i=1}^{2n-1} c_i(\alk{2n-1}\akl{i+1}-\akl{2n}\alk{i}) \,\cdot
\end{align*}
By \eqref{eq:crosseven}, for 
\begin{align*}
\Gamma_{(l,k)}^{2n-1}((c_i)_{i=1}^{2n-1})&=\sum_{i=1}^{2n-1}c_i\akl{2n-1-i}=\sum_{i=0}^{2n-2}c_{2n-1-i}\akl{i}\\
&=\sum_{i=1}^{2n-2}c_{2n-1-i}\akl{i}\,\cdot
\end{align*}
Note that, as $i$ and $2n-1-i$ have different parities, $o^{(l,k)}_{2n-1-i}=o_i^{(k,l)}$, so that if $(c_1,\ldots,c_{2n-2},0,\ldots)\in \,_j\C^{(l,k)}$, then $(c_{2n-2},\ldots, c_1,0,\cdots)\in \,^{2n-j}\C^{(k,l)}$. Therefore, by fact $(1)$ of Proposition \ref{prop:kladmseq}, 
$$\Gamma_{(l,k)}^{2n-1}((c_i)_{i=1}^{2n-1}), \Gamma_{(l,k)}^{2n-1}((d_i)_{i=1}^{2n-1})\in \{0,\akl{2n-j}-1\}\,\cdot$$
Hence, by \eqref{eq:klsommaxbis} and Lemma \ref{lem:followfin},  
\begin{align*}
 (d_i)_{i=1}^{2n-1}\in \,_{j+1}\C^{(l,k)}_{2n-1}&\Longrightarrow \Gamma_{(l,k)}^{2n-1}((d_i)_{i=1}^{2n-1})- \Gamma_{(l,k)}^{2n-1}((c_i)_{i=1}^{2n-1})= \akl{2n-1-j}-\akl{2n-j}\,,
 \\
 &\Longrightarrow \Gamma_{(l,k)}^{2n-1}( (c_i)_{i= 1}^{2n-1})= \akl{2n-1-j}-\akl{2n-j}+ \Gamma_{(l,k)}^{2n-1}( (d_i)_{i=1}^{2n-1})\,,
 \\
 &\Longrightarrow \Gamma_{(l,k)}^{2n-1}( (c_i)_{i= 1}^{2n-1})\geq \akl{2n-1-j}-\akl{2n-j}\,,\\\\
 (d_i)_{i\geq 1}\notin \,_{j+1}\C^{(l,k)}_{2n-1}&\Longrightarrow \Gamma_{(l,k)}^{2n-1}((d_i)_{i=1}^{2n-1})- \Gamma_{(l,k)}^{2n-1}((c_i)_{i=1}^{2n-1})= \akl{2n-1-j}\,
 \\
 &\Longrightarrow \Gamma_{(l,k)}( (c_i)_{i=1}^{2n-1})=  \Gamma_{(l,k)}( (d_i)_{i=1}^{2n-1})-\akl{2n-1-j}\,,\\
 &\Longrightarrow \Gamma_{(l,k)}( (c_i)_{i=1}^{2n-1})< \akl{2n-j} -\akl{2n-1-j}\,,
\end{align*}
and \eqref{eq:klratevendif1} and \eqref{eq:klratevendif1} follow.
\end{enumerate}
 \item The reasoning is the same, by using \eqref{eq:crossodd} instead of \eqref{eq:crosseven}, the function 
 \begin{align*}
 \Gamma_{(k,l)}^{2n}  \colon\C^{(k,l)}_{2n}&\to \Zz\\\
(c_i)_{i=1}^{2n}&\mapsto \sum_{i=1}^{2n} c_i(\akl{2n}\alk{i+1}-\alk{2n+1}\alk{i})= \sum_{i=1}^{2n} c_i\akl{2n-i}\,,
\end{align*}
and observing that
$(c_1,\ldots,c_{2n-1},0,\ldots)\in \,_j\C^{(k,l)}$ implies $(c_{2n-1},\ldots, c_1,0,\cdots)\in \,^{2n+1-j}\C^{(k,l)}$.
\end{enumerate}
\subsection{Proof of Proposition \ref{prop:k1inserted}}
The case $n$ even only consists of facts $(2)$ and $(3)$ for $(k-2,k-2)$ instead of $(k,l)$. We now consider the case $n$ odd, written $2n-1$. 
\begin{enumerate}
 \item For positive integer $n$, $\lfloor n-1\rfloor\geq j\geq 1$, and $(d_i)_{i=1}^{n}$ follows $(c_i)_{i=1}^{n}$ in $_j\C^{(k,1)}_{2n-1}$, we have by fact $(2)$ of Lemma \ref{lem:followfinbis} that
\begin{align*}
 (d_i)_{i=1}^{2n-1}\notin \,_{j+1}\C^{(k,1)}_{2n-1}&\Longrightarrow (d_n-c_n)\ak{2n+1}+\sum_{i=j}^{n-1} (d_i-c_i)\ak{2i+2} = \ak{2j+2}\\
\end{align*}
Now suppose that fact $(1)$ occurs in Lemma \ref{lem:followfinbis}, which is equivalent to saying that $(d_i)_{i=1}^{2n-1}\notin \,_{j+1}\C^{(k,1)}_{2n-1}$.
In the case $h=n-1$, we have by \eqref{eq:follow1finbis} that
$c_i=o_i^{(k-2)}+2\delta_{i,n-1}$ and $d_i=0$ for $j\leq i\leq n-1$, and $d_{n}=c_n+1$. Therefore, by \eqref{eq:klsommaxbis}, 
\begin{align*}
(d_n-c_n)\ak{2n+1}+\sum_{i=j}^{n-1} (d_i-c_i)\ak{2i+2}&= \ak{2n+1} - 2\ak{2n} - \sum_{i=j}^{n-1} o_i^{(k-2)}\ak{2i+2}\\
&= a_{n+1}^{(k-2)} - a_{n}^{(k-2)} - \sum_{i=j+1}^{n} o_{i}^{(k-2)}a_{i}^{(k-2)}\\
&=a_{j+1}^{(k-2)}-a_{j}^{(k-2)}\\
&=\ak{2j+2}-\ak{2j}\,\cdot
\end{align*}
In the remaining case  $j\leq h\leq n-2$, by \eqref{eq:follow1finbis} and \eqref{eq:klsommaxbis} 
\begin{align*}
(d_n-c_n)\ak{2n+1}+\sum_{i=j}^{n-1} (d_i-c_i)\ak{2i+2}&= \ak{2h+4} - \ak{2h+2} - \sum_{i=j}^{h} o_i^{(k-2)}\ak{2i+2}\\
&= a_{h+2}^{(k-2)} - a_{h+1}^{(k-2)} - \sum_{i=j+1}^{h+1} o_{i}^{(k-2)}a_{i}^{(k-2)}\\
&=a_{j+1}^{(k-2)}-a_{j}^{(k-2)}\\
&=\ak{2j+2}-\ak{2j}\,\cdot
\end{align*}
Thus, 
\begin{align*}
 (d_i)_{i=1}^{2n-1}\in \,_{j+1}\C^{(k,1)}_{2n-1}&\Longrightarrow (d_n-c_n)\ak{2n+1}+\sum_{i=j}^{n-1} (d_i-c_i)\ak{2i+2} = \ak{2j+2}-\ak{2j}\,,
\end{align*}
and since $\ak{2j}\neq 0$,
\begin{align*}
 (d_i)_{i=1}^{2n-1}\in \,_{j+1}\C^{(k,1)}_{2n-1}&\Longleftrightarrow (d_n-c_n)\ak{2n+1}+\sum_{i=j}^{n-1} (d_i-c_i)\ak{2i+2} = \ak{2j+2}-\ak{2j}\,,\\
  (d_i)_{i=1}^{2n-1}\notin \,_{j+1}\C^{(k,1)}_{2n-1}&\Longleftrightarrow (d_n-c_n)\ak{2n+1}+\sum_{i=j}^{n-1} (d_i-c_i)\ak{2i+2} = \ak{2j+2}\,\cdot
\end{align*}
\item 
Let us now set the function 
\begin{align*}
 \Gamma_{(k,1)}^{2n-1}  \colon \C^{(k,1)}_{2n-1}&\to \Zz\\\
(c_i)_{i=1}^{n}&\mapsto \sum_{i=1}^{n-1} c_i(\al{2n-1}\ak{2i+2}-\al{2i+1}\alk{2i}) \,\cdot
\end{align*}
We then have by \eqref{eq:crosseven} and \eqref{eq:crossodd} that
\begin{align*}
 \Gamma_{(k,1)}^{2n-1}((c_i)_{i=1}^{n}) &= \sum_{i=1}^{n-1}c_i(a_{n}^{(k-2)}\cdot a_{i+1}^{(k-2)}-a_{n+1}^{(k-2)}\cdot a_{i}^{(k-2)}) + \sum_{i=1}^{n-1}c_i(a_{n-1}^{(k-2)}\cdot a_{i+1}^{(k-2)}-a_{n}^{(k-2)}\cdot a_{i}^{(k-2)})\\
 &=\sum_{i=1}^{n-1}c_ia_{n-i}^{(k-2)} + \sum_{i=1}^{n-1}c_ia_{n-1-i}^{(k-2)}\\
 &=\sum_{i=1}^{n-1}c_{n-i}a_{i}^{(k-2)} + \sum_{i=1}^{n-2}c_{n-1-i}a_{i}^{(k-2)}
\end{align*}
Observe that $(c_1,\ldots,c_{n-2},0,\ldots)\in \,_j\C^{(k-2)}$ implies $(c_{n-2},\ldots,c_{1},0,\ldots)\in \,^{n-j}\C^{(k-2)}$, and by fact $(1)$ of Proposition \ref{prop:kladmseq}, 
$$0\leq \sum_{i=1}^{n-2}c_{n-1-i}a_{i}^{(k-2)}<a_{n-j}^{(k-2)}\,\cdot$$
On one hand, if $c_{n-1}=0$, then $(c_1,\ldots,c_{n-2},0,\ldots)\in \,_j\C^{(k-2)}$ implies $(0,c_{n-2},\ldots,c_{1},0,\ldots)\in\, ^{n+1-j}\C^{(k-2)}$, so that
$$0\leq \sum_{i=1}^{n-1}c_{n-i}a_{i}^{(k-2)}<a_{n+1-j}^{(k-2)}\,\cdot$$
One the other hand, if $c_{n-1}>0$, then $(c_1,\ldots,c_{n-2},c_{n-1}-1,0,\ldots)\in \,_j\C^{(k-2)}$ implies that \\
$(c_{n-1}-1,c_{n-2},\ldots,c_{1},0,\ldots)\in\, ^{n+1-j}\C^{(k-2)}$, so that 
$$0<\sum_{i=1}^{n-1}c_{n-i}a_{i}^{(k-2)}= a_{1}^{(k-2)} + (c_{n-1}-1)a_{1}^{(k-2)}+\sum_{i=2}^{n-1}c_{n-i}a_{i}^{(k-2)}\leq a_{n+1-j}^{(k-2)}\,\cdot$$
Therefore, $\Gamma_{(k,1)}^{2n-1}((c_i)_{i=1}^{n})\in (0;a_{n+1-j}^{(k-2)}+a_{n+1-j}^{(k-2)}(=(0;\ak{2n+1-2j}($.
Moreover,
$$\Gamma_{(k,1)}^{2n-1}((d_i)_{i=1}^{n})-\Gamma_{(k,1)}^{2n-1}((c_i)_{i=1}^{n})= \sum_{i=1}^{n-1}(d_{n-i}-c_{n-i})(a_{i}^{(k-2)}+a_{i-1}^{(k-2)})$$
Hence, 
\begin{align*}
  (d_i)_{i=1}^{2n-1}\notin \,_{j+1}\C^{(k,1)}_{2n-1}&\Longrightarrow \Gamma_{(k,1)}^{2n-1}((d_i)_{i=1}^{n})-\Gamma_{(k,1)}^{2n-1}((c_i)_{i=1}^{n}) = (a_{n-j}^{(k-2)}+a_{n-j-1}^{(k-2)})\,,\\
  &\Longrightarrow \Gamma_{(k,1)}^{2n-1}((c_i)_{i=1}^{n}) =\Gamma_{(k,1)}^{2n-1}((d_i)_{i=1}^{n})-\ak{2n-1-2j}\,,\\
  &\Longrightarrow \Gamma_{(k,1)}^{2n-1}((c_i)_{i=1}^{n})< \ak{2n+1-2j}-\ak{2n-1-2j}\,,
\end{align*}
We now study the cases of occurrence of fact $(1)$ in Lemma \ref{lem:followfinbis}. When $h=n-1$, by \eqref{eq:klsommaxbis},
 \begin{align*}
 \Gamma_{(k,1)}^{2n-1}((d_i)_{i=1}^{n})-\Gamma_{(k,1)}^{2n-1}((c_i)_{i=1}^{n})&= -2(a_{1}^{(k-2)}+a_{0}^{(k-2)})-\sum_{i=1}^{n-j}o_{n-i}^{(k-2)}(a_{i}^{(k-2)}+a_{i-1}^{(k-2)})\\
 &= -(a_{0}^{(k-2)}+a_{-1}^{(k-2)})-(a_{1}^{(k-2)}+a_{0}^{(k-2)})\\
 &\qquad-(a_{n+1-j}^{(k-2)}+a_{n-j}^{(k-2)})+(a_{n-j}^{(k-2)}+a_{n-j-1}^{(k-2)})\\
 &=\ak{2n-1-2j}-\ak{2n+1-2j}\,\cdot
 \end{align*}
  When $h\leq n-2$, 
 \begin{align*}
 \Gamma_{(k,1)}^{2n-1}((d_i)_{i=1}^{n})-\Gamma_{(k,1)}^{2n-1}((c_i)_{i=1}^{n})&= (a_{n-1-h}^{(k-2)}+a_{n-2-h}^{(k-2)})-(a_{n-h}^{(k-2)}+a_{n-1-h}^{(k-2)})\\
 &\qquad-\sum_{i=n-h}^{n-j}o_{n-i}^{(k-2)}(a_{i}^{(k-2)}+a_{i-1}^{(k-2)})\\
 &= -(a_{n+1-j}^{(k-2)}+a_{n-j}^{(k-2)})+(a_{n-j}^{(k-2)}+a_{n-j-1}^{(k-2)})\\
 &=\ak{2n-1-2j}-\ak{2n+1-2j}\,\cdot
 \end{align*}
Therefore, 
\begin{align*}
  (d_i)_{i=1}^{2n-1}\notin \,_{j+1}\C^{(k,1)}_{2n-1}&\Longrightarrow \Gamma_{(k,1)}^{2n-1}((d_i)_{i=1}^{n})-\Gamma_{(k,1)}^{2n-1}((c_i)_{i=1}^{n}) = \ak{2n-1-2j}-\ak{2n+1-2j}\,,\\
  &\Longrightarrow \Gamma_{(k,1)}^{2n-1}((c_i)_{i=1}^{n})=\Gamma_{(k,1)}^{2n-1}((d_i)_{i=1}^{n})+\ak{2n+1-2j}-\ak{2n-1-2j}\,,\\
  &\Longrightarrow \Gamma_{(k,1)}^{2n-1}((c_i)_{i=1}^{n})\geq \ak{2n+1-2j}-\ak{2n-1-2j}\,\cdot
\end{align*}
and \eqref{eq:k1ratdif1} and \eqref{eq:k1ratdif2} follow.
\end{enumerate}
\subsection{Proof of Proposition \ref{prop:klratioinfini}}$\,$
\begin{enumerate}
 \item Applying fact $(1)$ of Proposition \ref{prop:klinserted} for $n=1$,
 $$\sum_{i\geq 1}c_i(\akl{i+1}-\skl{0}\alk{i})=\sum_{i\geq 1}c_i\skl{i+1}\in (0;\skl{1})=(0;1)\,$$
 Moreover, there exists $i\geq 1$ such that $c_i\neq 0$. Thus, 
 $$0<\sum_{i\geq 1}c_i\akl{i+1} - \skl{0}\sum_{i\geq 1}c_i\alk{i}\leq 1 \Longleftrightarrow \sum_{i\geq 1}c_i\akl{i+1} = \left\lfloor\skl{0}\sum_{i\geq 1}c_i\alk{i}\right\rfloor+1\,\cdot$$
 Therefore, for $(r,s)\in \Zz\times \Zu$, by Remark \ref{rem:rat},
 \begin{align*}
  r>\skl{0}s \Longleftrightarrow r\geq \lfloor \skl{0}s \rfloor +1 &\Longleftrightarrow [r]^{(k,l)}\succeq\left[\lfloor \skl{0}s \rfloor +1\right]^{(k,l)}= 0\cdot \left[s\right]^{(l,k)}\,,\\
  &\Longleftrightarrow\, _2[r]^{(k,l)}\succeq 0\cdot \left[s\right]^{(l,k)}\,\cdot
 \end{align*}
 For $r>\skl{0}$, this implies that
 $$\skl{0}\lceil r/\skl{0}\rceil \geq r>\skl{0}(\lceil r/\skl{0}\rceil-1) \Longleftrightarrow 0\cdot\left[\skl{0}\lceil r/\skl{0}\rceil \right]^{(l,k)}\succ \, _2[r]^{(k,l)}\succeq\left[\skl{0}(\lceil r/\skl{0}\rceil-1)\right]^{(l,k)}\,,$$
so that $ _2[r]^{(k,l)}\succeq\left[\skl{0}(\lceil r/\skl{0}\rceil-1)\right]^{(l,k)}$, and \eqref{eq:klratinf0} follows.
Note that $r\geq 0=s=\skl{0}s$, we still have  $_2[r]^{(k,l)}\succeq 0\cdot \left[s\right]^{(l,k)}$. As by definition,
$$_2[r]^{(k,l)}\succeq 0\cdot \left[s\right]^{(l,k)}\Longleftrightarrow [r]^{(k,l)}\succeq 0\cdot \left[s\right]^{(l,k)}\,,$$
\eqref{eq:inlkl} is immediate.
 \item By fact $(2)$ of Proposition \ref{prop:klinserted} for $j=1$, 
 $$0\leq \sum_{i=1}^{2n-1}c_i\akl{i+1} - \frac{\akl{2n}}{\alk{2n-1}}\sum_{i\geq 1}c_i\alk{i}< \frac{\akl{2n-1}}{\alk{2n-1}}=1 \Longleftrightarrow \sum_{i=1}^{2n-1}c_i\akl{i+1} = \left\lceil \frac{\akl{2n}}{\alk{2n-1}}\sum_{i\geq 1}c_i\alk{i}\right\rceil\,\cdot$$
 As the previous reasoning, this leads to \eqref{eq:klrateven0} for $n$ odd. For the case $n$ even, it suffices to consider in fact $(3)$ of Proposition \ref{prop:klinserted}, $j=1$ and $(l,k)$ instead of $(k,l)$, and we obtain 
 $$0\leq \sum_{i=1}^{2n}c_i\akl{i+1} - \frac{\akl{2n+1}}{\alk{2n}}\sum_{i\geq 1}c_i\alk{i} < \frac{\alk{2n}}{\alk{2n}}=1 \Longleftrightarrow \sum_{i=1}^{2n}c_i\akl{i+1} = \left\lceil \frac{\akl{2n+1}}{\alk{2n}}\sum_{i\geq 1}c_i\alk{i}\right\rceil\,\cdot$$
 From \eqref{eq:klrateven0}  and the fact that $\akl{2j-1}=\alk{2j-1}$, \eqref{eq:inlklodd} and \eqref{eq:inlkleven} are immediate.
\end{enumerate}
\subsection{Proof of Proposition \ref{prop:k1ratiofin}}
The case $n$ even follows from \eqref{eq:klrateven0} with $(k-2,k-2)$ instead of $(k,l)$. For the odd case, written $2n-1$, for $(c_i)_{i=1}^{n}\in \C_{2n-1}^{(k,1)}$,  by Proposition \ref{prop:k1inserted} for $j=1$, 
\begin{align*}
 c_{n} \ak{2n+1}+\sum_{i=1}^{n-1} c_i\ak{2i+2} - \frac{\ak{2n+1}}{\ak{2n-1}}\left(c_{n} \ak{2n-1}+\sum_{i=1}^{n-1} c_i\ak{2i}\right) = \sum_{i=1}^{n-1} c_i\ak{2i+2} - \frac{\ak{2n+1}}{\ak{2n-1}}\left(\sum_{i=1}^{n-1} c_i\ak{2i}\right) \in (0, 1(\,\cdot
\end{align*}
The remaining of the proof is the same as the proof of \eqref{eq:klrateven0}. 
\subsection{Proof of Proposition \ref{prop:1kratiofin}}$\,$
\begin{enumerate}
 \item For $r,s\in \Zu$ 
 $$r>\sk{0}s \Longleftrightarrow r\geq \lfloor \sk{0}s\rfloor+1 $$
 and \eqref{eq:1kratinf0} follows from definition \ref{def:1k}. Note that $[r]^{(k,1)}\succeq p_1([s]^{(1,k)})$ still holds for $r\geq s=0$.
 Analogously, 
 $$r>\sll{0}s \Longleftrightarrow \sk{2}r>s \Longleftrightarrow  \lceil \sk{2}r\rceil-1\leq s$$
 and \eqref{eq:1kratinf1} follows from definition \ref{def:1k}.
 Here again, $p_2([r]^{(1,k)})\succeq [s]^{(k,1)}$ holds for $r\geq s = 0$. Hence, \eqref{eq:inlk1} and \eqref{eq:inl1k} are immediate.
 \item For $r,s\in \Zz$,
 $$r\geq \frac{\ak{n+1}}{\al{n}}s \Longleftrightarrow r\geq \left\lceil \frac{\ak{n+1}}{\al{n}}s\right\rceil $$
 and \eqref{eq:1krat0} follows from definition \ref{def:1k}.
 Analogously,
 $$r\geq \frac{\al{n+1}}{\ak{n}}s \Longleftrightarrow \frac{\ak{n}}{\ak{n+1}}r\geq s \Longleftrightarrow \left\lfloor \frac{\ak{n}}{\al{n+1}}s\right\rfloor$$
 and \eqref{eq:1krat1} follows from definition \ref{def:1k}.
 Hence, \eqref{eq:inlk12n} and \eqref{eq:inlk12n1} follows from \eqref{eq:1krat0} for the odd cases  and \eqref{eq:1krat1} for the even cases, and  \eqref{eq:inl1k2n} and \eqref{eq:inl1k2n1} follows from \eqref{eq:1krat0} for the even cases  and \eqref{eq:1krat1} for the odd cases. 
\end{enumerate}

\subsection{Proof of Proposition \ref{prop:welldefinf}}
Note that for $i\geq 1$, $m_i$ recursively derives the numbers $m_{i+1-j}^{(j)}$ and the pairs $(\la_{2j-1}^{(i+1-j)},\la_{2j}^{(i+1-j)})$ for $1\leq j\leq i$, as the $\bkl{i-j}$ inserted into the pair $(\la_{2j+1},\la_{2j+2})$ can only be provided by the $\bkl{i+1-j}$ inserted into the pair $(\la_{2j-1},\la_{2j})$.
For $n$ the greatest index $i$ such that $\bkl{i}$ occurs in $\nu$, we have for $i>n$ and $1\leq j\leq i$ that
$\la_{2j-1}^{(i+1-j)}=\la_{2j}^{(i+1-j)}=0$, and
$$\left[\la_{2j-1}^{(i+1-j)}\right]^{(k,l)}=0\cdot\left[\la_{2j}^{(i+1-j)}\right]^{(l,k)}\in \,_{i+1-j}\C^{(k,l)}\,\cdot$$
We now fix $j\geq 1$.
\begin{enumerate}
\item Suppose now that for some $2\leq i\leq n-j$, \eqref{eq:klbelongsji} holds for $i+1$.
\begin{enumerate}
\item Assume that $\left[\la_{2j-1}\right]^{(k,l)}=0\cdot\left[\la_{2j}\right]^{(l,k)}\in \,_{i}\C^{(k,l)}$.
We first observe that, \eqref{eq:condinsinf1} is equivalent to 
$$\skl{i-1}\geq \la_{2j-1}-s_0^{(k,l)}\cdot \la_{2j}> \skl{i-1}-\skl{i}\,,$$
and by \eqref{eq:klratinfdif2}, doing \eqref{eq:insert1type1} consists of replacing $\left[\la_{2j-1}\right]^{(k,l)}$ by $\F(1,\, _{i}\C^{(k,l)},\left[\la_{2j-1}\right]^{(k,l)})$, and $\left[\la_{2j}\right]^{(l,k)}$ by $\F(1,\, _{i-1}\C^{(l,k)},\left[\la_{2j}\right]^{(l,k)})$.
Otherwise,
$$\skl{i-1}-\skl{i}\geq \la_{2j-1}-s_0^{(k,l)}\cdot \la_{2j}\geq 0\,,$$
and by \eqref{eq:klratinfdif1}, doing \eqref{eq:insert1type2} consists of replacing $\left[\la_{2j-1}\right]^{(k,l)}$ by $\F(1,\, _{i}\C^{(k,l)},\left[\la_{2j-1}\right]^{(k,l)})$, and $\left[\la_{2j}\right]^{(l,k)}$ by $\F(1,\, _{i-1}\C^{(l,k)},\left[\la_{2j}\right]^{(l,k)})$. Therefore, the pair $(\left[\la_{2j-1}\right]^{(k,l)},\left[\la_{2j}\right]^{(l,k)})$ is replaced by the pair\\ $(\F(1,\, _{i}\C^{(k,l)},\left[\la_{2j-1}\right]^{(k,l)}),\F(1,\, _{i-1}\C^{(l,k)},\left[\la_{2j}\right]^{(l,k)}))$, and by Lemma \ref{lem:decalage},
$$0\cdot\F(1,\, _{i-1}\C^{(l,k)},\left[\la_{2j}\right]^{(l,k)})= \F(1,\, _{i}\C^{(k,l)},0\cdot\left[\la_{2j}\right]^{(l,k)})=\F(1,\, _{i}\C^{(k,l)},\left[\la_{2j-1}\right]^{(k,l)})\,\cdot$$
As $\left[\la_{2j-1}^{(i+1)}\right]^{(k,l)}=0\cdot\left[\la_{2j}^{(i+1)}\right]^{(l,k)}\in \,_{i+1}\C^{(k,l)}\subset \, _{i}\C^{(k,l)}$, we indeed retrieve fact $(1.a)$ and \eqref{eq:klbelongsji} holds for $i$.
\item Note that, by \eqref{eq:klratinfdif2}, there are as many stored $\bkl{i-1}$ for the insertion into the pair $(\la_{2j+1},\la_{2j+2})$ as following sequences of $\la_{2j-1}^{(i+1)}$ that belong to $_{i+1}\C^{(k,l)}$, i.e.
$$m_{i-1}^{(j+1)}=\sharp\left\{(c_i)\in \, _{i+1}\C^{(k,l)}: \left[\la_{2j-1}^{(i+1)}\right]^{(k,l)}\prec (c_i)\preceq \left[\la_{2j-1}^{(i)}\right]^{(k,l)} \right\}\,\cdot$$
Thus,  $_{i+1}\left[\la_{2j-1}^{(i)}\right]^{(k,l)}=\F(m_{i-1}^{(j+1)},\,_{i+1}\C^{(k,l)},\left[\la_{2j-1}^{(i+1)}\right]^{(k,l)})$, and we retrieve fact $(1.b)$.
\end{enumerate}
\item Finally, as $\alk{0}=0$, the insertion of $\bkl{1}$ does not affect the size of $\la_{2j}^{(2)}$, and then $\la_{2j}^{(1)}=\la_{2j}^{(2)}$.
\begin{enumerate}
 \item Since $\akl{1}=1$, $\la_{2j}^{(1)}=\la_{2j}^{(2)}+m_1^{(j)}$, and fact $(2.a)$ holds.
 \item Reciprocally, fact $(2.b)$ follows from \eqref{eq:klbelongsji} for $i=2$ and the fact  $\la_{2j}^{(1)}=\la_{2j}^{(2)}$.
\end{enumerate}
\end{enumerate}
\subsection{Proof of Proposition \ref{prop:welldefinf1}}
Note that for $i\geq 1$, $m_i$ recursively derives the numbers $m_{i'}^{(j)}$ for $i'$ at most equal to $i+1-j$, as the $\bkl{i-j}$ inserted into the pair $(\la_{2j+1},\la_{2j+2})$ can only be provided by the $\bkl{i+1-j}$ or $\bkl{i+2-j}$ inserted into the pair $(\la_{2j-1},\la_{2j})$.
Let $n$ be such that $\bkl{i}$ does not occur in $\nu$ for $i\geq 2n+1$. For $i\geq 2n+1$ and $1\leq j\leq i$, we have $2i-j+1\geq 2n+1$, and then 
$\la_{2j-1}^{(2i+1-2j)}=\la_{2j}^{(2i+1-2j)}=0$, and
$$\left[\la_{2j-1}^{(2i+1-2j)}\right]^{(k,l)}=0\cdot\left[\la_{2j}^{(2i+1-2j)}\right]^{(l,k)}\in \,_{i+1-j}\C^{(k,l)}\,\cdot$$
We now fix $j\geq 1$.
\begin{enumerate}
 \item Suppose that for some $1\leq i\leq 2n+1-j$, \eqref{eq:k1belongsji} is satisfied for $2i+1$.
 \begin{enumerate}
 \item First, observe that, for $i\geq 2$,
 \begin{align*}
  \la_{2j-1}-\sk{0}\cdot \la_{2j}> \sk{2i-2}-\sk{2i} &\Longleftrightarrow \sk{2}\la_{2j-1}-\sk{0}(\la_{2j}-\la_{2j-1})> \sk{2i-2}-\sk{2i}\\
  &\Longleftrightarrow \la_{2j-1}-s_0^{(k-2)}(\la_{2j}-\la_{2j-1})> s_{i-1}^{(k-2)}-s_{i}^{(k-2)}\,\cdot
 \end{align*}
Hence, if $\left[\la_{2j-1}\right]^{(k,1)} =  p_1\left(\left[\la_{2j}\right]^{(1,k)}\right)= 0\cdot p_2\left(\left[\la_{2j}\right]^{(1,k)}\right) \in \,_{i}\C^{(k,1)}$, then\\ 
$\left[\la_{2j-1}\right]^{(k-2)} =  0\cdot\left[\la_{2j}-\la_{2j-1}\right]^{(k-2)} \in \,_{i}\C^{(k-2)}\,,$
By \eqref{eq:klratinfdif1} and \eqref{eq:klratinfdif2}, doing \eqref{eq:insert1type121} or \eqref{eq:insert1type221} (according to whether \eqref{eq:condinsinf121} holds or not) is equivalent to replacing the pair
$(\left[\la_{2j-1}\right]^{(k-2)},\left[\la_{2j}-\la_{2j-1}\right]^{(k-2)})$
 by $\F(1,\,_{i}\C^{(k-2)},\left[\la_{2j-1}\right]^{(k-2)}),\F(1,\,_{i-1}\C^{(k-2)},\left[\la_{2j}-\la_{2j-1}\right]^{(k-2)})$. Hence, by Lemma \ref{lem:decalage}, the new pair also satisfies 
 $$\left[\la_{2j-1}\right]^{(k,1)} =  p_1\left(\left[\la_{2j}\right]^{(1,k)}\right)= 0\cdot p_2\left(\left[\la_{2j}\right]^{(1,k)}\right) \in \,_{i}\C^{(k,1)}\,,$$
 so that \eqref{eq:k1belongsji} recursively holds.\\
 Let us now set $\left[\mu_{2j}^{(2i)}\right]^{(1,k)}= \F(m_{2i}^{(j)},\,_{+(i-1)}\C^{(1,k)},\left[\la_{2j}^{(2i+1)}\right]^{(1,k)})$. By \eqref{eq:1ksominf}, 
 $$m_{2i}^{(j)}=a+2b-\chi\left(p_1\left(\left[\mu_{2j}^{(2i)}\right]^{(1,k)}\right)=0\cdot \F(1,\,_i\C^{(k,1)}, p_2\left(\left[\mu_{2j}^{(2i)}\right]^{(1,k)}\right)\right)\,,$$
 where
 \begin{align*}
  a&= \sharp \left\{r\in\,_{i}\C^{(k,1)}\setminus\, _{i+1}\C^{(k,1)}: \left[\la_{2j-1}^{(2i+1)}\right]^{(k,1)}\prec r\preceq  p_1\left(\left[\mu_{2j}^{(2i)}\right]^{(1,k)}\right)\right\}\,,\\
  b&= \sharp \left\{r\in\, _{i+1}\C^{(k,1)}: \left[\la_{2j-1}^{(2i+1)}\right]^{(k,1)}\prec r\preceq  p_1\left(\left[\mu_{2j}^{(2i)}\right]^{(1,k)}\right)\right\}\,\cdot
 \end{align*}
 \begin{enumerate}
  \item On the other hand, using \eqref{eq:klratinfdif1} and \eqref{eq:klratinfdif2}, at the end of the insertion of $\bk{2i}$, we have 
  $$m_{2i}^{(j)}=a'+2b'-\chi\left(\bk{2i-2} \text{ is stored}\right)\,,$$
  where
  \begin{align*}
  a'&= \sharp \left\{r\in\,_{i}\C^{(k,1)}\setminus\, _{i+1}\C^{(k,1)}: \left[\la_{2j-1}^{(2i+1)}\right]^{(k,1)}\prec r\preceq \left[\la_{2j-1}^{(2i)}\right]^{(k,1)}\right\}\,,\\
  b'&= \sharp \left\{r\in\, _{i+1}\C^{(k,1)}: \left[\la_{2j-1}^{(2i+1)}\right]^{(k,1)}\prec r\preceq \left[\la_{2j-1}^{(2i)}\right]^{(k,1)}\right\}\,\cdot
 \end{align*}
 Assume that $\left[\la_{2j-1}^{(2i)}\right]^{(k,1)}\succ p_1\left(\left[\mu_{2j}^{(2i)}\right]^{(1,k)}\right)$. We then have 
 \begin{align*}
  a'-a&= \sharp \left\{r\in\,_{i}\C^{(k,1)}\setminus\, _{i+1}\C^{(k,1)}: p_1\left(\left[\mu_{2j}^{(2i)}\right]^{(1,k)}\right)\prec r\preceq \left[\la_{2j-1}^{(2i)}\right]^{(k,1)}\right\}\,,\\
  b'-b&= \sharp \left\{r\in\, _{i+1}\C^{(k,1)}: p_1\left(\left[\mu_{2j}^{(2i)}\right]^{(1,k)}\right)\prec r\preceq \left[\la_{2j-1}^{(2i)}\right]^{(k,1)}\right\}\,\\\\
  a'+b'-a-b&= \sharp \left\{r\in\, _{i}\C^{(k,1)}: p_1\left(\left[\mu_{2j}^{(2i)}\right]^{(1,k)}\right)\prec r\preceq \left[\la_{2j-1}^{(2i)}\right]^{(k,1)}\right\}\\
  &\geq 1\,\cdot
 \end{align*}
 Hence, 
 \begin{align*}
  D&=\chi\left(\bk{2i-2} \text{ is stored}\right)-\chi\left(p_1\left(\left[\mu_{2j}^{(2i)}\right]^{(1,k)}\right)=0\cdot \F(1,\,_i\C^{(k,1)}, p_2\left(\left[\mu_{2j}^{(2i)}\right]^{(1,k)}\right)\right)\\
  &= a'-a+2(b'-b)\\
  &=a'+b'-a-b + (b'-b)\geq 1
 \end{align*}
 and this is only possible when the equality holds, as $\chi(prop)\in \{0,1\}$. Then, $\bk{2i-2}$ is stored and $p_1\left(\left[\mu_{2j}^{(2i)}\right]^{(1,k)}\right)\prec 0\cdot \F(1,\,_i\C^{(k,1)}, p_2\left(\left[\mu_{2j}^{(2i)}\right]^{(1,k)}\right)$. However, $\bk{2i-2}$ can only be stored at the end of the insertion of the $\bk{2i}$ with an occurrence of \eqref{eq:insert1type221}, and by \eqref{eq:klratinfdif2}, this means that $\left[\la_{2j-1}^{(2i)}\right]^{(k,1)}\in \,_{i+1}\C^{(k,1)}$. Thus, $b'-b\geq 1$, and $D\geq 2$, and a contradiction occurs. With the same reasonning, we show that we cannot have $\left[\la_{2j-1}^{(2i)}\right]^{(k,1)}\prec p_1\left(\left[\mu_{2j}^{(2i)}\right]^{(1,k)}\right)$. Therefore, $\left[\la_{2j-1}^{(2i)}\right]^{(k,1)}= p_1\left(\left[\mu_{2j}^{(2i)}\right]^{(1,k)}\right)$ and we obtain fact $(1.a.ii)$. Remark that $a=a'$, $b=b'$ and
 $\bk{2i-2}$ is stored if and only if $p_1\left(\left[\mu_{2j}^{(2i)}\right]^{(1,k)}\right)= 0\cdot \F(1,\,_i\C^{(k,1)}, p_2\left(\left[\mu_{2j}^{(2i)}\right]^{(1,k)}\right)$.
 \item Fact $(1.a.ii)$ is straightforward, since we insert once $\bk{2i-1}$ whenever \eqref{eq:condinsinf121} occurs or not.
 \end{enumerate}
 \item Reciprocally, let us consider $\left[\mu_{2j}^{(2i-1)}\right]^{(1,k)}=\left(p_1\left(\left[\la_{2j}^{(2i-1)}\right]^{(1,k)}\right), \,_{i}p_2\left(\left[\la_{2j}^{(2i-1)}\right]^{(1,k)}\right)\right)\,\cdot$
 By \eqref{eq:k1belongsji} in $2i-1$, 
 $$0\cdot p_2\left(\left[\mu_{2j}^{(2i)}\right]^{(1,k)}\right) = \,_{i+1}p_1\left(\left[\mu_{2j}^{(2i)}\right]^{(1,k)}\right)\preceq \left[\la_{2j-1}^{(2i-1)}\right]^{(k,1)} \prec 0\cdot \F(1,\,_i\C^{(k,1)}, p_2\left(\left[\mu_{2j}^{(2i)}\right]^{(1,k)}\right)\,\cdot$$
 Thus, by \eqref{eq:1ksominf}, if $\left[\mu_{2j}^{(2i-1)}\right]^{(1,k)}= \F(m,\,_{+(i-1)}\C^{(1,k)},\left[\mu_{2j}^{(2i)}\right]^{(1,k)})$, then 
 $$m=a+2b+\chi\left(p_1\left(\left[\mu_{2j}^{(2i)}\right]^{(1,k)}\right)= 0\cdot \F(1,\,_i\C^{(k,1)}, p_2\left(\left[\mu_{2j}^{(2i)}\right]^{(1,k)}\right)\right)$$
 where
 \begin{align*}
  a&= \sharp \left\{r\in\,_{i}\C^{(k,1)}\setminus\, _{i+1}\C^{(k,1)}: \left[\la_{2j-1}^{(2i)}\right]^{(k,1)}\prec r\preceq  \left[\la_{2j-1}^{(2i-1)}\right]^{(k,1)}\right\}\,,\\
  b&= \sharp \left\{r\in\, _{i+1}\C^{(k,1)}: \left[\la_{2j-1}^{(2i)}\right]^{(k,1)}\prec r\preceq  \left[\la_{2j-1}^{(2i-1)}\right]^{(k,1)}\right\}\,\cdot
 \end{align*}
 \begin{enumerate}
  \item Since for each inserted $\bk{2i-1}$, we store one $\bk{2i-2}$ when we do \eqref{eq:insert1type221} and two $\bk{2i-2}$ when we do \eqref{eq:insert1type121}, the quantity $a+2b$ then represents the numbers of stored  $\bk{2i-2}$ during the insertions of  $\bk{2i-1}$. In addition, $p_1\left(\left[\mu_{2j}^{(2i)}\right]^{(1,k)}\right)= 0\cdot \F\left(1,\,_i\C^{(k,1)}, p_2\left(\left[\mu_{2j}^{(2i)}\right]^{(1,k)}\right)\right)$ if and only if one $\bk{2i-2}$ is stored at the end of the insertions of  $\bk{2i-1}$. Therefore, $m=m_{2j-2}^{(j+1)}$ and we retrieve fact $(2.b.i)$.
 \item Recall that, in $(1.a.i)$,
 \begin{align*}
 Q&=\sharp \left\{r\in\, _{i}\C^{(k,1)}: p_2\left(\left[\la_{2j}^{(2i+1)}\right]^{(1,k)}\right)\prec r\preceq  p_2\left(\left[\mu_{2j}^{(2i)}\right]^{(1,k)}\right)\right\}\\
 &=\sharp \left\{r\in\, _{i+1}\C^{(k,1)}: \left[\la_{2j-1}^{(2i+1)}\right]^{(k,1)}\prec r\preceq  \left[\la_{2j-1}^{(2i)}\right]^{(k,1)}\right\}-\chi\left(\bk{2i-2} \text{ is stored}\right)\,\cdot
 \end{align*}
But this quantity is exactly the number of $\bk{2i-1}$ stored during the insertions of $\bk{2i}$. Hence $Q=m_{2i-1}^{(j+1)}$.
 \end{enumerate}
\end{enumerate}
\item Remark that applying \eqref{eq:insert1type123} is equivalent to replacing $(\la_{2j-1},\la_{2j}-\la_{2j-1})$ by\\
$(\la_{2j-1},\la_{2j}-\la_{2j-1}+1)$, and applying \eqref{eq:insert1type221} for $i=2$ is equivalent to replacing\\ 
$(\la_{2j-1},\la_{2j}-\la_{2j-1})$ by $(\la_{2j-1}+1,\la_{2j}-\la_{2j-1})$. Moreover,
 \begin{align*}
  \la_{2j-1}-\sk{0}\cdot \la_{2j}> \sk{0} &\Longleftrightarrow \sk{2}\la_{2j-1}-\sk{0}(\la_{2j}-\la_{2j-1})> \sk{0}
 \end{align*}
 Hence, if $(\la_{2j-1},\la_{2j}-\la_{2j-1})=(0,0)$, we replace by $(1,0)$, so that by fact $(1.a)$ of Lemma \ref{lem:follow1k}, $[\la_{2j}]^{(1,k)}=(\left[\la_{2j-1}\right]^{(k,1)},[\la_{2j}-\la_{2j-1})]^{(k,1)})$ is replaced by $\F(1,\C^{(1,k)},[\la_{2j}]^{(1,k)})$, and $\left[\la_{2j-1}\right]^{(k,1)}$ is replaced by $p_1(\F(1,\C^{(1,k)},[\la_{2j}]^{(1,k)}))$. Otherwise, assume that $\left[\la_{2j-1}\right]^{(k,1)}= p_1([\la_{2j}]^{(1,k)})$. Then, \eqref{eq:condinsinf123} is equivalent to
 $$1>\sk{2}\la_{2j-1}-\sk{0}(\la_{2j}-\la_{2j-1})> \sk{0}$$
so that by \eqref{eq:divide++} and \eqref{eq:klratinf0}, $p_1([\la_{2j}]^{(1,k)}) = 0\cdot \F(1,\C^{(k,1)},p_2([\la_{2j}]^{(1,k)}))$. Therefore, by fact $(1.b)$ of Lemma \ref{lem:follow1k} in $d=0$, 
$$\F(1,\C^{(1,k)},[\la_{2j}]^{(1,k)})= ([\la_{2j-1}]^{(k,1)}, [\la_{2j}-\la_{2j-1}+1]^{(1,k)})\,\cdot$$
Similarly, when \eqref{eq:condinsinf123} does not occur, by fact $(1.a)$ of Lemma \ref{lem:follow1k} in $d=0$,  
$$\F(1,\C^{(1,k)},[\la_{2j}]^{(1,k)})= ([\la_{2j-1}+1]^{(k,1)}, [\la_{2j}-\la_{2j-1}]^{(1,k)})\,\cdot$$
We thus retrieve the fact that $[\la_{2j}]^{(1,k)}=(\left[\la_{2j-1}\right]^{(k,1)},[\la_{2j}-\la_{2j-1})]^{(k,1)})$ is replaced by \\
$\F(1,\C^{(1,k)},[\la_{2j}]^{(1,k)})$, and $\left[\la_{2j-1}\right]^{(k,1)}$ is replaced by $p_1(\F(1,\C^{(1,k)},[\la_{2j}]^{(1,k)}))$.
Therefore, fact $(2.a)$ holds. Moreover, we store as many $\bk{1}$ as occurrences of fact $(1.b)$ of Lemma \ref{lem:follow1k} in $d=0$, and fact $(2.b)$ follows. 
\item Fact $(3)$ is straightforward as its proof follows the same reasoning as the proof of fact $(2)$ of Proposition \ref{prop:welldefinf}.
\end{enumerate}
\subsection{Proof of Proposition \ref{prop:welldefinf2}}
The proof of fact $(1)$ is the same as the proof of fact $(1)$ of Proposition \eqref{prop:welldefinf1}, given the following equivalences:
\begin{align*}
  \la_{2j-1}-\sll{0}\cdot \la_{2j}> \sll{2i-1}-\sll{2i+1} &\Longleftrightarrow \sll{1}(\la_{2j-1}-\la_{2j})-\sll{-1}\la_{2j}> \sll{2i-1}-\sll{2i+1}\\
  &\Longleftrightarrow (\la_{2j-1}-\la_{2j})-s_0^{(k-2)}\la_{2j}> s_{i-1}^{(k-2)}-s_{i}^{(k-2)}\,\cdot
 \end{align*}
 The proof of $(2)$ follows the proof of fact $(2)$ of Proposition \ref{prop:welldefinf}.
\subsection{Proof of Proposition \ref{prop:welldefeven}}
Set $m_{2n+1}=0$ the number of inserted $\bkl{2n+1}$ in $\la_{2n},\la_{2n-1}$. Note that for $2n+1\geq i\geq 1$, $m_i$ recursively derives the numbers $m_{i-j}^{(n-j)}$ and the pairs $(\la_{2n-2j}^{(i-j)},\la_{2n-2j-1}^{(i-j)})$ for $1\leq j\leq \max\{n-1,i-1\}$, as the $\bkl{i-j}$ inserted into the pair $(\la_{2n-2j},\la_{2n-2j-1})$ can only be provided by the $\bkl{i+1-j}$ inserted into the pair $(\la_{2n-2j+2},\la_{2n-2j+1})$.
We thus have for $1\leq j\leq n$ that
$\la_{2j}^{(n+1+j)}=\la_{2j-1}^{(n+1+j)}=0$.
\begin{enumerate}
\item Suppose that for some $n\geq j>1$, $m_i^{(j)}=0$ and $\la_{2j}^{(i)}=\la_{2j-1}^{(i)}=0$, for $n+1+j\geq i>2j$. This implies that for $m_{i}^{(j-1)}=0$ and $\la_{2j-2}^{(i)}=\la_{2j-3}^{(i)}=0$, for $n+j\geq i>2j-1$. Moreover, by \eqref{eq:klratevendif2}, the insertion of $\bkl{2j}$ into the pair $(\la_{2j},\la_{2j-1})$ can only be done using \eqref{eq:insert2type2}, so that there is no part $\bkl{2j-1}$ stored for the insertion into the pair $(\la_{2j-2},\la_{2j-3})$. Therefore, $m_{2j-1}^{j-1}=0$, $\la_{2j-2}^{(2j-1)}=\la_{2j-3}^{(2j-1)}=0$, and we conclude that $m_i^{(j-1)}=0$ and $\la_{2j-2}^{(i)}=\la_{2j-3}^{(i)}=0$, for $n+j\geq i>2j-2$. Since the assumption holds for $j=n$, fact $(1)$ recursively holds for $n\geq j\geq 1$.
\item As the insertion of $\bkl{2j}$ into the pair $(\la_{2j},\la_{2j-1})$ only consists of iteration of \eqref{eq:insert2type2}, and $\la_{2j}^{(2j+1)}=\la_{2j-1}^{(2j+1)}=0$, we then obtain that 
$(\la_{2j}^{(2j)},\la_{2j-1}^{(2j)})= (m_{2j}^{(j)}\akl{2j},m_{2j}^{(j)}\alk{2j-1})$. Hence, the relation \eqref{eq:klbelongsjieven} holds for $i=2j$, as well as fact $(2.a)$.\\\\
Assume that $\left[\la_{2j}\right]^{(k,l)}_{2j}=0\cdot\left[\la_{2j-1}\right]^{(l,k)}_{2j-1}\in \,_{i}\C^{(k,l)}_{2j}$.
We first observe that, \eqref{eq:condinsinf1} is equivalent to 
$$\akl{2j+1-i}> \alk{2j-1}\la_{2j}-\akl{2j}\la_{2j-1}\geq \akl{2j+1-i}-\akl{2j-i}\,,$$
and by \eqref{eq:klratevendif2}, doing \eqref{eq:insert2type1} consists of replacing $\left[\la_{2j}\right]^{(k,l)}_{2j}$ by $\F(1,\, _{i}\C^{(k,l)}_{2j},\left[\la_{2j-1}\right]^{(k,l)}_{2j})$, and $\left[\la_{2j-1}\right]^{(l,k)}_{2j-1}$ by $\F(1,\, _{i-1}\C^{(l,k)}_{2j-1},\left[\la_{2j-1}\right]^{(l,k)}_{2j-1})$.
Otherwise,
$$\akl{2j+1-i}-\akl{2j-i}> \alk{2j-1}\la_{2j}-\akl{2j}\la_{2j-1}\geq 0\,,$$
and by \eqref{eq:klratevendif1}, doing \eqref{eq:insert2type2} consists of replacing $\left[\la_{2j}\right]^{(k,l)}_{2j}$ by $\F(1,\, _{i}\C^{(k,l)}_{2j},\left[\la_{2j-1}\right]^{(k,l)}_{2j})$, and $\left[\la_{2j-1}\right]^{(l,k)}_{2j-1}$ by $\F(1,\, _{i-1}\C^{(l,k)}_{2j-1},\left[\la_{2j-1}\right]^{(l,k)}_{2j-1})$. Therefore, the pair $(\left[\la_{2j}\right]^{(k,l)}_{2j},\left[\la_{2j-1}\right]^{(l,k)}_{2j-1})$ is replaced by the pair\\
$(\F(1,\, _{i}\C^{(k,l)}_{2j},\left[\la_{2j}\right]^{(k,l)}_{2j}),\F(1,\,_{i-1}\C^{(l,k)}_{2j-1},\left[\la_{2j-1}\right]^{(l,k)}_{2j-1}))$, and by Lemma \ref{lem:decalage},
$$0\cdot\F(1,\, _{i-1}\C^{(l,k)}_{2j-1},\left[\la_{2j-1}\right]^{(l,k)}_{2j-1})= \F(1,\, _{i}\C^{(k,l)}_{2j},0\cdot\left[\la_{2j}\right]^{(l,k)}_{2j-1})=\F(1,\, _{i}\C^{(k,l)}_{2j},\left[\la_{2j}\right]^{(k,l)}_{2j})\,\cdot$$
As $\left[\la_{2j}^{(i+1)}\right]^{(k,l)}_{2j}=0\cdot\left[\la_{2j-1}^{(i+1)}\right]^{(l,k)}_{2j-1}\in \,_{i+1}\C^{(k,l)}_{2j}\subset \, _{i}\C^{(k,l)}_{2j}$, we indeed retrieve fact $(2.a)$ and \eqref{eq:klbelongsjieven} holds for $i$.
\item Note that, by \eqref{eq:klratevendif2}, there are as many stored $\bkl{i-1}$ for the insertion into the pair $(\la_{2j-2},\la_{2j-3})$ as following sequences of $[\la_{2j}^{(i+1)}]^{(k,l)}_{2j}$ that belong to $_{i+1}\C^{(k,l)}_{2j}$, i.e.
$$m_{i-1}^{(j-1)}=\sharp\left\{r\in \, _{i+1}\C^{(k,l)}_{2j}: \left[\la_{2j}^{(i+1)}\right]^{(k,l)}_{2j}\prec r\preceq \left[\la_{2j}^{(i)}\right]^{(k,l)}_{2j} \right\}\,\cdot$$
Thus,  $_{i+1}\left[\la_{2j}^{(i)}\right]^{(k,l)}_{2j}=\F(m_{i-1}^{(j-1)},\,_{i+1}\C^{(k,l)}_{2j},\left[\la_{2j}^{(i+1)}\right]^{(k,l)}_{2j})$, and we retrieve fact $(2.b)$.
\item Finally, as $\alk{0}=0$, and $\akl{1}=1$, fact $(3)$ follows. 
\end{enumerate}
\subsection{Proof of Proposition \ref{prop:welldefeven1}}
The proof of fact $(1)$ is the same as in the previous proof. The reasoning for the proof of fact $(2),(3)$ and $(4)$ follows the proof of Proposition \ref{prop:welldefinf1}, given the equivalence
\begin{align*}
 \ak{2j-1}\la_{2j}-\ak{2j}\la_{2j-1}\geq \ak{2j+2-2i}-\ak{2j-2i} \Longleftrightarrow \ak{2j-2}\la_{2j}-\ak{2j}(\la_{2j-1}-\la_{2j})\geq \ak{2j+2-2i}-\ak{2j-2i}
\end{align*}
and by using respectively facts $(2)$ of Lemmas \ref{lem:divide} and \ref{lem:follow1k}, \eqref{eq:k1ratdif1}, \eqref{eq:k1ratdif2}, \eqref{eq:k1rat0} instead of facts $(1)$ of Lemmas \ref{lem:divide} and \ref{lem:follow1k}, \eqref{eq:klratinfdif1}, \eqref{eq:klratinfdif2}, \eqref{eq:klratinf0}.
\subsection{Proof of Proposition \ref{prop:welldefeven2}}
The proof of fact $(1)$ is the same as in the previous proof. The reasoning for the proof of fact $(2),(3)$ follows the proof of Proposition \ref{prop:welldefinf2}, given the equivalence
\begin{align*}
 \al{2j-1}\la_{2j}-\al{2j}\la_{2j-1}\geq \al{2j+1-2i}-\al{2j-1-2i} \Longleftrightarrow \ak{2j-1}(\la_{2j}-\la_{2j-1})-\ak{2j+1}\la_{2j-1}\geq \ak{2j+1-2i}-\ak{2j-1-2i}\,,
\end{align*}
and by using respectively facts $(2)$ of Lemmas \ref{lem:divide} and \ref{lem:follow1k}, \eqref{eq:k1ratdif1}, \eqref{eq:k1ratdif2}, \eqref{eq:k1rat0} instead of facts $(1)$ of Lemmas \ref{lem:divide} and \ref{lem:follow1k}, \eqref{eq:klratinfdif1}, \eqref{eq:klratinfdif2}, \eqref{eq:klratinf0}.
\subsection{Proof of Proposition \ref{prop:welldefodd}}
It uses the same reasoning as the proof of Proposition \ref{prop:welldefeven}. We simply intertwine $(k,l)$ and $(k,l)$, replace $2n$ by $2n-1$, $2j$ by $2j+1$, and the intervals $1 \text{ or } 2\leq j\leq n \text{ or } n-1$ by $1 \text{ or } 2\leq j\leq n-1 \text{ or } n-2$.
\subsection{Proof of Proposition \ref{prop:welldefodd1}}
The proof of fact $(1)$ is the same as in the previous proof. The reasoning for the proof of fact $(2),(3)$ follows the proof of Proposition \ref{prop:welldefeven2}, given the equivalence
\begin{align*}
 \ak{2j}\la_{2j+1}-\ak{2j+1}\la_{2j}\geq \ak{2j+2-2i}-\ak{2j-2i} \Longleftrightarrow \ak{2j}(\la_{2j+1}-\la_{2j})-\ak{2j+2}\la_{2j}\geq \ak{2j+2-2i}-\ak{2j-2i}\,\cdot
\end{align*}
\subsection{Proof of Proposition \ref{prop:welldefodd2}}
The proof of fact $(1)$ is the same as in the previous proof. The reasoning for the proof of fact $(2),(3)$ and $(4)$ follows the proof of Proposition \ref{prop:welldefeven}, given the equivalence
\begin{align*}
 \al{2j}\la_{2j+1}-\al{2j+1}\la_{2j}\geq \al{2j+3-2i}-\al{2j+1-2i} \Longleftrightarrow \ak{2j-1}\la_{2j+1}-\ak{2j+1}(\la_{2j}-\la_{2j+1})\geq \ak{2j+1-2i}-\ak{2j-1-2i}\,\cdot
\end{align*}

\end{document}